\newcommand{\smallimplies}{\mathrel{\text{\scalebox{0.7}{$\implies$}}}}
\DeclareFontFamily{U}{mathx}{}
\DeclareFontShape{U}{mathx}{m}{n}{<-> mathx10}{}
\DeclareSymbolFont{mathx}{U}{mathx}{m}{n}
\DeclareMathAccent{\widehat}{0}{mathx}{"70}
\DeclareMathAccent{\widecheck}{0}{mathx}{"71}
\newcommand{\p}{\mathbb{P}}
\newcommand{\circnum}[1]{%
  \text{\ding{\the\numexpr #1+191}}%
}
\DeclareMathOperator{\argmax}{arg\,max}
\newtheorem{theorem}{Theorem}
\newtheorem{lemma}[theorem]{Lemma}
\theoremstyle{definition}
\newtheorem{Notations}{Notations}
\newtheorem{definition}[theorem]{Definition}
\newtheorem*{remark}{Remark}
\begin{document}

\title{Exact recovery in Gaussian weighted stochastic block model and planted dense subgraphs:  Statistical and algorithmic thresholds }
\date{}
\author{ Aaradhya Pandey\thanks{\texttt{aaradhyapandey@princeton.edu}. Princeton University.} \and Sanjeev Kulkarni\thanks{\texttt{kulkarni@princeton.edu}. Princeton University.}}

\maketitle
\begin{abstract}
In this paper,  we study the exact recovery problem in the Gaussian weighted version of the Stochastic block model with two symmetric communities. We provide the information-theoretic threshold in terms of the signal-to-noise ratio (SNR) of the model and prove that when $\text{SNR} <1$, no statistical estimator can exactly recover the community structure with probability bounded away from zero. On the other hand, we show that when $\text{SNR} >1$, the Maximum likelihood estimator itself succeeds in exactly recovering the community structure with probability approaching one. Then, we provide two algorithms for achieving exact recovery. The Semi-definite relaxation as well as the spectral relaxation of the  Maximum likelihood estimator can recover the community structure down to the threshold value of $1$, establishing the absence of an information-computation gap for this model.
\par 
Next, we compare the problem of community detection with the problem of recovering a planted densely weighted community within a graph and prove that the exact recovery of two symmetric communities is a strictly easier problem than recovering a planted dense subgraph of size half the total number of nodes, by establishing that when the same $\text{SNR} < \frac{3}{2}$, no statistical estimator can exactly recover the planted community with probability bounded away from zero. More precisely, when $ 1 <\text{SNR} < \frac{3}{2}$ exact recovery of community detection is possible, both statistically and algorithmically, but it is impossible to exactly recover the planted community, even statistically, in the Gaussian weighted model.   Finally, we show that when $\text{SNR} >2$, the Maximum likelihood estimator itself succeeds in exactly recovering the planted community with probability approaching one. We also prove that the Semi-definite relaxation of the  Maximum likelihood estimator can recover the planted community structure down to the threshold value of $2$.
\end{abstract}

\section{Introduction and related works} \label{sec:intro}
Estimating community structures in graphs is a central problem in computer science,  statistical machine learning, and complex networks. The stochastic block model (SBM) is a random graph model \cite{HOLLAND1983109} wherein distinct communities exhibit varied connectivity patterns. This model has been extensively studied as a standard framework for investigating clustering and community detection (See \cite{goldenberg2009survey}).  Furthermore, it serves as a rich domain for exploring the intricate tradeoffs between information and computation inherent in combinatorial statistics and the broader field of data science. We refer the reader to \cite{abbe2023community}, \cite{moore2017computer} and references therein for more details. 

  \section{Gaussian weighted stochastic block model: description}
   In  SBM with two symmetric communities, a random graph $G_n = ([n], E_n)$ with vertex set $[n]:=\{1, 2, \cdots, n-1, n\}$ and edge set $E_n \subset  \binom{[n]}{2}:= \{\{i,j\}: i \neq j \in [n]\}$ has two equally sized clusters given by $\sigma^* \in \{\pm 1\}^{n}$   ($n$ even) such that $ \{i \in [n]: \sigma_i =1\} = \{ i\in [n]: \sigma_i =-1\} =\frac{n}{2}. $
  
  The vertices connect with probability $p$ within clusters and $q$ across clusters. The problem of exact recovery asks to output a balanced partition $\hat{\sigma}$ that recovers the unobserved partition $\sigma^*$  correctly with probability approaching one as $n \to  \infty$.  Indeed, one can only hope to recover the communities up to a global sign flip of the labels. It turns out that the critical scaling of the parameters for this problem is given by the following for constants $a, b > 0$
  \begin{equation} \label{def:CritSBM0}
  p=\frac{a \log n}{n}  \quad \text{ and } \quad q = \frac{b \log n}{n}.
  \end{equation}
  
   In the Gaussian weighted version of the stochastic block model (GWSBM), we denote a weighted graph $G_{n} \sim$ SBM$(n, \mu_1\,\mu_2, \tau^2)$ on $n$ vertices with labels $(\sigma_i^*)_{i\in [n]}$ such that  
   $\#\{i\in [n]: \sigma_i^*=1\}=\#\{i\in [n]: \sigma_i^*=-1\}=\frac{n}{2}$ and conditioned on this labeling $(\sigma_i^*)_{i\in [n]}$,  we have  Gaussian\footnote{ where $ N(\mu, \tau^2)$ denotes a \text{Normally distributed random variable with mean } $\mu$ \text{ and variance } $\tau^2$} weighted edges sampled independently for every pair $\{i,j\}$.
   \begin{equation}\label{def:GWSBM0}
A_{ij}=A_{ji} \sim
\begin{cases}
  N(\mu_1, \tau^2)  & \text{if } \sigma^*(i) =\sigma^*(j)  \\
   N(\mu_2, \tau^2) & \text{if }  \sigma^*(i)\neq \sigma^*(j) 
\end{cases}
   \end{equation} 
   
     We focus on the assortative case $\mu_1>\mu_2$, and work with the critical scaling of  $\mu_1$ and $\mu_2$ as
    \begin{equation} \label{def:CritGWSBM0}
     \mu_1= \alpha \sqrt{\frac{\log n}{n}}, \quad   \mu_2= \beta\sqrt{\frac{\log n}{n}} 
    \end{equation}
    
  for constants $\alpha >\beta $. Further,  for $ \tau >0$, we define the signal-to-noise ratio (SNR)  as
  \begin{equation} \label{def:SNRp}
      \text{SNR}:= \frac{(\alpha -\beta)^2}{8\tau^2}.
  \end{equation}
  
 The problem of exact recovery asks to recover the  community labeling $(\sigma_i^*)_{i\in [n]}$ exactly up to a global sign flip, having observed an instance of the weighted graph $G_n\sim$ SBM$(n, \mu_1\,\mu_2, \tau^2)$ or equivalently a sample of the random $n \times n $ symmetric weighted adjacency matrix $(A_{ij})_{i,j,\in [n]}$ of the graph. 
 Let us  denote the parameter space $\Theta_n$  with 
 $\mathbf{1}_n := (1, \cdots, 1)\in \mathbb{R}^n$ as
 \begin{equation}
 \Theta_n = \{\sigma\in \{\pm1\}^n, \langle\sigma, \mathbf{1}_n\rangle=0\}.
 \end{equation}
 
  We define the agreement ratio between $\hat{\sigma},  \sigma  \in \{\pm 1\}^n$ as  
  \[
  M(\hat{\sigma}, \sigma)= \max \left(\frac{1}{n}\sum_{i=1}^n \mathbf{1}_{\hat{\sigma}(i)=\sigma(i)}, \frac{1}{n}\sum_{i=1}^n \mathbf{1}_{-\hat{\sigma}(i)=\sigma(i)}\right).
  \]

Then, we say that the problem of exact recovery is solvable if there exists an estimator that upon observing an instance of the weighted random graph $G_n \sim$ SBM$(n,\mu_1, \mu_2,\tau^2)$ with an unobserved $\sigma^* \in \Theta_n$,   outputs a community label $\hat{\sigma} \in \Theta_n
$ such that  $\mathbb{P} \left[M(\hat{\sigma},\sigma^*)=1\right] \to 1.$ We say the problem of exact recovery is unsolvable otherwise. That is, with probability bounded away from zero, every estimator fails to exactly recover the community labels, as the number of vertices grows.
\section{Gaussian weighted stochastic block model:  results and proof sketch}
In this section, we present our results along with the ideas of the proof for GWSBM in comparison with the breakthrough works of \cite{abbe2014exact}, \cite{Mossel_2016}, \cite{hajek2016achieving}, \cite{abbe2019entrywise} that established an important set of results for the SBM to inspire a Gaussian weighted version. Detailed proofs of all the theorems and lemmas are provided in the appendix.  
\par 
The literature for community detection and related problems is vast, and a weighted version similar to ours has been considered in the literature  \cite{7447159}, \cite{jog2015informationtheoretic}, \cite{xu2018optimal}. More precisely, results similar to Theorems ~\eqref{thm:Negative0}, and ~\eqref{thm: Positive0} has been established in  \cite{esmaeili2021community} under certain assumptions. \cite{bandeira2015random} established a result similar to Theorem ~\eqref{thm:SDP0} for the $\mathbb{Z}_2$ synchronization problem, using a different semi-definite program ~\eqref{eq:SDP0}.
\par
To explore the information-theoretic limits, we start by examining the maximum a posteriori (MAP) estimation, designed to maximize the probability of accurately reconstructing communities. In the context of uniform community assignment, (which is the case for us as we are fixing an unknown community labeling $\sigma^*$ with balanced community sizes, which could be chosen uniformly from all the possible $\binom{n}{\frac{n}{2}}$ choices of community assignments), MAP estimation is equivalent to maximum likelihood estimation (MLE). We refer to \cite{abbe2023community}[Section 3.1] for more details.
\par 
Next, we observe that the MLE for SBM and GWSBM  is given by the following.
    \begin{equation} \label{eq:MLE0}
   \hat{\sigma}_{MLE}= \underset{\sigma \in \Theta_n}{ \argmax} \hspace{.1 cm}f_A(\sigma) \quad \quad \text{  where  } \quad f_A(\sigma):=\sum_{i,j} A_{ij}\sigma_i \sigma_j 
    \end{equation}
    
Under the critical scaling ~\eqref{def:CritSBM0} in the stochastic block model,  it was shown in \cite{abbe2014exact} that if $|\sqrt{a} -\sqrt{b}| <\sqrt{2}$ then MLE fails to exactly recover the true community structure $\sigma^*$ with probability bounded away from zero as $n\to \infty$.  In the Gaussian weighted version of the stochastic block model ~\eqref{def:GWSBM0} we establish the following stronger statistical impossibility result.
\begin{theorem}[\textbf{Statistical impossibility of exact recovery}] \label{thm:Negative0}
     Let $G_n \sim $ SBM$(n,\mu_1, \mu_2, \tau^2)$ with $\mu_1, \mu_2$ in the critical scaling regime ~\eqref{def:CritGWSBM0}, and $\alpha > \beta$, $\tau >0$ be constants. If SNR$<1$ then MLE fails in recovering the community label $\sigma^*$ with high probability. 
     \[
     \underset{n \to \infty}{\limsup}\hspace{.1cm}\p(M(\hat{\sigma}_{MLE}, \sigma^*)=1) = 0 \quad \iff  \quad \underset{n \to \infty}{\liminf}\hspace{.1cm}\p(M(\hat{\sigma}_{MLE}, \sigma^*)\neq 1) =1.
     \]
\end{theorem}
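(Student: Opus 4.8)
Write $C_+=\{i\in[n]:\sigma^*_i=1\}$ and $C_-=\{i\in[n]:\sigma^*_i=-1\}$, each of size $n/2$. My plan is the classical single‑vertex perturbation argument: I would show that with probability tending to one there is a ``bad swap'' that strictly increases the objective $f_A$ of \eqref{eq:MLE0}, so that neither $\sigma^*$ nor $-\sigma^*$ maximizes $f_A$ over $\Theta_n$ (using $f_A(-\sigma)=f_A(\sigma)$), and hence $M(\hat{\sigma}_{MLE},\sigma^*)\neq1$. For $i\in C_+$ and $j\in C_-$ let $\sigma^{(ij)}\in\Theta_n$ be $\sigma^*$ with the labels of $i$ and $j$ interchanged (so balance is preserved); a direct computation cancelling the common terms gives
\[
f_A(\sigma^{(ij)})-f_A(\sigma^*)=-4\bigl(D_i+D_j'+2A_{ij}\bigr),
\]
where $D_i:=\sum_{l\in C_+\setminus\{i\}}A_{il}-\sum_{l\in C_-}A_{il}$ and $D_j':=\sum_{l\in C_-\setminus\{j\}}A_{jl}-\sum_{l\in C_+}A_{jl}$ are the ``in‑minus‑across'' weight sums at $i$ and $j$. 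Since the Gaussian law makes exact ties null, it suffices to produce, with probability $\to1$, a pair $i\in C_+$, $j\in C_-$ with $D_i+D_j'+2A_{ij}<0$.

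\textbf{Reduction to two bad vertices.} I would fix a sequence $\epsilon_n$ with $\sqrt{\log n}\ll\epsilon_n\ll\sqrt n$ (say $\epsilon_n=\log n$) and set $N_+:=\#\{i\in C_+:D_i\le-\epsilon_n\}$ and $N_-:=\#\{j\in C_-:D_j'\le-\epsilon_n\}$. A union bound over the $\binom n2$ Gaussian weights gives $\max_{k<l}|A_{kl}|=O(\sqrt{\log n})=o(\epsilon_n)$ with high probability. Hence on the event $\{N_+\ge1\}\cap\{N_-\ge1\}\cap\{\max_{k<l}|A_{kl}|<\epsilon_n/2\}$ a bad swap exists, because for the witnessing $i,j$ we get $D_i+D_j'+2A_{ij}\le-2\epsilon_n+\epsilon_n<0$. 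It remains to show $N_+\ge1$ and (by the obvious symmetry) $N_-\ge1$ with high probability, which I would do by the second moment method.

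\textbf{Moments of $N_+$.} Conditionally on $\sigma^*$, $D_i$ is a sum of $n-2$ independent Gaussians, so $D_i\sim N(m_n,v_n)$ with $m_n=(\tfrac n2-1)\mu_1-\tfrac n2\mu_2=\tfrac n2(\mu_1-\mu_2)(1+o(1))$ and $v_n=(n-1)\tau^2$. Inserting the critical scaling \eqref{def:CritGWSBM0}, the standardized distance of the threshold $-\epsilon_n$ from the mean is
\[
t_n:=\frac{m_n+\epsilon_n}{\sqrt{v_n}}=\frac{\alpha-\beta}{2\tau}\sqrt{\log n}\,(1+o(1)),
\]
the $\epsilon_n$‑shift being negligible since $\epsilon_n/\sqrt{v_n}\to0$. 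By the Gaussian tail bound and $\tfrac12\bigl(\tfrac{\alpha-\beta}{2\tau}\bigr)^2=\tfrac{(\alpha-\beta)^2}{8\tau^2}=\SNR$, this yields $p_n:=\mathbb{P}(D_i\le-\epsilon_n)=n^{-\SNR+o(1)}$, hence $\mathbb{E}[N_+]=\tfrac n2 p_n=n^{1-\SNR+o(1)}\to\infty$ because $\SNR<1$. For the variance, note that for $i\neq i'$ in $C_+$ the sums $D_i,D_{i'}$ share exactly the weight $A_{ii'}$ (with coefficient $+1$ in each), so $(D_i,D_{i'})$ is bivariate Gaussian with correlation $\rho_n=\tau^2/v_n=\Theta(1/n)$. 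Since $\rho_n\to0$ and $\rho_n t_n^2=\Theta(\log n/n)\to0$, the sharp estimate for bivariate Gaussian lower tails gives $\mathbb{P}(D_i\le-\epsilon_n,\,D_{i'}\le-\epsilon_n)=p_n^2(1+o(1))$ uniformly over $i\neq i'$. Therefore $\mathbb{E}[N_+^2]=\mathbb{E}[N_+]+\tfrac n2\bigl(\tfrac n2-1\bigr)p_n^2(1+o(1))=\mathbb{E}[N_+]+(1+o(1))\mathbb{E}[N_+]^2$, so $\Var(N_+)=o(\mathbb{E}[N_+]^2)$, and Chebyshev gives $\mathbb{P}(N_+=0)\le\Var(N_+)/\mathbb{E}[N_+]^2\to0$. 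The same bound holds for $N_-$, and combining the three high‑probability events above gives $\mathbb{P}(M(\hat{\sigma}_{MLE},\sigma^*)=1)\to0$, i.e.\ $\limsup_n\mathbb{P}(M(\hat{\sigma}_{MLE},\sigma^*)=1)=0$ (the displayed equivalence in the statement being automatic, as the two events are complementary).

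\textbf{Expected obstacle.} The reduction and the first‑moment Gaussian tail estimate are routine. The crux is the second‑moment step: one must verify that the weak, $\Theta(1/n)$ correlation between the statistics $D_i$ at distinct vertices does not inflate the joint lower‑tail probability beyond $p_n^2(1+o(1))$, even though $p_n$ is only polynomially small; it is exactly the condition $\rho_n t_n^2\to0$ (together with $\rho_n\to0$) that makes this work, and it is the first‑moment count $\mathbb{E}[N_+]=n^{1-\SNR+o(1)}$ that pins the failure threshold at $\SNR=1$.
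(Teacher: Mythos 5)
Your argument is correct and matches the paper's approach in all essentials: the same one-vertex swap (your identity $f_A(\sigma^{(ij)})-f_A(\sigma^*)=-4(D_i+D_j'+2A_{ij})$ is exactly the paper's condition $d_+(i)+d_+(j)<d_-(i\setminus j)+d_-(j\setminus i)$), the same reduction to a bad-vertex count in each side (your $N_+$ is the paper's $Z_+$, with your $\epsilon_n$ playing the role of the diverging sequence $c_n$ and the bound on $\max|A_{kl}|$ replacing the paper's event $G$), and the same second-moment method with $\mathbb{E}[N_+]=n^{1-\SNR+o(1)}$. The only substantive difference is presentational: where you invoke a bivariate Gaussian lower-tail estimate under $\rho_n\to0$, $\rho_n t_n^2\to0$ to conclude $\mathbb{P}(D_i\le-\epsilon_n,D_{i'}\le-\epsilon_n)=p_n^2(1+o(1))$, the paper proves that same estimate from scratch by conditioning on the common weight $A_{ii'}$ (the shared Gaussian $Z/\sqrt{n-2}$) and splitting on $|Z|\le n^{1/4}$ in Lemma~\ref{lem:jointGauss}; in a full write-up you would need to carry out that calculation or give a precise citation for the uniform-in-$\rho$ bivariate tail asymptotic, since the standard fixed-$\rho$ expansions do not directly apply.
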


 We prove this by establishing that whenever SNR$<1$, with probability approaching one we can find vertices $i$ and $j$ in each community such that swapping them increases the likelihood ~\eqref{eq:MLE0} and hence causes MLE to fail to recover the ground truth $\sigma^*$ exactly. Without loss of generality, we assume that the unknown community is given by $\sigma^* =(1, \cdots, 1, -1, \cdots, -1) \in \Theta_n$, and $G_n \sim $ SBM$(n, \mu_1, \mu_2, \tau^2)$ conditioned on $\sigma^*$. We denote the resulting communities  as
    \[C_+=\left\{1, \cdots, \frac{n}{2}\right\},  \quad C_{-}=\left\{\frac{n}{2}+1, \cdots, n\right\}.\]
    
    More precisely, we define the set of bad pairs of vertices in $G_n$ with $\sigma^*[i\leftrightarrow j]$ denoting the vector obtained by swapping the values of coordinates $i$ and $j$ in $\sigma^*$.
    \begin{equation}\label{def:bad0}
    \textit{B}(G_n) := \{(i,j): i\in C_+, j\in C_{-}, f_A(\sigma^*)< f_A(\sigma^*[i\leftrightarrow j])\}.
    \end{equation}

From ~\eqref{eq:MLE0} we observe that  $f_A(\sigma^*)< f_A(\sigma^*[i\leftrightarrow j]) $ is the same as the event that the likelihood of the partition $\sigma^*[i\leftrightarrow j]$ is larger than the planted one $\sigma^*$. So it reduces to the following.
\begin{lemma}
   MLE fails to solve the exact recovery with high probability if $\textit{B}(G_n)$ is non-empty with high probability. More precisely, we have the following
   \begin{equation} \label{eq:NegLB0}
   \underset{n \to \infty}{\liminf}\hspace{.1cm}\p(M(\hat{\sigma}_{MLE}, \sigma^*)\neq1) \geqslant \underset{n \to \infty}{\liminf}\hspace{.1cm} \p (\textit{B}(G_n)\neq \emptyset)
   \end{equation}
\end{lemma}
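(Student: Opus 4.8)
The plan is to establish the deterministic set inclusion
\[
\{B(G_n) \neq \emptyset\} \;\subseteq\; \{M(\hat\sigma_{MLE}, \sigma^*) \neq 1\},
\]
after which the claimed inequality ~\eqref{eq:NegLB0} is immediate: probability is monotone, so $\p(M(\hat\sigma_{MLE}, \sigma^*)\neq 1) \geq \p(B(G_n)\neq\emptyset)$ for every $n$, and $\liminf$ preserves the inequality.

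The inclusion rests on two elementary observations. First, if $i \in C_+$ and $j \in C_-$, then $\sigma^*[i\leftrightarrow j]$ is obtained from $\sigma^*$ by exchanging one $+1$ coordinate with one $-1$ coordinate, so it remains balanced; that is, $\sigma^*[i\leftrightarrow j] \in \Theta_n$ and is therefore an admissible competitor in the optimization ~\eqref{eq:MLE0}. Second, since $A$ is symmetric, $f_A(\sigma) = \sum_{i,j} A_{ij}\sigma_i\sigma_j$ is an even function of $\sigma$, so in particular $f_A(-\sigma^*) = f_A(\sigma^*)$; this is what lets us simultaneously rule out both global sign representatives of the ground truth.

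Now suppose $B(G_n)\neq\emptyset$ and fix any $(i,j)\in B(G_n)$. Combining the definition ~\eqref{def:bad0} of $B(G_n)$ with the two observations above,
\[
f_A(\hat\sigma_{MLE}) \;\geq\; f_A\bigl(\sigma^*[i\leftrightarrow j]\bigr) \;>\; f_A(\sigma^*) \;=\; f_A(-\sigma^*),
\]
where the first inequality holds because $\hat\sigma_{MLE}$ maximizes $f_A$ over $\Theta_n$ and $\sigma^*[i\leftrightarrow j]\in\Theta_n$. Hence $\hat\sigma_{MLE} \notin \{\sigma^*, -\sigma^*\}$, regardless of how ties in the $\argmax$ are broken. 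Since for vectors in $\{\pm1\}^n$ one has $M(\hat\sigma,\sigma^*) = 1$ if and only if $\hat\sigma \in \{\sigma^*, -\sigma^*\}$, this forces $M(\hat\sigma_{MLE},\sigma^*)\neq 1$, proving the inclusion and hence the lemma.

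I do not anticipate any real obstacle here: the argument is a short logical reduction, and the only points needing care are the balancedness of a single cross-community transposition (so that the swapped vector is feasible) and the evenness of $f_A$ (so that the improved competitor beats both $\sigma^*$ and $-\sigma^*$). The substantive work lies not in this lemma but in the subsequent step — a second-moment / first-moment analysis of $|B(G_n)|$ showing that $\p(B(G_n)\neq\emptyset)\to 1$ precisely when $\mathrm{SNR}<1$ — which this lemma merely sets up.
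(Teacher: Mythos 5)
Your proof is correct and takes essentially the same approach as the paper: establish the deterministic inclusion $\{B(G_n)\neq\emptyset\}\subseteq\{M(\hat\sigma_{MLE},\sigma^*)\neq 1\}$ by exhibiting a feasible competitor $\sigma^*[i\leftrightarrow j]\in\Theta_n$ with strictly larger likelihood, then apply monotonicity of probability and $\liminf$. You are slightly more explicit than the paper about the two facts that close the gap (feasibility of the swapped vector and evenness of $f_A$, so the competitor simultaneously defeats both $\sigma^*$ and $-\sigma^*$), but this is the same argument, spelled out more carefully.
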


We now analyze the condition $f_A(\sigma^*)< f_A(\sigma^*[i\leftrightarrow j])$ and show that  $\underset{n \to \infty}{\lim}\hspace{.1cm}\p (\textit{B}(G_n)\neq \emptyset) =1.$
\begin{definition}
    For a vertex $i \in C_+$, and $j \in C_-$ we define the following.
    \[
    d_+(i)= \sum_{v\in C_+}A_{iv}, \quad  d_+(j)=  \sum_{v\in C_-}A_{jv}, \quad d_-(i)= \sum_{v\in C_-}A_{iv},
    \]
    \[
    d_-(j)=  \sum_{v\in C_+}A_{jv}, \quad d_-(i\setminus j)=\sum_{v\in C_- \setminus j}A_{iv}, \quad  d_-(j\setminus i)=\sum_{v\in C_+ \setminus i}A_{jv}
    \]
\end{definition}

It is worth observing that $d_+(i)$ and $ d_+(j)$ sums the edge weights that $i$ and $j$ have with their own communities $C_+$ and  $C_-$, and $d_-(i)$ and $ d_-(j)$ sums the edge weights that $i$ and $j$ have with their opposite communities $C_-$ and  $C_+$ respectively.  With the notations above we have the following local condition for a bad pair $\{i,j\}$.
\begin{lemma}
 Let $i\in C_+, j\in C_-$. Then we have the following equivalence.
    \[
    f_A(\sigma^*)< f_A(\sigma^*[i\leftrightarrow j]) \quad \iff \quad  d_+(i)+d_+(j) < d_-(i\setminus j)+d_-(j\setminus i).
    \]
\end{lemma}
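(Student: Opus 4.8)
The plan is to reduce the equivalence to a one‑line computation by expanding the quadratic form and isolating exactly the summands that the swap touches. Write $\sigma' := \sigma^*[i\leftrightarrow j]$. Since $\sigma^*$ and $\sigma'$ agree in every coordinate except $i$ and $j$, every term $A_{kl}\sigma_k\sigma_l$ with $\{k,l\}\cap\{i,j\}=\emptyset$ is the same for both vectors and cancels in the difference $f_A(\sigma')-f_A(\sigma^*)$, as does the diagonal contribution $\sum_k A_{kk}$ (which is a constant; indeed $A_{kk}$ is not even defined on an edge set, so we may take it to be $0$). Collecting the surviving terms and using the symmetry $A_{kl}=A_{lk}$ gives, for any $\sigma$,
\[
f_A(\sigma) = C + 2\sigma_i\sum_{l\neq i,j}A_{il}\sigma_l + 2\sigma_j\sum_{l\neq i,j}A_{jl}\sigma_l + 2A_{ij}\sigma_i\sigma_j,
\]
where $C$ does not depend on $\sigma_i$ or $\sigma_j$.

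The next step is to evaluate the two inner sums for $\sigma=\sigma^*$ by splitting the index set $\{l\neq i,j\}$ into $C_+\setminus\{i\}$, where $\sigma^*_l=+1$, and $C_-\setminus\{j\}$, where $\sigma^*_l=-1$. This is precisely where the quantities of the preceding definition appear: one gets $\sum_{l\neq i,j}A_{il}\sigma^*_l = d_+(i)-d_-(i\setminus j)$ and $\sum_{l\neq i,j}A_{jl}\sigma^*_l = d_-(j\setminus i)-d_+(j)$. Since $\sigma^*_i=+1,\ \sigma^*_j=-1$ whereas $\sigma'_i=-1,\ \sigma'_j=+1$, with all other coordinates unchanged, substituting into the display above yields
\[
f_A(\sigma^*) = C + 2\bigl(d_+(i)-d_-(i\setminus j)\bigr) + 2\bigl(d_+(j)-d_-(j\setminus i)\bigr) - 2A_{ij},
\]
\[
f_A(\sigma') = C + 2\bigl(d_-(i\setminus j)-d_+(i)\bigr) + 2\bigl(d_-(j\setminus i)-d_+(j)\bigr) - 2A_{ij},
\]
because flipping the signs of $\sigma_i$ and $\sigma_j$ negates each of the two cross terms while leaving $C$ and $2A_{ij}\sigma_i\sigma_j=-2A_{ij}$ untouched.

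Subtracting these two identities, the constant $C$ and the terms $-2A_{ij}$ cancel, leaving
\[
f_A(\sigma') - f_A(\sigma^*) = 4\bigl(d_-(i\setminus j)+d_-(j\setminus i)-d_+(i)-d_+(j)\bigr),
\]
so $f_A(\sigma^*)<f_A(\sigma')$ holds if and only if $d_+(i)+d_+(j)<d_-(i\setminus j)+d_-(j\setminus i)$, which is the claimed equivalence. I do not expect any genuine obstacle: the argument is entirely elementary, and the only point that requires care is the bookkeeping. Specifically, because the single edge term $A_{ij}\sigma_i\sigma_j$ is pulled out separately, the running sums over the opposite communities must omit the endpoint $j$ (respectively $i$), which is exactly why $d_-(i\setminus j)$ and $d_-(j\setminus i)$ — rather than $d_-(i)$ and $d_-(j)$ — are the quantities that appear.
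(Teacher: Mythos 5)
Your proposal is correct and follows essentially the same route as the paper: both proofs expand the quadratic form $f_A(\sigma) = \sum_{u,v} A_{uv}\sigma_u\sigma_v$, isolate the terms touched by the swap of coordinates $i$ and $j$, and observe that the difference $f_A(\sigma^*[i\leftrightarrow j]) - f_A(\sigma^*)$ reduces to $4\bigl(d_-(i\setminus j)+d_-(j\setminus i)-d_+(i)-d_+(j)\bigr)$, with the $A_{ij}$ cross term cancelling. Your write-up is slightly more systematic (explicitly pulling out the swap-invariant constant $C$), but the computation is the same.
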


Finally, we analyze $\textit{B}(G_n)$ ~\eqref{def:bad0} the set of bad pairs of vertices by analyzing sets $\textit{B}_+(G_n)$  and $ \textit{B}_-(G_n)$ with individual vertices satisfying certain conditions, and apply the first and second-moment methods \cite{10.5555/3002498}. We now define the following sets with diverging constants $c_n \to \infty $ to be chosen later according to the proof.

\begin{definition}  The set of bad vertices (instead of bad pairs) in each community in $G_n$ is given by
\[
\textit{B}_+(G_n):=\left\{i: i \in C_+, d_{+}(i) < d_{-}(i)-c_n\right\}, 
\]
\[
\textit{B}_-(G_n):=\left\{j: j \in C_-, d_{+}(j) < d_{-}(j)-c_n\right\},
\]
\end{definition}

 \begin{lemma} \label{lem:Bto10}
     With the notations above, if $\textit{B}_+(G_n)$ and $\textit{B}_-(G_n)$ are non-empty with probability approaching $1$, then $\textit{B}(G_n)$ is non-empty with probability approaching  $1$ as $n \to  \infty$.
\end{lemma}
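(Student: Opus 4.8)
The plan is to upgrade the existence of bad \emph{vertices} to the existence of a bad \emph{pair} by a deterministic comparison, after controlling the cross‑community edge weights uniformly. First recall the local characterization from the previous lemma. Since $j\in C_-$ lies in the community opposite to $i\in C_+$, we have $d_-(i\setminus j)=\sum_{v\in C_-\setminus j}A_{iv}=d_-(i)-A_{ij}$, and likewise $d_-(j\setminus i)=d_-(j)-A_{ij}$; substituting into that lemma gives, for $i\in C_+$, $j\in C_-$,
\[
f_A(\sigma^*)<f_A(\sigma^*[i\leftrightarrow j])\quad\iff\quad \big(d_-(i)-d_+(i)\big)+\big(d_-(j)-d_+(j)\big)>2A_{ij}.
\]

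Now take any $i\in \textit{B}_+(G_n)$ and $j\in \textit{B}_-(G_n)$. By definition $d_-(i)-d_+(i)>c_n$ and $d_-(j)-d_+(j)>c_n$, so the left-hand side above exceeds $2c_n$, whence $(i,j)\in \textit{B}(G_n)$ provided only that $A_{ij}\le c_n$. The point of this step is that it requires no independence between the events $\{i\in\textit{B}_+(G_n)\}$, $\{j\in\textit{B}_-(G_n)\}$ and the weight $A_{ij}$: it is enough that $A_{ij}\le c_n$ holds simultaneously for \emph{every} cross pair. So introduce $\cE:=\{\max_{i\in C_+,\,j\in C_-}A_{ij}\le c_n\}$. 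Each such $A_{ij}\sim N(\mu_2,\tau^2)$ with $\mu_2=\beta\sqrt{\log n/n}\to 0$, so a union bound over the at most $n^2/4$ cross pairs together with the standard Gaussian tail gives $\p(\cE^c)\le \tfrac{n^2}{4}\,\p\!\big(N(\mu_2,\tau^2)>c_n\big)\to 0$, as long as $c_n$ is chosen to diverge faster than $\sqrt{\log n}$ — still compatible with the upper bound $c_n=o(\sqrt{n\log n})$ needed elsewhere to make $\textit{B}_\pm(G_n)$ nonempty w.h.p.\ (e.g.\ $c_n=\sqrt{n}$). On $\cE\cap\{\textit{B}_+(G_n)\neq\emptyset\}\cap\{\textit{B}_-(G_n)\neq\emptyset\}$, any pair $(i,j)$ with $i\in\textit{B}_+(G_n)$, $j\in\textit{B}_-(G_n)$ lies in $\textit{B}(G_n)$, and a union bound over the three complementary events yields $\p(\textit{B}(G_n)\neq\emptyset)\to 1$.

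I do not expect a genuine obstacle here: the substance is the displayed deterministic equivalence together with a crude maximal bound on the cross weights. The only care needed is bookkeeping — applying $d_-(i\setminus j)=d_-(i)-A_{ij}$ with the correct community memberships, and keeping the single parameter $c_n$ simultaneously large enough ($c_n/\sqrt{\log n}\to\infty$) for $\cE$ to hold and small enough ($c_n=o(\sqrt{n\log n})$) for the first/second‑moment computations behind the nonemptiness of $\textit{B}_\pm(G_n)$; both are satisfied by $c_n=\sqrt{n}$.
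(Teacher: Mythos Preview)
Your proposal is correct and follows essentially the same approach as the paper: both argue that on the high-probability event $\{\max_{i\in C_+,j\in C_-}A_{ij}\le c_n\}$ (established by a union bound over the $n^2/4$ cross edges and a Gaussian tail), any $i\in\textit{B}_+(G_n)$ and $j\in\textit{B}_-(G_n)$ automatically satisfy the swap inequality, hence $(i,j)\in\textit{B}(G_n)$. Your rewriting of the swap condition as $(d_-(i)-d_+(i))+(d_-(j)-d_+(j))>2A_{ij}$ is a slightly cleaner packaging of the paper's chain $E_i\cap E_j\subset E_{ij}$, $E_{ij}\cap G_{ij}\subset F_{ij}$, but the substance is identical; note also that a fixed large multiple of $\sqrt{\log n}$ (the paper uses $c_n\ge\tau\sqrt{6\log n}$) already suffices for the maximal bound, so your requirement $c_n/\sqrt{\log n}\to\infty$ is a bit stronger than needed but of course still compatible with $c_n=o(\sqrt{n\log n})$.
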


It remains to establish that if SNR$<1$, we have  $\mathbb{P}\left[ \textit{B}_+(G_n)\neq \emptyset \right] = \mathbb{P}\left[ \textit{B}_-(G_n)\neq \emptyset \right]=1 - o(1)$.
\begin{lemma} \label{lem:SM0}
 If $\text{SNR}<1$, then $\textit{B}_+(G_n)$ and  $\textit{B}_-(G_n)$ are non empty with probability approaching $1$.
\[
  \mathbb{P}\left[ \textit{B}_+(G_n)\neq \emptyset \right]  = 
  \mathbb{P}\left[ \textit{B}_-(G_n)\neq \emptyset \right]=1 - o(1).
\]
\end{lemma}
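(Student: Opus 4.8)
The plan is to prove Lemma~\ref{lem:SM0} by a first- and second-moment argument applied to the random variable counting the number of bad vertices in each community. By symmetry it suffices to treat $\textit{B}_+(G_n)$. For each $i \in C_+$ set $X_i := \mathbf{1}\{d_+(i) < d_-(i) - c_n\}$ and let $N_+ := \sum_{i \in C_+} X_i = |\textit{B}_+(G_n)|$. We want $\p(N_+ \geq 1) \to 1$, for which it suffices (Paley–Zygmund / second moment method) to show $\bE[N_+] \to \infty$ and $\bE[N_+^2] = (1+o(1))\bE[N_+]^2$. The first task is to compute the tail probability $p_n := \p(X_i = 1)$. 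Since $d_+(i) = \sum_{v \in C_+ \setminus i} A_{iv}$ is a sum of $\tfrac n2 - 1$ i.i.d.\ $N(\mu_1,\tau^2)$ variables and $d_-(i) = \sum_{v \in C_-} A_{iv}$ is a sum of $\tfrac n2$ i.i.d.\ $N(\mu_2,\tau^2)$ variables, independent of $d_+(i)$, the difference $d_+(i) - d_-(i)$ is Gaussian with mean $\approx \tfrac n2(\mu_1 - \mu_2) = \tfrac n2(\alpha-\beta)\sqrt{\tfrac{\log n}{n}}$ and variance $\approx n\tau^2$. Hence $p_n = \p\big(Z < -\tfrac{(n/2)(\mu_1-\mu_2) + c_n}{\sqrt{n}\,\tau}(1+o(1))\big)$ for a standard normal $Z$; using the standard Gaussian tail asymptotic $\p(Z < -t) = e^{-t^2/2 + o(t^2)}$ and $t^2/2 \approx \tfrac{(n/2)^2(\alpha-\beta)^2 (\log n)/n}{2 n \tau^2} = \tfrac{(\alpha-\beta)^2}{8\tau^2}\log n = \mathrm{SNR}\cdot \log n$, one gets $p_n = n^{-\mathrm{SNR} + o(1)}$, where the $c_n$ term contributes only a lower-order correction provided $c_n = o(\sqrt{n \log n})$ (e.g.\ $c_n = \sqrt{\log n \log\log n}$ works, and is chosen to give slack for Lemma~\ref{lem:Bto10}). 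Therefore $\bE[N_+] = \tfrac n2 p_n = n^{1 - \mathrm{SNR} + o(1)} \to \infty$ precisely because $\mathrm{SNR} < 1$.

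**For the second moment**, I would expand $\bE[N_+^2] = \sum_{i} \p(X_i=1) + \sum_{i \neq j} \p(X_i = 1, X_j = 1)$. The diagonal term is $\bE[N_+] = o(\bE[N_+]^2)$ since $\bE[N_+]\to\infty$. For the off-diagonal term with $i \neq j$ both in $C_+$, the events $\{X_i=1\}$ and $\{X_j=1\}$ are \emph{almost} independent: they share only the single edge weight $A_{ij}$, which appears in $d_+(i)$ and in $d_+(j)$. Conditioning on $A_{ij}$ (a single $N(\mu_1,\tau^2)$ variable, which is $O(\sqrt{\log n / n}) = o(1)$ with overwhelming probability and in any case concentrated), the variables $d_+(i) - d_-(i)$ and $d_+(j) - d_-(j)$ become independent Gaussians whose means are shifted by the negligible amount $A_{ij}$. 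A short computation shows $\p(X_i = 1, X_j = 1) \leq (1+o(1)) p_n^2$ uniformly over pairs — the $o(1)$ coming from the bounded shift of the threshold by $A_{ij}$, which changes $t^2/2$ by $o(\log n)$ and hence changes $p_n$ by a sub-polynomial factor. Summing over the $\leq (n/2)^2$ pairs gives $\sum_{i\neq j}\p(X_i=1,X_j=1) \leq (1+o(1))(\tfrac n2 p_n)^2 = (1+o(1))\bE[N_+]^2$, which is exactly what the second moment method needs.

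**Putting it together**, the Paley–Zygmund inequality gives $\p(N_+ \geq 1) \geq \frac{\bE[N_+]^2}{\bE[N_+^2]} = \frac{\bE[N_+]^2}{\bE[N_+] + (1+o(1))\bE[N_+]^2} \to 1$, so $\p(\textit{B}_+(G_n) \neq \emptyset) = 1 - o(1)$, and identically for $\textit{B}_-(G_n)$.

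**The main obstacle** I anticipate is not the structure of the argument but the bookkeeping in the first-moment tail estimate: one must track carefully that the ``$+o(1)$'' in the exponent $n^{-\mathrm{SNR}+o(1)}$ genuinely leaves $\bE[N_+]$ diverging, i.e.\ that neither the $-1$ shift in the number of summands in $d_+(i)$, nor the $c_n$ correction, nor the sub-Gaussian-tail error terms, eat into the polynomial gap $n^{1-\mathrm{SNR}}$. Concretely this forces the choice of $c_n$ to satisfy simultaneously $c_n \to \infty$ (needed in Lemma~\ref{lem:Bto10}) and $c_n = o(\sqrt{n\log n})$ (needed here so that the mean shift $c_n / (\sqrt n \tau)$ is $o(\sqrt{\log n})$); any $c_n$ of order polylog$(n)$ is comfortably in this window. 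A secondary, purely technical, point is justifying the Gaussian tail asymptotic with the precise constant — this is standard ($\log \p(Z > t) = -t^2/2 - \log t + O(1)$) but should be invoked cleanly rather than re-derived.
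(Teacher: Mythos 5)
Your approach is essentially identical to the paper's: both run a first-/second-moment argument on the count of bad vertices in $C_+$, reduce to showing $n\,\p[E_1]\to\infty$ and asymptotic independence $\p[E_1\cap E_2]/(\p[E_1]\p[E_2])\to 1$, and both obtain the latter by isolating the single shared edge $A_{ij}$ and conditioning on it. One small slip: you assert that $A_{ij}\sim N(\mu_1,\tau^2)$ is ``$O(\sqrt{\log n/n})=o(1)$ with overwhelming probability,'' which is false — since $\tau>0$ is a constant, $A_{ij}$ is of constant order (its magnitude is $O(\sqrt{\log n})$ with overwhelming probability, not $o(1)$). What actually makes the shared edge negligible is that after normalizing $d_+(i)-d_-(i)$ by its standard deviation $\Theta(\sqrt{n})$, the shift induced by $A_{ij}$ is $O(\sqrt{\log n}/\sqrt{n})=o(1)$, which perturbs $t^2/2$ by $o(\log n)$ as you correctly conclude downstream; the paper formalizes this (Lemma~\ref{lem:jointGauss}) by writing $\p[E_1\cap E_2]$ as $\int\Phi(\,\cdot+z/\sqrt{n-2})^2\phi(z)\,dz$ and splitting at $|z|\le n^{1/4}$. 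So the structure is sound and the conclusion holds; the stated justification for the shared edge being negligible just needs the $\sqrt{n}$-normalization made explicit.
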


We now apply the second-moment method to the  random variable $Z_+ = \sum_{i \in C_+}\mathbf{1}_{d_{+}(i) < d_{-}(i)-c_n}$.
\[
\mathbb{P}\left[ \textit{B}_+(G_n)\neq \emptyset \right]  =\mathbf{P}[Z_+ \geqslant 1] \geqslant \frac{\mathbb{E}\left[Z_+\right]^2}{\mathbb{E}\left[Z_+^2\right]}
\]

With this lower bound on the probability, it reduces to proving the following two limits.
\begin{equation}
n \mathbb{P}\left[ d_+(1) < d_-(1) -c_n\right] \to \infty,
\end{equation}
\begin{equation}
\frac{\mathbb{P}\left[d_+(1) < d_-(1) -c_n \cap d_+(2) < d_-(2) -c_n\right]}{\mathbb{P}\left[d_+(1) < d_-(1) -c_n\right] \mathbb{P}\left[d_+(1) < d_-(1) -c_n\right]}  \to 1 .
\end{equation}

\begin{lemma} If $Z \sim N(0,1)$ and SNR$<1$, then
\[
n \mathbb{P}\left[ d_+(1) < d_-(1) -c_n\right]  =  n \mathbb{P}\left[Z > 
     (1+o(1)) \frac{(\alpha - \beta)\sqrt{\log n}}{2\tau}\right] = n^{1 -\text{SNR} +o(1)} \to  \infty
\]
\end{lemma}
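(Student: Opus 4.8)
The plan is to express $d_-(1) - d_+(1)$ as a single Gaussian random variable, reduce the event $\{d_+(1) < d_-(1) - c_n\}$ to a standard normal upper tail, and then apply the classical estimate $\log \p[Z>t] = -(1+o(1))t^2/2$ valid for $t \to \infty$ growing slower than any power of $n$. First, since the edge weights $A_{ij}$ are independent Gaussians, $W := d_-(1) - d_+(1) = \sum_{v \in C_-} A_{1v} - \sum_{v \in C_+ \setminus \{1\}} A_{1v}$ is Gaussian with mean $m_n = \tfrac{n}{2}\mu_2 - (\tfrac{n}{2}-1)\mu_1 = -\tfrac{n}{2}(\mu_1 - \mu_2) + \mu_1$ and variance $s_n^2 = \tfrac{n}{2}\tau^2 + (\tfrac{n}{2}-1)\tau^2 = (n-1)\tau^2$ (the two sums involve disjoint collections of edges and are therefore independent). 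Substituting the critical scaling \eqref{def:CritGWSBM0} gives $-m_n = (1+o(1))\,\tfrac{\sqrt{n\log n}}{2}(\alpha-\beta)$ and $s_n = (1+o(1))\sqrt{n}\,\tau$.

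Next, I would standardize: with $Z := (W - m_n)/s_n \sim N(0,1)$,
\[
\p\big[d_+(1) < d_-(1) - c_n\big] = \p[W > c_n] = \p\!\left[Z > \frac{c_n - m_n}{s_n}\right].
\]
Since $c_n$ is at our disposal and only needs to diverge, we may take $c_n = o(\sqrt{n})$ (say $c_n = \log n$), so that $c_n/s_n = o(1)$ and, using that $-m_n/s_n = \Theta(\sqrt{\log n}) \to \infty$,
\[
\frac{c_n - m_n}{s_n} = \frac{-m_n}{s_n}\,(1+o(1)) = (1+o(1))\,\frac{(\alpha-\beta)\sqrt{\log n}}{2\tau},
\]
which is exactly the first displayed equality in the statement.

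Finally, I would invoke the Gaussian tail. The standard bounds $\tfrac{t}{t^2+1}\tfrac{1}{\sqrt{2\pi}}e^{-t^2/2} \le \p[Z>t] \le \tfrac{1}{t}\tfrac{1}{\sqrt{2\pi}}e^{-t^2/2}$ give $\log \p[Z>t] = -\tfrac{t^2}{2} + O(\log t)$, and with $t = (1+o(1))\tfrac{(\alpha-\beta)\sqrt{\log n}}{2\tau} = \Theta(\sqrt{\log n})$ the term $O(\log t) = O(\log\log n)$ is $o(t^2)$; hence
\[
\log \p[Z>t] = -(1+o(1))\,\frac{t^2}{2} = -(1+o(1))\,\frac{(\alpha-\beta)^2}{8\tau^2}\log n = -(1+o(1))\,\SNR \cdot \log n ,
\]
so $\p[Z>t] = n^{-\SNR + o(1)}$ and therefore $n\,\p[d_+(1) < d_-(1) - c_n] = n^{1-\SNR+o(1)}$. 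Because $\SNR < 1$, the exponent exceeds $(1-\SNR)/2 > 0$ for all large $n$, so the quantity diverges.

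The computation is essentially routine; the only delicate point is the bookkeeping of the lower-order corrections — the additive $+\mu_1$ in $m_n$, the ratio $c_n/s_n$, and the $O(\log t)$ prefactor in the Gaussian tail — and verifying that each of them is absorbed into the single $o(1)$ appearing in the final exponent $1 - \SNR + o(1)$. This is precisely why the strict inequality $\SNR < 1$ (rather than $\SNR \le 1$), together with the freedom to let $c_n$ grow slowly, is exactly what is needed to conclude divergence.
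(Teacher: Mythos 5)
Your argument is correct and is essentially the same as the paper's: both compute the distribution of $d_-(1)-d_+(1)$ as a Gaussian with variance $(n-1)\tau^2$, standardize, absorb the $c_n$ and $\mu_1$ corrections into the $(1+o(1))$ prefactor by imposing a mild growth condition on $c_n$ (the paper's constraint $c_n=o(\sqrt{n\log n})$ is slightly more generous than your $c_n=o(\sqrt{n})$, but both are compatible with the $c_n\gtrsim\sqrt{\log n}$ lower bound needed elsewhere), and then invoke the standard Gaussian tail asymptotic $\Phi(t)\sim\phi(t)/t$ to read off the exponent $1-\mathrm{SNR}+o(1)$. No substantive difference in approach.
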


Finally, we prove that the events $E_1 =\{d_+(1) < d_-(1) -c_n\}$ and $E_2= \{ d_+(2) < d_-(2) -c_n\}$ are asymptotically independent,  and  for notational purposes we define $a_n \sim b_n$ if $\lim_{n \to \infty} \frac{a_n}{b_n}=1$. 
\vspace{-.25 cm}
\begin{lemma} Let $Z_1, Z_2,$ and $ Z$ be IID $N(0,1)$ random  variables, and $s= \frac{\alpha -\beta}{2\tau}$. Then we have
\begin{align}
\frac{\mathbb{P}\left[E_1 \cap E_2\right]}{\mathbb{P}\left[E_1\right] \mathbb{P}\left[E_2\right]}
\sim \frac{\mathbb{P}\left[Z_1 > s\sqrt{\log n} +\frac{Z}{\sqrt{n-2}}\cap Z_2 > s\sqrt{\log n} +\frac{Z}{\sqrt{n-2}}\right]}{\mathbb{P}\left[Z_1 > s\sqrt{\log n} +\frac{Z}{\sqrt{n-2}}\right] \mathbb{P}\left[Z_1 > s\sqrt{\log n}+\frac{Z}{\sqrt{n-2}}\right]} \sim 1
\end{align}
\end{lemma}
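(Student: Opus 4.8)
The plan is to exploit the fact that $E_1$ and $E_2$ are built from almost disjoint collections of edge weights, reduce the target ratio to a bivariate‑Gaussian tail quantity, and then estimate that quantity with Plackett's identity.

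\emph{Decoupling.} First I would isolate the unique source of dependence. Since $1,2\in C_+$, the only weight common to the sums defining $d_+(1),d_-(1)$ and those defining $d_+(2),d_-(2)$ is $A_{12}$. Writing $W:=A_{12}\sim N(\mu_1,\tau^2)$ and
\[
X_k:=\sum_{v\in C_+\setminus\{1,2\}}A_{kv}-\sum_{v\in C_-}A_{kv}\sim N\big(m_n,(n-2)\tau^2\big),\qquad m_n=\Big(\tfrac n2-2\Big)\mu_1-\tfrac n2\mu_2,\quad k=1,2,
\]
the variables $W,X_1,X_2$ are mutually independent and, directly from the definitions, $E_k=\{W+X_k<-c_n\}$. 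Conditioning on $W$ and using that each $X_k$ is exactly Gaussian (no CLT needed), set $X_k=m_n+\tau\sqrt{n-2}\,Z_k$ and $W=\mu_1+\tau Z$ with $Z,Z_1,Z_2$ i.i.d.\ $N(0,1)$; then $\mathbb P[E_1\cap E_2\mid W]=\bar\Phi\big(\theta_n+\tfrac{Z}{\sqrt{n-2}}\big)^2$ and $\mathbb P[E_k\mid W]=\bar\Phi\big(\theta_n+\tfrac{Z}{\sqrt{n-2}}\big)$, where $\theta_n:=\big(c_n+\mu_1+m_n\big)/(\tau\sqrt{n-2})$. A one‑line computation gives $\theta_n=(1+o(1))\,s\sqrt{\log n}$: the term $(\tfrac n2-1)\mu_1-\tfrac n2\mu_2$ supplies the leading $\tfrac{\alpha-\beta}{2\tau}\sqrt{\log n}=s\sqrt{\log n}$, while $\mu_1$ and $c_n$ are $o(\sqrt{n\log n})$ in the regime relevant to the first‑moment bound. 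Taking expectations over $W$ and using symmetry of the $Z_k$ to turn $-Z_k$ into $Z_k$ yields the first $\sim$ of the statement — in fact as an identity with $\theta_n$ in place of $s\sqrt{\log n}$, the replacement being harmless because the estimate below applies to any threshold of order $\sqrt{\log n}$.

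\emph{Reduction to a bivariate tail.} Next I would absorb the shared term: with $U_k:=Z_k-\tfrac{Z}{\sqrt{n-2}}$ one has $\{Z_k>\theta_n+\tfrac{Z}{\sqrt{n-2}}\}=\{U_k>\theta_n\}$, and $(U_1,U_2)$ is centered Gaussian with $\Var U_k=1+\tfrac1{n-2}$ and $\Cov(U_1,U_2)=\tfrac1{n-2}$, hence correlation $r_n=\tfrac1{n-1}$. Normalizing to a standard bivariate normal $(V_1,V_2)$ with correlation $r_n$ and threshold $t_n=\theta_n/\sqrt{1+1/(n-2)}=(1+o(1))\,s\sqrt{\log n}$, the whole claim reduces to showing $\mathbb P[V_1>t_n,\,V_2>t_n]/\bar\Phi(t_n)^2\to1$. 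For this I would use Plackett's identity $\partial_\rho\,\mathbb P[V_1>t,V_2>t]=\tfrac{1}{2\pi\sqrt{1-\rho^2}}e^{-t^2/(1+\rho)}$ and integrate over $\rho\in[0,r_n]$:
\[
0\ \le\ \mathbb P[V_1>t_n,\,V_2>t_n]-\bar\Phi(t_n)^2\ \le\ \frac{r_n}{2\pi\sqrt{1-r_n^2}}\,e^{-t_n^2(1-r_n)}\ =\ (1+o(1))\,\frac{r_n}{2\pi}\,e^{-t_n^2},
\]
where I used $\tfrac1{1+\rho}\ge1-\rho$ and $t_n^2 r_n=O(\log n/n)\to0$. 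Dividing by the Mills‑ratio asymptotic $\bar\Phi(t_n)^2\sim\tfrac{1}{2\pi t_n^2}e^{-t_n^2}$ gives $\mathbb P[V_1>t_n,V_2>t_n]/\bar\Phi(t_n)^2=1+O(r_n t_n^2)=1+O(\log n/n)\to1$, which is the second $\sim$.

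\emph{Where the difficulty lies.} The decoupling and the check that $c_n$ and the subleading mean terms do not perturb the $s\sqrt{\log n}$ threshold are routine bookkeeping. The real obstacle is that one cannot naively send the correlation $r_n\to0$, since $t_n$ diverges simultaneously and the two rare tail events are genuinely positively correlated; the content is the quantitative trade‑off $r_n t_n^2\to0$ (equivalently Berman's condition $r_n\log n\to0$ for asymptotic independence of Gaussian extremes), which the Plackett identity makes transparent and which holds with room to spare here because $r_n=\Theta(1/n)$ against $t_n^2=\Theta(\log n)$. A cruder alternative is to bound $\mathbb P[V_2>t_n\mid V_1>t_n]$ via the regression $V_2\mid V_1\sim N(r_nV_1,1-r_n^2)$, but that route is messier than the exact differentiation in $\rho$.
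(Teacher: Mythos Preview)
Your proof is correct and the decoupling step matches the paper exactly: both identify $A_{12}$ as the sole shared edge and reduce $E_k$ to events of the form $\{Z_k>\theta_n+Z/\sqrt{n-2}\}$ with $\theta_n=(1+o(1))s\sqrt{\log n}$. The divergence is in how the asymptotic independence is established. The paper conditions on $Z$, truncates to $|Z|\le n^{1/4}$, and argues that on this range $\bar\Phi(\theta_n+z/\sqrt{n-2})^2$ is uniformly asymptotic to $\big(\phi(s\sqrt{\log n})/(s\sqrt{\log n})\big)^2$, while the complementary range is negligible; this is elementary but only yields $\to 1$ without a rate. You instead recast the joint event as a bivariate Gaussian tail with correlation $r_n=1/(n-1)$ and invoke Plackett's identity $\partial_\rho\mathbb P[V_1>t,V_2>t]=\frac{1}{2\pi\sqrt{1-\rho^2}}e^{-t^2/(1+\rho)}$, which gives the sharp quantitative bound $\mathbb P[V_1>t_n,V_2>t_n]/\bar\Phi(t_n)^2=1+O(r_nt_n^2)=1+O(\log n/n)$ and makes the governing condition $r_n\log n\to 0$ explicit. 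Your route is cleaner, yields a rate, and places the result in the standard framework of Berman-type conditions for Gaussian extremes; the paper's route is more self-contained and avoids the external identity.
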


The proof of the two lemmas above and hence the proof of Theorem ~\eqref{thm:Negative0} depends on the fact that $d_+(1\setminus2), d_+(2\setminus 1), A_{12}, d_-(1), d_-(2)$ are jointly independent collections of random variables and satisfy the following distributional identities.
\[
d_+(1\setminus2):= \sum_{i\in C_+ \setminus 2}A_{i1} \overset{d}{=}N\left(\left(\frac{n}{2}-2\right)\mu_1, \left(\frac{n}{2}-2\right)\tau^2\right),
\]
\[
d_+(2\setminus1):= \sum_{i\in C_+ \setminus 1}A_{i2} \overset{d}{=}N\left(\left(\frac{n}{2}-2\right)\mu_1, \left(\frac{n}{2}-2\right)\tau^2\right),
\]
\[
d_-(1)=\sum_{j\in C_-}A_{1j}\overset{d}{=} N\left(\frac{n}{2}\mu_2, \frac{n}{2}\tau^2\right), \quad
d_-(2)=\sum_{j\in C_-}A_{2j} \overset{d}{=} N\left(\frac{n}{2}\mu_2, \frac{n}{2}\tau^2\right).
\]

Now, we turn towards proving statistical possibility results. Under the critical scaling ~\eqref{def:CritSBM0} in SBM, it was proven in \cite{abbe2014exact}  if $|\sqrt{a} -\sqrt{b}| >\sqrt{2}$ then MLE can exactly recover the true community structure $\sigma^*$ with probability approaching one as $n\to \infty$. In the Gaussian weighted version of the stochastic block model ~\eqref{def:GWSBM0} we establish the following statistical possibility result. 
\begin{theorem}[\textbf{Statistical possibility of exact recovery}] \label{thm: Positive0}
    Let  $G_n \sim$ SBM$(n, \mu_1, \mu_2, \tau^2)$ ~\eqref{def:GWSBM0} with $\mu_1, \mu_2$ in the critical scaling regime ~\eqref{def:CritGWSBM0}, and $\alpha > \beta$, $\tau >0$ be constants. Then MLE can recover the  $\sigma^*$ whenever $\text{SNR} >1$. More precisely, $ \mathbb{P}\left[M(\hat{\sigma}_{\text{MLE}}, \sigma^*)= 1\right] \to 1$ if $\text{SNR} >1$.
\end{theorem}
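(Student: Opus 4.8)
The plan is to show that, with probability tending to one, $\sigma^*$ and $-\sigma^*$ are the only maximizers of $f_A$ over $\Theta_n$; since $f_A$ is invariant under $\sigma\mapsto-\sigma$, this forces $\hat\sigma_{\mathrm{MLE}}\in\{\pm\sigma^*\}$ and hence $M(\hat\sigma_{\mathrm{MLE}},\sigma^*)=1$. As in the impossibility part we condition on $\sigma^*=(1,\dots,1,-1,\dots,-1)$ with communities $C_+,C_-$, and it then suffices to prove
\[
\mathbb{P}\big[\,\exists\,\sigma\in\Theta_n,\ \sigma\ne\pm\sigma^*,\ f_A(\sigma)\ge f_A(\sigma^*)\,\big]\ \longrightarrow\ 0 .
\]
Any such $\sigma$ arises from $\sigma^*$ by choosing $S_+\subseteq C_+$ and $S_-\subseteq C_-$ with $|S_+|=|S_-|=k$ for some $1\le k\le\frac n2-1$ (balancedness forces the two sizes to agree) and flipping the coordinates in $S:=S_+\cup S_-$. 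Since replacing $(S_+,S_-)$ by $(C_+\setminus S_+,\,C_-\setminus S_-)$ sends $\sigma$ to $-\sigma$, the sign symmetry of $f_A$ lets me restrict to $1\le k\le\frac n4$ and union-bound over the $\binom{n/2}{k}^2$ sets of each such size.

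The first step is to identify the law of the ``swap profit''. Pairs $\{i,j\}$ with $0$ or $2$ endpoints in $S$ contribute identically to $f_A(\sigma^*)$ and $f_A(\sigma)$, so $f_A(\sigma^*)-f_A(\sigma)$ is a fixed positive multiple of
\[
\Delta_{S_+,S_-}\ :=\ \sum_{\{i,j\}:\,|\{i,j\}\cap S|=1}A_{ij}\,\sigma^*_i\sigma^*_j .
\]
Sorting the cut edges by the community structure, $\Delta_{S_+,S_-}$ is a signed sum of $2k\big(\frac n2-k\big)$ independent $N(\mu_1,\tau^2)$ weights (same-community cut edges, coefficient $+1$) and $2k\big(\frac n2-k\big)$ independent $N(\mu_2,\tau^2)$ weights (cross-community cut edges, coefficient $-1$), so
\[
\Delta_{S_+,S_-}\ \overset{d}{=}\ N\!\Big(2k\big(\tfrac n2-k\big)(\mu_1-\mu_2),\ 4k\big(\tfrac n2-k\big)\tau^2\Big).
\]
Inserting the critical scaling \eqref{def:CritGWSBM0} and using $\mathbb{P}[Z\ge t]\le e^{-t^2/2}$ for $Z\sim N(0,1)$ yields, uniformly over the $(S_+,S_-)$ of a fixed size $k$,
\[
\mathbb{P}\big[f_A(\sigma)\ge f_A(\sigma^*)\big]\ =\ \mathbb{P}\big[\Delta_{S_+,S_-}\le 0\big]\ \le\ \exp\!\Big(-2\,\mathrm{SNR}\cdot k\big(1-\tfrac{2k}{n}\big)\log n\Big).
\]

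The second step is the union bound, and it is here that $\mathrm{SNR}>1$ enters. The crude count $\binom{n/2}{k}\le n^k$ is too lossy — it would only give $\mathrm{SNR}>2$ — so instead I use $\binom{n/2}{k}\le(en/2k)^k$, valid because $k\le\frac n4$, which bounds the combined contribution of the $k$-swaps by
\[
\binom{n/2}{k}^2\,\mathbb{P}\big[\Delta_{S_+,S_-}\le 0\big]\ \le\ \exp\!\Big(2k\Big[\,1+\log\tfrac{n}{2k}-\mathrm{SNR}\big(1-\tfrac{2k}{n}\big)\log n\,\Big]\Big).
\]
When $k\le n/(\log n)^2$, the factor $1-\frac{2k}{n}$ is $1-o(1)$ (indeed $\frac{2k}{n}\log n=o(1)$), so the bracket is $\le 1-\log(2k)+(1-\mathrm{SNR}+o(1))\log n$; since $\mathrm{SNR}>1$ the resulting bounds form a geometric series in $k$, summing to $O\!\big(n^{2(1-\mathrm{SNR})+o(1)}\big)=o(1)$ and dominated by the $k=1$ term, which is $\asymp n^2\cdot n^{-2\,\mathrm{SNR}}=n^{2(1-\mathrm{SNR})}\to0$. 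When $n/(\log n)^2\le k\le\frac n4$, one has $1-\frac{2k}{n}\ge\frac12$ and $\log\frac{n}{2k}=O(\log\log n)$, so the bracket is $\le-\frac14\mathrm{SNR}\log n$ for $n$ large; hence each term is $e^{-\Omega(n/\log n)}$ and there are at most $n$ of them. Adding the two ranges shows the displayed probability tends to $0$, which completes the proof.

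I expect this union bound to be the main obstacle: one must balance the entropy factor $\binom{n/2}{k}^2\approx\exp\!\big(2k\log(n/k)\big)$ against the Gaussian rate $\exp\!\big(-\Theta(k(1-2k/n)\log n)\big)$ carefully at \emph{every} scale of $k$, which is precisely why the sharp binomial estimate is needed rather than the naive $n^{2k}$. The term $k=1$ already saturates the argument and pins the threshold at $\mathrm{SNR}=1$, matching the impossibility bound of Theorem~\ref{thm:Negative0}; all larger $k$ contribute strictly lower-order terms, so $\mathrm{SNR}>1$ is both necessary and sufficient for the union bound to close.
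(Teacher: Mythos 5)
Your proof is correct and takes essentially the same route as the paper's: reduce to a union bound over pairs $(S_+,S_-)\subseteq C_+\times C_-$ of size $k\le n/4$, show the ``swap profit'' is a Gaussian of mean $\Theta\bigl(k(n/2-k)(\mu_1-\mu_2)\bigr)$ and variance $\Theta\bigl(k(n/2-k)\tau^2\bigr)$, and combine the Gaussian tail with the sharp binomial estimate $\binom{n/2}{k}\le(en/2k)^k$ split by scale of $k$ (your cut at $k\approx n/(\log n)^2$ versus the paper's constant-fraction cut $\epsilon_\delta n$ are interchangeable technicalities, as is your direct cut-edge bookkeeping for $\Delta_{S_+,S_-}$ versus the paper's $d_\pm(S_\pm)$ decomposition). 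The key observation you flag --- that the naive $\binom{n/2}{k}\le n^k$ would only give $\mathrm{SNR}>2$ and the refined entropy term is what pins the threshold at $1$ --- is exactly the point the paper's proof of Lemma~\ref{lem:MLEh} hinges on.
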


The proof of this theorem relies on analyzing the inequality $f_A(\hat{\sigma}) > f_A(\sigma^*)$ 
 and observing that  MLE ~\eqref{eq:MLE0} fails to recover the true community labels $\sigma^*$  if and only if there exists $ \hat{\sigma}$ such that $f_A(\hat{\sigma}) > f_A(\sigma^*)$. For a subset of vertices $S_+ \subseteq C_+$ and $S_- \subseteq C_-$ we define 
\[
d_+(S_+) =\sum_{i \in S_+, k \in C_+\setminus S_+}A_{ik}, \quad 
d_+(S_-) =\sum_{j \in S_-, l \in C_-\setminus S_-}A_{jl}
\]
\[
d_-(S_+\setminus S_-) =\sum_{i \in S_+, j \in C_-\setminus S_-}A_{ij}, \quad 
d_-(S_-\setminus S_+) =\sum_{j \in S_-, i \in C_+\setminus S_+}A_{ij}
\]

 \begin{lemma} $\exists$ $\hat{\sigma}  \in \Theta_n$ with  
     $f_A(\hat{\sigma}) > f_A(\sigma^*)$  $\smallimplies$ $\exists  S_+ \subseteq C_+, S_- \subseteq C_-$ such that $1\leqslant |S_+|=|S_-| \leqslant \frac{n}{4}$ and  
     \begin{equation} \label{eq:Pos0}
     d_+(S_+) +d_+(S_-)<  d_-(S_+\setminus S_-) +d_-(S_-\setminus S_+).
     \end{equation} 
 \end{lemma}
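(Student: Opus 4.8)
The plan is to reduce the statement to a deterministic identity for the difference $f_A(\hat\sigma)-f_A(\sigma^*)$ in terms of the ``flip sets'' of $\hat\sigma$ relative to $\sigma^*$. Given any $\hat\sigma\in\Theta_n$ with $f_A(\hat\sigma)>f_A(\sigma^*)$, set
\[
S_+:=\{\,i\in C_+:\hat\sigma_i=-1\,\},\qquad S_-:=\{\,j\in C_-:\hat\sigma_j=+1\,\},
\]
i.e. the vertices that switch sides when passing from $\sigma^*$ to $\hat\sigma$. Since $\hat\sigma,\sigma^*\in\Theta_n$ are both balanced, the number of vertices leaving $C_+$ equals the number arriving from $C_-$, so $|S_+|=|S_-|$. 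Moreover $f_A(-\sigma)=f_A(\sigma)$, so the strict inequality forces $\hat\sigma\notin\{\sigma^*,-\sigma^*\}$; hence $S_+$ is neither empty nor all of $C_+$, giving $1\le|S_+|=|S_-|\le \tfrac n2-1$.

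Next I would compute $f_A(\hat\sigma)-f_A(\sigma^*)$ directly. Writing $S:=S_+\cup S_-$, for a pair $i,j$ one has $\hat\sigma_i\hat\sigma_j=\sigma^*_i\sigma^*_j$ unless exactly one of $i,j$ lies in $S$, in which case $\hat\sigma_i\hat\sigma_j=-\sigma^*_i\sigma^*_j$. Summing over all pairs and using $A_{ij}=A_{ji}$,
\[
f_A(\hat\sigma)-f_A(\sigma^*)=-4\sum_{i\in S,\ j\notin S}A_{ij}\,\sigma^*_i\sigma^*_j
\]
(the exact constant is irrelevant; only its sign matters). Now split the pairs $(i,j)$ with $i\in S,\ j\notin S$ into four groups according to which of $S_+,S_-$ contains $i$ and whether $j$ lies in $C_+\setminus S_+$ or $C_-\setminus S_-$, and read off the sign of $\sigma^*_i\sigma^*_j$ in each group. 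The ``within-community'' groups contribute $+d_+(S_+)$ and $+d_+(S_-)$, and the ``cross-community'' groups contribute $-d_-(S_+\setminus S_-)$ and $-d_-(S_-\setminus S_+)$, matching exactly the definitions preceding the lemma. Hence
\[
f_A(\hat\sigma)-f_A(\sigma^*)=-4\Big(d_+(S_+)+d_+(S_-)-d_-(S_+\setminus S_-)-d_-(S_-\setminus S_+)\Big),
\]
so $f_A(\hat\sigma)>f_A(\sigma^*)$ is precisely \eqref{eq:Pos0} for this $(S_+,S_-)$.

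It remains to arrange the size bound $|S_+|\le\tfrac n4$. If $|S_+|>\tfrac n4$, replace $\hat\sigma$ by $-\hat\sigma$: this leaves $f_A$ unchanged, so it still strictly beats $f_A(\sigma^*)$, and it replaces $(S_+,S_-)$ by $(C_+\setminus S_+,\,C_-\setminus S_-)$, whose common cardinality is $\tfrac n2-|S_+|$, which lies in $[1,\tfrac n4)$ by the bounds from the first paragraph. Either way we obtain $S_+\subseteq C_+$, $S_-\subseteq C_-$ with $1\le|S_+|=|S_-|\le\tfrac n4$ satisfying \eqref{eq:Pos0}, proving the lemma. The argument is essentially bookkeeping; the only places needing care are getting the four sums and their signs to line up with $d_+(\cdot),d_-(\cdot)$, and checking the edge cases $|S_+|\ge1$ and $|S_+|\le\tfrac n2-1$ (equivalently, that $\hat\sigma$ is not equal to $\pm\sigma^*$), which is exactly where the strictness of $f_A(\hat\sigma)>f_A(\sigma^*)$ is used.
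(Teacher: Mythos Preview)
Your proof is correct and follows essentially the same route as the paper: both define $S_+=C_+\setminus\hat C_+$ and $S_-=C_-\cap\hat C_+$, use the sign-flip $\hat\sigma\mapsto-\hat\sigma$ to enforce $|S_\pm|\le n/4$, and reduce $f_A(\hat\sigma)>f_A(\sigma^*)$ to the displayed inequality by a direct edge-count. The only cosmetic difference is that the paper expands $I(\hat\sigma)$ and $I(\sigma^*)$ term-by-term, whereas you compute the difference in one shot via the ``exactly one endpoint in $S$'' observation; the content is the same.
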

 We simply use union bound to prove that MLE ~\eqref{eq:MLE0}  solves exact recovery ~\eqref{thm: Positive0} as soon as \text{SNR} $>1$  by proving that such events of the kind ~\eqref{eq:Pos0} occur with probability approaching zero. We define
 \[
 E:=\{\exists S_+ \subseteq C_+, S_- \subseteq C_- : 1 \leqslant |S_+|=|S_-| \leqslant \frac{n}{4}, d_-(S_-\setminus S_+) + d_-(S_+ \setminus S_-) > d_+(S_+)+d_-(S_-) \} 
 \]
 \[
 E_k= \{\exists S_+ \subseteq C_+, S_- \subseteq C_- : |S_+|=|S_-| =k, d_-(S_-\setminus S_+) + d_-(S_+ \setminus S_-) > d_+(S_+)+d_-(S_-)\}
 \]
 
 \begin{lemma} With the notations above, if SNR$>1$ then $\mathbb{P}\left[E\right] =  \mathbb{P}\left[\cup_{k=1}^{\frac{n}{4}}E_k \right] \leqslant \sum_{k=0}^{\frac{n}{4}}\mathbb{P}\left[E_k\right] \to 0$.
 \end{lemma}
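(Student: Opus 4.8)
\section*{Proof proposal for the final lemma}

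The plan is to estimate each $\mathbb{P}[E_k]$ by a union bound over the $\binom{n/2}{k}^2$ choices of $(S_+,S_-)$ with $|S_+|=|S_-|=k$, using the Gaussian law of the relevant ``swap'' statistic, and then to sum over $k$ after splitting the range $1\le k\le n/4$ into a small-$k$ and a large-$k$ regime. First, fix $S_+\subseteq C_+$, $S_-\subseteq C_-$ with $|S_+|=|S_-|=k$. The four quantities $d_+(S_+),\ d_+(S_-),\ d_-(S_+\setminus S_-),\ d_-(S_-\setminus S_+)$ are sums over four pairwise disjoint edge sets (edges inside $C_+$ incident to $S_+$; edges inside $C_-$ incident to $S_-$; edges from $S_+$ to $C_-\setminus S_-$; edges from $S_-$ to $C_+\setminus S_+$), hence jointly independent, and each is a sum of exactly $k(\tfrac n2-k)$ i.i.d.\ Gaussians. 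Therefore, writing $W\eqdef d_-(S_+\setminus S_-)+d_-(S_-\setminus S_+)-d_+(S_+)-d_+(S_-)$, the event appearing in $E_k$ for this pair (from~\eqref{eq:Pos0}) is $\{W>0\}$, and $W\sim N\big(-2k(\tfrac n2-k)(\mu_1-\mu_2),\,4k(\tfrac n2-k)\tau^2\big)$. Using the standard bound $\mathbb{P}[Z>t]\le e^{-t^2/2}$ for $Z\sim N(0,1)$, together with $\mu_1-\mu_2=(\alpha-\beta)\sqrt{\log n/n}$ and $\tfrac{(\alpha-\beta)^2}{2\tau^2}=4\,\text{SNR}$,
\[
\mathbb{P}[W>0]=\mathbb{P}\!\left[Z>\frac{\sqrt{k(\tfrac n2-k)}\,(\mu_1-\mu_2)}{\tau}\right]\le \exp\!\left(-4\,\text{SNR}\cdot\frac{k(\tfrac n2-k)}{n}\log n\right)=n^{-4\,\text{SNR}\,k(\frac n2-k)/n}.
\]

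Next, the union bound gives $\mathbb{P}[E_k]\le \binom{n/2}{k}^2 n^{-4\,\text{SNR}\,k(\frac n2-k)/n}$, and with $\binom{n/2}{k}\le (en/(2k))^k$ we obtain $\log\mathbb{P}[E_k]\le 2k\log\frac{en}{2k}-4\,\text{SNR}\,\frac{k(\frac n2-k)}{n}\log n$. For $1\le k\le \epsilon n$ (with $\epsilon>0$ small, fixed below) we have $\frac{k(\frac n2-k)}{n}\ge k(\tfrac12-\epsilon)$, so since $\log\frac{en}{2k}\le 1+\log n$,
\[
\log\mathbb{P}[E_k]\le 2k\big[1-\big(2\,\text{SNR}(\tfrac12-\epsilon)-1\big)\log n\big].
\]
Because $\text{SNR}>1$, choose $\epsilon$ small enough that $2\,\text{SNR}(\tfrac12-\epsilon)-1=:2\delta>0$; then $\mathbb{P}[E_k]\le n^{-\delta k}$ for $n$ large, and $\sum_{k=1}^{\lfloor\epsilon n\rfloor}\mathbb{P}[E_k]\le\sum_{k\ge1}n^{-\delta k}=O(n^{-\delta})\to 0$. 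For $\epsilon n\le k\le \tfrac n4$ we use $\tfrac n2-k\ge\tfrac n4$ to get $\frac{k(\frac n2-k)}{n}\ge \tfrac14\epsilon n$ and $\binom{n/2}{k}^2\le 2^n$, whence $\mathbb{P}[E_k]\le 2^n n^{-\text{SNR}\,\epsilon n}$, which is superpolynomially small; summing over the at most $n/4$ such $k$ still yields a quantity tending to $0$. Together with the trivial fact that $E_0$ is empty, this gives $\sum_{k=0}^{n/4}\mathbb{P}[E_k]\to 0$, and hence $\mathbb{P}[E]\to 0$.

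The main obstacle is the small-$k$ regime: one must keep $\frac{k(\frac n2-k)}{n}$ close to $k/2$ rather than crudely replacing it by $k/4$, since the latter would only yield the threshold $\text{SNR}>2$. The factor $\tfrac12-\epsilon$ is exactly what makes $\text{SNR}>1$ sufficient here, matching the information-theoretic threshold of Theorem~\eqref{thm:Negative0}. A secondary point requiring care is the justification that the four edge sets defining $W$ are pairwise disjoint (so that $W$ is exactly Gaussian with the stated variance); this is genuinely clean here because $S_+$ and $S_-$ lie in different communities, but it should be verified explicitly. The remaining estimates ($\binom{N}{k}\le(eN/k)^k$, the Gaussian tail bound, the geometric summation) are routine.
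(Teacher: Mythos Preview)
Your proof is correct and follows essentially the same approach as the paper: union bound over the $\binom{n/2}{k}^2$ pairs, exact Gaussian computation for the swap statistic, the tail bound $e^{-t^2/2}$, and the same small-$k$/large-$k$ split (using $(en/(2k))^k$ for small $k$ and $2^{n/2}$ for large $k$). The only cosmetic difference is the parametrization of the cutoff $\epsilon$---the paper writes it as $(1-2\epsilon)(1+2\delta)\ge 1+\delta$ with $\text{SNR}=1+2\delta$, while you phrase it as $2\,\text{SNR}(\tfrac12-\epsilon)-1>0$---but these encode the same constraint.
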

 
 The proof of this lemma relies on that $ d_-(S_-\setminus S_+), d_-(S_+ \setminus S_-), d_+(S_+), d_-(S_-)$ are jointly independent random variables satisfying the following distributional identities.
     \[
     d_+(S_+)\overset{d}{=}  N( k(n/2 -k) \mu_1, k(n/2 -k) \tau^2), \quad 
     d_+(S_-)\overset{d}{=}  N( k(n/2 -k) \mu_1, k(n/2 -k) \tau^2)
     \]
     \[
     d_-(S_+\setminus S_-) \overset{d}{=} N( k(n/2 -k) \mu_2, k(n/2 -k) \tau^2), \quad
     d_-(S_-\setminus S_+) \overset{d}{=} N( k(n/2 -k) \mu_2, k(n/2 -k) \tau^2)
     \]
     
      If SNR$>1$, we have the following for $k=1$ with $\text{SNR}= 1+2\delta$ for some $\delta >0$.
     \[
     \mathbb{P}\left[E_1\right] \leqslant  \frac{n^2}{4}  \mathbb{P}\left[ N(0,1) \geqslant \sqrt{(n/2 -1)} \left(\frac{\mu_1- \mu_2}{\tau}\right)\right] \leqslant \frac{n^2}{4}  e^{-\frac{2 
     (n -2)  \text{SNR}\log  n}{n}}  = \frac{1}{4}  \frac{1}{n^{4\delta + o(1)}} \to 0.
     \]

 We observe that MLE ~\eqref{eq:MLE0} is a discrete optimization problem over an exponential-sized solution space and not a polynomially tractable problem in the worst-case sense, and also in the approximate-case sense \cite{doi:10.1137/050640904}. So, we look for potential algorithmic solutions to recover the ground truth $\sigma^*$. Under the critical scaling ~\eqref{def:CritSBM0} in SBM it was proven in \cite{hajek2016achieving} that a semi-definite relaxation (SDP) of MLE ~\eqref{eq:MLE0} solves exact recovery if $|\sqrt{a} -\sqrt{b}| > \sqrt{2}$. In the GWSBM ~\eqref{def:GWSBM0}, we establish the following algorithmic possibility result whenever SNR$> 1$.
       To start, we observe that MLE ~\eqref{eq:MLE0} can be rewritten as the following problem.
\begin{equation} \label{def:MLE0R}
\max_{Y, \sigma} \langle A, Y \rangle : Y = \sigma \sigma^T, \, Y_{ii} = 1, \, i \in [n], \, \langle \mathbf{J}, Y \rangle = 0. 
\end{equation}

 $Y = \sigma \sigma^T$ is a rank-one positive semi-definite matrix. Relaxing this condition by dropping the rank-one restriction, we obtain the following convex relaxation of ~\eqref{def:MLE0R}:
\begin{equation} \label{eq:SDP0}
\hat{Y}_{\text{SDP}} = \arg\max_{Y} \langle A, Y \rangle : Y \succeq 0, \, Y_{ii} = 1, \, \langle \mathbf{J}, Y \rangle = 0.
\end{equation}

Let $Y^*=\sigma^*\sigma^{*^{\top}}$ and $\mathcal{Y}_n \triangleq\left\{\sigma \sigma^{\top}: \sigma \in\{-1,1\}^n, \langle \sigma ,\mathbf{1}\rangle =0\right\}$. The following result establishes the algorithmic tractability of the exact recovery problem under the  SDP procedure.

\begin{theorem}[\textbf{Semi-definite relaxation achieves exact recovery}]\label{thm:SDP0}
Let $G_n \sim$ SBM$(n, \mu_1, \mu_2, \tau^2)$ ~\eqref{def:GWSBM0} with $\mu_1, \mu_2$ in the critical scaling regime ~\eqref{def:CritGWSBM0}, and $\alpha > \beta$, $\tau >0$ be constants.  If  SNR   $>1$   then   $\min _{Y^* \in \mathcal{Y}_n} \mathbb{P}\left\{\hat{Y}_{\mathrm{SDP}}=Y^*\right\}\to  1  \hspace{.1 cm} \text{ as } \hspace{.1 cm} n \rightarrow \infty.$
\end{theorem}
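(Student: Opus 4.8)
The plan is to follow the standard dual-certificate (KKT) strategy for SDP exact recovery, as developed by Hajek–Wu–Xu and adapted to the Gaussian-weighted setting. The feasibility constraints for \eqref{eq:SDP0} are $Y\succeq 0$, $Y_{ii}=1$, and $\langle \mathbf{J},Y\rangle=0$. First I would write down the Lagrangian dual: introduce a diagonal matrix $D=\diag(d_1,\dots,d_n)$ for the constraints $Y_{ii}=1$, a scalar $\lambda$ for $\langle \mathbf{J},Y\rangle=0$, and a PSD matrix $S\succeq 0$ for $Y\succeq 0$. Complementary slackness and stationarity give the condition that $Y^*=\sigma^*\sigma^{*\top}$ is the unique optimum provided one can exhibit $D$ and $\lambda$ such that
\[
S \;:=\; D + \lambda \mathbf{J} - A \;\succeq\; 0,\qquad S\sigma^* = 0,\qquad \lambda_2(S) > 0,
\]
where $\lambda_2$ is the second-smallest eigenvalue; the last condition (positive spectral gap, with $\sigma^*$ spanning the kernel) is exactly what upgrades "optimal" to "unique optimal." This is the step where the SNR $>1$ hypothesis must be spent.

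Second, I would construct the dual variables explicitly. The natural choice, forced by $S\sigma^*=0$, is $\lambda$ chosen to kill the rank-one part and $d_i = (A\sigma^*)_i \sigma^*_i - \lambda (\mathbf{J}\sigma^*)_i\sigma^*_i$; since $\sigma^*\in\Theta_n$ we have $\mathbf{J}\sigma^*=0$, so in fact $d_i=(A\sigma^*)_i\sigma^*_i = \sum_j A_{ij}\sigma^*_i\sigma^*_j = d_+(i)-d_-(i)$ in the notation of the earlier lemmas (up to the within-row bookkeeping). With this choice one checks $S\sigma^*=0$ and $\langle A,Y^*\rangle = \langle D,\Id\rangle$ automatically. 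It then remains to prove $S\succeq 0$ with a one-dimensional kernel, i.e. for every $x\perp \sigma^*$, $x^\top S x > 0$.

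Third — and this is the main obstacle — I would establish the spectral bound $x^\top S x>0$ on $\sigma^{*\perp}$. Decompose $S = D + \lambda\mathbf{J} - A = D - \bE A + (\bE A - A) + \lambda\mathbf{J}$. On $\sigma^{*\perp}$ the mean matrix $\bE A$ contributes a controlled negative-semidefinite perturbation (its nontrivial eigenvectors are $\mathbf{1}_n$ and $\sigma^*$, both handled by the constraint or by $\lambda\mathbf{J}$), so the fight is between the diagonal term $D$ and the centered Gaussian noise $A-\bE A$, a Wigner-type matrix with entry variance $\tau^2$. By matrix concentration (e.g. a Bai–Yin / non-asymptotic bound for Gaussian Wigner matrices, or Bernstein for the operator norm), $\|A-\bE A\|_{\mathrm{op}} = O(\tau\sqrt{n})$ with high probability. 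Meanwhile each diagonal entry $d_i$ concentrates: $d_i = d_+(i)-d_-(i)$ has mean $(\tfrac n2-1)\mu_1 - \tfrac n2\mu_2 = \tfrac{(\alpha-\beta)}{2}\sqrt{n\log n}(1+o(1))$ and standard deviation $O(\sqrt{n}\,\tau)$, so a union bound over $i\in[n]$ gives $\min_i d_i \ge \tfrac{(\alpha-\beta)}{2}\sqrt{n\log n}(1-o(1))$ with high probability — here the fluctuation $\sqrt{n\log n}\cdot\tau\cdot$(Gaussian tail at level $\sqrt{2\log n}$) is where SNR $>1$, i.e. $(\alpha-\beta)^2 > 8\tau^2$, guarantees that the signal $\tfrac{(\alpha-\beta)}{2}\sqrt{n\log n}$ strictly dominates the worst-case downward deviation $\sqrt{2}\cdot\tau\sqrt{n\log n}$. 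Combining, for $x\perp\sigma^*,\mathbf{1}_n$,
\[
x^\top S x \;\ge\; \Big(\min_i d_i - \|A - \bE A\|_{\mathrm{op}} - \|\text{low-rank remainder}\|\Big)\|x\|^2 \;>\; 0
\]
with high probability, since $\min_i d_i = \Theta(\sqrt{n\log n})$ swamps the $O(\sqrt n)$ noise norm. The delicate point is getting the constant in the diagonal concentration sharp enough that the threshold comes out at exactly SNR $=1$ rather than some larger constant; this requires using the precise Gaussian tail asymptotic $\p[Z>t]=e^{-(1+o(1))t^2/2}$ at $t=(1+o(1))\tfrac{(\alpha-\beta)}{2\tau}\sqrt{\log n}$ — the same computation that appeared in the statistical-possibility lemma for $\mathbb{P}[E_1]$ — inside the union bound over the $n$ vertices, so that $n\cdot\p[d_i \text{ too small}] = n^{1-\mathrm{SNR}+o(1)} \to 0$. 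Finally, since the construction and the bound are uniform over the choice of $\sigma^*\in\Theta_n$ (everything depends on $\sigma^*$ only through a relabeling), the conclusion $\min_{Y^*\in\mathcal Y_n}\p[\hat Y_{\mathrm{SDP}}=Y^*]\to 1$ follows.
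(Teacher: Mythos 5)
Your proposal is correct and follows essentially the same route as the paper: the dual certificate $S^* = D^* - A + \lambda^*\mathbf{J}$ with $d_i^* = (A\sigma^*)_i\sigma_i^*$ and $\lambda^* = \frac{\mu_1+\mu_2}{2}$, the weak-duality sufficiency plus uniqueness via the spectral gap, the $\|A-\bE A\| = O(\tau\sqrt n)$ concentration, and the union-bound computation $n\cdot\p[d_i^*\ \text{too small}] = n^{1-\mathrm{SNR}+o(1)}\to 0$ pinning the threshold at $\mathrm{SNR}=1$. One small slip: the intermediate claim $\min_i d_i^* \ge \tfrac{\alpha-\beta}{2}\sqrt{n\log n}\,(1-o(1))$ is too strong (the union-bound downward fluctuation is itself $\Theta(\tau\sqrt{n\log n})$, not $o(\sqrt{n\log n})$), but your subsequent reasoning correctly only uses $\min_i d_i^* = \Theta(\sqrt{n\log n}) \gg \|A-\bE A\|$, which is what the argument actually requires and is exactly the comparison the paper makes.
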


Following \cite{hajek2016achieving} we prove the theorem by first producing a deterministic sufficient condition for SDP to achieve exact recovery. Let $\mathbf{J}$ denote an $n \times n$ matrix with all entries equal to $1$ and $\lambda_2 (S)$ denote the second smallest eigenvalue of a symmetric $n \times n$ matrix $S$.
\begin{lemma}
 Suppose  $ \exists$ $D^{*}=\operatorname{diag}\left\{d_{i}^{*}\right\}$ with $\{d^*_i\}_{i=1}^{n} \in \mathbb{R}^n$ and $\lambda^{*} \in \mathbb{R}$ such that 
 \begin{equation} \label{eq:DualCert0}
  S^{*} := D^{*}-A+\lambda^{*} \mathbf{J} \hspace{.1 cm} \text{satisfies} \hspace{.1 cm}  S^{*} \succeq 0, \lambda_{2}\left(S^{*}\right)>0, \hspace{.1 cm} \text{and} \hspace{.1 cm} S^{*} \sigma^{*}=0.
 \end{equation}
 
Then SDP recovers the true solution. More precisely, $\widehat{Y}_{\mathrm{SDP}}=Y^{*}$ is the unique solution to ~\eqref{eq:SDP0}.
\end{lemma}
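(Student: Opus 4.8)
The plan is to verify the standard convex-duality argument showing that the matrix $S^*$ of \eqref{eq:DualCert0} is a dual certificate whose existence forces $Y^* = \sigma^*\sigma^{*\top}$ to be the unique maximizer of \eqref{eq:SDP0}. First I would write down the Lagrangian for \eqref{eq:SDP0}: the constraints are $Y \succeq 0$ (with dual variable a PSD matrix $S$), $Y_{ii}=1$ for each $i$ (with dual variables $d_i$, assembled into $D^* = \operatorname{diag}\{d_i^*\}$), and $\langle \mathbf{J}, Y\rangle = 0$ (with dual scalar $\lambda^*$). Weak duality gives, for any feasible $Y$ and any such dual triple $(S, D, \lambda)$ with $S \succeq 0$,
\[
\langle A, Y\rangle \;=\; \langle A - D - \lambda \mathbf{J}, Y\rangle + \langle D, Y\rangle + \lambda\langle \mathbf{J}, Y\rangle \;=\; -\langle S, Y\rangle + \sum_i d_i \;\leqslant\; \sum_i d_i,
\]
using $\langle D, Y\rangle = \sum_i d_i Y_{ii} = \sum_i d_i$, $\langle \mathbf{J}, Y\rangle = 0$, and $\langle S, Y\rangle \geqslant 0$ since both are PSD; here $S = D + \lambda \mathbf{J} - A$. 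Applying this to the particular certificate $S^*$ shows every feasible $Y$ has objective value at most $\sum_i d_i^*$.

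Next I would check that $Y^*$ attains this bound, i.e.\ that complementary slackness holds. Since $S^*\sigma^* = 0$ and $Y^* = \sigma^*\sigma^{*\top}$, we get $\langle S^*, Y^*\rangle = \sigma^{*\top} S^* \sigma^* = 0$, so the chain of inequalities above is tight at $Y^*$: $\langle A, Y^*\rangle = \sum_i d_i^*$. Also $Y^*$ is feasible for \eqref{eq:SDP0} because $\sigma^* \in \Theta_n$ gives $\langle \mathbf{J}, Y^*\rangle = \langle \mathbf{1}_n, \sigma^*\rangle^2 = 0$, the diagonal entries are $1$, and $Y^* \succeq 0$. Hence $Y^*$ is an optimal solution and the optimal value equals $\sum_i d_i^*$.

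The remaining point — and the one requiring the extra hypothesis $\lambda_2(S^*) > 0$ — is \emph{uniqueness}. Suppose $\wt Y$ is any optimal solution. Then equality must propagate back through the weak-duality chain, so $\langle S^*, \wt Y\rangle = 0$. Since $S^* \succeq 0$ and $\wt Y \succeq 0$, this forces the column space of $\wt Y$ to lie in $\ker S^*$. By assumption $S^*$ has $\lambda_2(S^*) > 0$, so $S^*$ has exactly one zero eigenvalue; combined with $S^*\sigma^* = 0$, this means $\ker S^* = \operatorname{span}\{\sigma^*\}$. Therefore $\wt Y = c\, \sigma^*\sigma^{*\top}$ for some scalar $c \geqslant 0$, and the diagonal constraint $\wt Y_{ii} = 1$ together with $(\sigma^*_i)^2 = 1$ forces $c = 1$, i.e.\ $\wt Y = Y^*$.

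I expect the main obstacle to be purely expository: making sure the eigenvalue bookkeeping in the uniqueness step is airtight — specifically that $\lambda_2(S^*) > 0$ together with $S^* \succeq 0$ genuinely pins down $\dim \ker S^* = 1$, and that the argument ``$\langle S^*, \wt Y\rangle = 0$, both PSD $\Rightarrow$ range of $\wt Y \subseteq \ker S^*$'' is invoked correctly (via writing $\langle S^*, \wt Y\rangle = \operatorname{tr}(S^{*1/2}\wt Y S^{*1/2})$ as a sum of nonnegative terms). No probabilistic input is needed here; this lemma is entirely deterministic, and the verification that such a certificate exists with high probability when $\mathrm{SNR} > 1$ is deferred to the subsequent construction.
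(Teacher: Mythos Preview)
Your proposal is correct and follows essentially the same weak-duality/complementary-slackness argument as the paper. The paper's uniqueness step (its Lemma~\ref{lem:uniq}) writes out full eigendecompositions of $S^*$ and the competing optimizer to reach the same conclusion, whereas your range-in-kernel formulation via $\operatorname{tr}(S^{*1/2}\wt Y S^{*1/2})=0$ is a cleaner packaging of the identical idea.
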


The proof of this lemma follows from the weak duality of semi-definite programming, and now we make a clever choice of $D^*$ and $\lambda^*$ such that $S^*$ satisfies all the conditions above ~\eqref{eq:DualCert0} with probability approaching one.
More precisely, for all $1\leqslant i \leqslant n$ we take $d_{i}^{*}=\sum_{j=1}^{n} A_{i j} \sigma_{i}^{*} \sigma_{j}^{*}$ and choose $\lambda^{*}= \frac{\mu_1+\mu_2}{2}$.  By definition of $S^*= D^* -A+\lambda^*\mathbf{J}$ with $D^*$ and $\lambda^*$ as above, we have $S^*\sigma^* = 0$, and it reduces to proving the following.
\begin{equation}\label{eq:SDPMain0}
  \mathbb{P}\left[S^* \succeq 0\cap \lambda_2(S^{*})>0\right] =\mathbb{P}\left[\inf _{x \perp \sigma^{*},\|x\|=1} \langle x, S^{*} x \rangle >0\right] \to 1
\end{equation}

Simplifying the quantity $\langle x, S^{*} x \rangle$ requires us to prove the following.
\begin{equation}
  \mathbb{P}\left[ \min _{i \in[n]} d_{i}^{*}+\mu_1 -\|A-\mathbb{E}[A]\| >0\right] \to  1
\end{equation}

From  bounds  on norms of Gaussian random matrices \cite{Vershynin_2018}, we have  for an absolute constant $c >0$ $\mathbb{P}\left[\|A-\mathbb{E}[A]\| \geqslant 3\tau\sqrt{n}\right] \leqslant 2e^{-cn} $. Therefore, it is enough to prove the following 
\begin{equation}
    \mathbb{P}[ \min _{i \in[n]} d_{i}^{*}> 4\tau\sqrt{n}] =  \mathbb{P}[ \cap_{i=1}^{n} \{d_{i}^{*}> 4\tau\sqrt{n}\}] \to  1
\end{equation}

We prove this using a union bound argument and observing that  for all $1\leqslant i \leqslant n$
\[
d_{i}^*=\sum_{j=1}^{n} A_{i j} \sigma_{i}^{*} \sigma_{j}^{*} \overset{ d}{=} N\left(\left(\frac{n}{2}-1\right)\mu_1, \left(\frac{n}{2}-1\right)\tau^2\right) - N\left(\frac{n}{2}\mu_2, \frac{n}{2}\tau^2\right),
\]
\[
\implies \sum_{i=1}^{n} \mathbb{P}\left[ d^*_i \leqslant 4\tau \sqrt{n} \right] = n \mathbb{P}\left[ d^*_i \leqslant 4\tau \sqrt{n} \right] \leqslant \frac{1}{n^{\text{SNR} -1 +o(1)}} \to  0.
\]

\par
This proves that there is no information computation gap \cite{bandeira2018notes} in this model as SDP ~\eqref{eq:SDP0} achieves exact recovery until the statistical threshold produced by  MLE ~\eqref{eq:MLE0}. We end the analysis of this model  ~\eqref{def:GWSBM0} by proving that even spectral relaxation of MLE ~\eqref{eq:MLE0} achieves exact recovery until the same threshold.  In SBM, it was proven in \cite{abbe2019entrywise} that a spectral relaxation of MLE ~\eqref{eq:MLE0} solves exact recovery whenever $|\sqrt{a} -\sqrt{b}| > \sqrt{2}$.  In the Gaussian weighted version of the stochastic block model ~\eqref{def:GWSBM0}, we establish the following algorithmic possibility result.
\begin{theorem}[\textbf{Spectral relaxation achieves exact recovery}] \label{thm:Spec0}
    Let $G_n \sim$ SBM$(n, \mu_1, \mu_2, \tau^2)$ ~\eqref{def:GWSBM0}  with $\mu_1, \mu_2$ in the critical scaling regime ~\eqref{def:CritGWSBM0},  $\alpha > \beta$, $\tau >0$ be constants, and we assume that $A_{ii} \overset{IID}{\sim} N(\mu_1, \tau^2)$.  Let $B:=  A - \frac{n (\mu_1 +\mu_2)}{2} \frac{\mathbf{1}_n}{\sqrt{n}}\frac{\mathbf{1}_n}{\sqrt{n}}^{\top}$. If $\text{SNR}$ $>1$  then $\hat{\sigma}:= \text{sign} (u_1)$ achieves exact recovery where $u_1$ is the eigenvector corresponding to the largest eigenvalue $\lambda_1$ of $B$. More precisely, if \text{SNR} $>1$ then $\mathbb{P} \left[ M(\hat{\sigma}, \sigma^*) =1\right] \to 1$.
\end{theorem}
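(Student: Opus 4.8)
The plan is to run an entrywise eigenvector analysis in the style of \cite{abbe2019entrywise}. First I would write $B = \mathbb{E}[B] + W$, where — using the assumption $A_{ii}\sim N(\mu_1,\tau^2)$, which makes the identity hold on the diagonal as well — $\mathbb{E}[B] = \mathbb{E}[A] - \tfrac{\mu_1+\mu_2}{2}\mathbf{1}_n\mathbf{1}_n^{\top} = \tfrac{\mu_1-\mu_2}{2}\sigma^{*}\sigma^{*\top}$ is rank one, with leading eigenpair $\big(\tfrac{n(\mu_1-\mu_2)}{2},\,\sigma^{*}/\sqrt n\big)$ and all other eigenvalues $0$, and $W := A-\mathbb{E}[A]$ is symmetric with independent (up to symmetry) $N(0,\tau^2)$ entries. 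The matrix-norm bound from \cite{Vershynin_2018} already quoted above gives $\|W\|\leqslant 3\tau\sqrt n$ with probability $1-o(1)$; since $\tfrac{n(\mu_1-\mu_2)}{2} = \tfrac{\alpha-\beta}{2}\sqrt{n\log n}$ has larger order than $\sqrt n$, Weyl and Davis--Kahan give $\lambda_1(B) = \tfrac{n(\mu_1-\mu_2)}{2}(1+o(1))$, a spectral gap of the same order separating $\lambda_1(B)$ from the rest of the spectrum, and $\big\|u_1-\sigma^{*}/\sqrt n\big\|_2 = O\!\big(\tau/((\alpha-\beta)\sqrt{\log n})\big) = o(1)$ once the sign of $u_1$ is fixed so that $\langle u_1,\sigma^{*}\rangle\geqslant 0$; in particular $\langle\sigma^{*},u_1\rangle = \sqrt n(1-o(1))$.

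Next I would exploit the exact identity the rank-one structure affords: from $\lambda_1 u_1 = Bu_1 = \mathbb{E}[B]u_1 + Wu_1$ and $\mathbb{E}[B]u_1 = \tfrac{\mu_1-\mu_2}{2}\langle\sigma^{*},u_1\rangle\sigma^{*}$, the $i$-th coordinate reads
\[
(u_1)_i \;=\; c\,\sigma^{*}_i \;+\; \frac{(Wu_1)_i}{\lambda_1}, \qquad c := \frac{(\mu_1-\mu_2)\langle\sigma^{*},u_1\rangle}{2\lambda_1} \;>\; 0 .
\]
Hence $\text{sign}(u_1) = \sigma^{*}$, so $M(\hat\sigma,\sigma^{*})=1$, on the event
\[
\max_{i\in[n]}\big|(Wu_1)_i\big| \;<\; c\lambda_1 \;=\; \tfrac{\mu_1-\mu_2}{2}\langle\sigma^{*},u_1\rangle \;=\; \tau\sqrt{2\,\SNR\,\log n}\,(1-o(1)),
\]
where I used $\mu_1-\mu_2 = (\alpha-\beta)\sqrt{\log n/n}$, the identity $(\alpha-\beta)^2 = 8\tau^2\,\SNR$, and the estimate for $\langle\sigma^{*},u_1\rangle$. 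Writing $\SNR = 1+2\delta$ with $\delta>0$, it then suffices to prove $\max_i|(Wu_1)_i| \leqslant \tau\sqrt{2(1+\delta)\log n} + o(\sqrt{\log n})$ with high probability, since $\sqrt{1+\delta}<\sqrt{1+2\delta}$.

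To get this $\ell_\infty$ bound I would use the leave-one-out construction: let $W^{(i)}$ be $W$ with row and column $i$ zeroed out, $B^{(i)} := \mathbb{E}[B]+W^{(i)}$, and $u_1^{(i)}$ its leading unit eigenvector (sign-aligned with $\sigma^{*}$), so that $u_1^{(i)}$ is independent of row $i$ of $W$. Splitting $(Wu_1)_i = (Wu_1^{(i)})_i + (W(u_1-u_1^{(i)}))_i$: conditionally on $u_1^{(i)}$ the first term is $N(0,\tau^2\|u_1^{(i)}\|_2^2) = N(0,\tau^2)$, so a union bound over $i$ gives $\max_i|(Wu_1^{(i)})_i| \leqslant \tau\sqrt{2(1+\delta)\log n}$ with probability $1-O(n^{-\delta})$. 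For the second term, the $i$-th row of $B^{(i)}$ is $\tfrac{\mu_1-\mu_2}{2}\sigma^{*}_i(\sigma^{*})^{\top}$, so $\lambda_1(B^{(i)})(u_1^{(i)})_i = \tfrac{\mu_1-\mu_2}{2}\sigma^{*}_i\langle\sigma^{*},u_1^{(i)}\rangle$ with $\lambda_1(B^{(i)}) = \Theta(n(\mu_1-\mu_2))$, forcing $|(u_1^{(i)})_i| = O(n^{-1/2})$; combined with $\|W_{i\cdot}\|_2 = O(\tau\sqrt n)$ and $|\langle W_{i\cdot},u_1^{(i)}\rangle| = O(\tau\sqrt{\log n})$ this gives $\|(W-W^{(i)})u_1^{(i)}\|_2 = O(\tau\sqrt{\log n})$, so Davis--Kahan applied to $B,B^{(i)}$ (gap of order $\sqrt{n\log n}$) yields $\|u_1-u_1^{(i)}\|_2 = O(\tau/\sqrt n)$ and hence $|(W(u_1-u_1^{(i)}))_i|\leqslant\|W_{i\cdot}\|_2\|u_1-u_1^{(i)}\|_2 = O(1)$, uniformly in $i$. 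Adding the two pieces gives $\max_i|(Wu_1)_i|\leqslant\tau\sqrt{2(1+\delta)\log n}+O(1)$, which lies below $\tau\sqrt{2(1+2\delta)\log n}(1-o(1))$ for large $n$, as needed.

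The hard part is precisely this entrywise bound on $(Wu_1)_i$: the crude estimates $|(Wu_1)_i|\leqslant\|W\|\,\|u_1\|_2 = O(\tau\sqrt n)$, or $|(W(u_1-\sigma^{*}/\sqrt n))_i|\leqslant\|W\|\,\|u_1-\sigma^{*}/\sqrt n\|_2 = O\!\big(\tau^2\sqrt n/((\alpha-\beta)\sqrt{\log n})\big)$, overshoot the true scale $\Theta(\sqrt{\log n})$ by a polynomial factor, and iterating the structural identity with $\ell_2\!\to\!\ell_\infty$ bounds only ever buys extra powers of $(\log n)^{-1/2}$, never the missing $n^{-1/2}$; the leave-one-out decoupling of $u_1$ from row $i$ of $W$, at the cost of a genuinely lower-order correction, is what makes the leading constant sharp and pins the threshold at $\SNR=1$. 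Everything else — the norm bound on $W$, Weyl, Davis--Kahan, and the Gaussian tail and union-bound steps — is routine and of the same flavor as the computations in the proofs of the earlier theorems.
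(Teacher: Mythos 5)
Your proof is correct, and it takes a genuinely different route from the paper. The paper verifies the three hypotheses (a)--(c) of \cite{abbe2019entrywise}[Theorem~1.1] for the matrix $B$ and then applies that theorem as a black box, obtaining $\bigl\|u_1-Bu_1^*/\lambda_1^*\bigr\|_\infty=o(\|u_1^*\|_\infty)$; since $Bu_1^*=Au_1^*$ (because $u_1^*\perp\mathbf{1}_n$), the remaining step is an elementary union-bound computation showing that each coordinate of $Au_1^*/\lambda_1^*$ has the correct sign with margin strictly exceeding the $o(\|u_1^*\|_\infty)$ error, which pins the threshold at $\mathrm{SNR}>1$. You instead bypass the black box: you use the exact rank-one eigenvector identity $(u_1)_i=c\sigma_i^*+(Wu_1)_i/\lambda_1$ and control $\max_i|(Wu_1)_i|$ directly via the leave-one-out construction, which is precisely the mechanism inside the proof of the AFWZ result. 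The two first-order expressions you each reduce to are the same up to lower-order terms — $c\sigma^*_i+(Wu_1)_i/\lambda_1 \approx (u_1^*)_i + (Wu_1^*)_i/\lambda_1^* = (Bu_1^*/\lambda_1^*)_i$ — and the final Gaussian-tail-plus-union-bound computation is identical, so both arguments land on $\mathrm{SNR}>1$. What the paper's route buys is brevity, at the cost of leaning on an external theorem; what yours buys is a self-contained proof that makes the decoupling mechanism and the source of the sharp constant transparent. One small caution if you write this out in full: the Davis--Kahan step applied to $(B,B^{(i)})$ must use a residual-based bound with $\|(B-B^{(i)})u_1^{(i)}\|_2$ in the numerator (not the operator norm $\|B-B^{(i)}\|$, which is of order $\tau\sqrt n$ and would only give $\|u_1-u_1^{(i)}\|_2=O(1/\sqrt{\log n})$, losing a polynomial factor in the final step); your computation implicitly assumes the residual version, which is the right one here, but it is worth stating explicitly.
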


 The proof of this theorem is based on a more general spectral separation result obtained in \cite{abbe2019entrywise}[Theorem 1.1] and we defer the detailed proof of the above result in the appendix. Now we analyze a Gaussian weighted planted dense subgraph model of linear proportional size.
\section{Gaussian weighted planted dense subgraph model: description}
In comparison to the SBM, the binary version of the planted dense subgraph problem \cite{ariascastro2013community} 
has a single  (denser) cluster of size $\gamma n$ for  $\gamma \in (0,1)$, and vertices within the planted community connect with probability $p$ and every other pair connect with probability $q$.
In the Gaussian weighted planted dense subgraph model (GWPDSM) denoted as PDSM$(n, \mu_1,\mu_2, \tau^2)$ we have a weighted graph $G_{n}$ on $n$  vertices with binary labels $(\zeta_i^*)_{i\in [n]}$ such  that  
\[
\#\{i\in [n]: \zeta_i^*=1\}= \gamma n, \quad 
\#\{i\in [n]: \zeta_i^*=0\}= (1-\gamma) n
\] 

and conditioned on this labeling $(\zeta_i^*)_{i\in [n]}$, for each pair  $\{i,j\}$, we have a weighted edge 
\begin{equation}\label{def:GWPDSM0}
A_{ij}=A_{ji} \sim
\begin{cases}
  N(\mu_1, \tau^2) & \text{if } \zeta^*(i) =\zeta^*(j) =1   \\
   N(\mu_2, \tau^2) & \text{otherwise}   
\end{cases}
\end{equation} 

sampled independently for every distinct pair $\{i,j\}$.  To simplify the writing, we focus on the assortative case, $\mu_1>\mu_2$ with the critical scaling ~\eqref{def:CritGWSBM0}  of $\mu_1= \alpha \sqrt{\frac{\log n}{n}}, \mu_2= \beta\sqrt{\frac{\log n}{n}}.$ We further have our SNR  given by $\text{SNR}:= \frac{(\alpha -\beta)^2}{8\tau^2}$ with $\alpha >\beta $ and $ \tau >0$ constants.
\par
 The problem of exact recovery of the densely weighted community asks to recover the planted community labeling $(\zeta_i^*)_{i\in [n]}$ exactly having observed an instance of the weighted graph $G_n\sim$ PDSM$(n, \mu_1\,\mu_2, \tau^2)$. Let us  denote the parameter space $\Theta^{\gamma}_n$ with
 $\mathbf{1}_n := (1, \cdots, 1)\in \mathbb{R}^n$ as 
 \begin{equation} \label{eq:SOL1}
 \Theta^{\gamma}_n = \{\zeta\in \{0, 1\}^n, \langle\zeta, \mathbf{1}_n\rangle=\gamma n\},
 \end{equation}
 
 We define the agreement ratio between  $\hat{\zeta},  \zeta  \in \{0, 1\}^n$ as     $M(\hat{\zeta}, \zeta)= \frac{1}{n}\sum_{i=1}^n \mathbf{1}_{\hat{\zeta}(i)=\zeta(i)} $.

Then, we say that the problem of exact recovery is solvable if having observed an instance of the weighted random graph $G_n \sim$ PDSM$(n,\mu_1, \mu_2,\tau^2)$ with an unobserved $\zeta^* \in \Theta^{\gamma}_n$, we output a community label $\hat{\zeta} \in \Theta^{\gamma}_n $ such that $\p (M(\hat{\zeta},\zeta^*)=1) \to 1.$  We say the problem of exact recovery is unsolvable otherwise. That is, with probability bounded away from zero, every estimator fails to exactly recover the densely weighted community, as the number of vertices grows.
\section{Gaussian weighted planted dense subgraphs: results and proof sketch}
 In this section, we present our results along with the ideas of the proof for GWPDSM in comparison with the works \cite{hajek2016achieving} \cite{hajek2016achieving1} that established an important set of results for the binary version. Detailed proofs of all the theorems and lemmas are provided in the appendix.
 \par
 To explore information theoretic limits, we analyze the maximum likelihood estimator (MLE).
 \vspace{-0.4 em}
\begin{equation} \label{eq:MLEPDSM0}
   \hat{\zeta}_{MLE}= \underset{\zeta \in \Theta^{\gamma}_n}{ \argmax} \hspace{.1 cm}g_A(\zeta) \quad \quad \text{where } \quad g_A(\zeta):=\sum_{i,j} A_{ij}\zeta_i \zeta_j 
   \vspace{-0.4 em}
\end{equation}

Under the critical scaling ~\eqref{def:CritSBM0} in the binary version, it was proven in \cite{hajek2016achieving} that below a certain threshold, exact recovery of the planted community is not solvable. In particular, MLE ~\eqref{eq:MLEPDSM0} fails to recover the ground truth $\zeta^*$ with high probability  as $n \to  \infty$. In GWPDSM ~\eqref{def:GWPDSM0} we establish the following statistical impossibility result.
\begin{theorem}[\textbf{Statistical impossibility of exact recovery of the planted community}] \label{thm:Negative1'}
     Let $\gamma \in  (0,1)$ and $G_n \sim $ PDSM$(n, \mu_1, \mu_2, \tau^2)$   with $\mu_1, \mu_2$ in the critical scaling regime ~\eqref{def:CritGWSBM0}, and $\alpha > \beta$, $\tau >0$ be constants.
      If  $\gamma$ SNR$<\frac{3}{4}$ then MLE fails in recovering the community label $\zeta^*$ with high probability
     \[ 
     \underset{n \to \infty}{\limsup}\hspace{.1cm}\p(M(\hat{\zeta}_{MLE}, \zeta^*)=1) = 0 \quad  \iff \quad  \underset{n \to \infty}{\liminf}\hspace{.1cm}\p(M(\hat{\zeta}_{MLE}, \zeta^*)\neq 1) = 1.
     \]
\end{theorem}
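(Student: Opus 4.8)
The plan is to follow the template of the GWSBM impossibility argument above: show that with probability $\to1$ there is a single-vertex swap strictly increasing the likelihood functional $g_A$, so that $\hat{\zeta}_{MLE}\neq\zeta^*$. Since under the uniform prior on $\Theta^{\gamma}_n$ the MLE coincides with the MAP rule, which is optimal for exact recovery, this also yields impossibility for \emph{every} estimator. Write $C^*:=\{i:\zeta_i^*=1\}$ (so $|C^*|=\gamma n$), and for $i\in C^*,\ v\notin C^*$ let $\zeta^*[i\leftrightarrow v]$ denote the configuration obtained from $\zeta^*$ by moving $v$ into the community and $i$ out. Call $(i,v)$ a \emph{bad pair} if $g_A(\zeta^*[i\leftrightarrow v])>g_A(\zeta^*)$. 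Exactly as in the GWSBM lemma, $\mathbb{P}\big(M(\hat{\zeta}_{MLE},\zeta^*)\neq1\big)\geq\mathbb{P}(\text{a bad pair exists})$, so it suffices to prove that a bad pair exists with probability $\to1$ whenever $\gamma\,\mathrm{SNR}<3/4$.

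First I would localize the condition. Set $d_{\mathrm{in}}(i):=\sum_{u\in C^*\setminus i}A_{iu}$ and $e_v(i):=\sum_{u\in C^*\setminus i}A_{vu}$. Expanding $g_A$ as in the GWSBM computation, $(i,v)$ is a bad pair iff $e_v(i)-d_{\mathrm{in}}(i)>\tfrac12(A_{ii}-A_{vv})$; the right side is $O_{\mathbb{P}}(\tau)$ and is absorbed, as in the GWSBM proof, into a slowly diverging slack $c_n$ with $\sqrt{\log n}\ll c_n\ll\sqrt{n\log n}$, so the relevant event is essentially $\{e_v(i)>d_{\mathrm{in}}(i)\}$. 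The two sums involve disjoint sets of edges, hence are independent, with $d_{\mathrm{in}}(i)\overset{d}{=}N\big((\gamma n-1)\mu_1,(\gamma n-1)\tau^2\big)$ and $e_v(i)\overset{d}{=}N\big((\gamma n-1)\mu_2,(\gamma n-1)\tau^2\big)$. Writing $y_0^2:=(\gamma n-1)(\mu_1-\mu_2)^2/\tau^2=8\gamma\,\mathrm{SNR}\,\log n\,(1+o(1))$ via the critical scaling ~\eqref{def:CritGWSBM0}, the Gaussian tail gives $\mathbb{P}((i,v)\text{ bad})=\mathbb{P}\big(N(0,1)>y_0/\sqrt2\big)=e^{-y_0^2/4+o(\log n)}=n^{-2\gamma\,\mathrm{SNR}+o(1)}$.

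Next I would run the second moment method on $W:=\sum_{i\in C^*,\,v\notin C^*}\mathbf{1}\{(i,v)\text{ bad}\}$. Since $\gamma\,\mathrm{SNR}<3/4<1$, the first moment $\mathbb{E}[W]=\gamma(1-\gamma)n^2\cdot e^{-y_0^2/4+o(\log n)}=n^{2-2\gamma\,\mathrm{SNR}+o(1)}\to\infty$. For $\mathbb{E}[W^2]$ I would split ordered pairs $\big((i,v),(i',v')\big)$ of bad pairs by vertex overlap: (i) identical pairs contribute $\mathbb{E}[W]=o(\mathbb{E}[W]^2)$; (ii) pairs sharing exactly one endpoint ($i=i',v\neq v'$ or $v=v',i\neq i'$), of which there are $\Theta(n^3)$; (iii) vertex-disjoint pairs, which share at most the single edge $\{i,i'\}$ and are therefore asymptotically uncorrelated, contributing $(1+o(1))\mathbb{E}[W]^2$. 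In case (ii) one conditions on the shared endpoint's degree and optimizes how the mean gap is split between the two half-overlapping tail events; a Laplace estimate of the resulting integral $\int e^{-(x-y_0)^2/2}\,\mathbb{P}(N(0,1)>x)^2\,dx$ shows the optimal allocation is $2{:}1$, giving joint probability $e^{-y_0^2/3+o(\log n)}=n^{-\frac83\gamma\,\mathrm{SNR}+o(1)}$ per pair, hence total contribution $\Theta(n^3)e^{-y_0^2/3+o(\log n)}=n^{3-\frac83\gamma\,\mathrm{SNR}+o(1)}$. Comparing with $\mathbb{E}[W]^2=n^{4-4\gamma\,\mathrm{SNR}+o(1)}$, the case-(ii) term is negligible iff $n^{-1}e^{y_0^2/6}\to0$, i.e. iff $\gamma\,\mathrm{SNR}<3/4$. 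Under this hypothesis $\mathbb{E}[W^2]=(1+o(1))\mathbb{E}[W]^2$, so Paley–Zygmund gives $\mathbb{P}(W\geq1)\geq\mathbb{E}[W]^2/\mathbb{E}[W^2]\to1$; thus a bad pair exists with probability $\to1$ and $\hat{\zeta}_{MLE}$ fails, establishing Theorem ~\eqref{thm:Negative1'}.

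I expect the decisive step to be the case-(ii) bound: the exponent $\tfrac83\gamma\,\mathrm{SNR}$, instead of the naive product exponent $4\gamma\,\mathrm{SNR}$, is exactly what lowers the threshold from the first-moment value $\gamma\,\mathrm{SNR}=1$ to $\gamma\,\mathrm{SNR}=3/4$, and obtaining it requires a careful saddle-point analysis of the conditional tail with error terms uniform enough to survive summation over $\Theta(n^3)$ overlapping pairs. Everything else — the $O_{\mathbb{P}}(\tau)$ diagonal and $\setminus i$ corrections handled through the slack $c_n$, the asymptotic independence in cases (i) and (iii), and the reduction from MLE failure to impossibility for all estimators via MAP-optimality — is routine and mirrors the GWSBM arguments already sketched.
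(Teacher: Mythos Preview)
Your proposal is correct and follows essentially the same route as the paper: reduce MLE failure to the existence of a bad swap pair, apply the second moment method directly to $W=\sum_{i\in C^*,\,v\notin C^*}\mathbf{1}_{\mathscr{C}_{iv}}$, and identify the single-shared-endpoint covariance term (your case (ii)) as the one that forces the threshold down from $1$ to $3/4$ via the exponent $\tfrac{8}{3}\gamma\,\mathrm{SNR}$. The only cosmetic discrepancy is that in this model $A_{ii}=0$, so the diagonal correction $\tfrac12(A_{ii}-A_{vv})$ and the slack $c_n$ you introduce to absorb it are unnecessary; the paper works directly with the exact equivalence $\mathscr{C}_{iv}=\{e(i,C^*)<e(v,C^*\setminus\{i\})\}$ without any slack.
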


  We prove this by establishing that whenever $\gamma$SNR$<\frac{3}{4}$, with probability approaching one we can find a vertex $i$ in the planted community and a vertex $j$ from the rest such that swapping them increases the likelihood ~\eqref{eq:MLEPDSM0} and hence causes MLE to fail to recover the ground truth $\zeta^*$ exactly. Without loss of generality, we assume that the planted community is given by $\zeta^* =(\mathbf{1}_{\gamma n}, \mathbf{0}_{(1-\gamma)n}) \in \Theta_n^{\gamma}$, and $G_n \sim $ PDSM$(n, \mu_1, \mu_2, \tau^2)$ conditioned on $\zeta^*$. We denote the planted community  as
    \[C^* := \left\{ i \in [n]: \zeta_i^*=1\right\}=\left\{1, \cdots, \gamma n\right\}, \quad
    [n] \setminus C^*:=\left\{\gamma n +1, \cdots, n\right\}.\]
    \vspace{-.5 cm}
    \begin{definition} 
       We define the set of bad pairs of vertices in $G_n$ with $\zeta^*[i\leftrightarrow j]$ denoting the vector obtained by swapping the values of coordinates $i$ and $j$ in $\zeta^*$.
    \begin{equation} \label{def:Bad1}
    \mathscr{C}(G_n) := \{(i,j): i\in C^*, j\in [n] \setminus C^*, g_A(\zeta^*)< g_A(\zeta^*[i\leftrightarrow j])\},
    \end{equation}
\end{definition}

From ~\eqref{eq:MLEPDSM0} we observe that $g_A(\zeta^*)< g_A(\zeta^*[i\leftrightarrow j]) $ is the same as the event that the likelihood of the partition $\zeta^*[i\leftrightarrow j]$ is larger than the planted one $\zeta^*$. So it reduces to the following.
\begin{lemma}
   MLE fails to solve the exact recovery with high probability if $\mathscr{C}(G_n)$ is non-empty with high probability . More precisely, we have the following 
   \[
   \underset{n \to \infty}{\liminf}\hspace{.1cm}\p(M(\hat{\zeta}_{MLE}, \zeta^*)\neq1) \geqslant \underset{n \to \infty}{\liminf}\hspace{.1cm} \p (\mathscr{C}(G_n)\neq \emptyset)
   \]
\end{lemma}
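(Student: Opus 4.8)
The plan is to prove the statement by showing the deterministic event inclusion
$\{\mathscr{C}(G_n) \neq \emptyset\} \subseteq \{M(\hat{\zeta}_{MLE}, \zeta^*) \neq 1\}$, from which the claimed inequality between $\liminf$'s follows at once by monotonicity of probability and of $\liminf$. So the entire argument is really one containment of events; no concentration or moment estimate is needed at this stage (those enter later, when one shows $\mathscr{C}(G_n)\neq\emptyset$ with high probability).

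First I would check feasibility of the swap: for $i \in C^*$ and $j \in [n]\setminus C^*$ we have $\zeta^*(i)=1$ and $\zeta^*(j)=0$, so exchanging these two coordinates leaves $\langle \zeta, \mathbf{1}_n\rangle = \gamma n$ unchanged, i.e.\ $\zeta^*[i\leftrightarrow j] \in \Theta^{\gamma}_n$. Hence $\zeta^*[i\leftrightarrow j]$ is an admissible competitor in the optimization \eqref{eq:MLEPDSM0}. Next, suppose $(i,j)\in\mathscr{C}(G_n)$, i.e.\ $g_A(\zeta^*) < g_A(\zeta^*[i\leftrightarrow j])$. Then $\zeta^*$ is not a maximizer of $g_A$ over $\Theta^{\gamma}_n$, so any maximizer $\hat{\zeta}_{MLE}$ obeys $g_A(\hat{\zeta}_{MLE}) \geqslant g_A(\zeta^*[i\leftrightarrow j]) > g_A(\zeta^*)$ and in particular $\hat{\zeta}_{MLE} \neq \zeta^*$. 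Since for binary labels $M(\hat{\zeta},\zeta) = \tfrac1n\sum_{k}\mathbf{1}_{\hat{\zeta}(k)=\zeta(k)}$ equals $1$ iff $\hat{\zeta}=\zeta$ (unlike the SBM there is no global relabeling symmetry in the PDSM), we conclude $M(\hat{\zeta}_{MLE},\zeta^*)\neq 1$ on the event $\{\mathscr{C}(G_n)\neq\emptyset\}$, proving the inclusion.

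Taking probabilities then gives $\p(M(\hat{\zeta}_{MLE},\zeta^*)\neq 1) \geqslant \p(\mathscr{C}(G_n)\neq\emptyset)$ for every $n$, and passing to the $\liminf$ on both sides yields the lemma. The only point that merits a word of care — and it is the closest thing to an obstacle — is that the conclusion must be robust to how ties among maximizers of $g_A$ are resolved; this is exactly why one records the \emph{strict} inequality $g_A(\zeta^*) < g_A(\zeta^*[i\leftrightarrow j])$ in the definition \eqref{def:Bad1} of $\mathscr{C}(G_n)$, which forces $\zeta^*$ out of the argmax set entirely rather than merely tying with it.
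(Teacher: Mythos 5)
Your proof is correct and takes essentially the same route as the paper: both argue that a bad pair $(i,j)$ furnishes a feasible competitor $\zeta^*[i\leftrightarrow j]\in\Theta^{\gamma}_n$ with strictly larger likelihood, hence $\zeta^*$ cannot lie in the argmax, yielding the event inclusion $\{\mathscr{C}(G_n)\neq\emptyset\}\subseteq\{M(\hat{\zeta}_{MLE},\zeta^*)\neq 1\}$ and the inequality upon taking probabilities and $\liminf$. Your explicit remarks on feasibility of the swap, on the absence of a sign-flip symmetry in the PDSM agreement ratio, and on the role of strict inequality in handling ties are all correct points that the paper leaves implicit.
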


We analyze $g_A(\zeta^*)< g_A(\zeta^*[i\leftrightarrow j]) $ and show that 
$\underset{n \to \infty}{\lim}\hspace{.1cm}\p (\mathscr{C}(G_n)\neq \emptyset) =1.$
\begin{definition}
    For a vertex $i \in C^*$, and $j \in [n]\setminus C^*$ we define the following.
    \[
    e(i, C^*) = \sum_{ k \in C^*} A_{ik} =e(i, C^* \setminus \{i\}), \quad 
    e(j, C^* \setminus \{i\}) = \sum_{ k \in C^*  \setminus \{i\}} A_{jk}
    \]
\end{definition}

It is worth observing that $e(i, C^*)$  sums the edge weights that $i $ has with $C^*$.
\begin{lemma}
With the notations above and $i \in C^*$, and $j \in [n]\setminus C^*$ we have the following.
    \[
    g_A(\zeta^*)< g_A(\zeta^*[i\leftrightarrow j]) \quad  \iff \quad  e(i, C^*) < e(j, C^* \setminus \{i\}).
    \]
\end{lemma}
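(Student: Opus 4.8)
The plan is a direct algebraic expansion of the two quadratic forms, with all the common terms cancelling. Write $S^{*}:=\{k:\zeta^{*}_{k}=1\}=C^{*}$ and, for the swapped configuration, $S':=\{k:(\zeta^{*}[i\leftrightarrow j])_{k}=1\}=(C^{*}\setminus\{i\})\cup\{j\}$; this is legitimate because $i\in C^{*}$ and $j\in[n]\setminus C^{*}$, so $i\neq j$ and $|S'|=\gamma n$. Since $\zeta_{k}\in\{0,1\}$ and the model has no self-loops (equivalently $A_{kk}=0$, which is exactly what underlies the stated identity $e(i,C^{*})=e(i,C^{*}\setminus\{i\})$), we have $g_{A}(\zeta)=\sum_{k,l}A_{kl}\zeta_{k}\zeta_{l}=\sum_{k,l\in S}A_{kl}$ for $S$ the support of $\zeta$, hence $g_{A}(\zeta^{*})=\sum_{k,l\in C^{*}}A_{kl}$ and $g_{A}(\zeta^{*}[i\leftrightarrow j])=\sum_{k,l\in S'}A_{kl}$.

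Next I would peel off the contribution of the single swapped coordinate relative to the common ``core'' $C_{0}:=C^{*}\setminus\{i\}=S'\setminus\{j\}$. Splitting each double sum,
\[
\sum_{k,l\in C^{*}}A_{kl}=\sum_{k,l\in C_{0}}A_{kl}+\sum_{l\in C_{0}}A_{il}+\sum_{k\in C_{0}}A_{ki}+A_{ii},
\]
and the analogous identity with $i$ replaced by $j$ for $S'$. Using $A=A^{\top}$ and $A_{ii}=A_{jj}=0$, the term $\sum_{k,l\in C_{0}}A_{kl}$ is common to both, while the remaining terms collapse to $2\sum_{k\in C_{0}}A_{ik}=2\,e(i,C^{*}\setminus\{i\})$ and $2\sum_{k\in C_{0}}A_{jk}=2\,e(j,C^{*}\setminus\{i\})$ respectively. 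Therefore
\[
g_{A}(\zeta^{*}[i\leftrightarrow j])-g_{A}(\zeta^{*})=2\bigl(e(j,C^{*}\setminus\{i\})-e(i,C^{*}\setminus\{i\})\bigr)=2\bigl(e(j,C^{*}\setminus\{i\})-e(i,C^{*})\bigr),
\]
the last step invoking $e(i,C^{*})=e(i,C^{*}\setminus\{i\})$. Dividing by $2$ gives the claimed equivalence $g_{A}(\zeta^{*})<g_{A}(\zeta^{*}[i\leftrightarrow j])\iff e(i,C^{*})<e(j,C^{*}\setminus\{i\})$.

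There is no analytic content here, so the only thing to watch is the bookkeeping: correctly identifying the common core $C_{0}$, handling the diagonal entries (which vanish under the no-self-loop convention), and using symmetry of $A$ to turn each single-vertex--to--core interaction into twice the corresponding $e(\cdot,\cdot)$ quantity. If the paper's convention for $g_{A}$ is instead a sum over unordered pairs $\{k,l\}$, the factor $2$ simply disappears and the argument is otherwise identical. The point of the lemma is that a single swap perturbs the likelihood exactly by the gap between the intruding vertex's and the planted vertex's weighted degrees into $C^{*}\setminus\{i\}$, which is precisely the reduction needed to make the first- and second-moment analysis of $\mathscr{C}(G_{n})$ go through.
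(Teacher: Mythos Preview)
Your proof is correct and follows essentially the same approach as the paper: both expand $g_A$ over the common core $C^*\setminus\{i\}$, cancel that contribution, and use symmetry of $A$ together with $A_{ii}=0$ to reduce the remaining terms to $2e(i,C^*)$ and $2e(j,C^*\setminus\{i\})$. Your version is slightly more explicit about the bookkeeping (naming $C_0$, handling the diagonal), but the argument is the same.
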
  

With the lemma above we apply the second-moment method to $Z = \sum_{ i \in C^*, j \in [n]\setminus C^*} Z_{ij}$ with
\begin{equation}
Z_{ij}=\mathbf{1}_{\mathscr{C}_{ij}}
\quad \text{with} \quad 
\mathscr{C}_{ij} : =\{e(i, C^*) < e(j, C^*\setminus \{i\})\}
\end{equation}
\begin{equation}
\mathscr{C}(G_n) =\{(i,j): i \in C^*, j \in [n]\setminus C^*, e(i, C^*)< e(j,C^*\setminus \{i\})\} = \cup_{ i \in C^*, j \in [n]\setminus C^* } \mathscr{C}_{ij}.
\end{equation}

We have $\mathbb{P}\left[\mathscr{C}(G_n) \neq \emptyset\right] =\mathbb{P}\left[Z \geqslant 1\right]$ and it reduces to proving $\mathbb{P}\left[Z \geqslant1 \right] \to 1$ if $ \gamma \text{SNR} <\frac{3}{4}$. Applying the second-moment method to $Z$ it further reduces to proving the following two limits.
\begin{equation}
  \sum_{(i,j) \in C^* \times [n]\setminus C^*} \mathbb{P}\left[\mathscr{C}_{ij}\right]\to \infty
  \end{equation}
  \begin{equation} \label{eq:SMC'}
   \frac{\sum_{(i_1, j_1) \neq (i_2, j_2)   \in  C^* \times [n]\setminus C^*} \mathbb{P}\left[ \mathscr{C}_{(i_1, j_1)}\cap \mathscr{C}_{(i_2, j_2)}\right]}{\left(\sum_{(i,j) \in C^* \times [n]\setminus C^*} \mathbb{P}\left[\mathscr{C}_{ij}\right]\right)^2} \to 1 .
  \end{equation}
\begin{lemma} \label{lem:First1'}
 Let $C = C(\alpha, \beta, \gamma, \tau): = \frac{\gamma (1- \gamma )\tau \sqrt{2} }{(\alpha -\beta) \sqrt{2\pi \gamma }} $. If $ \gamma \text{SNR} < 1$ then for $Z,Z_1$ IID $N(0,1)$    we have
\begin{equation} \label{eq:First1'}
\sum_{(i,j) \in C^* \times [n]\setminus C^*} \mathbb{P}\left[\mathscr{C}_{ij}\right] \sim  \gamma (1-\gamma)n^2 \mathbb{P}\left[ \left\{Z_1  > \frac{Z}{ \sqrt{2\gamma n -3}} + s \sqrt{\log n}\right\}\right] \sim C \frac{n^{2(1-\gamma \text{SNR})}}{\sqrt{\log n}} \to \infty.
\end{equation}
\end{lemma}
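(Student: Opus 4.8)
The plan is a textbook first-moment estimate, carried out for one generic pair $(i,j)$ and then multiplied by the number of pairs. Conditionally on $\zeta^*$ the array $(A_{k\ell})$ is exchangeable within $C^*$ and within $[n]\setminus C^*$, so $\mathbb{P}[\mathscr{C}_{ij}]$ does not depend on the choice of $(i,j)\in C^*\times([n]\setminus C^*)$; since there are exactly $\gamma(1-\gamma)n^2$ such pairs, $\sum_{(i,j)}\mathbb{P}[\mathscr{C}_{ij}]=\gamma(1-\gamma)n^2\,\mathbb{P}[\mathscr{C}_{ij}]$, and it remains to estimate the single probability. Fix $i\in C^*$, $j\in[n]\setminus C^*$. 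The statistics $e(i,C^*)=\sum_{k\in C^*\setminus\{i\}}A_{ik}$ and $e(j,C^*\setminus\{i\})=\sum_{k\in C^*\setminus\{i\}}A_{jk}$ run over the two \emph{disjoint} edge-sets $\{\{i,k\}:k\in C^*\setminus\{i\}\}$ and $\{\{j,k\}:k\in C^*\setminus\{i\}\}$, hence are independent, and by~\eqref{def:GWPDSM0} distributed as $N((\gamma n-1)\mu_1,(\gamma n-1)\tau^2)$ and $N((\gamma n-1)\mu_2,(\gamma n-1)\tau^2)$. Therefore $e(i,C^*)-e(j,C^*\setminus\{i\})\overset{d}{=}N\big((\gamma n-1)(\mu_1-\mu_2),\,(2\gamma n-2)\tau^2\big)$; writing its centred part as $\tau(Z+\sqrt{2\gamma n-3}\,Z_1)$ with $Z,Z_1$ IID $N(0,1)$ and using the sign-symmetry of $(Z,Z_1)$ and of $Z$, I get, after substituting~\eqref{def:CritGWSBM0},
\[
\mathbb{P}[\mathscr{C}_{ij}]=\mathbb{P}\!\left[Z_1>\frac{Z}{\sqrt{2\gamma n-3}}+s_n\sqrt{\log n}\right],\qquad s_n:=\frac{(\gamma n-1)(\alpha-\beta)}{\tau\sqrt{n(2\gamma n-3)}}\ \longrightarrow\ s:=\frac{(\alpha-\beta)\sqrt{2\gamma}}{2\tau},
\]
where $s$ reduces to $\tfrac{\alpha-\beta}{2\tau}$ at $\gamma=\tfrac12$. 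As $s_n-s=O(1/n)$, replacing $s_n$ by $s$ shifts the threshold by $O(\sqrt{\log n}/n)$ and changes the Gaussian tail only by a $1+o(1)$ factor, which yields the first ``$\sim$''.

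To finish I evaluate the tail sharply. Since $Z_1-\tfrac{Z}{\sqrt{2\gamma n-3}}\sim N\!\big(0,1+\tfrac{1}{2\gamma n-3}\big)$, the probability equals $\mathbb{P}\big(N(0,1)>s\sqrt{\log n}\,(1+o(1))\big)$, and the Mills-ratio asymptotics $\mathbb{P}(N(0,1)>t)=\tfrac{1}{t\sqrt{2\pi}}e^{-t^2/2}(1+o(1))$ as $t\to\infty$ turn this into $\tfrac{1}{s\sqrt{2\pi\log n}}\,e^{-s^2\log n/2}(1+o(1))$. The relevant identity is $\tfrac{s^2}{2}=\tfrac{(\alpha-\beta)^2\gamma}{4\tau^2}=2\gamma\,\text{SNR}$, so the tail is $\tfrac{1}{s\sqrt{2\pi\log n}}\,n^{-2\gamma\,\text{SNR}}(1+o(1))$; multiplying by $\gamma(1-\gamma)n^2$ and simplifying $\tfrac{\gamma(1-\gamma)}{s\sqrt{2\pi}}=\tfrac{\gamma(1-\gamma)\tau\sqrt{2}}{(\alpha-\beta)\sqrt{2\pi\gamma}}=C$ gives $\sum_{(i,j)}\mathbb{P}[\mathscr{C}_{ij}]\sim C\,n^{2(1-\gamma\,\text{SNR})}/\sqrt{\log n}$, the second ``$\sim$''. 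Under the hypothesis $\gamma\,\text{SNR}<1$ the exponent $2(1-\gamma\,\text{SNR})$ is strictly positive, so the quantity diverges; this is the planted-dense-subgraph analogue of the first-moment computation used for the SBM.

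There is no genuine conceptual obstacle once the independence and the Gaussian marginals are recorded; the only real work is upgrading the various ``$1+o(1)$'' corrections to honest ``$\sim$'' statements. The corrections are of three types: (i) $\gamma n-1$ versus $\gamma n$ in the mean and variance, (ii) the perturbation $\tfrac{Z}{\sqrt{2\gamma n-3}}$ of the threshold together with the gap $s_n-s$, and (iii) the Mills-ratio prefactor $1/(s\sqrt{2\pi\log n})$. Items (i) and (ii) each move the effective threshold $t=\Theta(\sqrt{\log n})$ by only $O(\sqrt{\log n}/n)$, so the multiplicative effect on $e^{-t^2/2}$ is $e^{-O(\log n/n)}=1+o(1)$, and (iii) is harmless since $s_n/s\to1$ and the prefactor is exhibited explicitly. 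The single point needing care is verifying that nothing of order $\log n$ times a non-vanishing quantity survives in the exponent of $n$ — which it does not, because all three discrepancies are $O(1/n)$ while only the leading $2\gamma\,\text{SNR}\log n$ term matters.
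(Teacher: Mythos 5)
Your proof is correct and follows the same first-moment route as the paper: exchangeability reduces the sum to $\gamma(1-\gamma)n^2$ times a single-pair probability, the disjointness of the edge sets gives independence and the stated Gaussian marginals, the variance split $(2\gamma n-2)\tau^2 = \tau^2(1+(2\gamma n-3))$ produces the $Z$, $Z_1$ representation, and the Mills ratio delivers the sharp constant $C$ and exponent $2(1-\gamma\,\text{SNR})$. You are slightly more explicit than the paper in deriving the $Z$, $Z_1$ decomposition inside this lemma and in checking that the $O(1/n)$ discrepancies $s_n-s$ and $\sqrt{1+1/(2\gamma n-3)}-1$ move the exponent by only $O(\log n/n)$, hence contribute only a $1+o(1)$ multiplicative factor.
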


 The proof relies on that fact that $e(i, C^*)$ and $e(j, C^* \setminus \{i\})$ are independent and  we have 
 \begin{equation}
 e(i, C^*) \overset{d}{=} N((\gamma n -1)\mu_1,(\gamma n -1)\tau^2), \quad 
 e(j, C^*\setminus \{i\}) \overset{d}{=} N((\gamma n -1)\mu_2,(\gamma n -1)\tau^2)
 \end{equation}
 \vspace{-.5 cm}
\begin{lemma} \label{lem:Second1'}
If $ \gamma \text{SNR} <  \frac{3}{4}$ then, we have the following   
\[
\frac{\sum_{(i_1, j_1) \neq (i_2, j_2)   \in  C^* \times [n]\setminus C^*} \mathbb{P}\left[ \mathscr{C}_{(i_1, j_1)}\cap \mathscr{C}_{(i_2, j_2)}\right]}{\left(\sum_{(i,j) \in C^* \times [n]\setminus C^*} \mathbb{P}\left[\mathscr{C}_{ij}\right]\right)^2} \to 1 
\]
\end{lemma}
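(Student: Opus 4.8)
The plan is to prove the second-moment ratio converges to $1$ by showing that the cross terms indexed by pairs $(i_1,j_1)\ne(i_2,j_2)$ that share no vertex are, after summation, asymptotically equal to the square of the first moment, while the contribution of the ``overlapping'' pairs (those sharing a vertex, i.e. $i_1=i_2$ or $j_1=j_2$) is negligible. First I would split the numerator sum according to the overlap pattern: (a) $i_1=i_2=:i$ but $j_1\ne j_2$; (b) $j_1=j_2=:j$ but $i_1\ne i_2$; (c) all four indices distinct. Cases (a) and (b) together contribute at most $O(\gamma n)\cdot O((1-\gamma)n)^2\cdot\max_{i,j}\mathbb{P}[\mathscr{C}_{ij}]$ many terms of size at most $\mathbb{P}[\mathscr{C}_{ij}]$ (bounding the joint probability by a single marginal), which by Lemma~\ref{lem:First1'} is of order $n^3\cdot n^{-2\gamma\mathrm{SNR}+o(1)}$, to be compared against the denominator of order $n^{4(1-\gamma\mathrm{SNR})+o(1)}$; this ratio is $n^{-1+2\gamma\mathrm{SNR}+o(1)}\to 0$ precisely when $\gamma\mathrm{SNR}<\tfrac12$, and is even smaller in the regime $\gamma\mathrm{SNR}<\tfrac34$ only if we are more careful. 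This is the crux of where the bound $\tfrac34$ (rather than $\tfrac12$) must come from, and I expect it to be the main obstacle (see below).

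For the dominant case (c), the key structural fact is the same independence/Gaussianity used in Lemma~\ref{lem:First1'}: writing $e(i,C^*)=N((\gamma n-1)\mu_1,(\gamma n-1)\tau^2)$ and $e(j,C^*\setminus\{i\})=N((\gamma n-1)\mu_2,(\gamma n-1)\tau^2)$, the only dependence between $\mathscr{C}_{(i_1,j_1)}$ and $\mathscr{C}_{(i_2,j_2)}$ when all indices are distinct comes through the shared edges $A_{i_1 i_2}$ (appearing in both $e(i_1,C^*)$ and $e(i_2,C^*)$) and $A_{i_1 j_2}, A_{i_2 j_1}$ (appearing in one $e(i,C^*)$ and one $e(j,C^*\setminus\{i'\})$). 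After centering and rescaling by $\sqrt{\gamma n}$, each of these shared contributions is $O(1/\sqrt{n})$, so conditioning on them perturbs the Gaussian thresholds by $o(1/\sqrt{\log n})$ relative to the $s\sqrt{\log n}$ scale. Concretely, I would write $\mathbb{P}[\mathscr{C}_{(i_1,j_1)}\cap\mathscr{C}_{(i_2,j_2)}]$ as an expectation over the $O(1)$ shared standard Gaussians $Z,Z',\dots$ of a product of two conditionally-independent tail probabilities $\mathbb{P}[Z_1>s\sqrt{\log n}+\xi_1]\,\mathbb{P}[Z_2>s\sqrt{\log n}+\xi_2]$ with $\xi_k=O(1/\sqrt n)$, and then use the standard Gaussian-tail estimate $\mathbb{P}[Z>t+\delta]/\mathbb{P}[Z>t]=e^{-t\delta-\delta^2/2}(1+o(1))$ with $t=\Theta(\sqrt{\log n})$ and $\delta=O(1/\sqrt n)$, so that $t\delta=O(\sqrt{\log n/n})=o(1)$. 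This shows each overlapping-edge correction is $1+o(1)$, uniformly over the distinct-index quadruples, so the sum over case (c) is $(1+o(1))\big(\sum_{(i,j)}\mathbb{P}[\mathscr{C}_{ij}]\big)^2$; dividing by the denominator gives the claimed limit $1$.

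The main obstacle is the bookkeeping in the overlapping cases (a), (b): a crude ``bound the joint probability by one marginal'' argument only yields the result for $\gamma\mathrm{SNR}<\tfrac12$, whereas the statement claims $\gamma\mathrm{SNR}<\tfrac34$. To recover the sharper constant one must keep both factors when the two events share exactly one vertex — e.g. for $i_1=i_2=i$, condition on $e(i,C^*)$ and note that $\mathscr{C}_{(i,j_1)}$ and $\mathscr{C}_{(i,j_2)}$ become conditionally independent, each of conditional probability roughly $\mathbb{P}[Z>s\sqrt{\log n}-\text{(the centered }e(i,C^*))/\sqrt{\gamma n\tau^2}]$; integrating the \emph{square} of this tail against the $N(0,1)$ law of the centered $e(i,C^*)$ produces, by a Laplace/saddle-point computation, a factor $n^{-2\cdot\frac{2}{3}\gamma\mathrm{SNR}\cdot\frac{3}{2}}$-type exponent — more precisely the exponent becomes $-\tfrac43\gamma\mathrm{SNR}$ in place of the naive $-2\gamma\mathrm{SNR}$ one would hope for or the $-\gamma\mathrm{SNR}$ one gets from a single marginal — so that the number of such terms, $O(n^3)$, times $n^{-\frac{8}{3}\gamma\mathrm{SNR}+o(1)}$, divided by $n^{4(1-\gamma\mathrm{SNR})+o(1)}$, tends to $0$ exactly when $\gamma\mathrm{SNR}<\tfrac34$. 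Carrying out this Gaussian Laplace-type estimate carefully, and checking that the all-distinct case (c) genuinely dominates, is the technical heart of the proof; everything else is the routine tail-comparison bookkeeping sketched above.
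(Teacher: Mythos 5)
Your proposal is correct and follows essentially the same three-way decomposition (all indices distinct; $i_1=i_2$; $j_1=j_2$) and the same key ideas as the paper: show asymptotic independence for the all-distinct case by conditioning on the $O(1)$ shared Gaussians, and estimate the overlapping cases by a Laplace/saddle-point evaluation of $\int\Phi^2(z+s\sqrt{2\log n})\phi(z)\,dz$ that yields the effective exponent $\tfrac{4}{3}\gamma\,\mathrm{SNR}$ and hence the threshold $\gamma\,\mathrm{SNR}<\tfrac34$. One small correction: when all four indices are distinct the only genuinely shared entry is $A_{i_1 i_2}$ (the edges $A_{i_1 j_2}$ and $A_{i_2 j_1}$ each appear in only one of $e(j_2,C^*\setminus\{i_2\})$ and $e(j_1,C^*\setminus\{i_1\})$ respectively, not in two of the four sums), which only makes the conditioning argument simpler than you anticipated.
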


The proof builds on the fact that for $i_1 \neq i_2 \in C^* $ and $j_1 \neq j_2 \in [n] \setminus C^*$ we have   $e_{i_1, i_2}, e(i_1, C^* \setminus \{i_2\}), e(i_2, C^* \setminus \{i_1\}),  e(j_1, C^* \setminus \{i_1\}), e(j_2, C^* \setminus \{i_2\})$ are jointly independent Gaussians with
\[
e(i_1, C^* \setminus \{i_2\}) \overset{d}{=} N( (\gamma n -2)\mu_1, (\gamma n-2) \tau^2), \quad
e(i_2, C^* \setminus \{i_1\}) \overset{d}{=} N( (\gamma n -2)\mu_1, (\gamma n-2) \tau^2)
\]
\[
e(j_1, C^* \setminus \{i_1\}) \overset{d}{=}  N( (\gamma n -1)\mu_2, (\gamma n-1) \tau^2), \quad
e(j_2, C^* \setminus \{i_2\}) \overset{d}{=} N( (\gamma n -1)\mu_2, (\gamma n-1) \tau^2)
\]

\begin{lemma}
Let  $Z_1, Z_2,$ and $ Z$ be IID $N(0,1)$ with $s=\frac{ (\alpha -\beta) \sqrt{\gamma}}{\tau \sqrt{2}}$. For $i_1\neq i_2 \in C^*, j_1\neq j_2 \in [n]\setminus C^*$
    \begin{equation}\label{eq:SM1'}
\frac{\mathbb{P}[\mathscr{C}_{(i_1, j_1)}\cap \mathscr{C}_{(i_2, j_2)}]}{\mathbb{P}[\mathscr{C}_{i_1,j_1}]\mathbb{P}[\mathscr{C}_{i_1,j_1}]} \sim  \frac{\mathbb{P}  \left[ \left\{Z_1  > \frac{Z}{ \sqrt{2\gamma n -3}} + s \sqrt{\log n}\right\} \cap  \left\{Z_2  > \frac{Z}{ \sqrt{2\gamma n -3}} + s \sqrt{\log n}\right\} \right]}{\mathbb{P}  \left[ Z_1  > \frac{Z}{ \sqrt{2\gamma n -3}} + s \sqrt{\log n}\right] \mathbb{P}  \left[Z_2  > \frac{Z}{ \sqrt{2\gamma n -3}} + s \sqrt{\log n}\right]} \sim 1 
   \end{equation}
\end{lemma}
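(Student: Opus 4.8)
The plan is to strip the two events down to the single edge weight they share, and then reduce to a two-dimensional Gaussian tail estimate. Write $m:=2\gamma n-3$ and $s:=\frac{(\alpha-\beta)\sqrt{\gamma}}{\tau\sqrt{2}}$. Since $i_1,i_2\in C^*$, I first peel off the common edge: $e(i_1,C^*)=e(i_1,C^*\setminus\{i_2\})+A_{i_1 i_2}$ and $e(i_2,C^*)=e(i_2,C^*\setminus\{i_1\})+A_{i_1 i_2}$. By the joint independence recorded just above the lemma, the five variables $e(i_1,C^*\setminus\{i_2\})$, $e(i_2,C^*\setminus\{i_1\})$, $e(j_1,C^*\setminus\{i_1\})$, $e(j_2,C^*\setminus\{i_2\})$, $A_{i_1 i_2}$ are independent, and one checks directly (no two of the relevant edge sets meet except in the edge $\{i_1,i_2\}$) that the \emph{only} randomness shared by $\mathscr{C}_{(i_1,j_1)}$ and $\mathscr{C}_{(i_2,j_2)}$ is $A_{i_1 i_2}$. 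Setting $D_k:=e(j_k,C^*\setminus\{i_k\})-e(i_k,C^*\setminus\{i_{3-k}\})$ for $k\in\{1,2\}$, the triple $D_1,D_2,A_{i_1 i_2}$ is independent, each $D_k\sim N\!\big((\gamma n-1)\mu_2-(\gamma n-2)\mu_1,\ m\tau^2\big)$, and $\mathscr{C}_{(i_k,j_k)}=\{D_k-A_{i_1 i_2}>0\}$.

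Next I standardise: writing $A_{i_1 i_2}=\mu_1+\tau Z$ and $D_k=\big[(\gamma n-1)\mu_2-(\gamma n-2)\mu_1\big]+\tau\sqrt{m}\,Z_k$ with $Z,Z_1,Z_2$ IID $N(0,1)$, a one-line computation gives
\[
\mathscr{C}_{(i_k,j_k)}=\Big\{Z_k>\tfrac{Z}{\sqrt m}+t_n\Big\},\qquad t_n:=\frac{(\gamma n-1)(\mu_1-\mu_2)}{\tau\sqrt m},
\]
and inserting the critical scaling $\mu_1-\mu_2=(\alpha-\beta)\sqrt{\log n/n}$ yields $t_n=s\sqrt{\log n}\,(1+O(1/n))$. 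This is exactly the middle expression of the lemma up to replacing $t_n$ by $s\sqrt{\log n}$; since $\mathbb{P}[Z_k>\frac{Z}{\sqrt m}+t]=\overline{\Phi}\!\big(t/\sqrt{1+1/m}\,\big)$ and a Gaussian-tail-ratio bound gives $\overline{\Phi}(a_n)/\overline{\Phi}(b_n)\to1$ whenever $a_n/b_n\to1$ and $a_n(a_n-b_n)\to0$ — here $a_n-b_n=O(\sqrt{\log n}/n)$ — the replacement changes neither the marginals nor the joint probability in the limit, so it suffices to prove the ratio with threshold $t_n$ tends to $1$.

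So fix the threshold $t:=t_n$ and set $v:=1+1/m$, $u:=t/\sqrt v$, and $\rho:=\mathrm{Corr}\big(Z_1-\tfrac{Z}{\sqrt m},\,Z_2-\tfrac{Z}{\sqrt m}\big)=\tfrac1{m+1}$. Then the numerator of the ratio equals $\Psi_\rho(u):=\mathbb{P}[N_1>u,\,N_2>u]$ for a standard bivariate normal $(N_1,N_2)$ of correlation $\rho$, and the denominator equals $\Psi_0(u)=\overline{\Phi}(u)^2$. By Plackett's identity $\partial_r\Psi_r(u)=\frac1{2\pi\sqrt{1-r^2}}\,e^{-u^2/(1+r)}\ge0$, so
\[
0\ \le\ \Psi_\rho(u)-\overline{\Phi}(u)^2\ =\ \int_0^\rho\frac{e^{-u^2/(1+r)}}{2\pi\sqrt{1-r^2}}\,dr\ \le\ \frac{e^{-u^2}}{\pi}\int_0^\rho e^{u^2 r}\,dr\ =\ \frac{e^{-u^2}\big(e^{u^2\rho}-1\big)}{\pi\,u^2},
\]
using $\frac1{1+r}\ge1-r$ and $\sqrt{1-r^2}\ge\frac12$ for $n$ large. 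Since $u^2\rho\sim\frac{s^2\log n}{2\gamma n}\to0$, the right side is at most $\frac{2\rho}{\pi}e^{-u^2}$ eventually; combining with the Mills bound $\overline{\Phi}(u)^2\ge\frac{u^2}{2\pi(1+u^2)^2}e^{-u^2}$ gives
\[
1\ \le\ \frac{\Psi_\rho(u)}{\overline{\Phi}(u)^2}\ \le\ 1+O(\rho\,u^2)\ =\ 1+O\!\Big(\tfrac{\log n}{n}\Big)\ \longrightarrow\ 1,
\]
which is the desired ratio limit.

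The heart of the matter is the Plackett-integral estimate: bounding the integrand crudely by $e^{-u^2/2}$ is hopeless — it would leave a factor $\rho\,u^2 e^{u^2/2}=n^{2\gamma\,\text{SNR}-1+o(1)}$, useless once $\gamma\,\text{SNR}\ge\frac12$ — so one must use $\frac1{1+r}\ge1-r$ \emph{together with} the fact that the shared edge is a $\Theta(1/n)$ fraction of a degree, i.e.\ $\rho=\Theta(1/n)$, whence $e^{u^2\rho}\to1$; that is the one place the smallness of the correlation is exploited quantitatively. The rest (tracking the $-1,-2,-3$ corrections in the Gaussian parameters, and passing from $t_n$ to $s\sqrt{\log n}$) is routine. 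As an alternative to Plackett, conditioning on $Z$ rewrites the ratio as $\mathbb{E}[g(Z)^2]/\mathbb{E}[g(Z)]^2$ with $g(z)=\overline{\Phi}(z/\sqrt m+t)/\overline{\Phi}(t)$; since $g(Z)\to1$ almost surely and $g(Z)^2\le e^{c|Z|}$ for a constant $c$ (monotonicity of the Gaussian hazard rate), dominated convergence forces both expectations to $1$.
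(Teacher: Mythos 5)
Your proof is correct, and the reduction you carry out in the first half (peeling off the shared edge $A_{i_1 i_2}$, noting the five relevant edge sums are independent, and standardising to threshold $t_n = s\sqrt{\log n}(1+O(1/n))$) is exactly the reduction the paper performs. Where you genuinely diverge is in the final analytic step. The paper finishes via Lemma \ref{lem:jointGauss}: it conditions on $Z$, splits on $\{|Z|\le n^{1/4}\}$ versus its complement, shows the latter has probability $o(r_n)$, and sandwiches $\Phi(s\sqrt{\log n}(1+o(1))+z/\sqrt{m})^2$ uniformly over $|z|\le n^{1/4}$ between quantities both asymptotic to $r_n$. You instead write the numerator as a bivariate orthant probability $\Psi_\rho(u)$ with $\rho=1/(m+1)$, apply Plackett's identity $\partial_r\Psi_r(u)=\frac{1}{2\pi\sqrt{1-r^2}}e^{-u^2/(1+r)}$, and bound the integral over $r\in[0,\rho]$ by exploiting $\frac{1}{1+r}\ge 1-r$ to pull out the factor $e^{u^2\rho}-1 \asymp u^2\rho$. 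Your route is more quantitative — it produces an explicit $1+O(\log n/n)$ rate rather than a bare $\sim 1$ — and your closing remark correctly identifies why the naive bound $e^{-u^2/(1+r)}\le e^{-u^2/2}$ would be fatal once $\gamma\,\mathrm{SNR}\ge\tfrac12$: the smallness of $\rho=\Theta(1/n)$ must enter through $e^{u^2\rho}\to1$, not just through the length of the integration interval. The alternative you sketch at the end (conditioning and dominated convergence for $\mathbb{E}[g(Z)^2]/\mathbb{E}[g(Z)]^2$) is essentially the paper's argument in compressed form, though as written it needs the domination to be uniform in $n$, which your hazard-rate remark supplies for $n$ large. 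One small presentational point: the claim that ``the replacement changes neither the marginals nor the joint probability in the limit'' is better phrased as the observation that your Plackett estimate shows $\Psi_\rho(u)/\overline\Phi(u)^2\to 1$ for \emph{either} choice of threshold ($t_n$ or $s\sqrt{\log n}$), and both $\sim$'s in the lemma follow since two quantities each tending to $1$ are asymptotic to one another.
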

 So, the leading term  in ~\eqref{eq:SMC'} is   $\sum_{i_1 \neq i_2  \in  C^*,j_1\neq  j_2 \in [n]\setminus C^*} \mathbb{P}\left[ \mathscr{C}_{(i_1, j_1)}\cap \mathscr{C}_{(i_2, j_2)}\right] \sim \left(\sum_{(i,j) \in C^* \times [n]\setminus C^*} \mathbb{P}\left[\mathscr{C}_{ij}\right]\right)^2.$ It is rather surprising that the subleading orders in the sum ~\eqref{eq:SMC'} does  go to zero only if $\gamma \text{SNR} < \frac{3}{4}$.
\[
\frac{\sum_{i_1 =i_2  \in  C^*,j_1\neq  j_2 \in [n]\setminus C^*} \mathbb{P}\left[ \mathscr{C}_{(i_1, j_1)}\cap \mathscr{C}_{(i_2, j_2)}\right]}{\left(\sum_{(i,j) \in C^* \times [n]\setminus C^*} \mathbb{P}\left[\mathscr{C}_{ij}\right]\right)^2} 
\sim
\frac{\mathbb{P}\left[  \mathscr{C}_{(i, j_1)}\cap \mathscr{C}_{(i, j_2)}\right]}{ \gamma n \mathbb{P}\left[\mathscr{C}_{ij}\right]^2} 
\sim
0  \quad ( \text{if} \hspace{.1 cm} \gamma \text{SNR} < \frac{3}{4})
\]
\[
\frac{\sum_{i_1 \neq i_2  \in  C^*,j_1 =  j_2 \in [n]\setminus C^*} \mathbb{P}\left[ \mathscr{C}_{(i_1, j_1)}\cap \mathscr{C}_{(i_2, j_2)}\right]}{\left(\sum_{(i,j) \in C^* \times [n]\setminus C^*} \mathbb{P}\left[\mathscr{C}_{ij}\right]\right)^2}
\sim
\frac{\mathbb{P}\left[  \mathscr{C}_{(i_1, j)}\cap \mathscr{C}_{(i_2, j)}\right]}{(1-\gamma )n \mathbb{P}\left[\mathscr{C}_{ij}\right]^2}  \sim
0 \quad (\text{if} \hspace{.1 cm} \gamma \text{SNR} < \frac{3}{4})
\]

 This is because for $i \in C^*$ and $j_1 \neq j_2 \in [n] \setminus C^*$ or $i_1 \neq i_2 \in C^*$ and $j \in [n] \setminus C^*$ we have  \footnote{$ \Phi(z) = \p [N(0,1) > z] =\int_{z}^{\infty} \phi(t) dt 
 \text{  with  } \phi(t) =\frac{1}{\sqrt{2\pi}}e^{-\frac{t^2}{2}}$} \footnote{ $a_n \simeq b_n \implies a_n \leqslant \text{ poly }(\log n) b_n  \text{  and  } b_n \leqslant \text{ poly } (\log n) a_n $}
 \begin{align*}
 \frac{\mathbb{P}[\mathscr{C}_{(i, j_1)}\cap \mathscr{C}_{(i, j_2)}]}{n \p [\mathscr{C}_{ij}]^2}
 \sim 
 \frac{\mathbb{P}[\mathscr{C}_{(i_1, j)}\cap \mathscr{C}_{(i_2, j)}]}{n \p [\mathscr{C}_{ij}]^2} 
 &
 \sim
 \frac{\mathbb{P}  \left[ \left\{Z_1 > Z + s\sqrt{2 \log n}  \right\} \cap \left\{Z_2 >  Z + s\sqrt{2 \log n} \right\} \right]}{n \mathbb{P}  \left[ Z_1 > Z + s\sqrt{2 \log n}\right]^2 } 
 \\
 &
 \sim 
 \frac{\int_{\mathbb{R}} \Phi^2(z +s\sqrt{2 \log n}) \phi(z) dz}{ n \left(\frac{1}{ s \sqrt{\log n}}\phi (s \sqrt{\log n})\right)^2} 
 \hspace{.1 cm} 
 \simeq
  n^{\frac{4 \gamma \text{SNR}}{3} -1} 
 \to 0
 \end{align*}
 
 Now, we turn towards proving statistical possibility results. In Gaussian weighted planted dense subgraph model ~\eqref{def:GWPDSM0} we establish the following statistical possibility result if $\gamma$SNR$>1$.
 \begin{theorem}[\textbf{Statistical possibility of exact recovery of the planted community}] \label{thm: Positive1'}
    Let $G_n\sim$ \\ PDSM$(n, \mu_1, \mu_2, \tau^2)$ ~\eqref{def:GWPDSM0}, MLE  recovers  $\zeta^*$ if $ \gamma \text{SNR} >1$. More precisely, $ \mathbb{P}[M(\hat{\zeta}_{\text{MLE}}, \zeta^*)= 1] \to 1$.
\end{theorem}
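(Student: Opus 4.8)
The plan is to run the proof of Theorem~\ref{thm: Positive0} (statistical possibility for the GWSBM), adapted to the single‑community, asymmetric setting. First I would state and prove the deterministic reduction, the analogue of the reduction lemma used for the GWSBM. Since the weights are continuous, with probability one MLE fails only if some $\hat{\zeta}\in\Theta^{\gamma}_n$ with $\hat{\zeta}\neq\zeta^*$ satisfies $g_A(\hat{\zeta})>g_A(\zeta^*)$; writing $\hat{C}:=\{i:\hat{\zeta}_i=1\}$, every such $\hat{C}$ arises from $C^*$ by deleting a set $S:=C^*\setminus\hat{C}\subseteq C^*$ and inserting a set $T:=\hat{C}\setminus C^*\subseteq[n]\setminus C^*$ with $1\le|S|=|T|=k\le\min(\gamma,1-\gamma)\,n$. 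Putting $R:=C^*\setminus S$ and expanding $g_A(\hat{\zeta})-g_A(\zeta^*)=2\big(\sum_{\{i,j\}\subseteq\hat{C}}A_{ij}-\sum_{\{i,j\}\subseteq C^*}A_{ij}\big)$, the pairs inside $R$ cancel and
\[
g_A(\hat{\zeta})-g_A(\zeta^*)\;=\;2\Big(\sum_{\{i,j\}\subseteq T}A_{ij}+\sum_{i\in R,\,j\in T}A_{ij}-\sum_{\{i,j\}\subseteq S}A_{ij}-\sum_{i\in R,\,j\in S}A_{ij}\Big),
\]
so $\hat{\zeta}$ beats $\zeta^*$ iff the ``gained'' block of edges (those of $\hat{C}$ incident to $T$) outweighs the ``lost'' block (those of $C^*$ incident to $S$). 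The one structural novelty relative to the GWSBM is that here the pairs inside $S$ and inside $T$ do \emph{not} cancel, because a swapped vertex leaves or enters the dense community rather than merely switching its label.

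Next I would identify the relevant law. The four edge‑sets above are pairwise disjoint, so their Gaussian sums are mutually independent; every edge in the gained block has mean $\mu_2$ and every edge in the lost block has mean $\mu_1$, and each block consists of $m_k:=\binom{k}{2}+k(\gamma n-k)=\tfrac12 k(2\gamma n-k-1)$ weights. Hence the displayed difference is $N\!\big(2m_k(\mu_2-\mu_1),\,4m_k\tau^2\big)$, and a Gaussian tail bound gives, for each fixed pair $(S,T)$ of common size $k$,
\[
\mathbb{P}\big[g_A(\hat{\zeta})>g_A(\zeta^*)\big]\;\le\;\exp\!\Big(-\frac{m_k(\mu_1-\mu_2)^2}{4\tau^2}\Big)\;=\;n^{-2\,\text{SNR}\,m_k/n},
\]
using $\mu_1-\mu_2=(\alpha-\beta)\sqrt{\log n/n}$ and $\text{SNR}=(\alpha-\beta)^2/(8\tau^2)$. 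For $k=1$ we have $m_1=\gamma n-1$, so this probability is $n^{-2\gamma\,\text{SNR}+o(1)}$; since there are $\gamma(1-\gamma)n^2$ candidate pairs of size $1$, a union bound over them contributes $n^{\,2-2\gamma\,\text{SNR}+o(1)}$, which tends to $0$ exactly when $\gamma\,\text{SNR}>1$. As in the GWSBM, the single‑swap term is the binding one.

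Finally I would close the union bound over all $k$: with $N_k:=\binom{\gamma n}{k}\binom{(1-\gamma)n}{k}$,
\[
\mathbb{P}\big[\hat{\zeta}_{\text{MLE}}\neq\zeta^*\big]\;\le\;\sum_{k=1}^{\min(\gamma,1-\gamma)n}N_k\,n^{-2\,\text{SNR}\,m_k/n}.
\]
For $1\le k\le n/\log n$, using $N_k\le(e\gamma n/k)^{k}(e(1-\gamma)n/k)^{k}$ and $m_k/n=\gamma k(1-o(1))$ (uniformly, since $k/n\to0$ in this range) one gets $N_k\,n^{-2\,\text{SNR}\,m_k/n}\le n^{-2k(\gamma\,\text{SNR}-1)+o(k)}$, which is summable once $\gamma\,\text{SNR}>1$. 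For $n/\log n<k\le\min(\gamma,1-\gamma)n$, the bound $\log N_k\le 2k\log\log n+O(k)$ together with $2\,\text{SNR}\,m_k\log n/n\ge\tfrac12\gamma\,\text{SNR}\,k\log n$ shows each term is $e^{-\Omega(n)}$, so the at most $n$ remaining terms also sum to $o(1)$. Therefore $\mathbb{P}[M(\hat{\zeta}_{\text{MLE}},\zeta^*)=1]\to1$. The only place that needs genuine care is this last step: the crude count $N_k\le n^{2k}$ is too lossy once $k$ is a constant fraction of $n$ (it would force $\gamma\,\text{SNR}>2$), so one must exploit the binomial‑entropy bound on $N_k$ together with the quadratic growth of $m_k$ in that regime. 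This is precisely the step handled tersely for the GWSBM in Theorem~\ref{thm: Positive0}, so it brings no new idea, only bookkeeping.
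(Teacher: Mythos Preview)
Your proposal is correct and follows essentially the same approach as the paper: the same deterministic reduction (your $S,T,R$ are the paper's $S_+,S_-,C^*\setminus S_+$), the same identification $m_k=\binom{K}{2}-\binom{K-k}{2}$, the same Gaussian tail bound, and the same union bound with a split into a ``small $k$'' regime (handled via $\binom{p}{k}\le(ep/k)^k$) and a ``large $k$'' regime. The only cosmetic difference is where the split is placed---you cut at $k=n/\log n$ and use $\log N_k\le 2k\log\log n+O(k)$ for the tail, while the paper cuts at $k=\epsilon_\delta n$ and uses the cruder $\binom{p}{k}\le 2^p$; both work.
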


The proof of this theorem relies on analyzing the inequality $g_A(\hat{\zeta}) > g_A(\zeta^*)$ and observing that  MLE ~\eqref{eq:MLEPDSM0} fails to recover  $\zeta^*$  if and only if there exists $ \hat{\zeta} \in \Theta_n^{\gamma}$ such that $g_A(\hat{\zeta}) > g_A(\zeta^*)$.
For  subsets of vertices $A,B \subseteq [n]$ with $A\cap B = \emptyset$ we define the following
\[
e(A, B) =\sum_{p \in A,q \in B} A_{pq}, \quad 
e(A) = \sum_{p \in A, q \in A} A_{pq}.
\]
\vspace{-.5 cm}
\begin{lemma} $\exists$ $\hat{\zeta} \in \Theta_n^{\gamma}$ with  
     $g_A(\hat{\zeta}) > g_A(\zeta^*)$  $\smallimplies$ $\exists  S_+ \subseteq C^*, S_- \subseteq [n] \setminus C^*$ such that $ 1\leqslant |S_+| =|S_-| \leqslant \min(\gamma n, (1-\gamma)n)$ and 
     \begin{equation} \label{eq:Pos1'}
     e(S_-) +
     e(C^* \setminus S_+ , S_-) > 
     e(S_+)+
     e(C^* \setminus S_+, S_+)
     \end{equation} 
 \end{lemma}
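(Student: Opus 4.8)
The plan is to read the required subsets directly off the symmetric difference of $\hat\zeta$ and $\zeta^*$, and then expand the objective $g_A$ along the resulting partition. I would identify $\hat\zeta\in\Theta_n^{\gamma}$ with the vertex set $\hat C:=\{i\in[n]:\hat\zeta_i=1\}$, which has $|\hat C|=\gamma n=|C^*|$, and set
$$S_+:=C^*\setminus\hat C,\qquad S_-:=\hat C\setminus C^*,\qquad M:=C^*\cap\hat C=C^*\setminus S_+=\hat C\setminus S_-,$$
so that $C^*=M\sqcup S_+$ and $\hat C=M\sqcup S_-$. Since $\hat C$ and $C^*$ have the same size and are obtained from $M$ by adjoining $S_-$, respectively $S_+$, it follows that $|S_+|=|S_-|=:k$. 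The hypothesis $g_A(\hat\zeta)>g_A(\zeta^*)$ in particular gives $\hat\zeta\neq\zeta^*$, hence $\hat C\neq C^*$ and $k\geq1$; moreover $S_+\subseteq C^*$ and $S_-\subseteq[n]\setminus C^*$ force $k\leq|C^*|=\gamma n$ and $k\leq|[n]\setminus C^*|=(1-\gamma)n$, giving the cardinality bound $1\leq|S_+|=|S_-|\leq\min(\gamma n,(1-\gamma)n)$.

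Next I would rewrite the likelihood gap in terms of $S_\pm$ and $M$. Using the additivity $e(X\sqcup Y)=e(X)+e(Y)+e(X,Y)$ for disjoint vertex sets (the cross term accounting for every weight between $X$ and $Y$ exactly once, by symmetry of $A$), the decompositions $\hat C=M\sqcup S_-$ and $C^*=M\sqcup S_+$ yield
$$g_A(\hat\zeta)=e(\hat C)=e(M)+e(S_-)+e(M,S_-),\qquad g_A(\zeta^*)=e(C^*)=e(M)+e(S_+)+e(M,S_+).$$
Subtracting, the common block $e(M)$ cancels, so $g_A(\hat\zeta)>g_A(\zeta^*)$ is equivalent to
$$e(S_-)+e(M,S_-)>e(S_+)+e(M,S_+),$$
and since $M=C^*\setminus S_+$ this is precisely \eqref{eq:Pos1'}. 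Together with the first paragraph this establishes the implication.

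The calculation itself is short; the only point requiring care is the bookkeeping in the second step — which pairs are summed and with what multiplicity, and checking that every term supported on the common part $M$ (including the diagonal entries $A_{ii}$, if $g_A$ is taken with the ordered convention) cancels between $e(\hat C)$ and $e(C^*)$ — so that \eqref{eq:Pos1'} emerges with exactly the stated coefficients. I would also note a structural contrast with the two-community SBM version of this lemma: there one may swap $(S_+,S_-)$ for its complement inside the two equal halves to force $|S_\pm|\leq n/4$, but here $C^*$ and $[n]\setminus C^*$ are asymmetric, so only the crude bound $|S_\pm|\leq\min(\gamma n,(1-\gamma)n)$ survives — which is why the subsequent union bound over $k$ must be handled more carefully than in the SBM case.
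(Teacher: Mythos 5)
Your proof is correct and matches the paper's argument essentially step for step: you choose $S_+=C^*\setminus\hat C$ and $S_-=\hat C\setminus C^*$, expand $e(\hat C)$ and $e(C^*)$ over the overlap $M=C^*\cap\hat C$, and cancel $e(M)$, which is exactly the paper's proof of the appendix Lemma~\ref{lem:MLEpos1}. Your remark that the paper's definition of $e(A)$ must be read as an unordered-pair sum (so that $e(X\sqcup Y)=e(X)+e(Y)+e(X,Y)$ without a factor of $2$ on the cross term, consistent with the later means $\binom{l}{2}\mu$ and $l(K-l)\mu$) and your observation about why no $n/4$-type reduction is available here, unlike the symmetric two-community case, are both accurate and worth keeping.
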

 
 We now prove that MLE ~\eqref{eq:MLEPDSM0}  solves exact recovery ~\eqref{thm: Positive1'} as soon as  $\gamma$ \text{SNR} $>1$  by proving that such events of the kind ~\eqref{eq:Pos1'} occur with probability approaching zero. We define 
 \[
 \begin{split}
E:=\{  \exists S_+ \subseteq C^*,& S_- \subseteq [n] \setminus C^* : 1\leqslant |S_+|=|S_-| \leqslant 
 \min(\gamma n, (1-\gamma)n), \\  &   e(S_-) + e(C^* \setminus S_+ , S_-) > 
 e(S_+) + e(C^* \setminus S_+, S_+) \}
\end{split}
 \]
 \[
 E_l= \{\exists S_+ \subseteq C^*, S_- \subseteq [n] \setminus C^* : |S_+|=|S_-| =l, e(S_-) +
     e(C^* \setminus S_+ , S_-) > 
     e(S_+)+
     e(C^* \setminus S_+, S_+)\}
 \]
 \vspace{-.5 cm}
 \begin{lemma}\label{lem:Second2'}
 Let $K=\gamma n$. If $\gamma$ \text{SNR} $>1$ then $\mathbb{P}\left[E\right] =  \mathbb{P}\left[\cup_{l=1}^{\min(K, n-K)}E_l \right] \leqslant \sum_{l =1}^{\min(K, n-K)} \mathbb{P}[E_l] \to 0.$
 \end{lemma}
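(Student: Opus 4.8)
The plan is to prove the substantive half of the statement, namely $\sum_{l=1}^{\min(K,n-K)}\mathbb{P}[E_l]\to0$ (the union bound $\mathbb{P}[\cup_l E_l]\le\sum_l\mathbb{P}[E_l]$ being immediate); together with the preceding reduction lemma and the fact that $g_A(\hat\zeta)=g_A(\zeta^*)$ has probability $0$ for $\hat\zeta\neq\zeta^*$, this also yields Theorem~\ref{thm: Positive1'}. Fix $l$ and a pair $S_+\subseteq C^*$, $S_-\subseteq[n]\setminus C^*$ with $|S_+|=|S_-|=l$, and write $R:=C^*\setminus S_+$, so $|R|=K-l$ with $K:=\gamma n$. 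The four edge sets underlying $e(S_-)$, $e(R,S_-)$, $e(S_+)$ and $e(R,S_+)$ are pairwise disjoint, so these sums are jointly independent Gaussians; writing $m_l:=\binom{l}{2}+(K-l)l=\tfrac{l(2K-l-1)}{2}$ for the common number of edge weights entering each side of \eqref{eq:Pos1'}, this gives
\[
D\;:=\;e(S_-)+e(R,S_-)-e(S_+)-e(R,S_+)\;\sim\;N\big(-m_l(\mu_1-\mu_2),\,2m_l\tau^2\big),
\]
so that $\mathbb{P}[D>0]=\mathbb{P}\big[N(0,1)>\sqrt{m_l/2}\,(\mu_1-\mu_2)/\tau\big]\le\exp\big(-\tfrac{m_l(\mu_1-\mu_2)^2}{4\tau^2}\big)$. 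Substituting $\mu_1-\mu_2=(\alpha-\beta)\sqrt{\log n/n}$ and $(\alpha-\beta)^2/\tau^2=8\,\mathrm{SNR}$ yields $\mathbb{P}[D>0]\le n^{-2m_l\mathrm{SNR}/n}$, and a union bound over the $\binom{K}{l}\binom{n-K}{l}$ choices of $(S_+,S_-)$ of size $l$ gives $\mathbb{P}[E_l]\le\binom{K}{l}\binom{n-K}{l}\,n^{-2m_l\mathrm{SNR}/n}$.

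Next I would estimate $\sum_l\mathbb{P}[E_l]$ by splitting the range of $l$, with $\gamma\,\mathrm{SNR}=1+2\delta$, $\delta>0$. The binding term is $l=1$: since $m_1=K-1=\gamma n-1$, one gets $\mathbb{P}[E_1]\le\gamma(1-\gamma)\,n^{2-2\gamma\mathrm{SNR}+o(1)}\to0$, and this single estimate is exactly where $\gamma\,\mathrm{SNR}>1$ is used. For $2\le l\le n/\log n$ one has $m_l=\gamma l n\,(1+o(1))$ uniformly in $l$, so with the crude count $\binom{K}{l}\binom{n-K}{l}\le n^{2l}$ we obtain $\mathbb{P}[E_l]\le n^{\,2l[(1-\gamma\mathrm{SNR})+o(1)]}\le n^{-l\delta}$ for all large $n$, whence $\sum_{2\le l\le n/\log n}\mathbb{P}[E_l]=o(1)$. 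For $n/\log n<l\le\min(K,n-K)$ the trivial count $\binom{K}{l}\binom{n-K}{l}\le 2^n$ suffices, because $m_l$ is non-decreasing in $l$ on $[0,\min(K,n-K)]$ and $m_{\lceil n/\log n\rceil}\ge\tfrac{\gamma n^2}{2\log n}$ for $n$ large, so $n^{-2m_l\mathrm{SNR}/n}\le e^{-\gamma\,\mathrm{SNR}\,n}$ and hence $\mathbb{P}[E_l]\le e^{\,n(\log2-\gamma\,\mathrm{SNR})}$, which is $o(1)$ uniformly in $l$ since $\gamma\,\mathrm{SNR}>1>\log2$; summing over the at most $n$ values of $l$ in this range keeps it $o(1)$. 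Adding the three contributions gives $\sum_l\mathbb{P}[E_l]\to0$.

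The main obstacle is the accounting in the regime $l=\Theta(n)$: using only $\binom{K}{l}\binom{n-K}{l}\le n^{2l}$ together with a uniform lower bound $m_l\ge c_\gamma l n$ would recover merely the weaker threshold $\gamma\,\mathrm{SNR}>2$, so it is essential to exploit that when $l=\Theta(n)$ the log-count $\log\binom{K}{l}\binom{n-K}{l}=\Theta(n)$ is swamped by the exponent $2m_l\mathrm{SNR}(\log n)/n=\Theta(n\log n)$ — the quadratic growth of $m_l$ in $l$ beats the binomial coefficients with a logarithmic factor to spare. Arranging for the $l=1$ estimate and the $l=\Theta(n)$ estimate to close at the same constant $\gamma\,\mathrm{SNR}=1$ is the only delicate point; the distributional facts are immediate from the independence (disjointness) structure, and the intermediate range reduces to a routine geometric-series bound.
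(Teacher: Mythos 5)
Your proof is correct and follows essentially the same route as the paper: a union bound over the subset size $l$ and over pairs $(S_+,S_-)$, the same independence/distributional facts giving the Gaussian tail $n^{-2m_l\mathrm{SNR}/n}$ with $m_l=\binom{l}{2}+l(K-l)=\binom{K}{2}-\binom{K-l}{2}$, and then a split of the range of $l$ to sum the contributions. The only difference is a minor one of bookkeeping: the paper splits at a constant fraction $\epsilon_\delta n$ with $\epsilon_\delta=\delta/\mathrm{SNR}$ and uses the $\binom{p}{k}\le(ep/k)^k$ estimate in the lower range, whereas you split at $n/\log n$, use the cruder $n^{2l}$ below it, and rely (correctly) on the quadratic growth of $m_l$ to swamp the trivial $2^n$ count above it.
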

 
The proof of this lemma follows from observing that  $e(S_-), e(S_+), e(C^* \setminus S_+, S_-),$  and $e(C^*\setminus S_+, S_+)$ are jointly independent random variables with 
     \[
     e(S_+)\overset{d}{=}  N\left( \binom{l}{2} \mu_1, \binom{l}{2} \tau^2\right), \quad 
     e(S_-)\overset{d}{=}  N\left( \binom{l}{2} \mu_2, \binom{l}{2} \tau^2\right)
     \]
     \[
     e(C^*\setminus S_+, S_+) \overset{d}{=} N\left( l(K -l) \mu_1, l(K -l) \tau^2 \right), \quad 
     e(C^*\setminus S_+, S_-) \overset{d}{=} N\left( l(K -l) \mu_2, l(K -l) \tau^2\right).
     \]
     
    From  Gaussian tail bounds \cite{Vershynin_2018} we have  for $l=1$ with $ \gamma \text{SNR}= 1+2\delta$.
     \[ 
     \mathbb{P}\left[E_1\right] \leqslant 
     K(n-K) \frac{1}{n^{ \frac{2(K-1) \text{SNR}}{n}}}\leqslant \gamma (1-\gamma) n^2 n^{-2 \gamma  
       \text{SNR} (1+o(1))} =  \frac{\gamma (1-\gamma)}{n^{4\delta +o(1)} }  \to 0
     \] 
     
     We finish this section by establishing that SDP achieves exact recovery of the planted community as soon as $\gamma $SNR$>1$ in comparison to the work \cite{hajek2016achieving} in the binary version. To start, we observe that MLE ~\eqref{eq:MLEPDSM0} can be rewritten as the following discrete optimization problem.
\begin{equation}\label{eq:MLEPDSM0'}
\max_{Z, \zeta} \hspace{.1 cm} \langle A, Z \rangle :  Z =\zeta \zeta^{\top},  Z_{ii} \leqslant  1, \hspace{.1 cm} \forall 
 \hspace{.1 cm} i \in [n],  Z_{ij} \geqslant  0, \hspace{.1 cm} \forall \hspace{.1 cm} i, j \in[n], \langle\mathbf{I}, Z\rangle=\gamma n,  \langle\mathbf{J}, Z \rangle =\gamma^2 n^2.
\end{equation} 
 $Z=\zeta \zeta^{\top}$ is positive semidefinite and rank-one. Removing the rank-one restriction leads to the following convex relaxation of ~\eqref{eq:MLEPDSM0'}, which is a semidefinite program.
\begin{equation} \label{def:SDPPDSM0}
 \hat{Z}_{\mathrm{SDP}}=\underset{Z}{\arg \max } \langle A, Z\rangle:  Z \succeq 0,  Z_{ii} \leqslant 1, \hspace{.1 cm} \forall \hspace{.1 cm} i \in [n], Z_{ij} \geqslant 0, \hspace{.1 cm} \forall \hspace{.1 cm} i, j \in[n], \langle\mathbf{I}, Z\rangle=\gamma n, \langle\mathbf{J}, Z\rangle=\gamma^2 n^2 .
\end{equation}

Let $Z^*=\zeta^*\zeta^{*^{\top}}$ correspond to the true cluster and define $\mathcal{Z}_n=\left\{\zeta \zeta^{\top}: \zeta \in\{0,1\}^n, \langle \zeta, \mathbf{1} \rangle =\gamma n\right\}$. 
\begin{theorem}[\textbf{Semi-definite relaxation achieves exact recovery of the planted community}] \label{thm:SDP1'}
    Let $G_n \sim$ PDSM$(n, \mu_1, \mu_2, \tau^2)$  and $\gamma \in  (0,1).$ If  $\gamma \text{SNR} \hspace{.05 cm}>1  \hspace{.05 cm} \text{then} \hspace{.05 cm}
\min _{Z \in \mathcal{Z}_n} \mathbb{P}\left[\hat{Z}_{\mathrm{SDP}}=Z^*\right]\to 1 \hspace{.1 cm} \text{as} \hspace{.1 cm} n \rightarrow \infty.
$
\end{theorem}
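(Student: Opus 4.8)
The plan is to follow the Lagrangian dual-certificate route used for Theorem~\ref{thm:SDP0}, now applied to the relaxation \eqref{def:SDPPDSM0} and accounting for the inequality constraints $Z_{ij}\ge 0$ and $Z_{ii}\le 1$ in the spirit of \cite{hajek2016achieving}. Since the model is invariant under vertex relabeling, it suffices to treat a fixed $\zeta^*=(\mathbf 1_{\gamma n},\mathbf 0_{(1-\gamma)n})$ with $C^*=\{1,\dots,\gamma n\}$ and $Z^*=\zeta^*\zeta^{*\top}$. First I would record a deterministic sufficient condition: writing the dual of \eqref{def:SDPPDSM0} with a PSD multiplier for $Z\succeq 0$, scalars $\nu$ and $\lambda^*$ for the two equality constraints $\langle\mathbf I,Z\rangle=\gamma n$ and $\langle\mathbf J,Z\rangle=\gamma^2n^2$, a nonnegative diagonal for $Z_{ii}\le 1$, and an entrywise-nonnegative symmetric matrix $B^*$ for the constraints $Z_{ij}\ge 0$, complementary slackness at $Z^*$ forces the $Z_{ii}\le 1$ multiplier to vanish off $C^*$ and $B^*$ to vanish on $C^*\times C^*$. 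Absorbing the diagonal pieces into $D^*=\diag(d_i^*)$ (so $d_i^*=\nu$ for $i\notin C^*$ and $d_i^*\ge\nu$ for $i\in C^*$), it then suffices to produce $D^*$, $\lambda^*\in\mathbb R$ and $B^*\ge 0$ with $B^*_{ij}=0$ for $i,j\in C^*$, obeying the strict-slackness conditions, such that
\[
S^*:=D^*-A+\lambda^*\mathbf J-B^*\ \succeq\ 0,\qquad \lambda_2(S^*)>0,\qquad S^*\zeta^*=0.
\]
By the weak duality of semidefinite programming this makes $\widehat Z_{\mathrm{SDP}}=Z^*$ the unique maximizer, exactly as in the dual-certificate lemma used for Theorem~\ref{thm:SDP0}.

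Second, I would fix the dual variables. Choose a threshold $\lambda^*$ strictly between $\mu_2$ and $\mu_1$, e.g.\ $\lambda^*=\tfrac{\mu_1+\mu_2}{2}$. For $i\in C^*$ the identity $S^*\zeta^*=0$ dictates $d_i^*=e(i,C^*)-\lambda^*\gamma n$, where $e(i,C^*)=\sum_{k\in C^*}A_{ik}$; for $i\notin C^*$ it dictates $\sum_{k\in C^*}B^*_{ik}=\lambda^*\gamma n-e(i,C^*)$, which I meet by spreading the mass uniformly over $k\in C^*$, while on the non-planted block I set $B^*$ together with the flat shift $\nu\asymp\tau\sqrt n$ so that the corresponding block of $S^*$ is dominated by $\nu\mathbf I-(A-\mathbb E[A])$. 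These choices make $S^*\zeta^*=0$ an identity, and the remaining demands ($B^*\ge 0$ entrywise, $\min_{i\in C^*}d_i^*>\nu$, and $\langle x,S^*x\rangle>0$ for all unit $x\perp\zeta^*$) reduce to concentration of the cluster-degrees $\{e(i,C^*)\}$.

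Third, I would bound $\lambda_2(S^*)$ below via $S^*=\mathbb E[S^*]+(S^*-\mathbb E[S^*])$: on $\zeta^{*\perp}$ the mean is PSD with spectral gap of order $\gamma n(\mu_1-\mu_2)\asymp\sqrt{n\log n}$, while the fluctuation is $-(A-\mathbb E[A])$, of operator norm $O(\tau\sqrt n)$ with probability $1-2e^{-cn}$ by \cite{Vershynin_2018}, plus rank-controlled deviations of $D^*$ and $B^*$ governed by $\{e(i,C^*)-\mathbb E[e(i,C^*)]\}$. As in \eqref{eq:SDPMain0}, the whole claim then reduces to showing that
\[
\Big\{\min_{i\in C^*}\big(e(i,C^*)-\lambda^*\gamma n\big)>\nu\Big\}\ \cap\ \Big\{\min_{i\notin C^*}\big(\lambda^*\gamma n-e(i,C^*)\big)>\nu'\Big\}
\]
holds with probability $\to1$ for suitable $\nu,\nu'\asymp\tau\sqrt n$. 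Since $e(i,C^*)\overset{d}{=}N\big((\gamma n-1)\mu_1,(\gamma n-1)\tau^2\big)$ for $i\in C^*$ and $e(i,C^*)\overset{d}{=}N\big(\gamma n\mu_2,\gamma n\tau^2\big)$ for $i\notin C^*$, under the scaling \eqref{def:CritGWSBM0} each one-sided event is a Gaussian tail at threshold $(1+o(1))\sqrt{2\gamma\,\text{SNR}\,\log n}$ (using $\tfrac{\alpha-\beta}{2\tau}=\sqrt{2\,\text{SNR}}$), hence of probability $n^{-\gamma\,\text{SNR}+o(1)}$; a union bound over the at most $n$ vertices gives $n^{1-\gamma\,\text{SNR}+o(1)}\to0$, which is precisely the hypothesis $\gamma\,\text{SNR}>1$.

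The main obstacle is the first two steps rather than the last. Unlike the symmetric SBM of Theorem~\ref{thm:SDP0}, here the inequality constraints $Z_{ij}\ge 0$, $Z_{ii}\le 1$ force the extra dual matrix $B^*$, and one must pick $\lambda^*$, $\nu$ and $B^*$ so that complementary slackness, the identity $S^*\zeta^*=0$, and entrywise positivity of $B^*$ are jointly satisfiable while $S^*$ keeps exactly the one-dimensional kernel $\mathrm{span}(\zeta^*)$; in particular one must check that the rank-controlled part of $B^*$ and the flat shift $\nu$ on the non-planted block do not erode the spectral gap of order $\sqrt{n\log n}$. Once the certificate is in hand, the probabilistic content is the computation of \eqref{eq:SDPMain0} with $\gamma n$ in place of $n/2$, and the threshold $\gamma\,\text{SNR}=1$ emerges from the same exponent bookkeeping as in the MLE proof of Lemma~\ref{lem:Second2'}.
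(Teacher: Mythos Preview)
Your proposal is correct and follows essentially the same dual-certificate route as the paper: the paper's deterministic lemma is exactly your sufficient condition (it keeps the trace multiplier $\eta^*$ separate from $D^*$ rather than absorbing it, and takes $\eta^*=2\|A-\mathbb E[A]\|$ random rather than a fixed $\nu\asymp\tau\sqrt n$), and the certificate choices $\lambda^*=\tfrac{\mu_1+\mu_2}{2}$, $d_i^*$ for $i\in C^*$ and $b_i^*$ for $i\notin C^*$ determined by $S^*\zeta^*=0$ with $B^*$ spread uniformly over $C^*$, all match yours.

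One small caution on execution: your ``mean plus fluctuation'' framing with a spectral gap of order $\sqrt{n\log n}$ is not quite how the argument closes. On the non-planted block the gap you install is only $\nu\asymp\tau\sqrt n$, and the random diagonal entries of $D^*$ fluctuate by $\asymp\tau\sqrt{n\log n}$, so a naive Weyl bound on $S^*-\mathbb E[S^*]$ would not suffice. The paper instead expands $\langle x,S^*x\rangle$ directly for $x\perp\zeta^*$, using the key identity $\langle x,B^*x\rangle=2\langle x,\zeta^*\rangle\sum_{i\notin C^*}b_i^*x_i=0$ (which holds precisely because of your uniform-spreading choice of $B^*$) together with $\langle x,D^*x\rangle\ge(\min_{i\in C^*}d_i^*)\sum_{i\in C^*}x_i^2$; after that everything reduces to exactly the two min-degree events you wrote, and the union-bound/Gaussian-tail calculation giving $n^{1-\gamma\,\mathrm{SNR}+o(1)}\to0$ is identical.
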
 

Following \cite{hajek2016achieving} we prove this theorem by first producing a deterministic sufficient condition for SDP to achieve exact recovery. Let $\mathbf{I}$ be $n\times n$ identity matrix.
\begin{lemma}\label{lem:DetPDSM0}
 Suppose $\exists$  $D^{*}=\operatorname{diag}\left\{d_{i}^{*}\right\}$ with $\{d^*_i\}_{i=1}^{n} \in  \mathbb{R}_{\geqslant 0}^{n}$,  $B^{*} \in \mathbb{R}^{n\times n}_{\geqslant 0},$    and $\lambda^{*}, \eta^{*} \in \mathbb{R}$ such that 
 \begin{equation}\label{eq:First0}
 S^{*} \triangleq D^{*}-B^{*}-A+\eta^{*} \mathbf{I}+\lambda^{*} \mathbf{J} \hspace{.1 cm} \text{satisfies} \hspace{.1 cm} S^{*} \succeq 0, \lambda_{2}\left(S^{*}\right)>0, \hspace{.1 cm} \text{and}  \hspace{.1 cm}S^{*}  \zeta^{*}  =0,
 \end{equation}
 \begin{equation}\label{eq:Second0}
 \hspace{.1 cm}
d_{i}^{*}\left(Z_{i i}^{*}-1\right)  =0 \quad \forall i, \quad 
B_{i j}^{*} Z_{i j}^{*}  =0 \quad \forall i, j,
 \end{equation}

Then SDP recovers the true solution. More precisely, $\widehat{Z}_{\mathrm{SDP}}=Z^{*}$ is the unique solution to ~\eqref{def:SDPPDSM0}.
\end{lemma}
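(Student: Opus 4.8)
The plan is to treat $S^{*}$ as a dual certificate and run the standard primal--dual (KKT) argument for semidefinite programs, in the spirit of \cite{hajek2016achieving}. Write $A = D^{*} - B^{*} + \eta^{*}\mathbf{I} + \lambda^{*}\mathbf{J} - S^{*}$. For \emph{any} $Z$ feasible for \eqref{def:SDPPDSM0} I would expand
\[
\langle A, Z\rangle = \sum_{i=1}^{n} d_{i}^{*} Z_{ii} - \langle B^{*}, Z\rangle + \eta^{*}\langle\mathbf{I}, Z\rangle + \lambda^{*}\langle\mathbf{J}, Z\rangle - \langle S^{*}, Z\rangle,
\]
and bound term by term: $\sum_i d_i^* Z_{ii} \le \sum_i d_i^*$ because $d_i^* \ge 0$ and $Z_{ii}\le 1$; $\langle B^*, Z\rangle \ge 0$ because $B^*$ and $Z$ are entrywise nonnegative; $\langle\mathbf{I},Z\rangle = \gamma n$ and $\langle\mathbf{J},Z\rangle = \gamma^2 n^2$ by the affine constraints; and $\langle S^*, Z\rangle = \operatorname{tr}\!\big((S^*)^{1/2} Z (S^*)^{1/2}\big) \ge 0$ since $S^* \succeq 0$ and $Z \succeq 0$. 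This yields $\langle A, Z\rangle \le \sum_{i} d_i^* + \eta^*\gamma n + \lambda^*\gamma^2 n^2 =: v$, a bound independent of $Z$.

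Next I would check that $Z^{*} = \zeta^{*}\zeta^{*\top}$ attains this bound. Evaluating the same identity at $Z^*$: the complementary slackness $d_i^*(Z_{ii}^* - 1) = 0$ gives $\sum_i d_i^* Z_{ii}^* = \sum_i d_i^*$; $B_{ij}^* Z_{ij}^* = 0$ gives $\langle B^*, Z^*\rangle = 0$; $Z^*$ is feasible so the two affine terms are unchanged; and $S^*\zeta^* = 0$ gives $\langle S^*, Z^*\rangle = \zeta^{*\top} S^* \zeta^* = 0$. Hence $\langle A, Z^*\rangle = v$, so $Z^*$ is optimal for \eqref{def:SDPPDSM0}.

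For uniqueness, suppose $Z$ is feasible with $\langle A, Z\rangle = v$; then all the inequalities above are equalities, in particular $\operatorname{tr}\!\big((S^*)^{1/2} Z (S^*)^{1/2}\big) = 0$. Being the trace of a positive semidefinite matrix, this forces $(S^*)^{1/2} Z (S^*)^{1/2} = 0$, hence $\|Z^{1/2}(S^*)^{1/2}\|_F = 0$, hence $S^* Z = 0$, i.e. $\operatorname{range}(Z) \subseteq \ker(S^*)$. The hypotheses $S^* \succeq 0$, $S^*\zeta^* = 0$, $\lambda_2(S^*) > 0$ force $0$ to be a simple eigenvalue of $S^*$, so $\ker(S^*) = \operatorname{span}(\zeta^*)$; combined with $Z \succeq 0$ this gives $Z = c\,\zeta^*\zeta^{*\top}$ for some $c \ge 0$. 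Finally $\langle\mathbf{J}, Z\rangle = c\,\langle\mathbf{1}_n,\zeta^*\rangle^2 = c\,\gamma^2 n^2$ must equal $\gamma^2 n^2$, so $c = 1$ and $Z = Z^*$. This shows $\widehat{Z}_{\mathrm{SDP}} = Z^*$ is the unique maximizer.

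I expect the only subtle point to be the uniqueness step: one must use positive semidefiniteness of \emph{both} $Z$ and $S^*$ to pass from vanishing of the trace inner product to $S^* Z = 0$, and the spectral-gap assumption $\lambda_2(S^*) > 0$ to conclude $\ker(S^*)$ is exactly one-dimensional; everything else is bookkeeping with the complementary-slackness relations and the affine constraints. The genuinely hard part of the overall argument --- not part of this lemma --- is the subsequent explicit choice of $D^*, B^*, \eta^*, \lambda^*$ making $S^*$ satisfy \eqref{eq:First0}--\eqref{eq:Second0} with probability $1 - o(1)$ when $\gamma\,\mathrm{SNR} > 1$.
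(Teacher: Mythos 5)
Your proof is correct and follows essentially the same primal--dual certificate strategy as the paper: verify weak duality by bounding $\langle A,Z\rangle$ term by term using the sign/PSD constraints, check that complementary slackness makes $Z^*$ attain the bound, and then use $\langle S^*,Z\rangle=0$ plus $\lambda_2(S^*)>0$ to force $Z$ into $\mathrm{span}(\zeta^*\zeta^{*\top})$. The only cosmetic difference is in how the scalar $c$ is pinned down at the end: you use the constraint $\langle\mathbf{J},Z\rangle=\gamma^2 n^2$, whereas the paper uses $\mathrm{Tr}(Z)=\gamma n$; both affine constraints serve equally well.
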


The proof of this lemma follows from the weak duality of semi-definite programming, and now we make a clever choice of $B^*, D^*$  with  $\lambda^* = \frac{\mu_1+\mu_2}{2}$  and $ \eta^* =2\|A-\mathbb{E}[A]\|$  satisfying all the conditions in  ~\eqref{lem:DetPDSM0} with probability approaching one. We take $D^{*}=\operatorname{diag}\left\{d_{i}^{*}\right\}$ with \footnote{ $A_i$ refers to the $i$th row of A} $d^*_i:= \langle (A-\eta^*\mathbf{I}- \lambda^*\mathbf{J})_i, \zeta^*\rangle \mathbf{1}_{i \in C^*}$ and   $B^{*}_{ij}:
=b_{i}^{*} \mathbf{1}_{\left\{i \notin C^{*}, j \in C^{*}\right\}}+b_{j}^{*} \mathbf{1}_{\left\{i \in C^{*}, j \notin C^{*}\right\}}$ with  $b_{i}^{*} \gamma n :=  \langle (\lambda^* \mathbf{J} -A)_i, \zeta^* \rangle \mathbf{1}_{i \notin C^{*}}.$
\par 
It follows immediately that $d_{i}^{*}\left(Z_{i i}^{*}-1\right)  =0, 
B_{i j}^{*} Z_{i j}^{*}  =0$  $\forall$ $i, j,$ and $S^*\zeta* =0$. It remains to prove 
\[
\min \left(\mathbb{P}\left[\cap_{i \notin C^*} \{b_i^*\geqslant 0\} \right ], \quad  \mathbb{P}\left[\cap_{i \in C^*} \{d_i^*\geqslant 0\}\right], \quad  \mathbb{P}\left[ \{S^* \succeq 0\} \cap \{\lambda_{2}\left(S^{*}\right)>0\}\right] \right)\to 1
\]

We prove this simply using union bound by observing that the following holds.
 \begin{equation}
 d^*_i = X_i - 6\tau \sqrt{n} - \lambda^{*} \gamma n \geqslant 0  \hspace{.1 cm} \forall \hspace{.1 cm}  i \in C^*, \quad 
 b_i^* \gamma n = \lambda^{*} \gamma n - X_i  \geqslant 0 \hspace{.1 cm} \forall \hspace{.1 cm} i \notin C^*
 \end{equation}
 \[ \text{where  } X_i \sim N(\gamma n \mu_1, \gamma n \tau^2) \mathbf{1}_{i \in C^*} +
      N(\gamma n \mu_2, \gamma n \tau^2) \mathbf{1}_{i \notin C^*}\]
      
   We now turn to the comparison of the two models discussed and establish that GWSBM ~\eqref{def:GWSBM0} is a strictly easier problem to solve than GWPDSM ~\eqref{def:GWPDSM0} with parameter $\gamma =\frac{1}{2}$.
\section{A comparison of exact recovery in two models and conclusion}

We observe that Theorems ~\eqref{thm:Negative0}, ~\eqref{thm: Positive0}, ~\eqref{thm:SDP0}, ~\eqref{thm:Spec0} obtained for GWSBM ~\eqref{def:GWSBM0} prove that if $\text{SNR} <1$, then the task of exact recovery of two balanced communities is impossible, even statistically. But, as soon as $\text{SNR} >1$,  the task of exact recovery is possible, even algorithmically. On the other hand Theorems ~\eqref{thm:Negative1'}, ~\eqref{thm: Positive1'}, ~\eqref{thm:SDP1'} obtained for GWPDSM (with $\gamma = \frac{1}{2}$) ~\eqref{def:GWPDSM0} prove that whenever $\text{SNR} < \frac{3}{2}$, the task of exact recovery of one planted community of size  $\frac{n}{2}$  is statistically impossible. But, as soon as $\text{SNR} >2$, then the task of exact recovery of one planted community of size $\frac{n}{2}$ is algorithmically possible.  So, when $1<\text{SNR} < \frac{3}{2}$, the problem of exact recovery of two balanced communities is algorithmically solvable, whereas the problem of exact recovery of one planted community of size $\frac{n}{2}$ is even statistically unsolvable. This proves that indeed community detection is a strictly easier problem than the planted dense subgraph model as considered in the models ~\eqref{def:GWSBM0}, ~\eqref{def:GWPDSM0}. It is also interesting to see the inability of first and second-moment methods to match the upper ~\eqref{lem:Second2'} and lower bounds ~\eqref{lem:Second1'} for the Gaussian weighted planted dense subgraph problem.

\bibliographystyle{alpha}
\bibliography{ref}

\begin{thebibliography}{AFWZ20}

\bibitem[Abb18]{abbe2023community}
Emmanuel Abbe.
\newblock Community detection and stochastic block models: Recent developments.
\newblock {\em Journal of Machine Learning Research}, 18(177):1--86, 2018.

\bibitem[ABH16]{abbe2014exact}
Emmanuel Abbe, Afonso~S. Bandeira, and Georgina Hall.
\newblock Exact recovery in the stochastic block model.
\newblock {\em IEEE Transactions on Information Theory}, 62(1):471--487, 2016.

\bibitem[ACV14]{ariascastro2013community}
Ery Arias-Castro and Nicolas Verzelen.
\newblock Community detection in random networks.
\newblock {\em The Annals of Statistics}, 42 Number 3:940 -- 969, 2014.

\bibitem[AFWZ20]{abbe2019entrywise}
Emmanuel Abbe, Jianqing Fan, Kaizheng Wang, and Yiqiao Zhong.
\newblock Entrywise eigenvector analysis of random matrices with low expected rank.
\newblock {\em Annals of statistics}, 48(3):1452 -- 1474, 2020.

\bibitem[AS16]{10.5555/3002498}
Noga Alon and Joel~H. Spencer.
\newblock {\em The Probabilistic Method}.
\newblock Wiley Publishing, 4th edition, 2016.

\bibitem[Ban18]{bandeira2015random}
Afonso~S. Bandeira.
\newblock Random laplacian matrices and convex relaxations.
\newblock {\em Foundations of Computational Mathematics}, 18 Number 2 Pages 345--379, 2018.

\bibitem[BPW18]{bandeira2018notes}
Afonso~S. Bandeira, Amelia Perry, and Alexander~S. Wein.
\newblock Notes on computational-to-statistical gaps: predictions using statistical physics.
\newblock {\em Port. Math.}, 75 no. 2, pp. 159–186, 2018.

\bibitem[CSX14]{Chen_2014}
Yudong Chen, Sujay Sanghavi, and Huan Xu.
\newblock Improved graph clustering.
\newblock {\em IEEE Transactions on Information Theory}, 60(10):6440–6455, oct 2014.

\bibitem[EN21]{esmaeili2021community}
Mohammad Esmaeili and Aria Nosratinia.
\newblock Community detection: Exact recovery in weighted graphs.
\newblock {\em arXiv 2102.04439}, 2021.

\bibitem[FK01]{FEIGE2001639}
Uriel Feige and Joe Kilian.
\newblock Heuristics for semirandom graph problems.
\newblock {\em Journal of Computer and System Sciences, Volume 63, Number 4, Pages 639-671}, 2001.

\bibitem[GZFA10]{goldenberg2009survey}
Anna Goldenberg, Alice~X. Zheng, Stephen~E. Fienberg, and Edoardo~M. Airoldi.
\newblock A survey of statistical network models.
\newblock {\em Found. Trends Mach. Learn.}, 2(2):129–233, feb 2010.

\bibitem[HLL83]{HOLLAND1983109}
Paul~W. Holland, Kathryn~Blackmond Laskey, and Samuel Leinhardt.
\newblock Stochastic blockmodels: First steps.
\newblock {\em Social Networks Volume 5 Number 2 Pages 109-137}, 1983.

\bibitem[HWX15]{hajek2016achieving}
Bruce Hajek, Yihong Wu, and Jiaming Xu.
\newblock Achieving exact cluster recovery threshold via semidefinite programming.
\newblock {\em 2015 IEEE International Symposium on Information Theory (ISIT)}, pages 1442--1446, 2015.

\bibitem[HWX16]{hajek2016achieving1}
Bruce Hajek, Yihong Wu, and Jiaming Xu.
\newblock Achieving exact cluster recovery threshold via semidefinite programming.
\newblock {\em IEEE Trans. Inf. Theor.}, 62(5):2788–2797, may 2016.

\bibitem[JL15a]{7447159}
Varun Jog and Po-Ling Loh.
\newblock Recovering communities in weighted stochastic block models.
\newblock {\em 2015 53rd Annual Allerton Conference on Communication, Control, and Computing (Allerton)}, pages 1308--1315, 2015.

\bibitem[JL15b]{jog2015informationtheoretic}
Varun~S. Jog and Po{-}Ling Loh.
\newblock Information-theoretic bounds for exact recovery in weighted stochastic block models using the renyi divergence.
\newblock {\em CoRR}, abs/1509.06418, 2015.

\bibitem[KF02]{doi:10.1137/050640904}
Robert Krauthgamer and Uriel Feige.
\newblock A polylogarithmic approximation of the minimum bisection.
\newblock {\em SIAM Journal on Computing}, 31(4):1090--1118, 2002.

\bibitem[MNS16]{Mossel_2016}
Elchanan Mossel, Joe Neeman, and Allan Sly.
\newblock Consistency thresholds for the planted bisection model.
\newblock {\em Electronic Journal of Probability}, 21(none), jan 2016.

\bibitem[Moo17]{moore2017computer}
Cristopher Moore.
\newblock The computer science and physics of community detection: Landscapes, phase transitions, and hardness.
\newblock {\em Bull. {EATCS}}, 121, 2017.

\bibitem[Ver18]{Vershynin_2018}
Roman Vershynin.
\newblock {\em High-Dimensional Probability: An Introduction with Applications in Data Science}.
\newblock Cambridge Series in Statistical and Probabilistic Mathematics. Cambridge University Press, 2018.

\bibitem[XJL20]{xu2018optimal}
Min Xu, Varun Jog, and Po-Ling Loh.
\newblock Optimal rates for community estimation in the weighted stochastic block model.
\newblock {\em The Annals of Statistics}, 48 Number 1 Pages 183 -- 204, 2020.

\end{thebibliography}
    \section{Organization}
In this supplementary document, we prove all the Theorems and Lemmas in the article in detail.
\subsection{Gaussian weighted stochastic block model}
Proof of the theorem \ref{thm: Negative} on the statistical impossibility of exact recovery for GWSBM  culminates with the proofs of lemmas  \ref{lem: MLE}, \ref{lem:MLEa}, \ref{lem:MLEb}, \ref{lem:Bto1}, \ref{lem:MLEc}, \ref{lem:MLEd}, \ref{lem:MLEe}, \ref{lem:MLEf}, and \ref{lem:jointGauss}.  Proof of the theorem \ref{thm: Positive} on the statistical possibility of exact recovery for GWSBM culminates with the proofs of lemmas  \ref{lem:MLEg} and \ref{lem:MLEh}. Proof of the theorem  \ref{thm:SDP} on semi-definite relaxation achieving exact recovery for GWSBM culminates with the proofs of lemmas  \ref{lem:exist}, \ref{lem:uniq}, and establishing a high probability event \ref{sec:hpe}. Proof of the theorem  \ref{thm:Spec1} on  spectral relaxation achieving exact recovery for GWSBM culminates with the proof of  the $l_{\infty}$ eigenvector bound using a spectral separation result \ref{thm:Spec} due to \cite{abbe2019entrywise}.
\subsection{Gaussian weighted planted dense subgraph model}
Proof of the theorem \ref{thm:Negative1}  on statistical impossibility of exact recovery of the planted community for GWPDSM culminates with the proofs of lemmas \ref{lem: MLE1}, \ref{lem:MLEa'}, \ref{lem:MLEb'}, \ref{lem:MLEc'}, \ref{lem:First1}, \ref{lem:Second1}. Proof of the theorem \ref{thm: Positive1} on statistical possibility of exact recovery of the planted community for GWPDSM culminates with the proofs of lemmas  \ref{lem:MLEpos1}, \ref{lem:MLEpos2}. Proof of the theorem \ref{thm:SDP1} on semi-definite relaxation achieving exact recovery for GWPDSM culminates with the proofs of lemmas \ref{lem:SDPeuiv}, \ref{lem:DetPDSM}, and establishing a high probability event \ref{sec:hpe1}.
    \section{Gaussian weighted stochastic block model}

This section defines Gaussian weighted Stochastic block model with two symmetric communities.

\subsection{Model description}

Let SBM$(n, \mu_1,\mu_2, \tau^2)$ denote a weighted graph $G_{n}$ on $n$ (even) vertices with binary labels $(\sigma_i^*)_{i\in [n]}$ such  that  

\[\#\{i\in [n]: \sigma_i^*=1\}=\#\{i\in [n]: \sigma_i^*=-1\}=\frac{n}{2}\] 

and conditioned on this labeling $(\sigma_i^*)_{i\in [n]}$, for each pair of distinct nodes $i,j \in [n]$, we have a weighted edge 
\begin{equation}\label{def:GWSBM}
A_{ij}=A_{ji} \sim
\begin{cases}
  N(\mu_1, \tau^2) & \text{if } \sigma^*(i) =\sigma^*(j)  \\
   N(\mu_2, \tau^2) & \text{if }  \sigma^*(i)\neq \sigma^*(j) 
\end{cases}
\end{equation} 

sampled independently for every distinct pair $\{i,j\}$.  To simplify the writing, we focus on the assortative case, $\mu_1>\mu_2$.  We further define the signal-to-noise ratio of the model SBM$(n, \mu_1,\mu_2, \tau^2)$ as the following.
\begin{equation}  \label{eq:SNR_n}
    \text{SNR}_n: =  \frac{(\mu_1-\mu_2)^2 n}{8\tau^2 \log n},
\end{equation}
\begin{equation}
    \text{SNR}:= \lim_{n\to\infty} \text{SNR}_n.
\end{equation}

To simplify our writings further, we work with the critical scaling of $\mu_1$ and $\mu_2$ as the following.
\begin{equation} \label{def:scaling}
\mu_1= \alpha \sqrt{\frac{\log n}{n}}, \mu_2= \beta\sqrt{\frac{\log n}{n}}.
\end{equation}

In this notation, we have our signal-to-noise ratio given by the following with $\alpha >\beta $ and $ \tau >0$ constants.

\begin{equation} \label{eq:SNR}
\text{SNR}:= \frac{(\alpha -\beta)^2}{8\tau^2}.
\end{equation}

 The problem of exact recovery asks to recover the planted community labeling $(\sigma_i^*)_{i\in [n]}$ exactly (up to a global sign flip) having observed an instance of the weighted graph $G_n\sim$ SBM$(n, \mu_1\,\mu_2, \tau^2)$ or equivalently the random $n \times n $ symmetric (weighted) adjacency matrix $(A_{ij})_{i,j,\in [n]}$ of the graph. 
 Let us  denote the parameter space $\Theta_n$ as 
 \begin{equation} \label{eq:SOL}
 \Theta_n = \{\sigma\in \{\pm1\}^n, \langle\sigma, \mathbf{1}_n\rangle=0\},
 \end{equation}
 
 where 
 $\mathbf{1}_n := (1, \cdots, 1)\in \mathbb{R}^n$. To be more precise, we define the agreement ratio between two community vectors $\hat{\sigma},  \sigma  \in \{\pm 1\}^n$ as 
 \begin{equation} \label{def:agr}
     M(\hat{\sigma}, \sigma)= \max \left(\frac{1}{n}\sum_{i=1}^n \mathbf{1}_{\hat{\sigma}(i)=\sigma(i)}, \frac{1}{n}\sum_{i=1}^n \mathbf{1}_{-\hat{\sigma}(i)=\sigma(i)}\right).
 \end{equation}

Then, we say that the problem of exact recovery is solvable if having observed an instance of the weighted random graph $G_n \sim$ SBM$(n,\mu_1, \mu_2,\tau^2)$ with an unobserved $\sigma^* \in \Theta_n$, we output a community label $\hat{\sigma} \in \Theta_n
$ such that 
\begin{equation} \label{def:ER}
    \mathbb{P} \left[(M(\hat{\sigma},\sigma^*)=1)\right] \to 1.
\end{equation} 

We say the problem of exact recovery is unsolvable otherwise. That is, with probability bounded away from zero, every estimator fails to exactly recover the community labels, as the number of vertices grows.
 
 \par
 
 Without loss of generality, we assume that the unobserved labels $(\sigma_i^*)_{i\in [n]}$ are such that  $\sigma_i^*=1$  for all $1\leqslant i \leqslant \frac{n}{2}$ and $\sigma_i^*=-1$  for all $\frac{n}{2}+1\leqslant i \leqslant  n$. In that case, the weighted adjacency matrix $A$ has the following form (except that the diagonal terms  $A_{ii} =0 $ for all $1\leqslant i\leqslant n$). 

\begin{equation} \label{def:MatA}
A=\begin{bmatrix}
\begin{array}{c|c}
N(\mu_1, \tau^2) & N(\mu_2, \tau^2) \\[1ex]
\hline
\\[-2ex]
N(\mu_2, \tau^2) & N(\mu_1, \tau^2)
\end{array}
\end{bmatrix}.
\end{equation}

\subsection{Maximum likelihood estimation} 

In this model, we define the maximum likelihood estimator as the following.

\begin{equation} \label{def:MLE}  
\hat{\sigma}_{MLE}:= \underset{\sigma \in \Theta_n}{\argmax} \hspace{.1cm} \log \p(G|((\sigma_i)_{i\in [n]})= \sum_{\sigma_i\sigma_j =1}\frac{-(A_{ij}-\mu_1)^2}{2\tau^2} + \sum_{\sigma_i\sigma_j =-1}\frac{-(A_{ij}-\mu_2)^2}{2\tau^2} -\log c_n,
\end{equation}

where $c_n =(\tau \sqrt{2 \pi})^{\binom{n}{2}}$. Further simplifying the right-hand side we have the following.

\begin{lemma} \label{lem: MLE}
    Let $G_n \sim \text{SBM}(n,\mu_1, \mu_2, \tau^2)$ with an unknown $\sigma^* \in \Theta_n$. Then we have the MLE given by the following.
    \begin{equation} \label{eq:MLE}
   \hat{\sigma}_{MLE}= \underset{\sigma \in \Theta_n}{ \argmax} \hspace{.1 cm}f_A(\sigma)  \quad \quad \text{where} f_A(\sigma):=\sum_{i,j} A_{ij}\sigma_i \sigma_j 
\end{equation}
\end{lemma}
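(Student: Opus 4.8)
The plan is to start from the log-likelihood expression displayed in \eqref{def:MLE} and expand the squared terms. Writing $-(A_{ij}-\mu_1)^2 = -A_{ij}^2 + 2\mu_1 A_{ij} - \mu_1^2$ and similarly for $\mu_2$, I would separate each summand into three pieces: a term quadratic in $A_{ij}$, a term linear in $A_{ij}$, and a constant. The key observation is that for a fixed instance of the weighted graph $G_n$, the quantity $\sum_{\{i,j\}} A_{ij}^2$ does not depend on the candidate labeling $\sigma$, so it contributes an additive constant to the objective and can be dropped from the $\argmax$. Likewise $-\log c_n$ is a constant independent of $\sigma$.

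Next I would handle the constant quadratic-in-$\mu$ terms. Over the domain $\sigma \in \Theta_n$ the partition into like-pairs $\{(i,j): \sigma_i\sigma_j = 1\}$ and unlike-pairs $\{(i,j): \sigma_i\sigma_j = -1\}$ always has the same cardinalities — namely $2\binom{n/2}{2}$ like-pairs and $(n/2)^2$ unlike-pairs — because every $\sigma \in \Theta_n$ is balanced. Hence $\sum_{\sigma_i\sigma_j=1}\mu_1^2 + \sum_{\sigma_i\sigma_j=-1}\mu_2^2$ is also a constant independent of $\sigma$ and drops out. What remains is the linear part, $\frac{1}{\tau^2}\bigl(\mu_1 \sum_{\sigma_i\sigma_j=1} A_{ij} + \mu_2 \sum_{\sigma_i\sigma_j=-1} A_{ij}\bigr)$. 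I would then rewrite this using the indicator identities $\mathbf{1}_{\sigma_i\sigma_j=1} = \tfrac{1+\sigma_i\sigma_j}{2}$ and $\mathbf{1}_{\sigma_i\sigma_j=-1} = \tfrac{1-\sigma_i\sigma_j}{2}$, so that the linear part becomes $\frac{1}{2\tau^2}\bigl[(\mu_1+\mu_2)\sum_{\{i,j\}} A_{ij} + (\mu_1-\mu_2)\sum_{\{i,j\}} A_{ij}\sigma_i\sigma_j\bigr]$. Again $\sum_{\{i,j\}} A_{ij}$ is a $\sigma$-independent constant, and the positive prefactor $\frac{\mu_1-\mu_2}{2\tau^2} > 0$ (using $\mu_1 > \mu_2$) can be divided out without changing the $\argmax$. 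This leaves exactly $\argmax_{\sigma\in\Theta_n} \sum_{\{i,j\}} A_{ij}\sigma_i\sigma_j$, which up to the symmetric double-counting convention is $f_A(\sigma) = \sum_{i,j} A_{ij}\sigma_i\sigma_j$ as claimed.

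I do not anticipate a genuine obstacle here; the content is entirely bookkeeping. The one point that needs a little care — and which I would make explicit — is the use of the balanced constraint $\langle\sigma,\mathbf{1}_n\rangle = 0$ to argue that the counts of like- and unlike-pairs (and hence the $\mu_1^2,\mu_2^2$ constants) are the same for every competitor $\sigma$; without balancedness that term would not be constant and the reduction would fail. I would also note the harmless discrepancy that $f_A(\sigma)$ as written sums over all ordered pairs $i,j$ (including, via the stated convention, zero diagonal) whereas the likelihood sum is over unordered pairs $\{i,j\}$ with $i\neq j$, so $f_A(\sigma) = 2\sum_{\{i,j\}} A_{ij}\sigma_i\sigma_j$; the factor $2$ is a positive constant and does not affect the maximizer. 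Assembling these observations gives the stated identity \eqref{eq:MLE}.
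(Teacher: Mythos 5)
Your proof is correct and follows essentially the same route as the paper: expand the squares, discard the $\sigma$-independent terms $\sum A_{ij}^2$ and $\log c_n$, use the balancedness of $\Theta_n$ to discard the $\mu_1^2,\mu_2^2$ constants, apply the indicator identity $\mathbf{1}_{\sigma_i\sigma_j=1}=\tfrac{1+\sigma_i\sigma_j}{2}$, and use $\mu_1>\mu_2$ to conclude. Your remarks on the balancedness requirement and the ordered/unordered double-count are correct and match the paper's implicit use of those facts.
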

\begin{proof}
    We have the following from the definition of the maximum likelihood estimator \ref{def:MLE}.
    \[\hat{\sigma}_{MLE}=  \underset{\sigma \in \Theta_n}{\argmax} \sum_{\sigma_i\sigma_j =1}\frac{-(A_{ij}-\mu_1)^2}{2\tau^2} + \sum_{\sigma_i\sigma_j =-1}\frac{-(A_{ij}-\mu_2)^2}{2\tau^2}.\]
    
 We start by simplifying the right-hand side of the expression above.
 
\begin{align*}
& \hspace{ .38 cm} \sum_{\sigma_i\sigma_j = 1}\frac{-(A_{ij}-\mu_1)^2}{2\tau^2} + \sum_{\sigma_i\sigma_j = -1}\frac{-(A_{ij}-\mu_2)^2}{2\tau^2} \\
&= \sum_{\sigma_i\sigma_j = 1}\frac{-A_{ij}^2 - \mu_1^2 + 2A_{ij}\mu_1}{2\tau^2} + \sum_{\sigma_i\sigma_j = -1}\frac{-A_{ij}^2 - \mu_2^2 +2A_{ij}\mu_2}{2\tau^2} \\
&= \sum_{\{i,j\}} \frac{-A_{ij}^2}{2\tau^2} + \frac{-\mu_1^2}{2\tau^2}\times \frac{n}{2}\left(\frac{n}{2}-1\right)+  \frac{-\mu_2^2}{2\tau^2} \times \frac{n}{2}\frac{n}{2} + \sum_{\sigma_i\sigma_j =1} \frac{A_{ij}\mu_1}{\tau^2} + \sum_{\sigma_i\sigma_j =-1} \frac{A_{ij}\mu_2}{\tau^2},
\end{align*}

where the index $\{ i,j\}$ means to sum over all the $\binom{n}{2}$ edges of the graph $G_n$. The total number of within-community edges and across-community edges are given by the following.
\[E_{\text{in}}= E_+ +E_-= \frac{n}{2}(\frac{n}{2}-1),\]
\[E_{\text{out}}= \frac{n}{2}\frac{n}{2}.\]
We further observe that the right hand of the penultimate expression simplifies to the following.

\begin{align*}
    & \hspace{.25 cm} \sum_{\sigma_i\sigma_j =1} A_{ij}\mu_1 + \sum_{\sigma_i\sigma_j =-1} A_{ij}\mu_2 \\
    & = \sum_{\{i,j\}} A_{ij}\mu_1 1_{\sigma_i\sigma_j=1} +\sum_{\{i,j\}} A_{ij}\mu_2 1_{\sigma_i\sigma_j =-1} \\
    & = (\mu_1-\mu_2)\sum_{\{i,j\}} A_{ij} 1_{\sigma_i\sigma_j=1} +\sum_{\{i,j\}} A_{ij}\mu_2 \\
    & = \frac{(\mu_1-\mu_2)}{2}\sum_{\{i,j\}} A_{ij} (\sigma_i\sigma_j+1) +\sum_{\{i,j\}} A_{ij}\mu_2 \\
    & = \frac{(\mu_1-\mu_2)}{4}\sum_{i,j} A_{ij} \sigma_i\sigma_j + \frac{(\mu_1-\mu_2)}{2}\sum_{\{i,j\}} A_{ij}  +\sum_{\{i,j\}} A_{ij}\mu_2
\end{align*} 

As most of the terms above are independent of $\sigma$, and $\mu_1 >\mu_2$, we have the following expression for MLE.

   \[\hat{\sigma}_{MLE}= \underset{\sigma \in \Theta_n}{ \argmax} \sum_{i,j} A_{ij}\sigma_i \sigma_j
   \]
 
\end{proof}

We observe that by the symmetry of the function $f_A$  both $\hat{\sigma}_{MLE}$ and $-\hat{\sigma}_{MLE}$   maximize the objective function.

    \subsection{Statistical impossibility}
In this section, we present the statistical impossibility result, providing an information theoretic limit for the exact recovery problem in Gaussian weighted graphs by showing that MLE fails to recover the community label in a low SNR regime. More precisely, we prove that when the signal-to-noise ratio (SNR) is less than one, no algorithm (efficient or not) can exactly recover the community labels with probability approaching one.

 \subsubsection{The statement}
\begin{theorem}[\textbf{Statistical impossibility of exact recovery}] \label{thm: Negative}
     Let $G_n \sim $ SBM$(n,\mu_1, \mu_2, \tau^2)$ with $\mu_1, \mu_2$ in the critical scaling regime \ref{def:scaling}, and $\alpha > \beta$, $\tau >0$ be constants with SNR = $\frac{(\alpha- \beta)^2}{8\tau^2}$. If SNR$<1$ then MLE fails in recovering the community label $\sigma^*$ with high probability. 
     \[ \underset{n \to \infty}{\limsup}\hspace{.1cm}\p(M(\hat{\sigma}_{MLE}, \sigma^*)=1) =0 \quad \iff \quad \underset{n \to \infty}{\liminf}\hspace{.1cm}\p(M(\hat{\sigma}_{MLE}, \sigma^*)\neq 1) = 1.\]
\end{theorem}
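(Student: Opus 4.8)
The equivalence in the statement is just the restatement of an event and its complement, so it suffices to prove $\p(M(\hat\sigma_{MLE},\sigma^*)\neq1)\to1$, i.e.\ that with probability tending to one $\sigma^*$ is not a maximizer of $f_A$. Following the reduction outlined above, the plan has three stages: (i) it is enough that the set of bad pairs $B(G_n)$ of \eqref{def:bad0} is nonempty w.h.p.; (ii) this follows if the bad-vertex sets $B_+(G_n)$ and $B_-(G_n)$ are each nonempty w.h.p.; (iii) the latter is proved by a first/second moment argument applied to $Z_+=\sum_{i\in C_+}\mathbf 1\{d_+(i)<d_-(i)-c_n\}$ (and symmetrically $Z_-$), for a suitably chosen diverging slack $c_n$.

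\textbf{The reductions.} The swap criterion $f_A(\sigma^*)<f_A(\sigma^*[i\leftrightarrow j])\iff d_+(i)+d_+(j)<d_-(i\setminus j)+d_-(j\setminus i)$ is a direct expansion: flipping the labels of $i\in C_+$ and $j\in C_-$ changes $f_A$ only through edges incident to $i$ or $j$, the $A_{ij}$ contribution cancels, and the net change equals $4\big(d_-(i\setminus j)+d_-(j\setminus i)-d_+(i)-d_+(j)\big)$. For the passage from bad vertices to a bad pair, note $d_-(i)=d_-(i\setminus j)+A_{ij}$ and $d_-(j)=d_-(j\setminus i)+A_{ij}$, so if $i\in B_+$ and $j\in B_-$ then
\[
d_+(i)+d_+(j)<d_-(i)+d_-(j)-2c_n=d_-(i\setminus j)+d_-(j\setminus i)+2A_{ij}-2c_n,
\]
whence $(i,j)$ is a bad pair as soon as $A_{ij}\leqslant c_n$. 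Choosing $c_n$ with $\sqrt{\log n}=o(c_n)$ and $c_n=o(\sqrt{n\log n})$ (for instance $c_n=\log n$), a union bound over the $n^2/4$ cross edges, each distributed as $N(\mu_2,\tau^2)$ with $\mu_2\to0$, shows $\max_{i\in C_+,\,j\in C_-}A_{ij}<c_n$ w.h.p.; on this event $B_+(G_n)\neq\emptyset$ and $B_-(G_n)\neq\emptyset$ together force $B(G_n)\neq\emptyset$.

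\textbf{The moment computation.} Since $d_+(1)\sim N((\tfrac n2-1)\mu_1,(\tfrac n2-1)\tau^2)$ and $d_-(1)\sim N(\tfrac n2\mu_2,\tfrac n2\tau^2)$ are independent, $d_-(1)-d_+(1)$ is Gaussian with mean $-(1+o(1))\tfrac{\alpha-\beta}{2}\sqrt{n\log n}$ and variance $(1+o(1))n\tau^2$; as $c_n=o(\sqrt{n\log n})$, the standard Gaussian tail gives
\[
\p\big[d_+(1)<d_-(1)-c_n\big]=\p\Big[N(0,1)>(1+o(1))\tfrac{(\alpha-\beta)\sqrt{\log n}}{2\tau}\Big]=n^{-\mathrm{SNR}+o(1)},
\]
so $\bE[Z_+]=\tfrac n2\,n^{-\mathrm{SNR}+o(1)}=n^{1-\mathrm{SNR}+o(1)}\to\infty$ precisely because $\mathrm{SNR}<1$. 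For the second moment, $\bE[Z_+^2]=\bE[Z_+]+\sum_{i\neq i'\in C_+}\p[i,i'\in B_+]$, and it remains to check $\p[1,2\in B_+]/\p[1\in B_+]^2\to1$. The sole source of dependence between $\{1\in B_+\}$ and $\{2\in B_+\}$ is the shared edge $A_{12}$: writing $d_+(1)=d_+(1\setminus2)+A_{12}$, $d_+(2)=d_+(2\setminus1)+A_{12}$, with $d_+(1\setminus2),d_+(2\setminus1),A_{12},d_-(1),d_-(2)$ jointly independent and Gaussian (the distributional identities are stated above), the two events become $d_+(1\setminus2)-d_-(1)<-c_n-A_{12}$ and $d_+(2\setminus1)-d_-(2)<-c_n-A_{12}$. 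Conditioning on $A_{12}$ (which is of order one) perturbs each threshold by $o(1)$ in units of the order-$\sqrt n$ standard deviation of the left-hand sides, which does not affect the leading tail asymptotics; integrating against the density of $A_{12}$ then yields $\p[1,2\in B_+]\sim\p[1\in B_+]^2$. Hence $\bE[Z_+^2]/\bE[Z_+]^2\to1$ and $\p[B_+(G_n)\neq\emptyset]=\p[Z_+\geqslant1]\geqslant\bE[Z_+]^2/\bE[Z_+^2]\to1$; the argument for $B_-(G_n)$ is identical. Combining with the reductions completes the proof.

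\textbf{Main obstacle.} I expect the delicate point to be the second-moment estimate in stage (iii): one must make rigorous that conditioning on the shared Gaussian $A_{12}$ and integrating against its density leaves the ratio $\p[1,2\in B_+]/\p[1\in B_+]^2$ equal to $1+o(1)$, which needs uniform control of the Gaussian tail under an $o(1)$-shift of a threshold that is itself growing like $\sqrt{\log n}$. The first-moment tail estimate, the algebraic swap identity, and the bad-vertex$\Rightarrow$bad-pair step are routine Gaussian-tail and union-bound arguments; the only care needed there is that $c_n$ simultaneously diverges fast enough to dominate every cross-edge weight yet stays $o(\sqrt{n\log n})$ so the tail exponent $1-\mathrm{SNR}$ is preserved, and both constraints are met by, e.g., $c_n=\log n$.
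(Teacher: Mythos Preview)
Your proposal is correct and follows essentially the same route as the paper: reduce to $B(G_n)\neq\emptyset$, then to $B_\pm(G_n)\neq\emptyset$ via the slack $c_n$ and a union bound on $\max A_{ij}<c_n$, and finally apply the second-moment method to $Z_+$ with the shared edge $A_{12}$ as the only source of dependence. The paper makes the second-moment step rigorous exactly along the lines you anticipate, by rewriting both events as $\{Z_k>s\sqrt{\log n}(1+o(1))+Z/\sqrt{n-2}\}$ for i.i.d.\ standard Gaussians $Z,Z_1,Z_2$ and truncating on $\{|Z|\le n^{1/4}\}$ to get uniform control of the tail under the $O(n^{-1/4})$ threshold shift; your conditioning-on-$A_{12}$ description is the same idea.
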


\begin{remark} \label{rem: MLE}
To explore the information-theoretic limits, we examine the Maximum A Posteriori (MAP) estimation, designed to maximize the probability of accurately reconstructing communities. In the context of uniform community assignment, (which is the case for us as we are fixing an unknown community labeling $\sigma^*$ with balanced community sizes, which could be chosen uniformly from all the possible $\binom{n}{\frac{n}{2}}$ choices of community assignments), MAP estimation is equivalent to Maximum Likelihood estimation (MLE).  We refer to \cite{abbe2023community}[Section 3.1] for more details on this.
\par 
Consequently, if MLE  fails in exactly recovering communities with high probability as the network size (n) grows, there is no algorithm, whether efficient or not, capable of reliably achieving this task. Therefore, to prove that exact recovery is not solvable, it is enough to show that MLE is unable to recover the planted community $\sigma^*$ with high probability. 
\par 
It is noteworthy that MLE, in this scenario, entails identifying a balanced 
 cut (bisection) \ref{eq:MLE} in the graph that minimizes the number of weighted edges crossing the cut—a solution to the NP-hard min-bisection problem. While ML serves to delineate the fundamental limit, its inefficiency in providing a practical algorithm prompts further consideration in a subsequent phase.
\end{remark}
 \par
 
Without loss of generality, we assume that the planted community is given by $\sigma^* =(\mathbf{1}_{\frac{n}{2}}, -\mathbf{1}_{\frac{n}{2}}) \in \Theta_n$, and $G_n \sim $ SBM$(n, \mu_1, \mu_2, \tau^2)$ conditioned on $\sigma^*$. We denote the resulting communities  as
    \[C_+=\left\{1, \cdots, \frac{n}{2}\right\}, C_{-}=\left\{\frac{n}{2}+1, \cdots, n\right\}.\]

\begin{definition} 

We also define the set of bad pairs of vertices in $G_n$ by the following.
    \[\mathcal{B}(G_n) := \{(i,j): i\in C_+, j\in C_{-}, f_A(\sigma^*)< f_A(\sigma^*[i\leftrightarrow j])\},\]
    
    where  $\sigma^*[i\leftrightarrow j]$ denotes the vector obtained by swapping the values of coordinates $i$ and $j$ in $\sigma^*$.
\end{definition}

From \ref{lem: MLE} we observe that the event $f_A(\sigma^*)< f_A(\sigma^*[i\leftrightarrow j]) $ is the same as the event that the likelihood of the partition $\sigma^*[i\leftrightarrow j]$ is larger than the planted one $\sigma^*$. Hence, we have the following result.

\begin{lemma} \label{lem:MLEa}
   MLE fails to solve the exact recovery with high probability  if $\mathcal{B}(G_n)$ is non-empty with high probability. More precisely, we have the following lower bound on the error.
   \[\underset{n \to \infty}{\liminf}\hspace{.1cm}\p(M(\hat{\sigma}_{MLE}, \sigma^*)\neq1) \geqslant  \underset{n \to \infty}{\liminf}\hspace{.1cm} \p (\mathcal{B}(G_n)\neq \emptyset)\]
\end{lemma}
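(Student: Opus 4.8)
The plan is to establish the set containment $\{\mathcal{B}(G_n)\neq\emptyset\}\subseteq\{M(\hat{\sigma}_{MLE},\sigma^*)\neq 1\}$ at the level of events; once this is in hand, the claimed inequality of $\liminf$'s follows immediately from monotonicity of $\p$ and of $\liminf$.

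First I would fix a realization of $G_n$ on the event $\{\mathcal{B}(G_n)\neq\emptyset\}$ and pick any pair $(i,j)\in\mathcal{B}(G_n)$, so that $i\in C_+$, $j\in C_-$, and $f_A(\sigma^*)<f_A(\sigma^*[i\leftrightarrow j])$. The key elementary observation is that swapping a coordinate equal to $+1$ with one equal to $-1$ does not change the number of $+1$'s or $-1$'s, so $\sigma^*[i\leftrightarrow j]$ is still balanced, i.e.\ $\sigma^*[i\leftrightarrow j]\in\Theta_n$. Hence $\sigma^*[i\leftrightarrow j]$ is a feasible competitor in the maximization \ref{eq:MLE} defining $\hat{\sigma}_{MLE}$, and it strictly beats the planted vector:
\[
f_A(\hat{\sigma}_{MLE})\;\geqslant\;f_A(\sigma^*[i\leftrightarrow j])\;>\;f_A(\sigma^*).
\]

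Next I would invoke the symmetry $f_A(-\sigma)=f_A(\sigma)$ recorded right after Lemma \ref{lem: MLE} (immediate from $(-\sigma_i)(-\sigma_j)=\sigma_i\sigma_j$). Combined with the displayed inequality this gives $f_A(\hat{\sigma}_{MLE})>f_A(\sigma^*)=f_A(-\sigma^*)$, so the maximizer $\hat{\sigma}_{MLE}$ is neither $\sigma^*$ nor $-\sigma^*$. By the definition \ref{def:agr} of the agreement ratio, $M(\hat{\sigma},\sigma^*)=1$ forces $\hat{\sigma}\in\{\sigma^*,-\sigma^*\}$; therefore $M(\hat{\sigma}_{MLE},\sigma^*)\neq 1$ on this realization. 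This proves the event containment, hence $\p(M(\hat{\sigma}_{MLE},\sigma^*)\neq 1)\geqslant\p(\mathcal{B}(G_n)\neq\emptyset)$ for every $n$, and passing to the $\liminf$ yields the lemma.

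There is no genuinely hard step here; the only points that need care are (a) checking that $\sigma^*[i\leftrightarrow j]$ remains in $\Theta_n$, where balance is preserved precisely because $i$ and $j$ lie in opposite communities --- a condition built into the definition of $\mathcal{B}(G_n)$ --- and (b) observing that it suffices for $\hat{\sigma}_{MLE}$ to be \emph{some} maximizer of $f_A$ over $\Theta_n$, since the strict inequality above excludes both $\sigma^*$ and $-\sigma^*$ from the $\argmax$ entirely. Since $\Theta_n$ is finite a maximizer always exists, and any tie-breaking rule for $\hat{\sigma}_{MLE}$ returns an element of the $\argmax$, which cannot be $\pm\sigma^*$, so no further argument is needed.
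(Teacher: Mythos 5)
Your proof is correct and follows essentially the same route the paper takes: exhibit the event containment $\{\mathcal{B}(G_n)\neq\emptyset\}\subseteq\{M(\hat{\sigma}_{MLE},\sigma^*)\neq 1\}$ via a strict improvement of $f_A$ under the swap, then pass to probabilities and $\liminf$. Your write-up is somewhat more careful than the paper's one-line argument, in that you explicitly verify that $\sigma^*[i\leftrightarrow j]$ stays balanced and, crucially, that the symmetry $f_A(-\sigma)=f_A(\sigma)$ rules out $-\sigma^*$ as a maximizer as well (which is needed because of the $\max$ over the two sign choices in the definition of $M$); the paper leaves this second point implicit.
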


\begin{proof}
    If $\exists (i,j) \in \mathcal{B}(G_n)$, then we can swap the coordinates $i$ and $j$ in $\sigma^*$ and increase the likelihood of the partition (or equivalently $f_A(\cdot)$), thus obtaining a different partition than the planted one ($\sigma^*$) that is more likely than the planted one. More precisely, we have that $\mathcal{B}(G_n) \neq \emptyset \implies M(\hat{\sigma}_{MLE}, \sigma^*)\neq 1$. Therefore,
    \begin{align*}
        \p(M(\hat{\sigma}_{MLE}, \sigma^*)\neq1) \geqslant   \p (\mathcal{B}(G_n)\neq \emptyset)
    \end{align*} 
\end{proof}
With the lemma above, the impossibility of exact recovery reduces to proving that if SNR$<1$ then 
\[\underset{n \to \infty}{\liminf}\hspace{.1cm}\p (\mathcal{B}(G_n)\neq \emptyset) >0.\]

We will show this by proving a stronger result \ref{lem:Bto1} that 
\[
\underset{n \to \infty}{\liminf}\hspace{.1cm}\p (\mathcal{B}(G_n)\neq \emptyset) =1.
\]
 \subsubsection{First step -- the first moment method}
 
We now analyze the condition $f_A(\sigma^*)< f_A(\sigma^*[i\leftrightarrow j])$ for $i\in C_+$ and $j\in C_-$.

\par
\begin{definition}
    For a vertex $i \in C_+$, and $j \in C_-$ we define the following.
    \[d_+(i)= \sum_{v\in C_+}A_{iv},\]
    \[d_+(j)=  \sum_{v\in C_-}A_{jv},\]
    \[d_-(i)= \sum_{v\in C_-}A_{iv},\]
    \[d_-(j)=  \sum_{v\in C_+}A_{jv},\]
    \[d_-(i\setminus j)=\sum_{v\in C_- \setminus j}A_{iv},\]
    \[d_-(j\setminus i)=\sum_{v\in C_+ \setminus i}A_{jv}\]
\end{definition}
It is worth observing that $d_+(i)$ and $ d_+(j)$ count the total number of weighted edges that $i$ and $j$ have with their own communities $C_+$ and  $C_-$, and $d_-(i)$ and $ d_-(j)$ count the total number of weighted edges that $i$ and $j$ have with their opposite communities $C_-$ and  $C_+$ respectively. Furthermore, $d_-(i\setminus j)$ and $d_-(j\setminus i)$  count the total number of weighted edges that $i$ has with community $C_-$ ignoring vertex $j$ and the total number of weighted edges that $j$ has with community $C_+$ ignoring vertex $i$.
\begin{lemma} \label{lem:MLEb}
With the notations above and $i\in C_+, j\in C_-$ we have the following.
 
    \[f_A(\sigma^*)< f_A(\sigma^*[i\leftrightarrow j]) \quad \iff \quad   d_+(i)+d_+(j) < d_-(i\setminus j)+d_-(j\setminus i).\]
\end{lemma}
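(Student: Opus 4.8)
The plan is to expand the objective $f_A$ on the two partitions $\sigma^*$ and $\sigma^*[i\leftrightarrow j]$ and cancel all common terms, reducing the inequality to a statement about the edge weights incident to $i$ and $j$. First I would write
\[
f_A(\sigma) = \sum_{k,l} A_{kl}\sigma_k\sigma_l,
\]
and note that $\sigma^*[i\leftrightarrow j]$ differs from $\sigma^*$ only in the coordinates $i$ (which flips from $+1$ to $-1$) and $j$ (which flips from $-1$ to $+1$). Hence the only terms of the sum that change are those involving at least one of $i$ or $j$. Splitting off these terms carefully — the pairs $\{i,v\}$ for $v\notin\{i,j\}$, the pairs $\{j,v\}$ for $v\notin\{i,j\}$, and the pair $\{i,j\}$ itself — gives
\[
f_A(\sigma^*[i\leftrightarrow j]) - f_A(\sigma^*) = 2\sum_{v\neq i,j}\bigl(\sigma^*[i\leftrightarrow j]_i - \sigma^*_i\bigr)\sigma^*_v A_{iv}\big/\,\text{(combine with }j\text{ term)},
\]
so I would instead just organize the bookkeeping by the sign of $\sigma^*_v$: for $v\in C_+\setminus\{i\}$ the product $\sigma^*_i\sigma^*_v$ flips sign, and for $v\in C_-\setminus\{j\}$ the product $\sigma^*_j\sigma^*_v$ flips sign, while the $\{i,j\}$ term $A_{ij}\sigma^*_i\sigma^*_j$ is unchanged (both endpoints flip). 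Summing the sign changes and using symmetry $A_{kl}=A_{lk}$ to account for the factor of $2$ from the double sum, I expect to obtain
\[
f_A(\sigma^*[i\leftrightarrow j]) - f_A(\sigma^*) = 4\Bigl(d_-(i\setminus j) + d_-(j\setminus i) - d_+(i\setminus j) - d_+(j\setminus i)\Bigr),
\]
and since $d_+(i) = \sum_{v\in C_+}A_{iv} = \sum_{v\in C_+\setminus\{i\}}A_{iv} = d_+(i\setminus j) + A_{ij}\cdot\mathbf{1}[j\in C_+]$; but $j\in C_-$, so in fact $d_+(i) = d_+(i\setminus j)$ already (the vertex $j$ is not in $C_+$ so removing it changes nothing in $d_+(i)$), and similarly $d_+(j) = d_+(j\setminus i)$. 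Thus the difference is $4\bigl(d_-(i\setminus j)+d_-(j\setminus i) - d_+(i) - d_+(j)\bigr)$, and this is positive precisely when $d_+(i)+d_+(j) < d_-(i\setminus j)+d_-(j\setminus i)$, which is the claimed equivalence.

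The one place to be careful is the treatment of the $\{i,j\}$ edge and the diagonal: since the model sets $A_{kk}=0$ and the sum $\sum_{k,l}$ in $f_A$ ranges over all ordered pairs (so each unordered edge is counted twice with the same $A$ value), I need to make sure the $\{i,j\}$ term genuinely cancels — it does, because $\sigma^*_i\sigma^*_j = (\sigma^*[i\leftrightarrow j]_i)(\sigma^*[i\leftrightarrow j]_j) = -1$ in both partitions. The main (very mild) obstacle is purely the combinatorial bookkeeping of which products flip sign and getting the overall constant right; there is no analytic difficulty. Once the expansion is done correctly, the equivalence is immediate, and the constant multiple (here $4$) is irrelevant to the strict inequality.
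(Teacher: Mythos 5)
Your proposal is correct and follows essentially the same route as the paper: expand $f_A$, observe that only the terms incident to $i$ or $j$ change sign (with the $\{i,j\}$ edge and the zero diagonal cancelling), and compare the two sides. Your explicit computation of the difference $f_A(\sigma^*[i\leftrightarrow j]) - f_A(\sigma^*) = 4\bigl(d_-(i\setminus j)+d_-(j\setminus i)-d_+(i)-d_+(j)\bigr)$ is right, including the observation that $d_+(i\setminus j)=d_+(i)$ and $d_+(j\setminus i)=d_+(j)$ because $j\notin C_+$ and $i\notin C_-$; the only cosmetic slip is that in your prose bookkeeping you name just two of the four families of flipping products (the within-community edges for $i$ and $j$) while the cross-community edges $A_{iv}$ for $v\in C_-\setminus\{j\}$ and $A_{jv}$ for $v\in C_+\setminus\{i\}$ also flip and are what produce $d_-(i\setminus j)$ and $d_-(j\setminus i)$ in your final formula.
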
  
\begin{proof}
We recall the definition of MLE estimator \ref{def:MLE} given by (recall $A_{uu}:=0$ for all $1\leqslant u\leqslant n$)
\[f_A(\sigma^*) :=  \sum_{1\leqslant u,v \leqslant n} A_{uv} \sigma^*_u\sigma^*_v =2\sum_{1\leqslant u<v \leqslant n} A_{uv} \sigma^*_u\sigma^*_v.\]

Therefore, one has the following chain of equivalences for $i\in C_+, j\in C_-$.
    \[
    f_A(\sigma^*)< f_A(\sigma^*[i\leftrightarrow j]
    \]
    \[
    \leftrightarrow  \hspace{.1 cm} \sum_{1\leqslant u<v \leqslant n} A_{uv} \sigma^*_u\sigma^*_v < \sum_{1\leqslant u<v \leqslant n} A_{uv} \sigma^*_u[i\leftrightarrow j]\sigma^*_v[i\leftrightarrow j])
    \]
    \[ \leftrightarrow \hspace{.1 cm} \sum_{v\in C_+}A_{iv} - \sum_{v\in C_- \setminus j}A_{iv} + \sum_{v\in C_-}A_{jv} - \sum_{v\in C_+ \setminus i}A_{jv} - A_{ij} < -\sum_{v\in C_+}A_{iv}  +  \sum_{v\in C_- \setminus j}A_{iv} - \sum_{v\in C_-}A_{jv} + \sum_{v\in C_+ \setminus i}A_{jv} - A_{ij} 
    \]
    \[
    \leftrightarrow d_+(i) - d_-(i\setminus j) + d_+(j)- d_-(j\setminus i) < -d_+(i) + d_-(i\setminus j) - d_+(j)  + d_-(j\setminus i)
    \]
    \[
    \leftrightarrow  d_+(i)+d_+(j) < d_-(i\setminus j)+d_-(j\setminus i)
    \]
\end{proof}

With the lemma above we have our set of bad pairs given by the following.
\[
\mathcal{B}(G_n) =\{(i,j): i \in C_+, j \in C_-, d_+(i)+d_+(j)< d_-(i\setminus j)+d_-(j\setminus i)\}.
\]

Finally, to apply the first and second-moment methods to our setting, we now define the following sets with diverging constants $c_n \to \infty $ to be chosen later according to the proof.

\begin{definition}  The set of bad vertices (instead of bad pairs) in each community in $G_n$ is given by
\[
\mathcal{B}_+(G_n):=\left\{i: i \in C_+, d_{+}(i) < d_{-}(i)-c_n\right\}, 
\]
\[
\mathcal{B}_-(G_n):=\left\{j: j \in C_-, d_{+}(j) < d_{-}(j)-c_n\right\},
\]
\end{definition}

 \begin{lemma} \label{lem:Bto1}
     
 If $\mathcal{B}_+(G_n)$ and $\mathcal{B}_-(G_n)$ are non-empty with probability approaching $1$, then $\mathcal{B}(G
 _n)$ is non-empty with probability approaching  $1$ as $n \to  \infty$.
\end{lemma}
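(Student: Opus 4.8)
The plan is to build an explicit bad pair out of one bad vertex of $C_+$ and one bad vertex of $C_-$, and to show the resulting pair lies in $\mathcal{B}(G_n)$ as soon as a single crossing edge is not too heavy. First I would record the elementary identity, valid for $i\in C_+$, $j\in C_-$ (using $A_{ij}=A_{ji}$ and $j\in C_-$, $i\in C_+$),
\[
d_-(i\setminus j)+d_-(j\setminus i)=\bigl(d_-(i)-A_{ij}\bigr)+\bigl(d_-(j)-A_{ij}\bigr)=d_-(i)+d_-(j)-2A_{ij},
\]
so that by Lemma~\ref{lem:MLEb} the pair $(i,j)$ belongs to $\mathcal{B}(G_n)$ exactly when $d_+(i)+d_+(j)<d_-(i)+d_-(j)-2A_{ij}$. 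If $i\in\mathcal{B}_+(G_n)$ and $j\in\mathcal{B}_-(G_n)$, then adding the two defining inequalities $d_+(i)<d_-(i)-c_n$ and $d_+(j)<d_-(j)-c_n$ gives $d_+(i)+d_+(j)<d_-(i)+d_-(j)-2c_n$, hence $(i,j)\in\mathcal{B}(G_n)$ as soon as $A_{ij}\le c_n$. In short, the slack $c_n$ built into the definitions of $\mathcal{B}_\pm(G_n)$ is exactly what is needed to absorb the weight of the edge joining the two swapped vertices.

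Granting this reduction, the remaining step is to prove that with probability $1-o(1)$ every crossing edge is at most $c_n$. A union bound over the $n^2/4$ pairs $(i,j)\in C_+\times C_-$ together with the Gaussian tail bound gives
\[
\mathbb{P}\bigl[\exists\, i\in C_+,\; j\in C_-:\; A_{ij}>c_n\bigr]\le\frac{n^2}{4}\exp\Bigl(-\tfrac{(c_n-\mu_2)^2}{2\tau^2}\Bigr),
\]
and since $\mu_2=\beta\sqrt{\log n/n}\to 0$ the right-hand side tends to $0$ whenever $c_n\ge(2+\varepsilon)\tau\sqrt{\log n}$ for some fixed $\varepsilon>0$ and all large $n$. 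On the intersection of this event with $\{\mathcal{B}_+(G_n)\ne\emptyset\}\cap\{\mathcal{B}_-(G_n)\ne\emptyset\}$, which by hypothesis has probability $1-o(1)$, I would pick any $i_0\in\mathcal{B}_+(G_n)$ and any $j_0\in\mathcal{B}_-(G_n)$ and conclude $(i_0,j_0)\in\mathcal{B}(G_n)$; thus $\mathcal{B}(G_n)\ne\emptyset$ with probability $1-o(1)$.

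The only delicate point — the main (mild) obstacle — is choosing the sequence $c_n$: it must be \emph{large enough}, $c_n\gtrsim\sqrt{\log n}$, to dominate the typical largest crossing-edge weight $(1+o(1))\,2\tau\sqrt{\log n}$, yet \emph{small enough}, growing more slowly than $\sqrt{n}$, so that the threshold governing $\mathbb{P}[d_+(1)<d_-(1)-c_n]$ stays $(1+o(1))\tfrac{(\alpha-\beta)\sqrt{\log n}}{2\tau}$ and the first- and second-moment estimates for $|\mathcal{B}_\pm(G_n)|$ in the subsequent lemmas are unaffected. Because $\sqrt{\log n}$ and $\sqrt{n}$ are far apart, there is ample room, and I would simply fix $c_n=\sqrt{n}/\log n$ (or $c_n=\log^2 n$) once and for all and check it is admissible at every place $c_n$ is used. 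An alternative that avoids the crude maximum bound is to use the stronger fact, which the second-moment computation actually delivers, that $|\mathcal{B}_\pm(G_n)|\to\infty$ in probability, and then argue that among the many resulting candidate pairs at least one has a small connecting edge; but that would require controlling the upward bias that conditioning on $\{i\in\mathcal{B}_+(G_n)\}$ places on $A_{ij}$, so the direct argument above is preferable.
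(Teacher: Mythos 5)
Your proposal is correct and follows essentially the same route as the paper: the paper likewise adds the two defining inequalities to get $d_+(i)+d_+(j)<d_-(i)+d_-(j)-2c_n$, uses the identity $d_-(i)+d_-(j)-2c_n=d_-(i\setminus j)+d_-(j\setminus i)+2A_{ij}-2c_n$ to absorb the edge $A_{ij}$ into the slack $c_n$, and then runs a union bound over all $\tfrac{n^2}{4}$ crossing pairs to show $\max_{i\in C_+,j\in C_-}A_{ij}\le c_n$ with high probability (the paper takes $c_n\ge\tau\sqrt{2C\log n}$ with $C=3$ and later imposes $c_n=o(\sqrt{n\log n})$ in~\eqref{eq:c_n}, matching your window). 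Your observation that a direct maximum bound is preferable to conditioning on $i\in\mathcal{B}_+(G_n)$ is exactly the point that makes the paper's cascade of set inclusions $E_+\cap E_-\subset E$, $E\cap G\subset F$ work cleanly.
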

\begin{proof}
We define the following events for $i \in C_+$ and $j \in C_-$, we define the following events. 
\[
E_i:= \{d_{+}(i) < d_{-}(i)-c_n\}
\]
\[
E_j:=\{ d_{+}(j) < d_{-}(j)-c_n\}
\]
\[
E_{ij}:= \{d_{+}(i)+d_{+}(j) < d_{-}(i)+d_{-}(j)-2c_n\}
\]
\[
E:=\cup_{(i,j): i\in C_+, j\in C_-} E_{ij}
\]
\[
F_{ij}:= \{d_+(i)+d_+(j)< d_-(i\setminus j)+d_-(j\setminus i)\}
\]
\[
F:= \cup_{(i,j): i\in C_+, j\in C_-} F_{ij}
\]
\[
G_{ij}:=\{A_{ij}< c_n\}
\]
\[G :=\cap_{(i,j): i\in C_+, j \in C_-} G_{ij}\]

Then we have the following equivalence of events.
\[
E_+:=
\{\mathcal{B}_+(G_n) \neq \emptyset\}= \cup_{i\in C_+}E_i
\]
\[E_-:=
\{\mathcal{B}_-(G_n) \neq \emptyset\} = \cup_{j\in C_-}E_j
\]
\[
F:= \cup_{(i,j): i\in C_+, j\in C_-} F_{ij}= \{\mathcal{B}(G_n) \neq \emptyset\}
\]

On the event $E_i\cap E_j$ we have 
\[ 
d_{+}(i)+d_{+}(j) < d_{-}(i)+d_{-}(j)-2c_n \implies E_i\cap E_j \subset E_{ij}.
\]

 We also have that $E_+\cap E_- \subset E $ because $\cup_{i\in C_+}E_i \bigcap 
 \cup_{j\in C_-}E_j \subset \cup_{(i,j): i \in C_+, j \in C_-}E_{ij}$.
 \par
 Further, $d_{-}(i)+d_{-}(j)-2c_n = d_{-}(i\setminus j)+d_{-}(j\setminus i) + 2A_{ij}-2c_n$. So, whenever $A_{ij}< c_n$ we have  \[d_{-}(i)+d_{-}(j)-2c_n < d_{-}(i\setminus j)+d_{-}(j\setminus i) \implies E_{ij}\cap G_{ij} \subset F_{ij}\]

Finally, we have $G \cap E \subset F$  because  $\cup_{(i,j): i\in C_+, j \in C_-} E_{ij} \bigcap \cap_{(i,j): i\in C_+, j \in C_-}G_{ij} \subset  \cup_{(i,j): i\in C_+, j \in C_-} F_{ij}$.
\par 
 In the probabilistic sense, we have the following.
 \[
 \mathbb{P}\left[\mathcal{B}_+(G_n) \neq \emptyset\right] = \mathbb{P}\left[ E_+ \right] \to 1 \quad \text{and} \quad  \mathbb{P}\left[\mathcal{B}_-(G_n) \neq \emptyset\right] =\mathbb{P}\left[  E_-\right] \to 1
 \] 
 \[
 \implies \mathbb{P}\left[\mathcal{B}_+(G_n) \neq \emptyset \cap \mathcal{B}_-(G_n) \neq \emptyset \right] =\mathbb{P}\left[E_+\cap E_-\right] \to 1
 \]
 \[
 \implies \mathbb{P}\left[ \exists (i,j): i 
 \in  C_+, j \in C_-,  d_{+}(i)+d_{+}(j) < d_{-}(i)+d_{-}(j)-2c_n\right]= \mathbb{P}\left[E\right] \to 1.
 \]

 Combining the above, to show that $\mathbb{P}\left[F\right] \to  1$, it remains to show that $\mathbb{P}\left[G\right] \to  1$, which we prove next.
\end{proof}

\begin{lemma} \label{lem:MLEc}
With the notations above, we have $\mathbb{P}\left[G\right] \to 1$.
\end{lemma}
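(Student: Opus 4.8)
The plan is to bound the complementary event by a union bound. Recall from the proof of Lemma~\ref{lem:Bto1} that $G=\bigcap_{i\in C_+,\,j\in C_-}G_{ij}$ with $G_{ij}=\{A_{ij}<c_n\}$, so $G^c=\bigcup_{i\in C_+,\,j\in C_-}\{A_{ij}\geqslant c_n\}$. Since the ground truth satisfies $\sigma^*_i\sigma^*_j=-1$ for every $i\in C_+,\ j\in C_-$, the $n^2/4$ variables $\{A_{ij}:i\in C_+,\,j\in C_-\}$ are i.i.d.\ $N(\mu_2,\tau^2)$, and therefore
\[
\mathbb{P}[G^c]\ \leqslant\ \frac{n^2}{4}\,\mathbb{P}\big[N(\mu_2,\tau^2)\geqslant c_n\big]\ =\ \frac{n^2}{4}\,\Phi\!\left(\frac{c_n-\mu_2}{\tau}\right).
\]

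Next I would apply the standard sub-Gaussian tail estimate $\Phi(x)\leqslant e^{-x^2/2}$ for $x>0$. Because $\mu_2=\beta\sqrt{\log n/n}\to 0$ while $c_n\to\infty$, we have $c_n-\mu_2\geqslant \tfrac12 c_n$ for all large $n$, hence
\[
\mathbb{P}[G^c]\ \leqslant\ \frac{n^2}{4}\exp\!\left(-\frac{(c_n-\mu_2)^2}{2\tau^2}\right)\ \leqslant\ \frac{n^2}{4}\exp\!\left(-\frac{c_n^2}{8\tau^2}\right).
\]
It thus suffices to fix the (still unspecified) diverging sequence $c_n$ so that $c_n^2/(8\tau^2)-2\log n\to\infty$; this holds for any $c_n$ with $c_n\geqslant 4\tau\sqrt{\log n}$ and $c_n\to\infty$, and in particular for $c_n=n^{1/4}$ (or the slower choice $c_n=\sqrt{\log n}\,\log\log n$). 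With such a choice $\mathbb{P}[G^c]\to 0$, i.e.\ $\mathbb{P}[G]\to1$.

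I do not expect a genuine obstacle here: the estimate is a one-line union bound plus a Gaussian tail bound. The only point requiring a little care is that the auxiliary sequence $c_n$ must satisfy two competing requirements at once — it has to grow faster than a constant multiple of $\sqrt{\log n}$ (to overcome the $n^2$ factor above), yet it must also be $o(\sqrt{n\log n})$ so that the extra shift by $c_n$ is negligible against the $\sqrt{n}\,\tau$-scale fluctuations of $d_+(i)-d_-(i)$ used in the first- and second-moment arguments that follow; both can be met simultaneously, so $c_n$ can be pinned down once and for all (e.g.\ $c_n=n^{1/4}$).
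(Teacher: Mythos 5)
Your argument is correct and follows essentially the same route as the paper: a union bound over the $n^2/4$ cross-community pairs, the observation that $A_{ij}\sim N(\mu_2,\tau^2)$ for such pairs, a standard Gaussian tail estimate, and a choice of $c_n$ growing faster than a constant multiple of $\sqrt{\log n}$ (the paper uses $c_n\geqslant\tau\sqrt{6\log n}$; your slightly lossier reduction $c_n-\mu_2\geqslant c_n/2$ just asks for $c_n\geqslant 4\tau\sqrt{\log n}$, which is immaterial). Your closing remark that $c_n$ must simultaneously be $o(\sqrt{n\log n})$ to keep the shift negligible in the later moment computations matches the paper's constraint~\eqref{eq:c_n}.
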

\begin{proof}
    We observe that it is enough to show by union bound and symmetry of the model that for a fixed $i \in C_-, j \in C_+$
    \[\mathbb{P}\left[G^c\right] = \mathbb{P}\left[ \cup_{(i,j): i\in C_+, j \in C_-} G^c_{ij}\right] \leqslant \frac{n^2}{4} \mathbb{P}\left[A_{ij}\geqslant c_n \right] \to 0
    \]
    
    We recall that $A_{ij} \sim N(\mu_2, \tau^2)$ and therefore $\mathbb{P}\left[A_{ij}\geqslant c_n \right] =\mathbb{P}\left[N(0,1)\geqslant \frac{c_n-\mu_2}{\tau} \right] $. We now take $c_n\geqslant \tau\sqrt{2C\log n}$ for some large absolute constant $C>0$ ($C=3$ works) and by standard Gaussian tail bounds \ref{eq:Gtail} we have the following.
    \[n^2 \mathbb{P}\left[A_{ij}\geqslant c_n\right] \leqslant n^2 e^{-(1+o(1)) C \log n} \leqslant n^{(2-C)(1+o(1))} \to 0.\]
\end{proof}
\subsubsection{Second step -- the second moment method}
\begin{lemma} \label{lem:MLEd}
 If $\text{SNR}=\frac{(\alpha -\beta)^2}{8\tau^2}<1$, then $\mathcal{B}_+(G_n)$ and  $\mathcal{B}_-(G_n)$ are non empty with probability approaching $1$.
\[
\mathbb{P}\left[E_+\right] = \mathbb{P}\left[ \mathcal{B}_+(G_n)\neq \emptyset \right]  =1 - o(1)
\]
\[\mathbb{P}\left[E_-\right]=\mathbb{P}\left[ \mathcal{B}_-(G_n)\neq \emptyset \right]=1 - o(1)\]
\end{lemma}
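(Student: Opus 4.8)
The plan is to run the second moment method on the counting variable
\[
Z_+ := \sum_{i\in C_+}\mathbf{1}_{E_i}, \qquad E_i := \{d_+(i) < d_-(i) - c_n\},
\]
so that $\{\mathcal{B}_+(G_n)\neq\emptyset\} = \{Z_+\geqslant 1\}$ and the Paley--Zygmund inequality gives $\mathbb{P}[Z_+\geqslant 1] \geqslant \mathbb{E}[Z_+]^2/\mathbb{E}[Z_+^2]$. By linearity and the symmetry of the model, $\mathbb{E}[Z_+] = \tfrac n2\,\mathbb{P}[E_1]$ and $\mathbb{E}[Z_+^2] = \tfrac n2\,\mathbb{P}[E_1] + \tfrac n2(\tfrac n2-1)\,\mathbb{P}[E_1\cap E_2]$, so it suffices to establish the two limits $n\,\mathbb{P}[E_1]\to\infty$ and $\mathbb{P}[E_1\cap E_2]/\mathbb{P}[E_1]^2\to 1$; together these force $\mathbb{E}[Z_+^2]/\mathbb{E}[Z_+]^2\to 1$ and hence $\mathbb{P}[Z_+\geqslant 1]\to 1$. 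The verbatim argument with $C_+$ replaced by $C_-$ handles $\mathcal{B}_-(G_n)$, which is why it is enough to treat $\mathcal{B}_+(G_n)$.

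For the first moment, I would use that $d_+(1)\overset{d}{=}N\big((\tfrac n2-1)\mu_1,(\tfrac n2-1)\tau^2\big)$ is independent of $d_-(1)\overset{d}{=}N\big(\tfrac n2\mu_2,\tfrac n2\tau^2\big)$, so $d_+(1)-d_-(1)$ is Gaussian with mean $\sim -\tfrac n2(\mu_1-\mu_2)$ and variance $\sim n\tau^2$. Normalizing and inserting the critical scaling $\mu_1-\mu_2=(\alpha-\beta)\sqrt{\log n/n}$ together with any admissible choice of $c_n$ (diverging, $c_n=o(\sqrt{n\log n})$, and large enough --- say $c_n\asymp\tau\sqrt{\log n}$ --- for Lemma~\ref{lem:MLEc}), the event $E_1$ becomes $\{Z > (1+o(1))\tfrac{(\alpha-\beta)\sqrt{\log n}}{2\tau}\}$ for $Z\sim N(0,1)$. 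Since $\tfrac12\big(\tfrac{(\alpha-\beta)\sqrt{\log n}}{2\tau}\big)^2=\mathrm{SNR}\cdot\log n$, the standard Gaussian tail estimate yields $\mathbb{P}[E_1]=n^{-\mathrm{SNR}+o(1)}$, hence $n\,\mathbb{P}[E_1]=n^{1-\mathrm{SNR}+o(1)}\to\infty$ precisely because $\mathrm{SNR}<1$.

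The second and more delicate step is the asymptotic independence $\mathbb{P}[E_1\cap E_2]=(1+o(1))\mathbb{P}[E_1]\mathbb{P}[E_2]$. The key observation is that $E_1$ and $E_2$ interact only through the single shared edge weight $A_{12}$: writing $d_+(1)=d_+(1\setminus 2)+A_{12}$ and $d_+(2)=d_+(2\setminus 1)+A_{12}$, the collection $d_+(1\setminus 2),\,d_+(2\setminus 1),\,A_{12},\,d_-(1),\,d_-(2)$ is jointly independent, so conditioning on $A_{12}$ makes $E_1$ and $E_2$ conditionally independent. It then remains to show the conditioning is harmless: for typical $A_{12}$ (of magnitude $O_{\mathbb{P}}(\sqrt{\log n})$) the threshold defining $E_i$ is shifted only by $O(\sqrt{\log n}/\sqrt n)$, while the threshold itself is of order $\sqrt{\log n}$; since $\tfrac{d}{dt}\log\mathbb{P}[Z>t]\sim -t$ near $t\sim s\sqrt{\log n}$, the multiplicative perturbation of $\mathbb{P}[E_i\mid A_{12}]$ is $e^{o(1)}$. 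Integrating over $A_{12}$ --- after splitting off the negligible event that $|A_{12}|$ is atypically large --- gives the claim; concretely this reduces, as in the sketch, to showing that for i.i.d.\ standard normals $Z_1,Z_2,Z$ and $s=\tfrac{\alpha-\beta}{2\tau}$,
\[
\frac{\mathbb{P}\big[Z_1 > s\sqrt{\log n}+\tfrac{Z}{\sqrt{n-2}}\cap Z_2 > s\sqrt{\log n}+\tfrac{Z}{\sqrt{n-2}}\big]}{\mathbb{P}\big[Z_1 > s\sqrt{\log n}+\tfrac{Z}{\sqrt{n-2}}\big]^2}\longrightarrow 1,
\]
the $Z/\sqrt{n-2}$ correction being uniformly negligible. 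The main obstacle is exactly this quantitative control: although $A_{12}$ is ``just one Gaussian,'' it enters a tail probability living at the scale $\sqrt{\log n}$ where Gaussian tails are extremely sensitive, so one must show the induced shift is small compared to $1/(\text{threshold})$ uniformly over the bulk of $A_{12}$ and bound its tail contribution separately. Everything else --- the first-moment exponent and the admissibility of $c_n$ --- is routine given the Gaussian distributional identities already recorded.
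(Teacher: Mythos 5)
Your proposal is correct and follows essentially the same route as the paper: the same second-moment reduction via $Z_+=\sum_{i\in C_+}\mathbf{1}_{E_i}$ to the two limits $n\,\mathbb{P}[E_1]\to\infty$ and $\mathbb{P}[E_1\cap E_2]/\mathbb{P}[E_1]\mathbb{P}[E_2]\to1$, the same Gaussian distributional identities yielding the exponent $1-\mathrm{SNR}$, and the same decoupling of $E_1,E_2$ through the shared edge $A_{12}$ leading to the ratio with the $Z/\sqrt{n-2}$ correction (which the paper handles in Lemmas~\ref{lem:MLEe}--\ref{lem:jointGauss}). The only cosmetic difference is that the paper splits the two limits into separate lemmas while you sketch them inline.
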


\begin{proof}  We begin the proof by recalling the second-moment method, which says that we have the following inequality for an integer-valued random variable $Z \geqslant 0$.
\begin{equation}\label{eq:SMM}
\mathbb{P}\left[Z \geqslant 1 \right] \geqslant \frac{\mathbb{E}\left[Z\right]^2}{ \mathbb{E}\left[Z^2\right]}
\end{equation}

The proof of this follows just from Cauchy-Schwarz inequality applied to $Z\mathbf{1}_{Z\geqslant 1} $ as shown below. 
\begin{equation}\label{lem:SMMproof}
\mathbb{E} \left[Z\right]^2= \mathbb{E} [Z \mathbf{1}_{Z>0}]^2 =  \mathbb{E} [Z \mathbf{1}_{Z\geqslant 1}]^2 \leqslant \mathbb{E} [Z^2] \mathbb{P}\left[Z\geqslant 1\right]
\end{equation}

Now, we construct two integer-valued random variables non-negative random variables $Z_+$ and $Z_-$.
\[
Z_+ = \sum_{i \in C_+} \mathbf{1}_{d_{+}(i) < d_{-}(i)-c_n}= \sum_{i \in C_+} \mathbf{1}_{E_i}.
\]
\[
Z_- = \sum_{j \in C_-} \mathbf{1}_{d_{+}(j) < d_{-}(j)-c_n}= \sum_{j \in C_-} \mathbf{1}_{E_j}.
\]

Further, we have the following identities for the relevant events.
\[ 
\{ \exists i \in \mathcal{B}_+(G_n)\}=\{\mathcal{B}_+(G_n) \neq \emptyset\} =\{Z_+ \geqslant 1\}
\]
\[\{ \exists j \in \mathcal{B}_-(G_n)\} =\{\mathcal{B}_-(G_n) \neq \emptyset\} =\{Z_- \geqslant 1\}\]

We now provide the proof for the case of $Z_+$ and observe that by symmetry the case of $Z_-$ is exactly similar. 
 Let
\[
D_i:=\mathbf{1}_{\left\{d_{+}(i) <  d_{-}(i)-c_n\right\}} .
\]

By the bound obtained in the second-moment method, we have
\[
\begin{aligned}
& \mathbb{P}\left\{Z_+ \geqslant 1\right\} \geqslant   \frac{\mathbb{E}\left[Z_+\right]^2}{ \mathbb{E}\left[Z_+^2\right]} \geqslant\frac{\left(\mathbb{E} \left[\sum_{i \in C_+} D_i\right]\right)^2}{\mathbb{E}\left[\left(\sum_{i \in C_+} D_i\right)^2\right]}=  \frac{\left(\sum_{i \in C_+}\mathbb{P}\left[E_i\right] \right)^2}{ \sum_{i \in C_+} \mathbb{P}\left[E_i\right] +\sum_{i_1 \neq  i_2  \in C_+} \mathbb{P}\left[ E_{i_1}\cap E_{i_2}\right]}\\
& =\frac{\left((n / 2) \mathbb{P}\left[E_1\right]\right)^2}{(n / 2) \mathbb{P}\left[E_1\right]+(n / 2)(n / 2-1) \mathbb{P}\left[E_1\cap E_2\right]} = \frac{1}{ \frac{1}{(n / 2) \mathbb{P}\left[E_1\right]} + (1 - \frac{2}{n})\frac{\mathbb{P}\left[E_1\cap E_2\right]}{ \mathbb{P}\left[E_1\right]\mathbb{P}\left[E_2\right] }}
\end{aligned}
\]

and the last bound tends to $1$ if the following two conditions hold, which we will prove in the following lemma
\begin{align}
n \mathbb{P}\left[E_1\right] & \to \infty, \\
\frac{\mathbb{P}\left[E_1\cap E_2\right]}{\mathbb{P}\left[E_1\right] \mathbb{P}\left[E_2\right]} &  \to 1 .
\end{align}

\end{proof}

It remains to show that both the conditions hold as soon as $\text{SNR} < 1$ and reveals the bound in the theorem. We also remark that the second condition amounts to say that the events  $E_1$ and $E_2$ are asymptotically independent.

\begin{lemma} \label{lem:MLEe}
Let $D_i:=\mathbf{1}_{\left\{d_{+}(i) <  d_{-}(i)-c_n\right\}} =\mathbf{1}_{E_i}$.  If $\text{SNR} < 1$ then, we have the following   
\[
n \mathbb{P}\left[ E_1 \right]  \to \infty.
\]
\end{lemma}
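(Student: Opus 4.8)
The plan is to reduce $\mathbb{P}[E_1]$ to a single standard-Gaussian tail probability and then invoke Gaussian tail asymptotics. First I would record the distributions of the two quantities involved. Since $A_{11}=0$, the sum $d_+(1)=\sum_{v\in C_+}A_{1v}$ runs over $\tfrac n2-1$ i.i.d.\ $N(\mu_1,\tau^2)$ weights, so $d_+(1)\overset{d}{=}N\big((\tfrac n2-1)\mu_1,(\tfrac n2-1)\tau^2\big)$, while $d_-(1)=\sum_{v\in C_-}A_{1v}\overset{d}{=}N\big(\tfrac n2\mu_2,\tfrac n2\tau^2\big)$; these depend on disjoint sets of edge weights and are therefore independent. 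Hence $d_+(1)-d_-(1)$ is Gaussian with mean $m_n:=(\tfrac n2-1)\mu_1-\tfrac n2\mu_2$ and variance $v_n:=(n-1)\tau^2$. Under the critical scaling \eqref{def:scaling} we have $m_n=\tfrac12(\alpha-\beta)\sqrt{n\log n}\,(1+o(1))$ (the $-\mu_1$ correction being of lower order) and $v_n=n\tau^2(1+o(1))$.

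Next I would rewrite the event in standard form: for $Z\sim N(0,1)$,
\[
\mathbb{P}[E_1]=\mathbb{P}\big[d_+(1)-d_-(1)<-c_n\big]=\mathbb{P}\!\left[Z>\frac{c_n+m_n}{\sqrt{v_n}}\right].
\]
Recall from the choice made in the proof of Lemma~\ref{lem:MLEc} that $c_n=\Theta(\sqrt{\log n})$, so $c_n=o(m_n)$ and $c_n=o(\sqrt{v_n})$; consequently
\[
\frac{c_n+m_n}{\sqrt{v_n}}=(1+o(1))\,\frac{m_n}{\sqrt{v_n}}=(1+o(1))\,\frac{(\alpha-\beta)\sqrt{\log n}}{2\tau},
\]
which is precisely the threshold that appears in the companion statement.

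Finally I would apply the standard Gaussian tail estimate \eqref{eq:Gtail} in the form $\mathbb{P}[Z>t]=e^{-(1+o(1))t^2/2}$ as $t\to\infty$. Taking $t=(1+o(1))\tfrac{(\alpha-\beta)\sqrt{\log n}}{2\tau}$ gives $t^2/2=(1+o(1))\tfrac{(\alpha-\beta)^2}{8\tau^2}\log n=(1+o(1))\,\text{SNR}\cdot\log n$, hence $\mathbb{P}[E_1]=n^{-\text{SNR}+o(1)}$ and $n\,\mathbb{P}[E_1]=n^{1-\text{SNR}+o(1)}$, which tends to $\infty$ exactly because $\text{SNR}<1$. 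I do not anticipate a genuine obstacle here; the only point requiring care is that all the error terms — the polylogarithmic $c_n$, the $O(1)$ shifts in the mean and variance, and the $o(1)$ inside the Gaussian tail exponent — each contribute only $o(\log n)$ to the exponent, so they can all be absorbed into the single $o(1)$ multiplying $\text{SNR}\log n$; the hypothesis $\text{SNR}<1$ is used only at the very last step.
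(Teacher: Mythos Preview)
Your argument is correct and mirrors the paper's proof essentially line for line: identify the distributions of $d_+(1)$ and $d_-(1)$, use their independence to write $\mathbb{P}[E_1]$ as a single standard Gaussian tail $\mathbb{P}[Z>(1+o(1))\tfrac{(\alpha-\beta)\sqrt{\log n}}{2\tau}]$, and conclude via \eqref{eq:Gtail} that $n\,\mathbb{P}[E_1]=n^{1-\text{SNR}+o(1)}\to\infty$. The only small imprecision is your claim that $c_n=\Theta(\sqrt{\log n})$: Lemma~\ref{lem:MLEc} gives only a lower bound of that order, while the upper constraint actually used here is the weaker $c_n=o(\sqrt{n\log n})$ recorded in \eqref{eq:c_n}; your argument only needs $c_n=o(m_n)$, which follows from the latter, so nothing is affected.
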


\begin{proof}
 We recall that if $X_1 \sim N(\alpha_1, \beta_1^2)$, $X_2 \sim N(\alpha_2, \beta_2^2)$ and $X_1 $ is independent of $X_2$, then $X_1+X_2 \sim N(\alpha_1+\alpha_2, \beta_1^2+\beta_2^2).$
 We now observe the following distributional identities. 
 \[
 d_+(i)= \sum_{v\in C_+}A_{iv} \sim N((\frac{n}{2}-1) \mu_1, (\frac{n}{2} -1)\tau^2) ,
 \]
 \[ 
 d_-(i)= \sum_{v\in C_-}A_{iv} \sim N(\frac{n \mu_2}{2}, \frac{n\tau^2}{2})
 \]
 
 With the identities above and the fact that $d_+(i)$ is independent of $d_-(i)$, we have the following.
 \begin{align*}
     \mathbb{P}\left[E_1\right]  = \mathbb{P} \left\{d_{+}(i) <  d_{-}(i)-c_n\right\} 
 & =\mathbb{P}\left[ N((\frac{n}{2}-1) \mu_1, (\frac{n}{2} -1)\tau^2) < N(\frac{n \mu_2}{2}, \frac{n\tau^2}{2}) -c_n \right] \\
     & = \mathbb{P}\left[N(0, (n-1)\tau^2) > c_n -\mu_1 +\frac{n}{2}(\mu_1 -\mu_2)\right] \\
     & = \mathbb{P}\left[N(0,1) > \frac{c_n -\mu_1}{\tau\sqrt{n-1}} +\frac{(\alpha - \beta)\sqrt{\log n}}{2\tau \sqrt{1- \frac{1}{n}}}\right] \\
     & = \mathbb{P}\left[N(0,1) > 
     (1+o(1)) \frac{(\alpha - \beta)\sqrt{\log n}}{2\tau}\right]
 \end{align*}

 We take $c_n \to \infty$ such that the following happens.
 \begin{equation} \label{eq:c_n}
 o(1) =\frac{2(c_n -\mu_1)}{ (\alpha -\beta) \sqrt{n \log n}} \to  0 \iff \frac{c_n}{ \sqrt{ n \log n}} \to 0.
 \end{equation}
 
 We now recall the standard tail bound for standard Gaussian random variables for any $t > 0$ with 
 $\phi(t)=\frac{e^{-\frac{t^2}{2}}}{\sqrt{2\pi}}$
 \begin{equation}\label{eq:Gtail}
  \frac{e^{-\frac{t^2}{2}}}{(t +\frac{1}{t})  \sqrt{2\pi}} \leqslant \Phi(t)=\mathbb{P}\left[ N(0,1) \geqslant 
 t \right] \leqslant \frac{e^{-\frac{t^2}{2}}}{t \sqrt{2\pi}}
 \end{equation}
 \begin{equation}
     \Phi(t) \sim \frac{\phi(t)}{t} \quad \text{as} \quad t \to \infty
 \end{equation}
 
With the inequalities above, we have for $t_n= (1+o(1)) \frac{(\alpha - \beta)\sqrt{\log n}}{2\tau} \to \infty $ 
\begin{align*}
     n \mathbb{P}\left[E_1 \right] 
     & = 
     n \mathbb{P}\left[N(0,1) \geqslant 
     (1+o(1)) \frac{(\alpha - \beta)\sqrt{\log n}}{2\tau}\right] \\
     &  \geqslant \frac{n e^{-(1+o(1)) \text{SNR} \log n}}{(t_n+\frac{1}{t_n}) 
     \sqrt{2 \pi}} \\
     & = n^{(1 - SNR) (1+ o(1))} \to  \infty 
\end{align*}
\end{proof}

We remark that the result holds verbatim even in the general model \ref{eq:SNR_n} as soon as $\lim_{n \to \infty} \text{SNR}_n =\text{SNR} <1$.

\begin{lemma} \label{lem:MLEf}
Let $D_i:=\mathbf{1}_{\left\{d_{+}(i) <  d_{-}(i)-c_n\right\}} =\mathbf{1}_{E_i}$. Then, we have the following.
\[
\frac{\mathbb{P}\left[E_1 \cap E_2\right]}{\mathbb{P}\left[E_1\right] \mathbb{P}\left[E_2\right]} =\frac{\mathbb{P}\left[E_1|E_2\right]}{\mathbb{P}\left[E_1\right]}   \to 1 .
\]
\end{lemma}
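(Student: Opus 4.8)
The plan is to isolate the single source of dependence between $E_1$ and $E_2$ --- the shared edge weight $A_{12}$ --- and to show that conditioning on $A_{12}$ decouples the two events, after which asymptotic independence is a routine Gaussian tail computation. Throughout, $\Phi$ denotes the Gaussian upper tail and $\phi$ the density, as in \eqref{eq:Gtail}, and $s:=\frac{\alpha-\beta}{2\tau}$.

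First I would split off the shared weight: since $A_{11}=A_{22}=0$ we have $d_+(1)=d_+(1\setminus 2)+A_{12}$ and $d_+(2)=d_+(2\setminus 1)+A_{12}$, so that
\[
E_1=\{d_+(1\setminus 2)-d_-(1)<-A_{12}-c_n\},\qquad E_2=\{d_+(2\setminus 1)-d_-(2)<-A_{12}-c_n\}.
\]
By the joint independence and distributional identities of $d_+(1\setminus 2),d_+(2\setminus 1),A_{12},d_-(1),d_-(2)$ recorded earlier (Lemma~\ref{lem:jointGauss}), conditionally on $A_{12}$ the pairs $(d_+(1\setminus 2),d_-(1))$ and $(d_+(2\setminus 1),d_-(2))$ are independent, hence $E_1$ and $E_2$ are conditionally independent given $A_{12}$, and by the $1\leftrightarrow 2$ symmetry both have the same conditional probability
\[
g_n(w):=\mathbb{P}[E_1\mid A_{12}=w]=\Phi\!\left(\tfrac{w+c_n+m_n}{\tau\sqrt{n-2}}\right),\qquad m_n:=\big(\tfrac n2-2\big)\mu_1-\tfrac n2\mu_2 .
\]
Consequently $\mathbb{P}[E_1\cap E_2]=\mathbb{E}\big[g_n(A_{12})^2\big]$ while $\mathbb{P}[E_1]\,\mathbb{P}[E_2]=\mathbb{E}\big[g_n(A_{12})\big]^2$, so the statement reduces to $\mathbb{E}[g_n(A_{12})^2]/\mathbb{E}[g_n(A_{12})]^2\to 1$.

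Next, writing $A_{12}=\mu_1+\tau Z$ with $Z\sim N(0,1)$ and $t_n:=\tfrac{\mu_1+c_n+m_n}{\tau\sqrt{n-2}}$, we have $g_n(A_{12})=\Phi(t_n+Z/\sqrt{n-2})$. From the critical scaling \eqref{def:scaling} one gets $m_n=\tfrac{\alpha-\beta}{2}\sqrt{n\log n}\,(1+o(1))$, and together with $c_n=o(\sqrt{n\log n})$ from \eqref{eq:c_n} this gives $t_n=(s+o(1))\sqrt{\log n}$; in particular $t_n\to\infty$ and $t_n/\sqrt n\to 0$. It then suffices to show $h_n(Z):=\Phi(t_n+Z/\sqrt{n-2})/\Phi(t_n)\longrightarrow 1$ in $L^2(N(0,1))$, since this gives $\mathbb{E}[g_n(A_{12})^2]=\Phi(t_n)^2\,\mathbb{E}[h_n(Z)^2]\sim\Phi(t_n)^2$ and likewise $\mathbb{E}[g_n(A_{12})]\sim\Phi(t_n)$, hence the ratio tends to $1$. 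For the $L^2$ convergence I would use dominated convergence after a truncation. Pointwise, $\log h_n(z)=-\int_{t_n}^{t_n+z/\sqrt{n-2}}\tfrac{\phi(u)}{\Phi(u)}\,du$, and since $\phi(u)/\Phi(u)=u(1+o(1))$ as $u\to\infty$ by \eqref{eq:Gtail} while $t_n/\sqrt{n-2}\to 0$, this is $O\big(t_n|z|/\sqrt{n-2}\big)+o(1)=o(1)$, so $h_n(z)\to 1$. For domination, $h_n$ is nonincreasing, so $h_n(z)\le 1$ for $z\ge 0$; for $z<0$ with $|z|\le\tfrac12 t_n\sqrt{n-2}$ the integral runs over $[t_n-|z|/\sqrt{n-2},\,t_n]\subset[\tfrac12 t_n,t_n]$, where $\phi(u)/\Phi(u)\le u+1/u\le 2t_n$, giving $h_n(z)\le e^{2t_n|z|/\sqrt{n-2}}\le e^{|z|}$ for $n$ large, and $e^{2|z|}\phi(z)$ is integrable; finally the complementary region $\{|Z|>\tfrac12 t_n\sqrt{n-2}\}$ is negligible because there $h_n(z)^2\le \Phi(t_n)^{-2}=n^{2\,\text{SNR}+o(1)}$ while $\mathbb{P}[|Z|>\tfrac12 t_n\sqrt{n-2}]\le 2e^{-t_n^2(n-2)/8}$ is super-polynomially small, so this contribution tends to $0$. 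Combining gives $\mathbb{E}[h_n(Z)^2]\to 1$ and $\mathbb{E}[h_n(Z)]\to 1$, and dividing finishes the proof.

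The main obstacle is exactly this domination step: the ratio $h_n(z)$ grows like $\Phi(t_n)^{-1}$ as $z\to-\infty$, so it cannot be dominated by a fixed integrable function on all of $\mathbb{R}$, and the truncation at $|z|\asymp t_n\sqrt n$ is what separates the ``bulk'', where $h_n$ is $O(e^{|z|})$, from the far tail, where the super-exponential decay of the law of $Z$ overwhelms the merely polynomial blow-up of $\Phi(t_n)^{-1}$. Everything else is bookkeeping with the Mills ratio bounds \eqref{eq:Gtail}; note in particular that the argument uses no upper bound on $\text{SNR}$ --- only $\alpha>\beta$, so that $s>0$ and $t_n\to\infty$ --- which matches the expectation that $E_1$ and $E_2$ are asymptotically independent regardless of the value of $\text{SNR}$.
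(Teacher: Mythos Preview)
Your proof is correct and follows essentially the same strategy as the paper: isolate the single shared edge weight $A_{12}$, express $E_1,E_2$ as $\{Z_i\ge t_n+Z/\sqrt{n-2}\}$ with $Z_1,Z_2,Z$ i.i.d.\ standard Gaussians, and show that the small shared perturbation $Z/\sqrt{n-2}$ is asymptotically negligible via a truncation. The only difference is packaging: the paper truncates at $|z|\le n^{1/4}$ and argues uniform asymptotics $\Phi(t_n+z/\sqrt{n-2})\sim\Phi(t_n)$ on the bulk (this is Lemma~\ref{lem:jointGauss}), whereas you normalize by $\Phi(t_n)$ and run dominated convergence with the envelope $e^{|z|}$ on $|z|\le\tfrac12 t_n\sqrt{n-2}$; both are valid implementations of the same idea.
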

\begin{proof}

We start by observing the following distributional identities.
\[
d_+(1\setminus2):= \sum_{i\in C_+ \setminus 2}A_{i1} \overset{d}{=}N\left(\left(\frac{n}{2}-2\right)\mu_1, \left(\frac{n}{2}-2\right)\tau^2\right),
\]
\[
d_+(2\setminus1):= \sum_{i\in C_+ \setminus 1}A_{i2} \overset{d}{=}N\left(\left(\frac{n}{2}-2\right)\mu_1, \left(\frac{n}{2}-2\right)\tau^2\right),
\]
\[
d_-(1)=\sum_{j\in C_-}A_{1j}\overset{d}{=} N\left(\frac{n}{2}\mu_2, \frac{n}{2}\tau^2\right)
\]
\[
d_-(2)=\sum_{j\in C_-}A_{2j} \overset{d}{=} N\left(\frac{n}{2}\mu_2, \frac{n}{2}\tau^2\right)\]
\[A_{12} \overset{d}{=} \mu_1 +\tau N(0,1)
\]

We also have that $d_+(1\setminus2), d_+(2\setminus 1), A_{12}, d_-(1), d_-(2)$ are jointly independent collections of  random variables. Let $Z_1, Z_2, Z$ be three IID standard Gaussian random variables and denote 
$s=\frac{(\alpha -\beta)}{2\tau}$.
    \begin{align*}
    E_1 = \{d_+(1) < d_-(1) -c_n\} & =\{A_{12} +N\left(\left(\frac{n}{2}-2\right)\mu_1, \left(\frac{n}{2}-2\right)\tau^2\right) < N\left(\frac{n}{2}\mu_2, \frac{n}{2}\tau^2\right) -c_n\} \\
    & =\{A_{12} < \frac{n}{2}(\mu_2 -\mu_1) +2\mu_1-c_n +\tau\sqrt{n-2}Z_1\} \\
    & =\{ \frac{Z}{\sqrt{n-2}} +\frac{(c_n-\mu_1)}{\tau \sqrt{n-2}} +\frac{(\alpha -\beta)\sqrt{\log n}}{2\tau \sqrt{1-\frac{2}{n}}} < Z_1 \}
    \end{align*}
    
    \begin{align*}
    E_2 = \{d_+(2) < d_-(2) -c_n\}  & =\{A_{12} +N\left(\left(\frac{n}{2}-2\right)\mu_1, \left(\frac{n}{2}-2\right)\tau^2\right) < N\left(\frac{n}{2}\mu_2, \frac{n}{2}\tau^2\right) -c_n\} \\
    & = \{A_{12} < \frac{n}{2}(\mu_2 -\mu_1) +2\mu_1-c_n +\tau\sqrt{n-2}N(0,1)\} \\
    & =\{ \frac{Z}{\sqrt{n-2}} +\frac{(c_n-\mu_1)}{\tau \sqrt{n-2}} +\frac{(\alpha -\beta)\sqrt{\log n}}{2\tau \sqrt{1-\frac{2}{n}}} < Z_2 \}
    \end{align*}
    
Combining all the above we can write the following with $1+o(1)=\frac{\sqrt{n}}{\sqrt{n-2}}+\frac{c_n -\mu_1}{\tau s \sqrt{(n-2) \log n}}$.

\begin{equation}\label{eq:SM}
\frac{\mathbb{P}\left[E_1\cap E_2\right]}{ \mathbb{P}\left[E_1\right] \mathbb{P}\left[E_2\right]} =\frac{\mathbb{P}\left[Z_1 \geqslant s\sqrt{\log n}(1+o(1)) +\frac{Z}{\sqrt{n-2}}\cap Z_2 \geqslant s\sqrt{\log n}(1+o(1)) +\frac{Z}{\sqrt{n-2}}\right]}{\mathbb{P}\left[Z_1 \geqslant s\sqrt{\log n}(1+o(1)) +\frac{Z}{\sqrt{n-2}}\right] \mathbb{P}\left[Z_1 \geqslant s\sqrt{\log n}(1+o(1)) +\frac{Z}{\sqrt{n-2}}\right]}
\end{equation}
\begin{align*}
\mathbb{P}\left[Z_1 \geqslant s\sqrt{\log n}(1+o(1)) +\frac{Z}{\sqrt{n-2}}\right] & =\mathbb{P}\left[N(0, 1+\frac{1}{n-2}) \geqslant  s\sqrt{\log n}(1+o(1))\right] 
\\
& \sim \frac{1}{s\sqrt{\log n}}\phi\left( (1+o(1))s \sqrt{\log n} \frac{\sqrt{n-2}}{\sqrt{n-1}}\right)
\\
& \sim \frac{1}{s\sqrt{\log n}} \phi(s \sqrt{\log n})
\end{align*}

It remains to be established that the numerator in 
\ref{eq:SM} $ \sim \left(\frac{\phi(s \sqrt{\log n})}{s\sqrt{\log n}} \right)^2$ which we prove next.
\end{proof}

\begin{lemma}\label{lem:jointGauss}
    With the notations above, we have for IID standard Gaussians $Z,Z_1, Z_2$ as $n \to  \infty$
    \[\mathbb{P}\left[Z_1 \geqslant s\sqrt{\log n}(1+o(1)) +\frac{Z}{\sqrt{n-2}}\cap Z_2 \geqslant s\sqrt{\log n}(1+o(1)) +\frac{Z}{\sqrt{n-2}}\right]\sim \left(\frac{\phi(s \sqrt{\log n})}{s\sqrt{\log n}} \right)^2 =:r_n\]
\end{lemma}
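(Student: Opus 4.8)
The plan is to exploit the independence of $Z_1$, $Z_2$, and $Z$: conditioning on $Z$ makes the two events independent. Writing $\Phi(t):=\mathbb{P}[N(0,1)\geqslant t]$ and $t_n := s\sqrt{\log n}\,(1+o(1))$ for the deterministic threshold appearing in \ref{eq:SM}, this gives
\[
\mathbb{P}\!\left[Z_1 \geqslant t_n + \tfrac{Z}{\sqrt{n-2}}\ \cap\ Z_2 \geqslant t_n + \tfrac{Z}{\sqrt{n-2}}\right] = \mathbb{E}_Z\!\left[\Phi\!\left(t_n + \tfrac{Z}{\sqrt{n-2}}\right)^{2}\right],
\]
and likewise $\mathbb{P}[E_1]=\mathbb{E}_Z[\Phi(t_n + Z/\sqrt{n-2})]$. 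So it suffices to show the first expectation is asymptotic to the square of the second; combined with the estimate $\mathbb{P}[E_1]\sim \phi(s\sqrt{\log n})/(s\sqrt{\log n})=\sqrt{r_n}$ already established in the proof of Lemma \ref{lem:MLEf}, this yields both $\mathbb{P}[E_1\cap E_2]\sim r_n$ and the ratio $\to 1$. (One may equivalently read the left side as a bivariate normal orthant probability with correlation $\rho_n=\tfrac1{n-1}\to 0$ and threshold of order $\sqrt{\log n}$, so that $\rho_n\log n\to 0$; the conditioning route is the self-contained way to exploit this.)

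First I would truncate. Fix a slowly growing level, e.g.\ $\omega_n:=\log n$, and split $\mathbb{E}_Z[\,\cdot\,]$ over $\{|Z|\leqslant\omega_n\}$ and $\{|Z|>\omega_n\}$. On the first event the perturbation $\delta:=Z/\sqrt{n-2}$ obeys $\delta^2\leqslant(\log n)^2/(n-2)\to 0$ and, decisively, $|t_n\delta| = O\!\big((\log n)^{3/2}/\sqrt n\,\big)\to 0$, both uniformly in $Z$. Using the two-sided Gaussian tail bound \ref{eq:Gtail} (hence $\Phi(t)\sim\phi(t)/t$ as $t\to\infty$) together with $\phi(t+\delta)/\phi(t)=\exp(-t\delta-\delta^2/2)$, one gets, uniformly over $\{|Z|\leqslant\omega_n\}$,
\[
\frac{\Phi(t_n+\delta)}{\Phi(t_n)} = (1+o(1))\exp\!\big(-t_n\delta-\tfrac{\delta^2}{2}\big)=1+o(1).
\]
Squaring and integrating, and using $\mathbb{P}[|Z|\leqslant\omega_n]\to 1$, the truncated second moment equals $(1+o(1))\,\Phi(t_n)^2$; the same computation without the square gives the truncated first moment $=(1+o(1))\,\Phi(t_n)$, so the truncated second moment is $(1+o(1))$ times the square of the truncated first moment.

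It remains to discard the tails. On $\{|Z|>\omega_n\}$ I would use only $\Phi(\cdot)^2\leqslant 1$, so the omitted part is at most $\mathbb{P}[|Z|>\log n]\leqslant 2\phi(\log n)/\log n = \exp\!\big(-\tfrac12(\log n)^2(1+o(1))\big)$, which is smaller than any polynomial in $n$; since $r_n = n^{-2\,\text{SNR}+o(1)}$ is only polynomially small, the tail is $o(r_n)$ and likewise negligible against the first moment. Assembling the pieces gives $\mathbb{E}_Z[\Phi(t_n+Z/\sqrt{n-2})^2]\sim\mathbb{P}[E_1]^2\sim r_n$, which is the claim; fed back into \ref{eq:SM} with the denominator estimate of Lemma \ref{lem:MLEf} it proves $\mathbb{P}[E_1\cap E_2]/(\mathbb{P}[E_1]\mathbb{P}[E_2])\to 1$. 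The main obstacle — essentially the only delicate point — is the interplay between the diverging threshold $t_n\to\infty$ and the vanishing random perturbation: one must check $t_n|\delta|\to 0$ uniformly over a high-probability range of $Z$, which dictates the truncation level $\omega_n$, and ultimately is the reason the diverging sequence $c_n$ in \ref{eq:c_n} cannot be taken too large (e.g.\ $c_n$ of order $\sqrt{\log n}$ works, which also respects the lower bound $c_n\geqslant\tau\sqrt{6\log n}$ used in Lemma \ref{lem:MLEc}).
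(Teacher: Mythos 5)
Your proof follows the same strategy as the paper's: condition on $Z$ to reduce the bivariate probability to $\mathbb{E}_Z[\Phi(t_n + Z/\sqrt{n-2})^2]$, truncate $|Z|$ to a range where $t_n|Z|/\sqrt{n-2}\to 0$ uniformly so that $\Phi(t_n + Z/\sqrt{n-2})^2 \sim r_n$ there, and show the tail contribution is $o(r_n)$ against the polynomially-small $r_n$. The only cosmetic differences are the truncation level ($\log n$ versus the paper's $n^{1/4}$, both of which satisfy the two requirements) and that you derive the uniform estimate from the explicit identity $\phi(t_n+\delta)/\phi(t_n)=\exp(-t_n\delta-\delta^2/2)$ together with $\Phi(t)\sim\phi(t)/t$, whereas the paper sandwiches using monotonicity of $\Phi$ with endpoints $a_n,b_n\sim r_n$.
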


\begin{proof}
We start the proof by defining the following events.
\[
A_1 =\{Z_1 \geqslant s\sqrt{\log n}(1+o(1)) +\frac{Z}{\sqrt{n-2}}\}
\]
\[
A_2= \{Z_2 \geqslant s\sqrt{\log n}(1+o(1)) +\frac{Z}{\sqrt{n-2}}\}
\]

    We subdivide $A_1\cap A_2$  into two subevents $\{|Z|\leqslant n^{1/4}\}$ and $\{|Z|> n^{1/4}\}$ and let 
    \[
    I_n=\int_{|z| \leqslant n^{1/4}}\Phi\left(s\sqrt{\log n}(1+o(1)) +\frac{z}{\sqrt{n-2}}\right)^2 \phi(z) dz.
    \]
    
    From the bounds on the Gaussian  tails \ref{eq:Gtail}, we also have 
    \begin{equation} \label{eq:GTF}
    \mathbb{P}\left[|Z| \geqslant n^{1/4}\right] \sim 2 \frac{1}{n^{1/4}} \phi(n^{1/4}) =o(r_n)
    \end{equation}
    
    \begin{equation*}
         I_n \leqslant  \mathbb{P}\left[A_1\cap A_2\right] \leqslant I_n + o(r_n) 
    \end{equation*}

    It remains to establish that $I_n \sim r_n$. As a first step we observe that uniformly over $|z|\leqslant n^{1/4}$ we have 
    \[
    \Phi\left(s\sqrt{\log n}(1+o(1)) +\frac{z}{\sqrt{n-2}}\right)^2 \sim \Phi\left(s\sqrt{\log n}(1+o'(1))\right)^2 \sim r_n
    \]
    
    Therefore, we have $I_n \sim r_n (1- \mathbb{P}\left[|Z| \geqslant n^{1/4}\right]) \sim r_n(1- o(r_n))$. Now we provide a detailed proof of this.
    \par 
    More precisely, using the fact that $\Phi(\cdot)$ is monotone decreasing non-negative function and therefore for $|z| \leqslant n^{1/4}$
   
    \[
     a_n\leqslant \Phi\left(s\sqrt{\log n}(1+o(1)) +\frac{z}{\sqrt{n-2}}\right)^2 \leqslant b_n 
    \]
    \[
    a_n=\Phi\left(s\sqrt{\log n}(1+o(1)) +\frac{n^{1/4}}{\sqrt{n-2}}\right)^2 \sim r_n
    \]
    \[
    b_n=\Phi\left(s\sqrt{\log n}(1+o(1)) -\frac{n^{1/4}}{\sqrt{n-2}}\right)^2 \sim r_n
    \]

    Combining all the above we have the following.
    \[
    r_n \int_{|z|\leqslant n^{1/4}} \phi(z)dz \sim a_n\int_{|z|\leqslant n^{1/4}} \phi(z)dz \leqslant I_n \leqslant b_n\int_{|z|\leqslant n^{1/4}} \phi(z)dz \sim r_n \int_{|z|\leqslant n^{1/4}} \phi(z)dz
    \]
    
    It remains to observe that from \ref{eq:GTF} we have $\int_{|z|\leqslant n^{1/4}} \phi(z)dz =1 -o(r_n) \sim 1$.
\end{proof}
    \subsection{Statistical possibility}
In this section, we prove that as soon as the $\text{SNR} >1$, we have that MLE can recover the unobserved community $\sigma^*$ exactly with probability approaching one. We prove this result in two steps. On the impossibility side,  we already showed that below the threshold, there exists a bad pair of vertices in each community that can be swapped and thus placed in the wrong community while increasing the likelihood (equivalently reducing the cut). Here we show that the min-bisection \ref{eq:MLE} gives the planted bisection, by showing that there is no possibility to swap any set of $k$
 vertices from each community and reduce the cut for any $k \in \{1, \cdots, \frac{n}{4}\}$ above the threshold $\text{SNR} >1$.
\subsubsection{The statement}
\begin{theorem}[\textbf{Statistical possibility of exact recovery}] \label{thm: Positive}
    For the Gaussian weighted stochastic block model  SBM$(n, \mu_1, \mu_2, \tau^2)$ \ref{def:GWSBM}, MLE can exactly recover the community labelling $\sigma^*$ as soon as $\text{SNR} >1$. More precisely, $ \mathbb{P}\left[M(\hat{\sigma}_{\text{MLE}}, \sigma^*)= 1\right] \to 1$.
\end{theorem}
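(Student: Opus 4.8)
The plan is to prove that, above the threshold, there is no ``improving swap'' of a balanced family of vertices, and to control every potential such swap by a union bound. First I would invoke the reduction recorded just above: if $f_A(\hat{\sigma})>f_A(\sigma^*)$ for some $\hat{\sigma}\in\Theta_n$, then there exist $S_+\subseteq C_+$ and $S_-\subseteq C_-$ with $1\le|S_+|=|S_-|=k\le n/4$ for which \eqref{eq:Pos0} holds. Since $M(\hat{\sigma}_{\mathrm{MLE}},\sigma^*)=1$ unless some $\hat{\sigma}\notin\{\pm\sigma^*\}$ satisfies $f_A(\hat{\sigma})\ge f_A(\sigma^*)$ (and exact ties occur with probability zero for continuous weights), it suffices to show that $\mathbb{P}[E]\le\sum_{k=1}^{n/4}\mathbb{P}[E_k]\to0$ when $\text{SNR}>1$, with $E,E_k$ as defined above.

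Second, I would fix $k$ and one pair $S_+,S_-$ of size $k$ and estimate $p_k$, the probability that this pair satisfies \eqref{eq:Pos0}. The structural point is that $d_+(S_+)$, $d_+(S_-)$, $d_-(S_+\setminus S_-)$ and $d_-(S_-\setminus S_+)$ are sums over four pairwise disjoint sets of $k(n/2-k)$ edges, hence jointly independent Gaussians with the means and variances recorded after \eqref{eq:Pos0}, so
\[
d_-(S_+\setminus S_-)+d_-(S_-\setminus S_+)-d_+(S_+)-d_+(S_-)\;\overset{d}{=}\;N\!\Big(-2k\big(\tfrac n2-k\big)(\mu_1-\mu_2),\;4k\big(\tfrac n2-k\big)\tau^2\Big).
\]
Recalling $\text{SNR}=\tfrac{(\alpha-\beta)^2}{8\tau^2}$ and using the Gaussian tail bound \eqref{eq:Gtail}, this yields for $n$ large
\[
p_k\;=\;\mathbb{P}\!\Big[N(0,1)>\sqrt{k\big(\tfrac n2-k\big)}\,\tfrac{\mu_1-\mu_2}{\tau}\Big]\;\le\;n^{-2k\left(1-\frac{2k}{n}\right)\text{SNR}}.
\]

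Third, I would union-bound over the $\binom{n/2}{k}^2$ choices of $(S_+,S_-)$, so $\mathbb{P}[E_k]\le\binom{n/2}{k}^2 p_k$, and split the sum over $k$. Write $\text{SNR}=1+2\delta$ with $\delta>0$ and set $c_0:=\tfrac{\delta}{2(1+2\delta)}$, chosen so that $1-(1-\tfrac{2k}{n})\text{SNR}\le-\delta$ whenever $k\le c_0 n$. For $1\le k\le c_0 n$ the crude estimate $\binom{n/2}{k}\le n^k$ gives $\mathbb{P}[E_k]\le n^{2k}\,n^{-2k(1-2k/n)\text{SNR}}\le n^{-2k\delta}$, whence $\sum_{k\le c_0 n}\mathbb{P}[E_k]\le\sum_{k\ge1}n^{-2k\delta}=\tfrac{n^{-2\delta}}{1-n^{-2\delta}}\to0$; this is precisely where $\text{SNR}>1$ enters, through $k=1$, where the $\binom{n/2}{1}^2=n^2/4$ single-vertex swaps must be beaten by $p_1\le n^{-2\text{SNR}+o(1)}$, giving $\mathbb{P}[E_1]\le\tfrac14 n^{2-2\text{SNR}+o(1)}=\tfrac14 n^{-4\delta+o(1)}\to0$. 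For $c_0 n<k\le n/4$ I would instead use $\binom{n/2}{k}^2\le 2^n$ together with $1-\tfrac{2k}{n}\ge\tfrac12$ and $k>c_0 n$ to get $\mathbb{P}[E_k]\le 2^n\,n^{-c_0 n\,\text{SNR}}=\exp\!\big(-n(c_0\,\text{SNR}\log n-\log2)\big)$, which is superpolynomially small, so summing these $\le n/4$ terms also gives $o(1)$. Adding the two ranges yields $\mathbb{P}[E]\to0$, hence $\mathbb{P}[M(\hat{\sigma}_{\mathrm{MLE}},\sigma^*)=1]\to1$.

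I expect the main obstacle to be the bookkeeping of this union bound rather than any individual estimate: the crude count $\binom{n/2}{k}\le n^k$ is essentially tight for bounded $k$, and it is exactly the bounded-$k$ regime that determines the critical value $\text{SNR}=1$, but it is far too lossy once $k=\Theta(n)$, where one must instead exploit that $\binom{n/2}{k}$ is merely exponential in $n$ while the Gaussian exponent $2k(1-\tfrac{2k}{n})\text{SNR}\log n$ is of order $n\log n$. Everything else is the routine Gaussian computation above, which relies only on the joint independence of the four boundary-weight sums.
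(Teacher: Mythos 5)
Your proposal is correct and takes essentially the same approach as the paper's proof: reduce to the existence of a balanced swap pair $(S_+,S_-)$ of size $k$ via the structural lemma, compute the single-pair failure probability $p_k\le n^{-2k(1-2k/n)\mathrm{SNR}}$ from the four jointly independent Gaussian boundary sums, and union-bound over $\binom{n/2}{k}^2$ pairs with the sum over $k$ split at the same threshold $c_0 n$, $c_0=\tfrac{\delta}{2(1+2\delta)}$, using a polynomial count $\binom{n/2}{k}\le n^k$ for small $k$ and the crude $\binom{n/2}{k}^2\le 2^n$ bound for large $k$. The only cosmetic difference is that the paper uses $\binom{n/2}{k}\le (en/(2k))^k$ in the small-$k$ range where you use the slightly cruder $n^k$; this does not change the conclusion.
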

\begin{Notations} Let the ground truth $\sigma^*=\{\mathbf{1}_{\frac{n}{2}}, -\mathbf{1}_{\frac{n}{2}}\} \in \Theta_n$ [\ref{eq:SOL}] and we have the true communities $C_+:=C_+(\sigma^*)=\{1, \cdots, \frac{n}{2}\}$, $C_-:=C_-(\sigma^*)=\{\frac{n}{2}+1, \cdots, n\}$. Further, we let  $\hat{\sigma}  \in  \Theta_n$ with $\hat{\sigma}(1)= +1$. We  denote the predicted community labels  with $\hat{C}_+=\{i \in [n]: \hat{\sigma}(i)=+1\}, \hat{C}_-=\{i \in [n]: \hat{\sigma}(i)=-1\}$. 
We observe that  MLE \ref{eq:MLE} fails to recover the true community labels $\sigma^*$  if and only if there exists $ \hat{\sigma}$ such that $f_A(\hat{\sigma}) > f_A(\sigma^*)$.
For a subset of vertices $S_+ \subseteq C_+$ and $S_- \subseteq C_-$ we define the following
\[
d_+(S_+) =\sum_{i \in S_+, k \in C_+\setminus S_+}A_{ik}
\]
\[
d_+(S_-) =\sum_{j \in S_-, l \in C_-\setminus S_-}A_{jl}
\]
\[
d_-(S_+\setminus S_-) =\sum_{i \in S_+, j \in C_-\setminus S_-}A_{ij}
\]
\[
d_-(S_-\setminus S_+) =\sum_{j \in S_-, i \in C_+\setminus S_+}A_{ij}
\]

\end{Notations}

\subsubsection{The proof -- union bound}
 \begin{lemma} \label{lem:MLEg}
 $\exists$ $\hat{\sigma} \in \Theta_n$ with 
     $f_A(\hat{\sigma}) > f_A(\sigma^*)$  $\smallimplies$ $\exists  S_+ \subseteq C_+, S_- \subseteq C_-$ such that $ 1\leqslant |S_+|=|S_-| \leqslant \frac{n}{4}$ and we have 
     \begin{equation} \label{eq:Pos}
     d_+(S_+) +d_+(S_-)<  d_-(S_+\setminus S_-) +d_-(S_-\setminus S_+).
     \end{equation} 
 \end{lemma}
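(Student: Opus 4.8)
The plan is to read off $S_+$ and $S_-$ as the two sets of vertices that $\hat\sigma$ misclassifies relative to $\sigma^*$, and then to rewrite the likelihood gap $f_A(\hat\sigma)-f_A(\sigma^*)$ as a signed sum of edge weights running between these "flipped" sets and their complements, which will turn out to be (a multiple of) the negative of the quantity appearing in \eqref{eq:Pos}.

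\textbf{Setup.} Given $\hat\sigma\in\Theta_n$ with $f_A(\hat\sigma)>f_A(\sigma^*)$, write $\hat C_\pm=\{i:\hat\sigma(i)=\pm1\}$ and set $S_+:=C_+\cap\hat C_-$, $S_-:=C_-\cap\hat C_+$, the vertices that $\hat\sigma$ places in the wrong cluster. Since $\hat\sigma$ and $\sigma^*$ are both balanced, comparing $|\hat C_+|=|C_+\cap\hat C_+|+|S_-|$ with $|C_+|=|C_+\cap\hat C_+|+|S_+|$ and using $|\hat C_+|=|C_+|=n/2$ gives $|S_+|=|S_-|=:k$. If $k=0$ then $\hat\sigma=\sigma^*$ and if $k=n/2$ then $\hat\sigma=-\sigma^*$; in either case $f_A(\hat\sigma)=f_A(\sigma^*)$, contradicting strictness, so $1\leqslant k\leqslant n/2-1$.

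\textbf{Reduction to $k\leqslant n/4$.} Because $f_A(\sigma)=\sum_{i,j}A_{ij}\sigma_i\sigma_j$ is even in $\sigma$, we have $f_A(-\hat\sigma)=f_A(\hat\sigma)>f_A(\sigma^*)$, and the misclassification sets of $-\hat\sigma$ are $C_+\cap\hat C_+$ and $C_-\cap\hat C_-$, of common size $n/2-k\in[1,n/2-1]$. Since $k+(n/2-k)=n/2$, at least one of $k$ and $n/2-k$ is $\leqslant n/4$, so replacing $\hat\sigma$ by $-\hat\sigma$ if necessary we may assume $1\leqslant k\leqslant n/4$ (when $n/2$ is odd one gets $\min(k,n/2-k)\leqslant\lfloor n/4\rfloor<n/4$, still within the stated bound).

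\textbf{The edge computation.} Using $A_{ii}=0$ and symmetry, $f_A(\hat\sigma)-f_A(\sigma^*)=2\sum_{i<j}A_{ij}(\hat\sigma_i\hat\sigma_j-\sigma^*_i\sigma^*_j)$. Put $\epsilon_i:=\hat\sigma_i\sigma^*_i\in\{\pm1\}$, so $\epsilon_i=-1$ exactly on $T:=S_+\cup S_-$; then $\hat\sigma_i\hat\sigma_j-\sigma^*_i\sigma^*_j=(\epsilon_i\epsilon_j-1)\sigma^*_i\sigma^*_j$, which vanishes unless exactly one of $i,j$ lies in $T$, in which case it equals $-2\sigma^*_i\sigma^*_j$. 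Splitting the crossing edges according to whether their $T$-endpoint lies in $S_+$ or in $S_-$, and whether the other endpoint lies in $C_+\setminus S_+$ or in $C_-\setminus S_-$ (edges inside $T$ and edges inside $[n]\setminus T$ contribute nothing), these four disjoint, exhaustive families sum respectively to $d_+(S_+)$, $-d_-(S_+\setminus S_-)$, $d_+(S_-)$, $-d_-(S_-\setminus S_+)$ once the sign of $\sigma^*_i\sigma^*_j$ is tracked. Hence $f_A(\hat\sigma)-f_A(\sigma^*)=-4\big(d_+(S_+)+d_+(S_-)-d_-(S_+\setminus S_-)-d_-(S_-\setminus S_+)\big)$, so $f_A(\hat\sigma)>f_A(\sigma^*)$ is equivalent to \eqref{eq:Pos}, proving the lemma.

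There is no analytic content here; the only things needing care are the combinatorial bookkeeping in the last step — verifying the four edge families are disjoint, cover all crossing edges, and carry the correct sign — and the boundary cases in the $k\leqslant n/4$ reduction. I expect the sign-tracking in the edge decomposition to be the part worth writing out in full.
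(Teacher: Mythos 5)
Your proposal is correct and follows essentially the same route as the paper: same choice of the misclassified sets $S_+=C_+\cap\hat C_-$, $S_-=C_-\cap\hat C_+$, same use of $f_A(\hat\sigma)=f_A(-\hat\sigma)$ to force $k\leqslant n/4$, and the same reduction of the likelihood gap to the four block sums over crossing edges. The only cosmetic difference is your bookkeeping via $\epsilon_i=\hat\sigma_i\sigma_i^*$ (so that $\hat\sigma_i\hat\sigma_j-\sigma^*_i\sigma^*_j$ vanishes unless exactly one endpoint is misclassified), whereas the paper expands $I(\hat\sigma)$ and $I(\sigma^*)$ directly and cancels the common within-block terms; both yield the identical equivalence with \eqref{eq:Pos}.
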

 \begin{proof}  
 For $\sigma \in \Theta_n$ let $I(\sigma) = \sum_{\sigma_i\sigma_j=1} A_{ij}$ and $O(\sigma)= \sum_{\sigma_i\sigma_j=-1} A_{ij}$. We observe that $I(\sigma)+O(\sigma)= \sum_{ij}A_{ij}$ is independent of $\sigma$, and therefore
     \[
     f_A(\sigma)= \sum_{i,j} A_{ij}\sigma_i\sigma_j=  \sum_{\sigma_i\sigma_j=1} A_{ij} - \sum_{\sigma_i\sigma_j=-1} A_{ij} = I(\sigma)- O(\sigma)
     \]
     \[
         f_A(\sigma) > f_A(\sigma^*) \iff I(\sigma)-O(\sigma) >  I(\sigma^*)-O(\sigma^*) \iff I(\sigma) > I(\sigma^*)
     \]
     
     Now given $\hat{\sigma}$ and $\sigma^*$ we define $S_+:=C_+\setminus\hat{C}_+$, $S_-:= C_-\cap \hat{C}_+$, and observe that $f_A(\hat{\sigma}) > f_A(\sigma^*)$ implies that $|S_+| \geqslant 1$ and $|S_-| \geqslant 1$. As $f_A(\hat{\sigma})= f_A(-\hat{\sigma})$, by choosing $\hat{\sigma}$ correctly (along with the fact $|C_+|=|C_-| = \frac{n}{2}= |\hat{C}_+|= |\hat{C}_-|$) we can ensure that $|S_+|= |S_-| \leqslant \frac{n}{4}$.
     \[
     I(\hat{\sigma})= 
     \sum_{i, i' \in C_+\setminus S_+} A_{ii'} +
     \sum_{j, j' \in S_-} A_{jj'}+ 
     \sum_{ i\in C_+\setminus S_+, j \in S_-} A_{ij} +
     \sum_{i, i' \in S_+} A_{ii'} +
     \sum_{j, j' \in C_- \setminus S_-} A_{jj'} + 
     \sum_{ i\in S_+, j \in C_-\setminus S_-} A_{ij} 
     \]
     \[
     I(\sigma^*) = 
     \sum_{i, i' \in S_+} A_{ii'} +
     \sum_{i, i' \in C_+\setminus S_+} A_{ii'} +
     \sum_{i \in S_+, k \in C_+\setminus S_+} A_{ik} + \sum_{j, j' \in S_-} A_{jj'} +
     \sum_{j, j' \in C_- \setminus S_-} A_{jj'} +
     \sum_{j \in S_-, l \in C_-\setminus S_-} A_{jl}
     \]

     Combining the above, we have the following equivalence.
     \begin{align*}
     f_A(\hat{\sigma}) > f_A(\sigma^*) 
     &  
     \iff  \sum_{ i\in C_+\setminus S_+, j \in S_-} A_{ij} +  \sum_{ i\in S_+, j \in C_-\setminus S_-} A_{ij}  > \sum_{i \in S_+, k \in C_+\setminus S_+} A_{ik} + \sum_{j \in S_-, l \in C_-\setminus S_-} A_{jl} 
     \\
     &
     \iff d_-(S_-\setminus S_+) + d_-(S_+ \setminus S_-) > d_+(S_+)+d_-(S_-)
     \end{align*}
 \end{proof}

 Now, to prove that MLE \ref{def:MLE} can solve exact recovery \ref{thm: Positive} as soon as \text{SNR} $>1$  by proving that such events of the kind \ref{eq:Pos} occur with probability approaching zero. We define the following events as the following.
 \[
 E:=\{\exists S_+ \subseteq C_+, S_- \subseteq C_- : 1\leqslant |S_+|=|S_-| \leqslant \frac{n}{4}, d_-(S_-\setminus S_+) + d_-(S_+ \setminus S_-) > d_+(S_+)+d_-(S_-) \} =\bigcup_{k=1}^{\frac{n}{4}}E_k
 \]
 \[
 E_k= \{\exists S_+ \subseteq C_+, S_- \subseteq C_- : |S_+|=|S_-| =k, d_-(S_-\setminus S_+) + d_-(S_+ \setminus S_-) > d_+(S_+)+d_-(S_-)\}
 \]
 \begin{lemma} \label{lem:MLEh}
    \[
          \mathbb{P}\left[E\right] =  \mathbb{P}\left[\bigcup_{k=1}^{\frac{n}{4}}E_k \right] \to 0
    \]
 \end{lemma}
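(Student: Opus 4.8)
The plan is a two-level union bound fed by the distributional identities recorded just after the lemma. First I would write $\mathbb{P}[E]\le\sum_{k=1}^{n/4}\mathbb{P}[E_k]$ and, for each $k$, bound $\mathbb{P}[E_k]$ by a union bound over the $\binom{n/2}{k}^{2}$ pairs $(S_+,S_-)$ with $|S_+|=|S_-|=k$. By the stated joint independence and Gaussianity of $d_+(S_+),d_+(S_-),d_-(S_+\setminus S_-),d_-(S_-\setminus S_+)$ — each of the form $N(m\mu_a,\,m\tau^2)$ with $m=k(n/2-k)$ and $a\in\{1,2\}$ — the quantity $d_-(S_+\setminus S_-)+d_-(S_-\setminus S_+)-d_+(S_+)-d_+(S_-)$ is $N(-2m(\mu_1-\mu_2),\,4m\tau^2)$. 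Hence the inequality defining $E_k$ has probability exactly $\Phi(t_k)$ with $t_k=\sqrt m\,(\mu_1-\mu_2)/\tau$, and by the critical scaling \ref{def:scaling} together with \ref{eq:SNR},
\[
\frac{t_k^{2}}{2}=\frac{k(n/2-k)(\mu_1-\mu_2)^{2}}{2\tau^{2}}=\frac{4k(n/2-k)\,\text{SNR}\,\log n}{n}.
\]
Using $\binom{n/2}{k}\le(en/(2k))^{k}$ and the tail bound $\Phi(t)\le e^{-t^{2}/2}$ (a consequence of \ref{eq:Gtail}) this gives
\[
\log\mathbb{P}[E_k]\;\le\;2k\log\frac{en}{2k}-\frac{4k(n/2-k)\,\text{SNR}\,\log n}{n}\;=\;c(k)\,k\log n,
\qquad c(k):=\frac{2\log(en/(2k))}{\log n}-\frac{4(n/2-k)\,\text{SNR}}{n}.
\]

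The crux is then to show that, since $\text{SNR}>1$, there is a fixed $\varepsilon=\varepsilon(\text{SNR})>0$ with $c(k)\le-\varepsilon$ uniformly over $1\le k\le n/4$ once $n$ is large; the conclusion then follows from a geometric series. I would split the range at $k=\sqrt n$. For $1\le k\le\sqrt n$ one has $\log(en/(2k))\le(1+o(1))\log n$ while $n/2-k\ge(1/2-o(1))n$, so $c(k)\le 2-2\,\text{SNR}+o(1)$, which is $\le-(\text{SNR}-1)$ for $n$ large — this simply recovers and makes uniform in $k$ the $k=1$ estimate already displayed in the proof sketch. For $\sqrt n\le k\le n/4$, instead $\log(en/(2k))\le(\tfrac12+o(1))\log n$ and $n/2-k\ge n/4$, whence $c(k)\le 1-\text{SNR}+o(1)\le-\tfrac12(\text{SNR}-1)$ for $n$ large. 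Taking $\varepsilon=\tfrac12(\text{SNR}-1)>0$ yields $\mathbb{P}[E_k]\le n^{-\varepsilon k}$ for every $k$, so
\[
\mathbb{P}[E]\;\le\;\sum_{k=1}^{n/4}\mathbb{P}[E_k]\;\le\;\sum_{k\ge1}n^{-\varepsilon k}\;=\;\frac{n^{-\varepsilon}}{1-n^{-\varepsilon}}\;\longrightarrow\;0.
\]

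The one genuinely delicate point is this uniformity in $k$: the crudest estimates — replacing $\binom{n/2}{k}^{2}$ by $n^{2k}$ and $t_k^{2}/2$ by $k\,\text{SNR}\log n$ — would only give the result for $\text{SNR}>2$, so one must exploit that the binomial entropy $\log(en/(2k))$ decays from $\approx 2\log n$ at $k=O(1)$ down to $O(1)$ at $k=\Theta(n)$, precisely compensating for the fact that the Gaussian exponent $\tfrac{k(n/2-k)}{n}\cdot 4\,\text{SNR}\log n$ is only $\approx 2k\,\text{SNR}\log n$ for small $k$. The split at $k=\sqrt n$ is one convenient way to make this balance rigorous; any threshold $n^{a}$ with $0<a<1$ would do. Everything else is the routine Gaussian tail estimate and the distributional identities already stated after the lemma.
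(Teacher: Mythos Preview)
Your proposal is correct and follows essentially the same approach as the paper: union bound over $k$ and over the $\binom{n/2}{k}^{2}$ pairs, the Gaussian tail estimate $\mathbb{P}[E_k]\le\binom{n/2}{k}^{2}e^{-2k(n-2k)\text{SNR}\log n/n}$, and then a split of the range of $k$ to balance the combinatorial factor against the exponent. The only cosmetic difference is the split point --- the paper cuts at $k=\epsilon_\delta n$ (a fixed fraction depending on $\text{SNR}=1+2\delta$) and uses the crude bound $\binom{n/2}{k}\le 2^{n/2}$ for large $k$, whereas you cut at $k=\sqrt{n}$ and keep the sharper estimate $(en/(2k))^{k}$ throughout; as you note, any threshold $n^{a}$ with $0<a<1$ works, so the two arguments are interchangeable.
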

 \begin{proof}
     We prove this by observing the following for a fixed pair $(S_+, S_-)$ of size $k$.
     \[
     \mathbb{P}[E] \leqslant \sum_{ 1\leqslant k \leqslant \frac{n}{4}} \mathbb{P}[E_k] \leqslant  \sum_{ 1\leqslant k \leqslant \frac{n}{4}}  \binom{\frac{n}{2}}{k}^2 \mathbb{P}\left[ d_-(S_-\setminus S_+) + d_-(S_+ \setminus S_-) > d_+(S_+)+d_-(S_-)\right]\to 0
     \]
     
     We now observe that $ d_-(S_-\setminus S_+), d_-(S_+ \setminus S_-), d_+(S_+), d_-(S_-)$ are jointly independent random variables with 
     \[
     d_+(S_+)\overset{d}{=}  N( k(n/2 -k) \mu_1, k(n/2 -k) \tau^2) 
     \]
     \[
     d_+(S_-)\overset{d}{=}  N( k(n/2 -k) \mu_1, k(n/2 -k) \tau^2)
     \]
     \[
     d_-(S_+\setminus S_-) \overset{d}{=} N( k(n/2 -k) \mu_2, k(n/2 -k) \tau^2)
     \]
     \[
     d_-(S_-\setminus S_+) \overset{d}{=} N( k(n/2 -k) \mu_2, k(n/2 -k) \tau^2)
     \]
     
     Combining the above distributional identities we have the following simplification.
     \begin{align*}
     & \mathbb{P}\left[ d_-(S_-\setminus S_+) + d_-(S_+ \setminus S_-) > d_+(S_+)+d_-(S_-)\right] 
     \\
     &
     =\mathbb{P}\left[2\tau\sqrt{k(n/2 -k)}  N(0,1)> 2k(n/2 -k)(\mu_1- \mu_2)\right] 
     \\
     & 
     = \mathbb{P}\left[ N(0,1) > \sqrt{k(n/2 -k)} \left(\frac{\mu_1- \mu_2}{\tau}\right)\right]
     \\
     & 
     \leqslant  e^{-\frac{2 
     k(n -2k)  \text{SNR}\log  n}{n}}
     \end{align*}
     
     From the standard Gaussian tail bounds \ref{eq:Gtail} we have the following for $k=1$ with $\text{SNR}= 1+2\delta$.
     \[
     \binom{n/2}{k}^2  \mathbb{P}\left[ N(0,1) > \sqrt{k(n/2 -k)} \left(\frac{\mu_1- \mu_2}{\tau}\right)\right] \leqslant \frac{1}{4} n^2 e^{-\frac{2 
     (n -2)  \text{SNR}\log  n}{n}}  = \frac{1}{4}  \frac{1}{n^{2\delta + o(1)}} 
     \]

     More generally, let $ \epsilon = \frac{k}{n}$. Then we apply the bounds on the binomial coefficients for $1 \leqslant k \leqslant p$.
     \begin{equation} \label{eq:Binom}
    \left(\frac{p}{k}\right)^k \leqslant \binom{p}{k} \leqslant \left(\frac{ep}{k}\right)^k
     \end{equation}

     We choose  $\epsilon_{\delta}$ be such that $(1- 2\epsilon) \text{SNR} = (1-2\epsilon) (1+2 \delta) \geqslant  1+\delta \implies  2\epsilon \leqslant \frac{\delta}{1+2\delta} =:2\epsilon_{\delta}$. For $\epsilon \leqslant \epsilon_{\delta}$ 
     \[
      \binom{\frac{n}{2}}{k}^2   e^{-\frac{2 
     k(n -2k)  \text{SNR}\log  n}{n}} \leqslant \left(\frac{e}{2k}
     \right)^{2k} e^{2k\log n -\frac{2 
     k(n -2k)  \text{SNR}\log  n}{n}}\leqslant  e^{2k \log n (1- (1- 2\epsilon) \text{SNR})} \leqslant \frac{1}{n^{2k\delta}}
     \]
     
     For $\epsilon \geqslant \epsilon_{\delta}$  we use the simpler binomial identity $\binom{p}{k} \leqslant 2^{p}$.
     \[
     \binom{\frac{n}{2}}{k}^2   e^{-\frac{2 
     k(n -2k)  \text{SNR}\log  n}{n}} \leqslant 2^{n}  e^{-2 
     \epsilon_{\delta}(1 -2\epsilon_{\delta})  \text{SNR} n \log  n}  \leqslant e^{ -n \log n (2 
     \epsilon_{\delta}(1 -2\epsilon_{\delta}) - o(1))}
     \]
     \begin{align*}
         \sum_{ \frac{1}{n}\leqslant \frac{k}{n}  \leqslant \frac{1}{4}}  \binom{\frac{n}{2}}{k}^2  e^{-\frac{2 
     k(n -2k)  \text{SNR}\log  n}{n}} \leqslant \sum_{ \frac{1}{n} \leqslant \frac{k}{n}  \leqslant \epsilon_{\delta}} \frac{1}{n^{2k\delta}} + \sum_{ \epsilon_{\delta} \leqslant \frac{k}{n}  \leqslant  \frac{1}{4}} \frac{1}{e^{n \log n (2 
     \epsilon_{\delta}(1 -2\epsilon_{\delta}) - o(1))}} \to 0.
     \end{align*}
     
 \end{proof}
    \subsection{Semi-definite relaxation}

So far, we have established that the task of exact recovery in our Gaussian weighted stochastic block model is statistically achievable \ref{thm: Positive} by the MLE estimation problem \ref{def:MLE} as soon as $\text{SNR} >1$. But, that is a discrete optimization problem over an exponential-sized solution space and not a polynomially tractable problem in the worst case sense, and also in the approximate-case sense. We now look for potential algorithmic solutions to recover the community labeling $\sigma^*$.

\subsubsection{The statement}

In this section, we prove that the Semi-definite relaxation of the MLE problem can solve the exact recovery problem with probability approaching one as soon as $SNR >1$.  We recall that the community membership under the Gaussian weighted stochastic block model can be represented by a vector $\sigma \in \{ \pm 1 \}^n$ such that $\sigma_i = +1$ if vertex $i$ is in the first community and $\sigma_i = -1$ otherwise. Let $\sigma^*$ correspond to the true community labeling such that  $\sigma^*:[n]\to \{\pm 1\}$ with  $\langle \sigma^*, \mathbf{1}\rangle =0$, where $\mathbf{1}$ is a vector in $\mathbb{R}^{n}$ with all entries equal to $1$. Then the maximum likelihood estimator of $\sigma^*$ for the case can be stated as the following problem which maximizes the number of in-cluster edges minus the number of out-cluster edges.
\begin{equation}\label{def:MLE1}
\max_{\sigma \in \Theta_n }   f_A(\sigma)= \sum_{i,j} A_{ij} \sigma_i\sigma_j = I(\sigma) -O(\sigma)  
\end{equation}
\[
 I(\sigma)= \sum_{\sigma_i\sigma_j =+1} A_{ij}
\]
\[
O(\sigma)=  \sum_{\sigma_i\sigma_j =-1} A_{ij}
\]
This is equivalent to solving the NP-hard minimum graph bisection problem. we will instead consider one of its convex relaxation, the semi-definite relaxation (SDP). Let $Y = \sigma \sigma^T$. Then $Y_{ii} = 1$ is equivalent to $\sigma_i = \pm 1$, and $\langle \sigma,\mathbf{1} \rangle  = 0$ if and only if $\langle Y, \mathbf{J} \rangle = 0$, where $\mathbf{J}$ is the $n \times n$ matrix of all ones. Therefore,  the MLE \ref{def:MLE1} can be rewritten as
\begin{equation} \label{def:MLE2}
\max_{Y, \sigma} \langle A, Y \rangle : Y = \sigma \sigma^T, \, Y_{ii} = 1, \, i \in [n], \, \langle \mathbf{J}, Y \rangle = 0. 
\end{equation}
We observe that the matrix $Y = \sigma \sigma^T$ is a rank-one positive semi-definite matrix. If we relax this condition by dropping the rank-one restriction, we obtain the following convex relaxation of \ref{def:MLE2}, which is a semi-definite program:
\begin{equation} \label{eq:SDP}
\hat{Y}_{\text{SDP}} = \arg\max_{Y} \langle A, Y \rangle : Y \succeq 0, \, Y_{ii} = 1, \, \langle \mathbf{J}, Y \rangle = 0.
\end{equation}

Let $Y^*=\sigma^*\sigma^{*^{\top}}$ and $\mathcal{Y}_n \triangleq\left\{\sigma \sigma^{\top}: \sigma \in\{-1,1\}^n, \langle \sigma ,\mathbf{1}\rangle =0\right\}$. The following result establishes the algorithmic tractability of the exact recovery problem under the  SDP procedure.

\begin{theorem}[\textbf{Semi-definite relaxation achieves exact recovery}] \label{thm:SDP}
Under the Gaussian weighted stochastic block model \ref{def:GWSBM} SBM$(n, \mu_1, \mu_2, \tau^2)$, whenever $\text{SNR}>1$ 
\[
\min _{Y^* \in \mathcal{Y}_n} \mathbb{P}\left\{\hat{Y}_{\mathrm{SDP}}=Y^*\right\}\to  1  \hspace{.1 cm} \text{as} \hspace{.1 cm} n \rightarrow \infty.
\]

\end{theorem}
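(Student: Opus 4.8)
The plan is to follow the dual-certificate strategy of \cite{hajek2016achieving}: exhibit, with probability tending to one, a matrix of the form $S^{*} = D^{*} - A + \lambda^{*}\mathbf{J}$ satisfying the sufficient condition \eqref{eq:DualCert0}, i.e. $S^{*} \succeq 0$, $\lambda_{2}(S^{*}) > 0$ and $S^{*}\sigma^{*} = 0$. Once such an $S^{*}$ is available, weak duality for the program \eqref{eq:SDP} together with complementary slackness gives that $\widehat{Y}_{\mathrm{SDP}} = Y^{*} = \sigma^{*}\sigma^{*\top}$ is optimal, and the uniqueness clause follows because $\lambda_{2}(S^{*}) > 0$ forces the null space of $S^{*}$ to be exactly $\mathrm{span}(\sigma^{*})$, so any other optimizer $Y$ must satisfy $S^{*}Y = 0$, hence $Y = c\,\sigma^{*}\sigma^{*\top}$, and the constraints $Y_{ii} = 1$ pin down $c = 1$. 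Since the construction below is uniform in the planted partition, this yields $\min_{Y^{*}\in\mathcal{Y}_{n}}\mathbb{P}[\widehat{Y}_{\mathrm{SDP}} = Y^{*}] \to 1$.

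First I would make the explicit choice $d_{i}^{*} = \sum_{j} A_{ij}\sigma_{i}^{*}\sigma_{j}^{*}$ for $i \in [n]$ and $\lambda^{*} = \tfrac{\mu_{1}+\mu_{2}}{2}$. Because $\langle \mathbf{1}_{n},\sigma^{*}\rangle = 0$ and $d_{i}^{*}\sigma_{i}^{*} = \sum_{j} A_{ij}\sigma_{j}^{*}$, one checks at once that $S^{*}\sigma^{*} = 0$ holds deterministically, so the only thing left to verify is $\mathbb{P}[\,S^{*}\succeq 0,\ \lambda_{2}(S^{*}) > 0\,] = \mathbb{P}\big[\inf_{x\perp\sigma^{*},\,\|x\|=1}\langle x, S^{*}x\rangle > 0\big] \to 1$, as in \eqref{eq:SDPMain0}. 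Splitting $A = \mathbb{E}[A] + (A - \mathbb{E}[A])$ and using $\mathbb{E}[A] = \tfrac{\mu_{1}-\mu_{2}}{2}\sigma^{*}\sigma^{*\top} + \tfrac{\mu_{1}+\mu_{2}}{2}\mathbf{J} - \mu_{1}\Id$, the $\sigma^{*}\sigma^{*\top}$ term vanishes on $x\perp\sigma^{*}$, the $\mathbf{J}$ contribution coming from $\mathbb{E}[A]$ cancels exactly against $\lambda^{*}\mathbf{J}$ by the choice of $\lambda^{*}$, and $\langle x,\mathbf{J}x\rangle = \langle x,\mathbf{1}_{n}\rangle^{2} \geq 0$ may be discarded. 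This leaves the clean bound $\langle x, S^{*}x\rangle \geq \min_{i\in[n]} d_{i}^{*} + \mu_{1} - \|A - \mathbb{E}[A]\|$, so it suffices to show $\mathbb{P}[\,\min_{i} d_{i}^{*} + \mu_{1} - \|A - \mathbb{E}[A]\| > 0\,] \to 1$.

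For the probabilistic part I would invoke the Gaussian matrix deviation bound $\mathbb{P}[\|A - \mathbb{E}[A]\| \geq 3\tau\sqrt{n}] \leq 2e^{-cn}$, which reduces the task to $\mathbb{P}[\,\min_{i} d_{i}^{*} > 4\tau\sqrt{n}\,] \to 1$ (on the intersection of these two events, $\min_{i} d_{i}^{*} + \mu_{1} - \|A - \mathbb{E}[A]\| > \tau\sqrt{n} + \mu_{1} > 0$). A union bound over $i\in[n]$ and the distributional identity $d_{i}^{*} \overset{d}{=} N\big((\tfrac{n}{2}-1)\mu_{1},(\tfrac{n}{2}-1)\tau^{2}\big) - N\big(\tfrac{n}{2}\mu_{2},\tfrac{n}{2}\tau^{2}\big)$ with independent summands turn this into a single Gaussian tail estimate: the mean is $\tfrac{n}{2}(\mu_{1}-\mu_{2}) - \mu_{1} = (1+o(1))\tfrac{(\alpha-\beta)}{2}\sqrt{n\log n}$ and the variance is $(n-1)\tau^{2}$, so $\mathbb{P}[d_{i}^{*} \leq 4\tau\sqrt{n}] = \mathbb{P}\big[N(0,1) \geq (1+o(1))\tfrac{(\alpha-\beta)\sqrt{\log n}}{2\tau}\big] \leq n^{-\mathrm{SNR} + o(1)}$ by \eqref{eq:Gtail}, whence $\sum_{i=1}^{n}\mathbb{P}[d_{i}^{*} \leq 4\tau\sqrt{n}] \leq n^{1 - \mathrm{SNR} + o(1)} \to 0$ precisely because $\mathrm{SNR} > 1$.

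The main obstacle is the construction and verification of the dual certificate: one must choose $D^{*}$ and $\lambda^{*}$ so that $S^{*}\sigma^{*} = 0$ is automatic and the quadratic form $\langle x, S^{*}x\rangle$ on $x\perp\sigma^{*}$ collapses to the scalar quantity $\min_{i} d_{i}^{*} + \mu_{1} - \|A - \mathbb{E}[A]\|$. After that, the only real work is the concentration estimate above, and it is exactly here that the threshold enters: the spectral noise $\|A - \mathbb{E}[A]\|$ is of order $\sqrt{n}$, the weighted degrees $d_{i}^{*}$ have mean of order $\sqrt{n\log n}$ and Gaussian fluctuations of order $\sqrt{n}$, and the union bound over the $n$ vertices survives if and only if $\mathrm{SNR} > 1$ --- matching the statistical threshold of Theorems \ref{thm: Positive} and \ref{thm: Negative}, so that no information--computation gap occurs.
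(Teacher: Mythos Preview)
Your proposal is correct and follows essentially the same approach as the paper: the same dual certificate $S^{*}=D^{*}-A+\lambda^{*}\mathbf{J}$ with $d_{i}^{*}=\sum_{j}A_{ij}\sigma_{i}^{*}\sigma_{j}^{*}$ and $\lambda^{*}=\tfrac{\mu_{1}+\mu_{2}}{2}$, the same reduction of $\lambda_{2}(S^{*})>0$ to $\min_{i}d_{i}^{*}+\mu_{1}-\|A-\mathbb{E}[A]\|>0$, and the same union bound plus Gaussian tail estimate yielding $n^{1-\mathrm{SNR}+o(1)}\to 0$. The only cosmetic difference is that the paper records the exact cancellation of the $\mathbf{J}$ term (since $\lambda^{*}=\tfrac{\mu_{1}+\mu_{2}}{2}$) rather than bounding it by nonnegativity, but this is immaterial.
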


\begin{remark}
    It is important to highlight that Theorem \ref{thm:SDP} extends naturally to the semi-random model as explored in \cite{FEIGE2001639}. In this model, a graph instantiated from the GWSBM allows a monotone adversary to resample in-cluster edges with a higher mean Gaussian (maintaining the same variance) and resample cross-cluster edges with a lower mean Gaussian (with the same variance), arbitrarily. Despite the potential perception of enhancing community structure visibility, such an adversarial model has been known to disrupt various procedures relying on degrees, local search, or graph spectrum, as discussed in \cite{FEIGE2001639} (for the binary model). The SDP \ref{eq:SDP}, by its design, exhibits robustness against such a monotone adversary, a characteristic observed in \cite{FEIGE2001639} and proven in \cite{Chen_2014}. 
 \end{remark}
 
\subsubsection{A deterministic condition}

The following lemma provides a deterministic sufficient condition for the success of SDP \ref{eq:SDP}.

 \begin{lemma} \label{lem:exist}
 Suppose  $ \exists$ $D^{*}=\operatorname{diag}\left\{d_{i}^{*}\right\}$ with $\{d^*_i\}_{i=1}^{n} \in \mathbb{R}^n$ and $\lambda^{*} \in \mathbb{R}$ such that 
 \begin{equation} \label{eq:DualCert}
  S^{*} \triangleq D^{*}-A+\lambda^{*} \mathbf{J} \hspace{.1 cm} \text{satisfies} \hspace{.1 cm}  S^{*} \succeq 0, \lambda_{2}\left(S^{*}\right)>0, \hspace{.1 cm} \text{and} \hspace{.1 cm} S^{*} \sigma^{*}=0.
 \end{equation}
 
Then SDP recovers the true solution. More precisely, $\widehat{Y}_{\mathrm{SDP}}=Y^{*}$ is the unique solution to \ref{eq:SDP}.
\end{lemma}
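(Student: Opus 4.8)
The plan is to run the standard dual-certificate (weak-duality) argument for semidefinite programs, using the matrix $S^*$ from \ref{eq:DualCert} as a certificate of optimality for $Y^* = \sigma^*\sigma^{*\top}$. First I would observe that $Y^*$ is itself feasible for \ref{eq:SDP}: $Y^* \succeq 0$, $(Y^*)_{ii} = (\sigma^*_i)^2 = 1$, and $\langle \mathbf{J}, Y^*\rangle = \langle \mathbf{1}_n, \sigma^*\rangle^2 = 0$ because $\sigma^* \in \Theta_n$. So it suffices to prove that $\langle A, Y^*\rangle \geq \langle A, Y\rangle$ for every feasible $Y$, with equality forcing $Y = Y^*$.

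The key computation rewrites the objective through $S^*$. For any feasible $Y$, using $Y_{ii}=1$ and $\langle \mathbf{J}, Y\rangle = 0$,
\begin{equation*}
\langle S^*, Y\rangle = \langle D^*, Y\rangle - \langle A, Y\rangle + \lambda^*\langle \mathbf{J}, Y\rangle = \sum_{i=1}^{n} d_i^* - \langle A, Y\rangle .
\end{equation*}
Specializing to $Y = Y^*$ and using $S^*\sigma^* = 0$, hence $\langle S^*, Y^*\rangle = \sigma^{*\top}S^*\sigma^* = 0$, gives $\langle A, Y^*\rangle = \sum_i d_i^*$, so that for every feasible $Y$
\begin{equation*}
\langle A, Y^*\rangle - \langle A, Y\rangle = \langle S^*, Y\rangle \geq 0 ,
\end{equation*}
the last inequality because $S^* \succeq 0$ and $Y \succeq 0$. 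This already shows $Y^*$ is a maximizer of \ref{eq:SDP}.

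For uniqueness, suppose $Y$ is feasible and attains the optimum; then $\langle S^*, Y\rangle = 0$. Writing $S^* = \sum_k \lambda_k u_k u_k^\top$ with $\lambda_k \ge 0$, each term $\lambda_k\, u_k^\top Y u_k$ is nonnegative (since $Y \succeq 0$) and their sum is $0$, so $Y u_k = 0$ whenever $\lambda_k > 0$; equivalently, the column space of $Y$ lies in $\ker S^*$. The hypothesis $\lambda_2(S^*) > 0$ together with $S^*\sigma^* = 0$ forces $\ker S^* = \mathbb{R}\sigma^*$ to be one-dimensional, so $Y = c\,\sigma^*\sigma^{*\top}$ for some $c \geq 0$; the constraint $Y_{ii}=1$ (with $(\sigma_i^*)^2 = 1$) pins down $c = 1$, hence $Y = Y^*$.

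This argument is essentially routine weak duality, and I do not expect a real obstacle here: the only point requiring a little care is the uniqueness step, where one invokes the PSD complementary-slackness fact that $\langle S^*, Y\rangle = 0$ with both matrices positive semidefinite implies they annihilate each other, and then uses $\lambda_2(S^*) > 0$ to identify $\ker S^*$ with the single line $\mathbb{R}\sigma^*$. The genuinely substantive part of the overall proof of Theorem \ref{thm:SDP} — exhibiting explicit $D^*$ and $\lambda^*$ for which $S^*$ satisfies all of \ref{eq:DualCert} with probability tending to one — is what the later steps handle, and is where the hypothesis $\text{SNR} > 1$ enters.
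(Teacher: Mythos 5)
Your proof is correct and takes essentially the same dual-certificate route as the paper: you use $S^*\succeq 0$ and feasibility to deduce $\langle A,Y^*\rangle-\langle A,Y\rangle=\langle S^*,Y\rangle\geq 0$, with $S^*\sigma^*=0$ giving equality at $Y^*$, then $\lambda_2(S^*)>0$ to force the column space of any optimizer into the line $\mathbb{R}\sigma^*$. The only cosmetic difference is that the paper wraps the optimality step in a Lagrangian weak-duality chain and proves uniqueness in a separate lemma via simultaneous eigendecompositions of $S^*$ and the competing optimizer, whereas you compute $\langle S^*,Y\rangle$ directly and phrase uniqueness via PSD complementary slackness and $\ker S^*$; the underlying argument is identical.
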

Proof.  For auxiliary Lagrangian multipliers denoted by $S \succeq 0, D=\operatorname{diag}\left\{d_{i}\right\}$, and $\lambda \in \mathbb{R}$, we define the Lagrangian dual function of the original semi-definite problem \ref{eq:SDP} as

\begin{equation}
L(Y, S, D, \lambda) :=\langle A, Y\rangle+\langle S, Y\rangle+\langle D, \mathbf{I}-Y\rangle+ \langle-\lambda\mathbf{J}, Y\rangle = \langle A+S-D -\lambda \mathbf{J}, Y \rangle  +\langle D, \mathbf{I} \rangle
\end{equation}

 Then for any $Y$ satisfying the constraints in \ref{eq:SDP}, we have the following weak duality inequality.

\begin{equation}
\langle A, Y\rangle \stackrel{(a)}{\leqslant} L\left(Y, S^{*}, D^{*}, \lambda^{*}\right)\stackrel{(b)}{=}\left\langle D^{*}, I\right\rangle \stackrel{(c)}{=}\left\langle D^{*}, Y^{*}\right\rangle \stackrel{(d)}{=}\left\langle A+S^{*}-\lambda^{*} \mathbf{J}, Y^{*}\right\rangle \stackrel{(e)}{=}\left\langle A, Y^{*}\right\rangle
\end{equation}

where $(a)$ holds because $ S^*\succeq 0 \implies \left\langle S^{*}, Y\right\rangle \geqslant 0,  Y_{ii}=1 $ $\forall$ $1\leqslant i \leqslant n,$ and $\langle \mathbf{J}, Y\rangle =0$,  $(b)$ holds because  $S^* =D^* -A + \lambda^* \mathbf{J}$. $(c)$ holds because $D^*$ is diagonal and $Y^*_{ii}=1$ $\forall$ $1\leqslant i \leqslant n$. Further, $(d)$ holds by  \ref{eq:DualCert}. Finally, $(e)$ holds because  $\langle \mathbf{J}, Y^* \rangle =0$ and $\left\langle S^{*}, Y^{*} \right\rangle =\sigma^{*^{\top}} S^{*} \sigma^{*}=0$ by \ref{eq:DualCert} again. Therefore, $Y^{*}$ is an optimal solution to \ref{eq:SDP}. 
\par 
We now establish its uniqueness. To this end, suppose $Y_2$ be another optimal solution to \ref{eq:SDP}. Then we have,

\[
\left\langle S^{*}, Y_2\right\rangle\stackrel{(a)}{=}\left\langle D^{*}-A+\lambda^{*} \mathbf{J}, Y_2\right\rangle \stackrel{(b)}{=}\left\langle D^{*}-A +\lambda^* \mathbf{J}, Y^*\right\rangle \stackrel{(c)}{=}\left\langle S^{*}, Y^{*}\right\rangle=0 .
\]

where  $(a)$ holds by \ref{eq:DualCert}, and  $(b)$ holds because $\langle\mathbf{J}, Y_2\rangle=0= \langle\mathbf{J}, Y^*\rangle$.  Further,   $\langle A, Y_2\rangle=\left\langle A, Y^{*}\right\rangle$ by the optimality of both $Y^*$ and $Y_2$. Finally, $Y_{2_{ii}}=Y_{i i}^{*}=1$ for all $ 1\leqslant i \leqslant n \implies $ $\langle D^*, Y_2\rangle = \langle D^*, Y^*\rangle$.  From \ref{eq:DualCert}, we have  $Y_2 \succeq 0, S^{*} \succeq 0$ with $\lambda_{2}\left(S^{*}\right)>0, Y_2$ must be a multiple of $Y^{*}=\sigma^{*}\sigma^{*^{\top}}$. Because $Y_{2_{ii}}=1$ for all $i \in[n], Y_2=Y^{*}$.

\begin{lemma} \label{lem:uniq}
 With all the notations above, we have that $\langle S^*, Y_2\rangle = 0$  $\implies Y_2= Y^*$.
\end{lemma}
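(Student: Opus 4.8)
\textbf{Proof proposal for Lemma \ref{lem:uniq}.}
The plan is to exploit the rank structure of $S^*$ forced by the hypothesis $\lambda_2(S^*)>0$ together with the standard complementary-slackness fact that two positive semidefinite matrices with zero trace inner product must annihilate each other. First I would record that since $S^*\succeq 0$, $S^*\sigma^*=0$ (so $0$ is an eigenvalue with eigenvector $\sigma^*$), and $\lambda_2(S^*)>0$, the matrix $S^*$ has exactly a one-dimensional kernel, namely $\ker S^*=\operatorname{span}(\sigma^*)$.

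Next I would use $\langle S^*,Y_2\rangle=0$. Write $Y_2=\sum_k \lambda_k v_k v_k^\top$ with $\lambda_k\geqslant 0$ (spectral decomposition, legitimate because $Y_2\succeq 0$). Then
\[
0=\langle S^*,Y_2\rangle=\sum_k \lambda_k\, v_k^\top S^* v_k,
\]
and every summand is nonnegative because $S^*\succeq 0$; hence $v_k^\top S^* v_k=0$ whenever $\lambda_k>0$, which for a positive semidefinite $S^*$ forces $S^* v_k=0$, i.e. $v_k\in\ker S^*$. Equivalently $S^* Y_2=0$, so the column space of $Y_2$ is contained in $\ker S^*=\operatorname{span}(\sigma^*)$. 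A symmetric positive semidefinite matrix whose column space is spanned by the single vector $\sigma^*$ must be of the form $Y_2=c\,\sigma^*\sigma^{*\top}$ for some scalar $c\geqslant 0$.

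Finally I would pin down $c$ using the feasibility constraint $Y_{2,ii}=1$ for all $i\in[n]$: since $\sigma^*_i\in\{\pm1\}$ we have $(\sigma^*\sigma^{*\top})_{ii}=1$, so $c=1$ and $Y_2=\sigma^*\sigma^{*\top}=Y^*$. I do not anticipate a serious obstacle here; the only point requiring care is the passage "$\langle S^*,Y_2\rangle=0$ and both PSD $\implies S^*Y_2=0$", which I would justify exactly as above via the eigendecomposition rather than quoting it as a black box, and the observation that the kernel of $S^*$ is genuinely one-dimensional (this is precisely where the hypothesis $\lambda_2(S^*)>0$, as opposed to merely $S^*\succeq 0$, is used).
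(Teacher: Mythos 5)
Your proof is correct and follows essentially the same route as the paper: decompose $Y_2$ spectrally, use $\langle S^*, Y_2\rangle=0$ together with positive semidefiniteness to force every eigenvector of $Y_2$ with a positive eigenvalue into $\ker S^*=\operatorname{span}(\sigma^*)$, and then pin down the scalar from the diagonal (you use $Y_{2,ii}=1$; the paper equivalently uses $\operatorname{Tr}(Y_2)=n$). The only cosmetic difference is that the paper also expands $S^*$ in its eigenbasis and argues from the double sum $\sum_{i,j}\lambda_i\alpha_j\langle v_i,w_j\rangle^2=0$, whereas you decompose only $Y_2$ and invoke the cleaner fact that $v^\top S^* v=0$ with $S^*\succeq 0$ implies $S^*v=0$; these are the same argument.
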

\begin{proof}
    With eigenvalues  $0= \lambda_1 < \lambda_2 \leqslant \lambda_3  \cdots  \leqslant \lambda_n$  and the corresponding  orthonormal set of eigenvectors $ v_1 = \frac{\sigma^*}{\sqrt{n}}, v_2, \cdots, v_n$ the eigenvalue decomposition of $S^*$  is given by the following representation.
    \[
    S^*= \sum_{i=1}^n \lambda_i v_iv_i^*,
    \]
  
 Similarly, the eigenvalue decomposition of $Y_2$ is  given by the following representation.
    \[
    Y_2= \sum_{i=1}^n \alpha_i w_iw_i^*,
  \]
  
  where $ \alpha_1 \geqslant  \alpha_2 \cdots  \geqslant \alpha_n \geqslant 0$ are the eigenvalues and $ w_1, w_2, \cdots, w_n$ are orthonormal set of eigenvectors.
  \[
  \langle S^* , Y_2 \rangle =0 \smallimplies \sum_{i,j} \lambda_i\alpha_j \langle v_i, w_j \rangle ^2 =0 \smallimplies \lambda_i\alpha_j \langle v_i, w_j \rangle ^2 =0 \hspace{.001 cm} \forall \hspace{.001 cm} i,j \geqslant 1 \smallimplies \alpha_j \langle v_i, w_j \rangle ^2 =0 \hspace{.001 cm} \forall \hspace{.001 cm} i \geqslant 2,j \geqslant 1
  \]
  \[
   Y_{2_{ii}}=1 \hspace{.1cm } \forall \hspace{.001cm} 1\leqslant i \leqslant n \implies Y_2 \neq 0 \smallimplies \alpha_1 >0 \smallimplies w_1 \perp \{v_2, \cdots, v_n\}\smallimplies w_1w_1^{\top}= v_1v_1^{\top}= \frac{1}{n}\sigma^*\sigma^{*^{\top}} 
  \]
   
  Finally, we have that  $\alpha_2 =\alpha_3 =\cdots =\alpha_n=0$ because otherwise for any $j\geqslant 2$
  \[
  \alpha_j >0 \implies \langle v_i, w_j \rangle ^2 =0  \hspace{.1 cm} \forall \hspace{.1 cm} i \geqslant 2 \implies  w_j \perp \{v_2, \cdots, v_n\}\implies w_jw_j^{\top}= v_1v_1^{\top}= \frac{1}{n}\sigma^*\sigma^{*^{\top}}
  \]

  But, $\{w_j\}_{j=1}^n$ are by construction orthonormal set of vectors and hence the contradiction.

  Summarizing the above, we have that $ Y_2= \alpha_1v_1v_1^{\top}$ but $\text{Tr}(Y_2)=n \implies \alpha_1 =n$ and therefore $Y_2= \sigma^*\sigma^{*^{\top}}=Y^*$.
\end{proof}

\subsubsection{A high probability event and the proof} \label{sec:hpe}

We are now in a position to provide a proof of Theorem  \ref{thm:SDP}. From \ref{eq:DualCert}  we need to construct the carefully chosen dual certificates $S^*, D^*, \lambda^*$ satisfying all the criteria in \ref{eq:DualCert}. We start by constructing $D^{*}=\operatorname{diag}\left\{d_{i}^{*}\right\}$. 
From the conditions $S^*= D^* -A+\lambda^*\mathbf{J}$ and $S^*\sigma^* =0$ we have $D^*\sigma^*= A\sigma^*$ because $\mathbf{J}\sigma^*=0$. More precisely, we have for all $1\leqslant i \leqslant n$

\[
d_{i}^{*}=\sum_{j=1}^{n} A_{i j} \sigma_{i}^{*} \sigma_{j}^{*}
\]

We choose $\lambda^{*}= \frac{\mu_1+\mu_2}{2}$. It remains to show that $S^{*}=D^{*}-A+\lambda^{*} \mathbf{J}$ satisfies  \ref{eq:DualCert} with probability approaching one.

By construction,  $S^{*} \sigma^{*}=0$ because   $D^{*} \sigma^{*}=A \sigma^{*}$ and $\mathbf{J} \sigma^{*}=0$.  
It remains to verify that $S^{*} \succeq 0$ and $\lambda_{2}\left(S^{*}\right)>0$ with probability approaching one, which is equivalent to showing the following

\[
  \mathbb{P}\left[S^* \succeq 0\cap \lambda_2(S^{*})>0\right] =\mathbb{P}\left[\inf _{x \perp \sigma^{*},\|x\|=1} \langle x, S^{*} x \rangle >0\right] \to 1
\]

We observe  that $\mathbb{E}[A]=\frac{\mu_1-\mu_2}{2} Y^{*}+\frac{\mu_1+\mu_2}{2} \mathbf{J}-\mu_1 \mathbf{I}$ and $Y^{*}=\sigma^{*}\sigma^{*^{\top}}$. Thus for any $x$ such that $x \perp \sigma^{*}$ and $\|x\|=1$,

\begin{align}
\langle x, S^{*} x \rangle  & =\langle x, D^{*} x \rangle - \langle x, \mathbb{E}[A] x \rangle +\lambda^{*} \langle x, \mathbf{J} x \rangle - \langle x, (A-\mathbb{E}[A]) x \rangle  \\
& = \langle x, D^{*} x \rangle  +  \frac{\mu_2- \mu_1}{2}\langle x, Y^* x \rangle + \left(\lambda^{*}- \frac{\mu_1+\mu_2}{2}\right) \langle x, \mathbf{J} x \rangle +\mu_1- \langle x, (A-\mathbb{E}[A]) x \rangle \\
& \stackrel{(a)}{=} \langle  x, D^{*} x \rangle + \mu_1 -\langle x, (A-\mathbb{E}[A]) x \rangle  \geqslant \min _{i \in[n]} d_{i}^{*}+\mu_1 -\|A-\mathbb{E}[A]\| .
\end{align}

where $(a)$ holds since $\lambda^{*} = \frac{\mu_1+\mu_2}{2}$ and $\left\langle x, \sigma^{*}\right\rangle=0$. Therefore, it suffices to prove the following.
\begin{equation}
\mathbb{P}\left[ \min _{i \in[n]} d_{i}^{*}+\mu_1 -\|A-\mathbb{E}[A]\| >0\right] \to  1
\end{equation}

From standard bounds \cite{Vershynin_2018} on norms of Gaussian random matrices, we have that for an absolute constant $c >0$
\begin{equation} \label{def:MatA-A^*}
A=\begin{bmatrix}
\begin{array}{c|c}
N(0, \tau^2) & N(0, \tau^2) \\[1ex]
\hline
\\[-2ex]
N(0, \tau^2) & N(0, \tau^2)
\end{array}
\end{bmatrix}.
\end{equation}
\begin{equation}\label{eq:GNorm}
\mathbb{P}\left[\|A-\mathbb{E}[A]\| \geqslant 3\tau\sqrt{n}\right] \leqslant 2e^{-cn}.
\end{equation}

Therefore, it is enough to prove the following as $|\mu_1| =\left|\alpha \frac{\sqrt{\log n}}{\sqrt{n}}\right| \leqslant \tau \sqrt{n} $ for $n $ large enough.
\begin{equation}
    \mathbb{P}\left[ \min _{i \in[n]} d_{i}^{*}> 4\tau\sqrt{n}\right] =  \mathbb{P}\left[ \bigcap_{i=1}^{n} \{d_{i}^{*}> 4\tau\sqrt{n}\}\right] \to  1
\end{equation}

We prove this using a union bound argument and proving the following.
\[
\sum_{i=1}^{n} \mathbb{P}\left[ d^*_i \leqslant 4\tau \sqrt{n} \right] = n \mathbb{P}\left[ d^*_i \leqslant 4\tau \sqrt{n} \right]\to  0
\]

 To prove this, we observe that for all $1\leqslant i \leqslant n$, 
 \[
 d_{i}^*=\sum_{j=1}^{n} A_{i j} \sigma_{i}^{*} \sigma_{j}^{*} \overset{ d}{=} N\left(\left(\frac{n}{2} -1\right)\mu_1, (\frac{n}{2}-1)\tau^2\right) - N\left(\frac{n}{2}\mu_2, \frac{n}{2}\tau^2\right).
 \]

 \begin{align*}
 n\mathbb{P}\left[ d^*_i \leqslant 4\tau \sqrt{n} \right]  \leqslant n \mathbb{P}\left[ N(0,1) \geqslant \frac{n(\mu_1 -\mu_2)}{2\tau \sqrt{n}}(1+ o(1)\right] \leqslant ne^{-\text{SNR} (1+o(1)) \log n} = \frac{1}{n^{\text{SNR} -1 +o(1)}} \to  0.
 \end{align*}

    \subsection{Spectral relaxation}
In this section, we prove that spectral relaxation achieves the exact recovery up to the statistical threshold. Our result in this section relies on a more general spectral result that we explain below along with proving the assumptions. 

\subsubsection{A more general spectral separation result}
Consider a general symmetric random matrix $B \in \mathbb{R}^{n \times n}$ with independent entries on and above its diagonal. To be more precise, this is a sequence of random matrices with growing dimensions. In our context, this will be the 
 shifted adjacency matrix of the Gaussian weighted stochastic block model \ref{def:GWSBM} as $n \to \infty$. We denote its eigenvalues $\lambda_1 \geqslant \cdots \geqslant \lambda_n $ and their corresponding orthonormal set of eigenvectors $u_1, \cdots, u_n$ with the spectral decomposition given by the following. 

\[
B=\sum_{j=1}^n \lambda_j u_ju_j^{\top}.
\]

Suppose its expectation $B^*=\mathbb{E} B \in \mathbb{R}^{n \times n}$ is low-rank and has $r$ nonzero eigenvalues. 
\begin{enumerate}

\item[(a)]

We assume that $r=\Theta(1)$, and these $r$ eigenvalues are positive and in descending order $\lambda_1^* \geqslant  \lambda_2^* \geqslant \cdots \geqslant \lambda_r^* >0,$ and $\lambda_1^* \asymp \lambda_r^*$. Their corresponding eigenvectors are denoted by $u_1^*, \cdots, u_r^* \in \mathbb{R}^n$. In other words, we have the spectral decomposition of $\mathbb{E}[B]$ given by the following
\[
B^*=\sum_{j=1}^r \lambda_j^* u_j^*u_j^{*^\top}.
\]

For a fixed $k \in[r]$ we define the spectral gap with the convention that $\lambda_0^*=+\infty$ and $\lambda_{n+1}^*=-\infty$.
\[
\Delta^*= (\lambda_{k-1}^*-\lambda_k^*) \wedge (\lambda_k^*-\lambda_{k+1}^*).
\]

\item[(b)]

We assume $B$ concentrates under the spectral norm. More precisely,  $\exists $ $\gamma=\gamma_n=o(1)$ such that 
\[ \mathbb{P}\left[\left\|B-B^*\right\|_2 \geqslant \gamma_n \Delta^*\right] \to 0.\]

As a direct consequence of $(b)$ and because of Weyl's inequality 
\[
\left|\lambda_k-\lambda_k^*\right| \leqslant \left\|B-B^*\right\|_2,
\]

we have that the fluctuation of $\lambda_k$ is much smaller than the gap $\Delta^*$,  Thus, $\lambda_k$ is well separated from other eigenvalues, including the bulk $n-r$ eigenvalues whose magnitudes are at most $\|B-B^*\|_2$.

\item[(c)]
In addition, we assume that $B$ concentrates in a row-wise sense. More precisely, there exists a continuous non-decreasing function $\varphi =\varphi_n: \mathbb{R}_{+} \rightarrow \mathbb{R}_{+}$, such that 
$\varphi(0)=0, \varphi(x) / x$  is non-increasing, and for any   $w \in \mathbb{R}^n
$
\[
\sum_{m=1}^{n}
\mathbb{P} 
\left[ \left| \langle \left(B-B^*\right)_m,  w \rangle \right| \geqslant  \Delta^*\|w\|_{\infty} \varphi\left(\frac{\|w\|_2}{\sqrt{n}\|w\|_{\infty}}\right) \right]\to 0 .
\]
\end{enumerate}

where, the notation $\left(A-A^*\right)_m$. means the $m$-th row vector of $A-A^*$,
 $||w||_2$, $||w||_{\infty}$ are the $l_2$ and $l_{\infty}$ norms on $\mathbb{R}^n$.
\begin{theorem}[\cite{abbe2019entrywise}] \label{thm:Spec}
Fix $k \in[r]$. Suppose assumptions (a), (b), and (c) hold, and $\left\|u_k^*\right\|_{\infty} \leqslant \gamma_n$. Then, with  the notations $O(\cdot)$ and $o(\cdot)$ hide dependencies on $\varphi(1)$ we have
\[
\mathbb{P}\left[
\left\|u_k-  \frac{Bu_k^*}{\lambda_k^*}\right\|_{\infty} \wedge \left\|u_k+ \frac{Bu_k^*}{ \lambda_k^*}\right\|_{\infty}=O\left((\gamma+\varphi(\gamma))\left\|u_k^*\right\|_{\infty}\right)=o\left(\left\|u_k^*\right\|_{\infty}\right),\right] \to  1
\]
\end{theorem}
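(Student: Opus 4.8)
This is precisely the entrywise eigenvector perturbation theorem of \cite{abbe2019entrywise}, and the plan is to reproduce the leave-one-out argument behind it, using only assumptions (a)--(c). Fix $k\in[r]$; replacing $u_k$ by $-u_k$ if needed, we may assume $\langle u_k,u_k^*\rangle\geqslant 0$, which is the branch attaining the minimum of the two $\ell_\infty$ quantities. I would start from the eigen-equation $Bu_k=\lambda_ku_k$, rewritten as
\[
u_k-\frac{Bu_k^*}{\lambda_k^*}=\frac{\lambda_k^*-\lambda_k}{\lambda_k^*}\,u_k+\frac{1}{\lambda_k^*}\,B\bigl(u_k-u_k^*\bigr).
\]
On the event of (b), Weyl's inequality gives $|\lambda_k-\lambda_k^*|\leqslant\|B-B^*\|_2\leqslant\gamma\Delta^*\leqslant\gamma\lambda_k^*$ (using $\Delta^*\leqslant\lambda_k^*$ and $\lambda_1^*\asymp\lambda_r^*$), so the first term is $O(\gamma)\|u_k\|_\infty$; and since (c) applied to the \emph{deterministic} vector $w=u_k^*$ (note $\|u_k^*\|_2/(\sqrt n\|u_k^*\|_\infty)\leqslant 1$ and $\varphi$ is non-decreasing) yields $\|(B-B^*)u_k^*\|_\infty\lesssim\Delta^*\varphi(1)\|u_k^*\|_\infty$, we get $\|Bu_k^*/\lambda_k^*\|_\infty=O(\|u_k^*\|_\infty)$ and hence $\|u_k\|_\infty\leqslant\|u_k-Bu_k^*/\lambda_k^*\|_\infty+O(\|u_k^*\|_\infty)$. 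Thus the first term contributes $o(1)$ times the quantity to be bounded plus $O(\gamma\|u_k^*\|_\infty)$, and the whole problem reduces to a uniform-in-$m$ bound on $\lambda_k^{*-1}\langle B_m,u_k-u_k^*\rangle$.

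\textbf{$\ell_2$ inputs.} Davis--Kahan plus (b) give $\|u_k-u_k^*\|_2\leqslant C\gamma$ (with the chosen sign). For the decoupling I will also use: if $B^{(m)}$ denotes $B$ with its $m$-th row and column reset to their expectations, then $B-B^{(m)}$ has rank at most two and operator norm at most $2\|(B-B^*)_m\|_2\leqslant 2\|B-B^*\|_2\leqslant 2\gamma\Delta^*$, so Davis--Kahan again gives $\|u_k-u_k^{(m)}\|_2\lesssim\gamma$ for the corresponding (sign-aligned) eigenvector $u_k^{(m)}$.

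\textbf{The decoupling -- the crux.} The obstruction is that $u_k$ depends on every entry of $B$, so (c) cannot be applied to $w=u_k-u_k^*$. Following \cite{abbe2019entrywise}, one uses the family $\{B^{(m)}\}$: by construction $B^{(m)}$, and therefore $u_k^{(m)}$, is independent of the row $(B-B^*)_m$. Write $\langle B_m,u_k-u_k^*\rangle=\langle B_m^*,u_k-u_k^*\rangle+\langle(B-B^*)_m,u_k^{(m)}-u_k^*\rangle+\langle(B-B^*)_m,u_k-u_k^{(m)}\rangle$. The middle term is handled by (c) with the \emph{independent} vector $w=u_k^{(m)}-u_k^*$, whose $\ell_2$ norm is $O(\gamma)$ by the previous paragraph, giving a bound $\lesssim\Delta^*\varphi(\gamma)\|u_k^{(m)}-u_k^*\|_\infty$ and, after $\|u_k^{(m)}-u_k^*\|_\infty\leqslant\|u_k-u_k^*\|_\infty+\|u_k-u_k^{(m)}\|_\infty$, it feeds the $\ell_\infty$ error back into itself with a small coefficient; the last (Cauchy--Schwarz type) term is recursively controlled by a leave-two-out refinement. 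The first term $\langle B_m^*,u_k-u_k^*\rangle$ is handled deterministically: since $B^*u_k^*=\lambda_k^*u_k^*$ it equals $[(B^*-\lambda_k^*u_k^*u_k^{*\top})(u_k-u_k^*)]_m+\lambda_k^*(\langle u_k^*,u_k\rangle-1)u_k^*(m)$, and $\|B^*-\lambda_k^*u_k^*u_k^{*\top}\|_2=\max_{j\neq k}|\lambda_j^*|$ together with the eigen-gap $\Delta^*$, $\lambda_1^*\asymp\lambda_r^*$, and $|\langle u_k^*,u_k\rangle-1|=O(\gamma^2)$ bounds it. Taking a maximum over $m$ — the union over the $n$ leave-one-out events costs nothing, since (c) is already stated as a convergent sum over $m$ — produces a master inequality
\[
\Bigl\|u_k-\tfrac{Bu_k^*}{\lambda_k^*}\Bigr\|_\infty\ \leqslant\ C\,(\gamma+\varphi(\gamma))\,\|u_k^*\|_\infty\ +\ o(1)\cdot\Bigl\|u_k-\tfrac{Bu_k^*}{\lambda_k^*}\Bigr\|_\infty .
\]

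\textbf{Closing and the main difficulty.} Because the self-coefficient on the right is $o(1)$ — this is exactly where $\gamma=o(1)$ and the hypothesis $\|u_k^*\|_\infty\leqslant\gamma_n$ enter — the master inequality rearranges to $\|u_k-Bu_k^*/\lambda_k^*\|_\infty=O((\gamma+\varphi(\gamma))\|u_k^*\|_\infty)$, and continuity of $\varphi$ with $\varphi(0)=0$ makes $\gamma+\varphi(\gamma)=o(1)$, so the right side is $o(\|u_k^*\|_\infty)$; all of this holds on the intersection of the high-probability events of (b) and (c), still of probability $1-o(1)$. The hard part will be that the naive Cauchy--Schwarz bounds on the cross terms $\langle(B-B^*)_m,u_k-u_k^{(m)}\rangle$ are an order of magnitude too weak to reach $o(\|u_k^*\|_\infty)$; one must set up the correct nested leave-one-out surrogates and arrange the decomposition so that these terms reappear multiplied by an $o(1)$ factor, i.e.\ so that the resulting inequality is genuinely self-improving rather than merely $O(1)$-self-referential. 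Controlling the contribution of the other population eigenvectors $u_j^*$, $j\neq k$, when $r>1$ (where only $\|u_k^*\|_\infty$, not $\max_j\|u_j^*\|_\infty$, is assumed small) is the other place requiring care, and is absorbed through the eigen-gap $\Delta^*$ and the condition $\lambda_1^*\asymp\lambda_r^*$.
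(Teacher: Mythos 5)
The paper offers no proof of this theorem: it is imported verbatim from \cite{abbe2019entrywise} (Theorem 1.1 there), and everything downstream in the paper simply invokes it as a black box for the GWSBM verification of hypotheses (a)--(c). So there is no ``paper's own proof'' to compare against; what you have written is a sketch of the proof that appears in the cited reference.

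As a sketch of that argument, your outline captures the correct skeleton: the eigen-equation rewriting, Weyl plus (b) for the $\lambda_k$ fluctuation, (c) applied to the deterministic $u_k^*$ to control $\|Bu_k^*/\lambda_k^*\|_\infty$, Davis--Kahan for the $\ell_2$ inputs, and the leave-one-out surrogates $u_k^{(m)}$ that restore independence so (c) can be applied row-by-row. You also correctly identify where the real work is. That said, the crux is precisely the part your proposal leaves open: the cross term $\lambda_k^{*-1}\langle(B-B^*)_m,u_k-u_k^{(m)}\rangle$. A bare Cauchy--Schwarz bound gives $\gamma^2\Delta^*/\lambda_k^*\asymp\gamma^2$, and $\gamma^2$ is \emph{not} in general $O((\gamma+\varphi(\gamma))\|u_k^*\|_\infty)$ --- the whole point of the $\ell_\infty$ theorem is to beat exactly this naive $\ell_2$ loss. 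You wave at ``a leave-two-out refinement,'' but \cite{abbe2019entrywise} does not actually iterate to a leave-two-out construction; instead it carefully decomposes $u_k-u_k^{(m)}$ so that the troublesome part feeds back multiplicatively with a genuinely $o(1)$ coefficient, and the remainder is bounded in terms of $\|u_k^{(m)}-u_k^*\|_\infty$ (to which (c) does apply). Without spelling out that decomposition and the accompanying bookkeeping of the $n$ leave-one-out events, the master inequality you display is asserted rather than derived, so the sketch does not yet constitute a proof. Since the paper treats the theorem as an external ingredient, this is a reasonable level of detail for a reader, but it should be flagged as a citation rather than as an argument.
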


\begin{Notations}
Under the Gaussian weighted stochastic block model   \ref{def:MatA} we further assume that $A_{ii} \overset{IID}{\sim} N(\mu_1, \tau^2)$. 
\begin{equation} \label{def:MatA'}
    A:=\begin{bmatrix} 
    \begin{array}{c|c}
    N(\mu_1, \tau^2) & N(\mu_2, \tau^2) \\[1ex]
    \hline
    \\[-2ex]
    N(\mu_2, \tau^2) & N(\mu_1, \tau^2)
    \end{array}
    \end{bmatrix}.
    \end{equation}
    \begin{equation} \label{def:MatA^*}
A^*:= \mathbb{E}[A] =\begin{bmatrix}
\begin{array}{c|c}
\mu_1 & \mu_2 \\[1ex]
\hline
\\[-2ex]
\mu_2 & \mu_1
\end{array}
\end{bmatrix} = \lambda^* u^* u^{*^{\top}} + \lambda_1^* u_1^* u_1^{*^{\top}}.
\end{equation}
    \begin{equation}
    \lambda^* := \frac{n (\mu_1 +\mu_2)}{2}, \quad  \text{with} \quad u^* := \frac{\mathbf{1}_n}{\sqrt{n}} 
    \end{equation}
    \begin{equation}
    \lambda_1^*  := \frac{n (\mu_1 -\mu_2)}{2} \quad  \text{with}  \quad u_1^*  := \frac{\left(\mathbf{1}_{\frac{n}{2}}, -\mathbf{1}_{\frac{n}{2}}\right)}{\sqrt{n}} = \frac{\sigma^*}{\sqrt{n}}
    \end{equation}
    \begin{equation} \label{def:MatB}
    B:= A - \frac{n (\mu_1 +\mu_2)}{2} \frac{\mathbf{1}_n}{\sqrt{n}}\frac{\mathbf{1}_n}{\sqrt{n}}^{\top} =A - \lambda^* u^* u^{*^{\top}}
    \end{equation}
    \begin{equation}  \label{eq:MatB^*}
    B^*= \mathbb{E}[B] =\mathbb{E}[A] - \lambda^* u^* u^{*^{\top}} = \lambda_1^* u_1^* u_1^{*^{\top}} 
    \end{equation}
    
    Let $u_1$ be the eigenvector corresponding to the largest eigenvalue $\lambda_1$ of $B$.
    
\end{Notations}

\begin{theorem}[\textbf{Spectral relaxation achieves exact recovery}] \label{thm:Spec1}
    Under the Gaussian weighted stochastic block model \ref{def:GWSBM} and  \ref{def:MatA} with the assumption that $A_{ii} \overset{IID}{\sim} N(\mu_1, \tau^2)$ we have the shifted adjacency matrix $B$ as defined above.
    If $\text{SNR}$ $>1$  then $\hat{\sigma}:= \text{sign} (u_1)$ achieves exact recovery where $u_1$ is the eigenvector corresponding to the largest eigenvalue $\lambda_1$ of $B$. More precisely, if \text{SNR} $>1$

    \[
    \mathbb{P} \left[ M(\hat{\sigma}, \sigma^*) =1\right] \to 1
    \]

\end{theorem}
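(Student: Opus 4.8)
The plan is to invoke the entrywise eigenvector perturbation result of Theorem~\ref{thm:Spec} with $r=1$ and $k=1$, since by \eqref{eq:MatB^*} the matrix $B^*=\mathbb{E}[B]=\lambda_1^* u_1^* u_1^{*\top}$ has rank one, with $\lambda_1^*=\tfrac{n(\mu_1-\mu_2)}{2}=\tfrac{(\alpha-\beta)}{2}\sqrt{n\log n}$ and top eigenvector $u_1^*=\sigma^*/\sqrt n$. First I would verify the three hypotheses. Assumption (a) is immediate: $r=1=\Theta(1)$, $\lambda_1^*>0$, and (the remaining $n-1$ population eigenvalues vanishing) the spectral gap is $\Delta^*=\lambda_1^*=\tfrac{(\alpha-\beta)}{2}\sqrt{n\log n}$. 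For assumption (b), note $B-B^*=A-\mathbb{E}[A]$ is a symmetric matrix of i.i.d.\ $N(0,\tau^2)$ entries (using the hypothesis $A_{ii}\sim N(\mu_1,\tau^2)$), so by \eqref{eq:GNorm} we have $\|B-B^*\|\le 3\tau\sqrt n$ with probability $\ge 1-2e^{-cn}$; taking $\gamma_n:=\tfrac{6\tau}{(\alpha-\beta)\sqrt{\log n}}=o(1)$ gives $\gamma_n\Delta^*=3\tau\sqrt n$, hence $\mathbb{P}[\|B-B^*\|\ge\gamma_n\Delta^*]\to 0$; and $\|u_1^*\|_\infty=1/\sqrt n\le\gamma_n$ for large $n$, as required.

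For assumption (c) I would take $\varphi(x)=x$ (continuous, non-decreasing, $\varphi(0)=0$, $\varphi(x)/x\equiv 1$ non-increasing). For any $w\in\mathbb{R}^n$ and any row $m$, $\langle(B-B^*)_m,w\rangle\sim N(0,\tau^2\|w\|_2^2)$, so with $x=\|w\|_2/(\sqrt n\|w\|_\infty)$,
\[
\mathbb{P}\!\left[\,|\langle(B-B^*)_m,w\rangle|\ge\Delta^*\|w\|_\infty\,\varphi(x)\right]
=\mathbb{P}\!\left[\,|N(0,1)|\ge\frac{\Delta^*}{\tau\sqrt n}\right]
=\mathbb{P}\!\left[\,|N(0,1)|\ge\frac{(\alpha-\beta)\sqrt{\log n}}{2\tau}\right],
\]
which is independent of $w$. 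By \eqref{eq:Gtail} this is at most $O(n^{-\mathrm{SNR}}/\sqrt{\log n})$, so summing over $m=1,\dots,n$ gives $O(n^{1-\mathrm{SNR}}/\sqrt{\log n})\to 0$ precisely because $\mathrm{SNR}>1$. Theorem~\ref{thm:Spec} then yields, with probability $\to 1$ and for one choice of sign,
\[
\Big\|u_1-\tfrac{Bu_1^*}{\lambda_1^*}\Big\|_\infty\wedge\Big\|u_1+\tfrac{Bu_1^*}{\lambda_1^*}\Big\|_\infty
=O\big((\gamma_n+\varphi(\gamma_n))\|u_1^*\|_\infty\big)=O\!\left(\tfrac{1}{\sqrt{n\log n}}\right).
\]

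Next I would pin down the population signal $Bu_1^*/\lambda_1^*$. Since $\langle\mathbf{1}_n,\sigma^*\rangle=0$, we have $Bu_1^*=Au_1^*=A\sigma^*/\sqrt n$, whose $i$-th coordinate is $\tfrac{\sigma_i^*}{\sqrt n}d_i^*$ with $d_i^*=\sum_j A_{ij}\sigma_i^*\sigma_j^*$ — the quantity analysed in the SDP section, here with $d_i^*\overset{d}{=}N(\tfrac n2\mu_1,\tfrac n2\tau^2)-N(\tfrac n2\mu_2,\tfrac n2\tau^2)$ (the diagonal is included now). A union bound with \eqref{eq:Gtail} shows that, because $\mathrm{SNR}>1$, there is a constant $c_0>0$ with $\min_{i\in[n]}d_i^*\ge c_0\sqrt{n\log n}$ with probability $\to 1$: indeed $\mathbb{P}[d_i^*<c_0\sqrt{n\log n}]\le n^{-(\alpha-\beta-2c_0)^2/(8\tau^2)+o(1)}$, and the exponent exceeds $1$ for $c_0$ small by continuity from $\mathrm{SNR}>1$. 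Hence $|(Bu_1^*/\lambda_1^*)_i|=\tfrac{d_i^*}{\sqrt n\,\lambda_1^*}\ge\tfrac{2c_0}{(\alpha-\beta)\sqrt n}$ and $\operatorname{sign}((Bu_1^*)_i)=\sigma_i^*$ for all $i$ simultaneously.

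Finally I would intersect the two high-probability events. There the $\ell_\infty$ error $O(1/\sqrt{n\log n})$ is $o(1/\sqrt n)$, hence eventually strictly smaller than $\min_i|(Bu_1^*/\lambda_1^*)_i|\ge\tfrac{2c_0}{(\alpha-\beta)\sqrt n}$; so either $\operatorname{sign}((u_1)_i)=\sigma_i^*$ for every $i$ or $\operatorname{sign}((u_1)_i)=-\sigma_i^*$ for every $i$, giving $M(\hat\sigma,\sigma^*)=1$. I expect the main obstacle to be the bookkeeping in step (c): one must choose $\varphi$ so that the per-row tail probability is of order $n^{-\mathrm{SNR}}$ — the scale-invariant choice $\varphi(x)=x$ does exactly this — and then ensure the resulting error $O(\|u_1^*\|_\infty(\gamma_n+\varphi(\gamma_n)))$ truly beats the smallest coordinate of the population signal, which is why $\mathrm{SNR}>1$ must be used a second time to get $\min_i d_i^*=\Theta(\sqrt{n\log n})$ rather than merely $\omega(\sqrt n)$.
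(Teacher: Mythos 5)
Your proof is correct and uses essentially the same strategy as the paper: verify conditions (a)--(c) of the entrywise bound in Theorem \ref{thm:Spec}, then lower-bound the coordinates of $Bu_1^*/\lambda_1^* = A\sigma^*/(\sqrt{n}\lambda_1^*)$ via a union bound on $d_i^*=\sigma_i^*(A\sigma^*)_i$, using $\mathrm{SNR}>1$ at that final union-bound step. The only cosmetic deviation is your choice $\varphi(x)=x$ in condition (c), which makes that verification also rely on $\mathrm{SNR}>1$, whereas the paper instead uses the scaled linear $\varphi(x)=\frac{2\sqrt{2}c\tau}{\alpha-\beta}x$ with a free constant $c$ so that (c) holds for any fixed SNR; both choices give the identical $O(1/\sqrt{n\log n})$ entrywise rate and the rest of the argument is the same.
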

We break the symmetry of the problem by assuming that the first coordinate of $u_1$ has to be positive, and therefore 

\begin{equation}\label{def:ER1}
\{M(\hat{\sigma}, \sigma^*) =1\}  \iff G^+ \cap G^-
\end{equation}
\[
G^+:=\left\{ u_{1_{i}} >0 \hspace{.1 cm} \forall \hspace{.1 cm}  1 \leqslant i \leqslant \frac{n}{2}\right\} 
\]
\[
G^-:=\left\{u_{1_{i}} <0\hspace{.1 cm}   \forall  \hspace{.1 cm}  \frac{n}{2} +1 \leqslant i \leqslant n\right\}
\]

Otherwise, we have the following generalized set equality without the restriction on $u_1$.
\[
\{M(\hat{\sigma}, \sigma^*) =1\}  \iff  (G^+ \cap G^-) \cup  (H^+ \cap H^-)
\]
\[
H^+:=\left\{ u_{1_{i}} <0 \hspace{.1 cm} \forall \hspace{.1 cm}  1 \leqslant i \leqslant \frac{n}{2}\right\} 
\]
\[
H^-:=\left\{u_{1_{i}} >0\hspace{.1 cm}   \forall  \hspace{.1 cm}  \frac{n}{2} +1 \leqslant i \leqslant n\right\}
\]

Now, we establish the right-hand side of the above \ref{def:ER1} by showing that $\mathbb{P}\left[G^+\cap G^-\right]\to 1$. We now apply Theorem \ref{thm:Spec} with the restriction on $u_1$ to have that for $k=1$
\[
\mathbb{P}\left[
\left\|u_1-  \frac{Bu_1^*}{\lambda_1^*}\right\|_{\infty} =O\left((\gamma+\varphi(\gamma))\left\|u_1^*\right\|_{\infty}\right)=o\left(\left\|u_1^*\right\|_{\infty}\right),\right] \to  1
\]

Finally,  it remains to establish the following.
\[
\mathbb{P}\left[ \left(\frac{Bu_1^*}{ \lambda_1^*}\right)_i > o\left(\left\|u_1^*\right\|_{\infty}\right) \quad \forall \quad  1\leqslant i \leqslant \frac{n}{2}\right] \to 1
\]
\[
\mathbb{P}\left[ \left(\frac{Bu_1^*}{ \lambda_1^*}\right)_i < -o\left(\left\|u_1^*\right\|_{\infty}\right) \quad \forall \quad  \frac{n}{2} +1 \leqslant i \leqslant n \right] \to 1
\]
\subsubsection{The proof -- application of the $l_{\infty}$ eigenvector bound}

We now prove Theorem \ref{thm:Spec1}. We start with the first step of verifying all the assumptions of Theorem \ref{thm:Spec}.

\begin{proof}
For $(a)$, we have $B^* = \lambda_1^* u_1^* u_1^{*^{\top}}$ has $r=1$ positive eigenvalue with $\lambda_1^* =  \frac{n (\mu_1 -\mu_2)}{2} = \frac{(\alpha -\beta)}{2} \sqrt{n \log n}$. 

For $(b)$ we need to prove concentration of $B-\mathbb{E}[B]$. We observe that  the spectral gap $\Delta^*$ is given by 
\[
\Delta^* = \lambda_1^* = \frac{(\alpha -\beta)}{2} \sqrt{n \log n}.
\]

Further, noting that $A-A^* =B-B^*$ and from standard bounds on norms of Gaussian random matrices \ref{eq:GNorm} we have
\[ 
\mathbb{P} \left[ \left\|B-B^* \right\| \geqslant \gamma_n \Delta^* = 3\tau \sqrt{n} \right] \leqslant 2 e^{-cn},
\]

where we choose $\gamma_n = \frac{6 \tau }{ (\alpha -\beta)} \frac{1}{\sqrt{\log n}} \to 0 $ as $n \to  \infty$.

For $(c)$ we observe that $ \langle \left(B-B^*\right)_m, w \rangle \sim N\left(0, \tau^2\|w\|_2^2\right)$ and   choose a linear function  $\varphi(\cdot)$ with a constant $c>1$ 
\[
\varphi(x)=\sqrt{2}c x \frac{\tau \sqrt{n \log n}}{\Delta^*}=  \frac{2\sqrt{2}c \tau}{ (\alpha -\beta)}  x
\]
 
It then clearly satisfies  $\varphi(0)=0, \varphi(x) / x$  \text{is non-increasing} and for any $w \in \mathbb{R}^n$
\[
\sum_{m=1}^{n} \mathbb{P}\left(\left| \langle \left(B-B^*\right)_m, \cdot w \rangle \right| \geqslant \Delta^*\|w\|_{\infty} \varphi\left(\frac{\|w\|_2}{\sqrt{n}\|w\|_{\infty}}\right) \right) = n \mathbb{P}\left[ N(0,1) \geqslant  c\sqrt{\log n}\right] \leqslant n ^{1- c^2} \to 0
\]

which directly follows from standard tail bounds on Gaussian random variables  \ref{eq:Gtail}. 

Having satisfied all the conditions $ (a), (b), $ and $(c)$ of Theorem \ref{thm:Spec}, and with the restriction on $u_1$ and $u_1^*$ we have 
\begin{equation}\label{eq:Sbound}
\mathbb{P}\left[\left\|u_1-  \frac{Bu_1^*}{\lambda_1^*}\right\|_{\infty}  \leqslant C\gamma_n \left\|u_1^*\right\|_{\infty},\right] \to  1 \quad \text{ for an absolute constant} \quad C>0
\end{equation}

In order to prove that $\mathbb{P} \left[G^+ \cap G^-\right] \to 1$   using \ref{eq:Sbound} we observe that 
\[
u_{1} = u_1-  \frac{Bu_1^*}{\lambda_1^*} +  \frac{Bu_1^*}{\lambda_1^*}.
\]

Noting the fact that $\langle u^*, u_1^*\rangle = 0$, we have $Bu_1^* =Au_1^*$.  So, it is enough to prove that 

 \[
\mathbb{P}\left[ \left(\frac{Au_1^*}{ \lambda_1^*}\right)_i >   2C \gamma_n ||u_1^*||_{\infty} \quad \forall \quad  1\leqslant i \leqslant \frac{n}{2}\right] = \mathbb{P}\left[ \left(Au_1^*\right)_i >   6C \tau \quad \forall \quad  1\leqslant i \leqslant \frac{n}{2}\right]\to 1
\]
\[
\mathbb{P}\left[ \left(\frac{Au_1^*}{ \lambda_1^*}\right)_i < -  2C \gamma_n ||u_1^*||_{\infty} \quad \forall \quad  \frac{n}{2}+1\leqslant i \leqslant n \right]= \mathbb{P}\left[ \left(Au_1^*\right)_i <  - 6C \tau \quad \forall \quad  \frac{n}{2} +1 \leqslant i \leqslant n\right] \to 1
\]

We prove these two bounds using union bounds and observing the following distributional identities.
\[
(Au_1^*)_i \overset{d}{=} \frac{n}{2} (\mu_1-\mu_2) - \tau \sqrt{n}N(0,1)  \quad \forall \quad  1\leqslant i \leqslant \frac{n}{2}
\]
\[
(Au_1^*)_i \overset{d}{=} \frac{n}{2} (\mu_2-\mu_1) + \tau \sqrt{n}N(0,1)  \quad  \forall \quad  \frac{n}{2}+1\leqslant i \leqslant n 
\]
\begin{align}
& \frac{n}{2} \mathbb{P}\left[ N(0,1) \geqslant  \frac{n(\mu_1-\mu_2)}{2\tau \sqrt{n}} -6\frac{C}{\sqrt{n}} \right] + \frac{n}{2} \mathbb{P}\left[ N(0,1) \geqslant  \frac{n(\mu_1-\mu_2)}{2\tau \sqrt{n}} -6\frac{C}{\sqrt{n}} \right] \\
& = n  \mathbb{P}\left[ N(0,1) \geqslant  \frac{(\alpha -\beta) \sqrt{\log n}}{2\tau } (1 + o(1))\right] \leqslant n e^{-\text{SNR} (1+ o(1)) \log n} = \frac{1}{n^{\text{SNR} -1 + o(1)}} \to 0
\end{align}
\end{proof}
    \section{Gaussian weighted planted densest subgraph model}

This section defines our Gaussian weighted planted dense subgraph model of linear proportional size.

\subsection{Model description}

Let PDSM$(n, \mu_1,\mu_2, \tau^2)$ denote a weighted graph $G_{n}$ on $n$  vertices with binary labels $(\zeta_i^*)_{i\in [n]}$ such  that  

\[
\#\{i\in [n]: \zeta_i^*=1\}= \gamma n
\]
\[
\#\{i\in [n]: \zeta_i^*=0\}= (1-\gamma) n
\] 

and conditioned on this labeling $(\zeta_i^*)_{i\in [n]}$, for each pair of distinct nodes $i,j \in [n]$, we have a weighted edge 
\begin{equation}\label{def:GWPDSM}
A_{ij}=A_{ji} \sim
\begin{cases}
  N(\mu_1, \tau^2) & \text{if } \zeta^*(i) =\zeta^*(j) =1   \\
   N(\mu_2, \tau^2) & \text{otherwise}   
\end{cases}
\end{equation} 

sampled independently for every distinct pair $\{i,j\}$.  To simplify the writing, we focus on the assortative case, $\mu_1>\mu_2$.  We further define the signal-to-noise ratio of the model PDSM$(n, \mu_1,\mu_2, \tau^2)$ as the following.
\begin{equation}  \label{eq:SNR_n1}
    \text{SNR}_n: =  \frac{(\mu_1-\mu_2)^2 n}{8\tau^2 \log n},
\end{equation}
\begin{equation}
    \text{SNR}:= \lim_{n\to\infty} \text{SNR}_n.
\end{equation}

To simplify our writings further, we work with the critical scaling of $\mu_1$ and $\mu_2$ as the following.
\begin{equation} \label{def:scaling1}
\mu_1= \alpha \sqrt{\frac{\log n}{n}}, \mu_2= \beta\sqrt{\frac{\log n}{n}}.
\end{equation}

In this notation, we have our signal-to-noise ratio given by the following with $\alpha >\beta $ and $ \tau >0$ constants.

\begin{equation} \label{eq:SNR1}
\text{SNR}:= \frac{(\alpha -\beta)^2}{8\tau^2}.
\end{equation}

 The problem of exact recovery of the densely weighted community asks to recover the planted community labeling $(\zeta_i^*)_{i\in [n]}$ exactly having observed an instance of the weighted graph $G_n\sim$ PDSM$(n, \mu_1\,\mu_2, \tau^2)$ or equivalently the random $n \times n $ symmetric (weighted) adjacency matrix $(A_{ij})_{i,j,\in [n]}$ of the graph. 
 Let us  denote the parameter space $\Theta^{\gamma}_n$ as 
 \begin{equation} \label{eq:SOL1'}
 \Theta^{\gamma}_n = \{\zeta\in \{0, 1\}^n, \langle\zeta, \mathbf{1}_n\rangle=\gamma n\},
 \end{equation}
 
 where 
 $\mathbf{1}_n := (1, \cdots, 1)\in \mathbb{R}^n$. To be more precise, we define the agreement ratio between two community vectors $\hat{\zeta},  \zeta  \in \{0, 1\}^n$ as 
 \begin{equation} \label{def:agr1'}
     M(\hat{\zeta}, \zeta)= \frac{1}{n}\sum_{i=1}^n \mathbf{1}_{\hat{\zeta}(i)=\zeta(i)}
 \end{equation}

Then, we say that the problem of exact recovery is solvable if having observed an instance of the weighted random graph $G_n \sim$ PDSM$(n,\mu_1, \mu_2,\tau^2)$ with an unobserved $\zeta^* \in \Theta^{\gamma}_n$, we output a community label $\hat{\zeta} \in \Theta^{\gamma}_n
$ such that 
\begin{equation} \label{def:ER2'}
    \p (M(\hat{\zeta},\zeta^*)=1) \to 1.
\end{equation} 

We say the problem of exact recovery is unsolvable otherwise. That is, with probability bounded away from zero, every estimator fails to exactly recover the densely weighted community, as the number of vertices grows.
 
 \par
 
 Without loss of generality, we assume that the unobserved labels $(\zeta_i^*)_{i\in [n]}$ are such that  $\zeta_i^*=1$  for all $1\leqslant i \leqslant \gamma n$ and $\zeta_i^*=0$  for all $\gamma n+1\leqslant i \leqslant  n$. In that case, the weighted adjacency matrix $A$ has the following form (except that the diagonal terms  $A_{ii} =0 $ for all $1\leqslant i\leqslant n$). 

\[ A=
\begin{bmatrix}
\begin{array}{c|c}
N(\mu_1, \tau^2) & N(\mu_2, \tau^2) \\[1ex]
\hline
\\[-2ex]
N(\mu_2, \tau^2) & N(\mu_2, \tau^2)
\end{array}
\end{bmatrix}.
\]

\subsection{Maximum likelihood estimation} 

In this model, we define the maximum likelihood estimator as the following.

\begin{equation} \label{def:MLEPD}  
\hat{\zeta}_{MLE}:= \underset{\zeta \in \Theta^{\gamma}_n}{\argmax} \hspace{.1cm} \log \p(G|((\zeta_i)_{i\in [n]})= \sum_{\zeta_i\zeta_j =1}\frac{-(A_{ij}-\mu_1)^2}{2\tau^2} + \sum_{\zeta_i\zeta_j =0}\frac{-(A_{ij}-\mu_2)^2}{2\tau^2} -\log c_n,
\end{equation}

where $c_n =(\tau \sqrt{2 \pi})^{\binom{n}{2}}$. Further simplifying the right-hand side we have the following.

\begin{lemma} \label{lem: MLE1}
    Let $G_n \sim \text{PDSM}(n,\mu_1, \mu_2, \tau^2)$ with an unknown $\zeta^* \in \Theta^{\gamma}_n$. Then we have the MLE given by the following.
    \begin{equation} \label{eq:MLE3}
   \hat{\zeta}_{MLE}= \underset{\zeta \in \Theta^{\gamma}_n}{ \argmax} \hspace{.1 cm}g_A(\zeta) \quad \quad \text{where} \quad g_A(\sigma):=\sum_{i,j} A_{ij}\zeta_i \zeta_j 
\end{equation}
\end{lemma}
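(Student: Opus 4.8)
The plan is to mirror, in the $\{0,1\}$ encoding, the bookkeeping computation already carried out for \ref{lem: MLE} in the GWSBM case. Starting from the definition \ref{def:MLEPD}, I would expand each squared term, $(A_{ij}-\mu_1)^2 = A_{ij}^2 - 2\mu_1 A_{ij} + \mu_1^2$ and likewise $(A_{ij}-\mu_2)^2 = A_{ij}^2 - 2\mu_2 A_{ij} + \mu_2^2$, and split the sum over unordered pairs $\{i,j\}$ according to whether $\zeta_i\zeta_j = 1$ (i.e.\ both endpoints lie in the planted community) or $\zeta_i\zeta_j = 0$.

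The key structural point is that the constraint $\langle\zeta,\mathbf{1}_n\rangle = \gamma n$ forces the number of pairs $\{i,j\}$ with $\zeta_i=\zeta_j=1$ to equal $\binom{\gamma n}{2}$, a quantity independent of the particular $\zeta \in \Theta_n^\gamma$; consequently the number of pairs with $\zeta_i\zeta_j = 0$ is also fixed, equal to $\binom{n}{2} - \binom{\gamma n}{2}$. Hence the terms $-\frac{1}{2\tau^2}\sum_{\{i,j\}}A_{ij}^2$, $-\frac{\mu_1^2}{2\tau^2}\binom{\gamma n}{2}$ and $-\frac{\mu_2^2}{2\tau^2}\bigl(\binom{n}{2}-\binom{\gamma n}{2}\bigr)$ are all constants in the optimization over $\Theta_n^\gamma$ and may be dropped.

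What remains is $\frac{1}{\tau^2}\bigl(\mu_1\sum_{\zeta_i\zeta_j=1}A_{ij} + \mu_2\sum_{\zeta_i\zeta_j=0}A_{ij}\bigr)$, which I would rewrite as
\[
\frac{1}{\tau^2}\Bigl((\mu_1-\mu_2)\!\!\sum_{\{i,j\}:\zeta_i\zeta_j=1}\!\!A_{ij} \;+\; \mu_2\!\!\sum_{\{i,j\}}\!A_{ij}\Bigr),
\]
where the second piece is again $\zeta$-independent. Since $A_{ii}=0$ we have $\sum_{\{i,j\}:\zeta_i\zeta_j=1}A_{ij} = \tfrac12\sum_{i,j}A_{ij}\zeta_i\zeta_j = \tfrac12 g_A(\zeta)$, so using $\mu_1>\mu_2$ the maximization of the log-likelihood over $\Theta_n^\gamma$ is equivalent to maximizing $g_A(\zeta)$, which is the assertion of \ref{lem: MLE1}.

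There is essentially no obstacle: the whole argument is a routine expansion analogous to the proof of \ref{lem: MLE}, the only substantive difference being that in the $\{0,1\}$ encoding $\zeta_i\zeta_j$ is the indicator of ``both endpoints planted,'' so the counts of within-type and across-type pairs are pinned down by the cardinality constraint rather than by a $\pm1$ identity. The single point worth writing out carefully is precisely why those two counts are constant over $\Theta_n^\gamma$, since that is what turns the quadratic-in-$\mu$ terms into constants.
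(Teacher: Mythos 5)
Your proposal is correct and follows essentially the same route as the paper's own proof of Lemma~\ref{lem: MLE1}: expand the squared terms, observe that the within-cluster pair count $\binom{\gamma n}{2}$ (and hence the across-type count) is pinned down by the cardinality constraint on $\Theta_n^\gamma$ so that the quadratic-in-$\mu$ pieces are constant, rewrite the remaining linear term as $(\mu_1-\mu_2)\sum_{\{i,j\}}A_{ij}\zeta_i\zeta_j$ plus a $\zeta$-independent piece, and conclude via $\mu_1>\mu_2$. The only difference is presentational: you make explicit the reason the counts are constant, which the paper leaves implicit.
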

\begin{proof}
    We have the following from the definition of the maximum likelihood estimator \ref{def:MLEPD}.
    \[\hat{\zeta}_{MLE}=  \underset{\zeta \in \Theta^{\gamma}_n}{\argmax} \sum_{\zeta_i\zeta_j =1}\frac{-(A_{ij}-\mu_1)^2}{2\tau^2} + \sum_{\zeta_i\zeta_j =0}\frac{-(A_{ij}-\mu_2)^2}{2\tau^2}.\]
    
 We start by simplifying the right-hand side of the expression above.
 
\begin{align*}
& \hspace{ .38 cm} \sum_{\zeta_i\zeta_j = 1}\frac{-(A_{ij}-\mu_1)^2}{2\tau^2} + \sum_{\zeta_i\zeta_j = 0}\frac{-(A_{ij}-\mu_2)^2}{2\tau^2} \\
&= \sum_{\zeta_i\zeta_j = 1}\frac{-A_{ij}^2 - \mu_1^2 + 2A_{ij}\mu_1}{2\tau^2} + \sum_{\zeta_i\zeta_j = 0}\frac{-A_{ij}^2 - \mu_2^2 +2A_{ij}\mu_2}{2\tau^2} \\
&= \sum_{\{i,j\}} \frac{-A_{ij}^2}{2\tau^2} + \frac{-\mu_1^2}{2\tau^2}\times \binom{\gamma n}{2}
+  \frac{-\mu_2^2}{2\tau^2} \times \left(\binom{n}{2} -\binom{\gamma n}{2}\right) + \sum_{\zeta_i\zeta_j =1} \frac{A_{ij}\mu_1}{\tau^2} + \sum_{\zeta_i\zeta_j =0} \frac{A_{ij}\mu_2}{\tau^2},
\end{align*}

where the index $\{ i,j\}$ means to sum over all the $\binom{n}{2}$ edges of the graph $G_n$. The total number of edges in the densely weighted community is given by $\binom{\gamma n}{2}$.  We further observe that the above expression simplifies to the following.

\begin{align*}
    & \hspace{.25 cm} \sum_{\zeta_i\zeta_j =1} A_{ij}\mu_1 + \sum_{\zeta_i\zeta_j =0} A_{ij}\mu_2 \\
    & = \sum_{\{i,j\}} A_{ij}\mu_1 \zeta_i\zeta_j +\sum_{\{i,j\}} A_{ij}\mu_2 (1-\zeta_i\zeta_j) \\
    & = (\mu_1-\mu_2)\sum_{\{i,j\}} A_{ij} \zeta_i\zeta_j +\sum_{\{i,j\}} A_{ij}\mu_2 \\
    & = \frac{(\mu_1-\mu_2)}{2}\sum_{i,j} A_{ij} \zeta_i\zeta_j   +\sum_{\{i,j\}} A_{ij}\mu_2
\end{align*} 

As most of the terms above are independent of $\zeta$, and $\mu_1 >\mu_2$, we have the following expression for MLE.

   \[\hat{\zeta}_{MLE}=  \argmax \langle A, \zeta \zeta^{\top} \rangle  : \zeta\in \{0, 1\}^n, \langle\zeta, \mathbf{1}_n\rangle=\gamma n
   \]
\end{proof}

    \subsection{Statistical impossibility}

In this section, we present the statistical impossibility result, providing an information theoretic limit for the exact recovery problem in Gaussian weighted planted dense subgraph problem by showing that MLE fails to recover the community label in a low SNR regime. More precisely, we prove that when  $\gamma \text{SNR}$ is less than three quarter, no algorithm (efficient or not) can exactly recover the community labels with probability approaching one.

 \subsubsection{The statement}
\begin{theorem}[\textbf{Statistical impossibility of exact recovery of the planted community}] \label{thm:Negative1}
    Under the Gaussian weighted planted dense subgraph model \ref{def:GWPDSM} with parameter $\gamma \in  (0,1)$, 
      If  $\gamma$ SNR$<\frac{3}{4}$ then MLE fails in recovering the community label $\zeta^*$ bounded away from zero. 
     \[ \underset{n \to \infty}{\limsup}\hspace{.1cm}\p(M(\hat{\zeta}_{MLE}, \zeta^*)=1) =0 \quad \iff \quad  \underset{n \to \infty}{\liminf}\hspace{.1cm}\p(M(\hat{\zeta}_{MLE}, \zeta^*)\neq 1) = 0.\]
\end{theorem}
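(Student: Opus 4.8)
The plan is the local-obstruction-plus-second-moment strategy, following the structure already laid out for this model (Lemmas \ref{lem:First1'} and \ref{lem:Second1'}), but emphasizing where the threshold $3/4$ — rather than $1$ — comes from. First I would reduce the failure of MLE to the non-emptiness of the bad-pair set $\mathscr{C}(G_n)$: by the likelihood identity $g_A(\zeta)=\sum_{i,j}A_{ij}\zeta_i\zeta_j$, any pair $(i,j)$ with $i\in C^*$, $j\notin C^*$ and $g_A(\zeta^*)<g_A(\zeta^*[i\leftrightarrow j])$ yields $\zeta^*[i\leftrightarrow j]\in\Theta_n^\gamma$ strictly more likely than $\zeta^*$, so $\{\mathscr{C}(G_n)\neq\emptyset\}\subseteq\{M(\hat\zeta_{MLE},\zeta^*)\neq1\}$ and it suffices to show $\mathbb{P}[\mathscr{C}(G_n)\neq\emptyset]\to1$. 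Expanding $g_A(\zeta^*[i\leftrightarrow j])-g_A(\zeta^*)$ and cancelling the terms untouched by the swap, $(i,j)\in\mathscr{C}(G_n)$ is exactly the event $\mathscr{C}_{ij}:=\{e(i,C^*)<e(j,C^*\setminus\{i\})\}$; since $A_{ii}=0$, these two sums run over disjoint edge sets and are therefore independent Gaussians, $e(i,C^*)\overset{d}{=}N((\gamma n-1)\mu_1,(\gamma n-1)\tau^2)$ and $e(j,C^*\setminus\{i\})\overset{d}{=}N((\gamma n-1)\mu_2,(\gamma n-1)\tau^2)$.

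Next I would apply the second moment method to $Z=\sum_{i\in C^*,\,j\notin C^*}\mathbf 1_{\mathscr{C}_{ij}}$, so that $\mathbb{P}[\mathscr{C}(G_n)\neq\emptyset]=\mathbb{P}[Z\geq1]\geq\mathbb{E}[Z]^2/\mathbb{E}[Z^2]$. With $s=(\alpha-\beta)\sqrt{\gamma}/(\tau\sqrt2)$, so that $\tfrac12 s^2=2\gamma\,\mathrm{SNR}$, the two Gaussian identities give $\mathbb{P}[\mathscr{C}_{ij}]=\mathbb{P}[N(0,1)>(1+o(1))s\sqrt{\log n}]$ (a vanishing perturbation of order $n^{-1/2}$ coming from the diagonal/indexing convention is absorbed into the $o(1)$), and the standard Gaussian tail bounds give $\mathbb{E}[Z]\sim\gamma(1-\gamma)n^2\cdot\tfrac1{s\sqrt{2\pi\log n}}\,n^{-2\gamma\,\mathrm{SNR}}=\Theta\!\big(n^{2(1-\gamma\,\mathrm{SNR})}/\sqrt{\log n}\big)\to\infty$ (this uses only $\gamma\,\mathrm{SNR}<1$), which is exactly Lemma \ref{lem:First1'}.

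The crux is the estimate $\mathbb{E}[Z^2]=(1+o(1))\mathbb{E}[Z]^2$. I would split $\mathbb{E}[Z^2]=\sum_{(i_1,j_1),(i_2,j_2)}\mathbb{P}[\mathscr{C}_{(i_1,j_1)}\cap\mathscr{C}_{(i_2,j_2)}]$ by the overlap pattern into four blocks. The diagonal block $(i_1,j_1)=(i_2,j_2)$ contributes $\mathbb{E}[Z]=o(\mathbb{E}[Z]^2)$ since $\mathbb{E}[Z]\to\infty$. When $i_1\neq i_2$ and $j_1\neq j_2$, the sums $A_{i_1i_2}$, $e(i_1,C^*\setminus\{i_2\})$, $e(i_2,C^*\setminus\{i_1\})$, $e(j_1,C^*\setminus\{i_1\})$, $e(j_2,C^*\setminus\{i_2\})$ are jointly independent, and conditioning on $A_{i_1i_2}$ with a truncation $|A_{i_1i_2}|\le n^{1/4}$ — exactly as in the SBM asymptotic-independence argument — gives $\mathbb{P}[\mathscr{C}_{(i_1,j_1)}\cap\mathscr{C}_{(i_2,j_2)}]\sim\mathbb{P}[\mathscr{C}_{i_1j_1}]\mathbb{P}[\mathscr{C}_{i_2j_2}]$, so this block contributes $(1+o(1))\mathbb{E}[Z]^2$. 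The obstruction is the two mixed blocks $i_1=i_2,\ j_1\neq j_2$ and $i_1\neq i_2,\ j_1=j_2$: there the two events share the Gaussian $W=e(i,C^*)$ (respectively all but $O(1)$ edges of $e(j,C^*\setminus\{\cdot\})$), and conditioning on the shared variable gives, per pair, a contribution $\sim\int_{\mathbb{R}}\Phi^2\!\big(z+s\sqrt{2\log n}\big)\phi(z)\,dz$. The decisive step is a Laplace/saddle-point evaluation of this integral: with $\Phi(z+t)\sim\phi(z+t)/(z+t)$ and $t=s\sqrt{2\log n}$, the exponent is $-\tfrac32 z^2-2zt-t^2$, maximised at $z_*=-\tfrac23 t$ with value $-\tfrac13 t^2$, so the integral equals $n^{-2s^2/3}$ up to $\mathrm{poly}(\log n)$; multiplying by the $\Theta(n^3)$ pairs of each mixed block and dividing by $\mathbb{E}[Z]^2\asymp n^{4-4\gamma\,\mathrm{SNR}}/\log n$ yields a ratio of order $n^{4\gamma\,\mathrm{SNR}/3-1}$, which tends to $0$ precisely when $\gamma\,\mathrm{SNR}<\tfrac34$. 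Collecting the blocks gives $\mathbb{E}[Z^2]=(1+o(1))\mathbb{E}[Z]^2$ (Lemma \ref{lem:Second1'}), hence $\mathbb{P}[Z\geq1]\to1$, so with high probability MLE cannot recover $\zeta^*$.

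I expect the genuine difficulty to be this mixed-block analysis: extracting the \emph{precise} polynomial order of $\int\Phi^2(z+t)\phi(z)\,dz$ (equivalently of $\mathbb{P}[Z_1>Z+t\cap Z_2>Z+t]$) via a rigorous Laplace estimate with matching upper and lower bounds, and counting the mixed pairs carefully enough that the exponent comes out exactly $\tfrac43\gamma\,\mathrm{SNR}-1$ and not larger — this $\tfrac43$ factor is what turns the naive threshold $1$ into $\tfrac34$ and is the nontrivial feature of the model. A secondary technicality is the uniform control of the vanishing perturbations in the fully-disjoint block (handled by truncating the shared standard Gaussian at level $n^{1/4}$, as in the SBM lemmas) and checking that the diagonal block is indeed negligible once $\mathbb{E}[Z]\to\infty$.
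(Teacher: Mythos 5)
Your proposal follows the paper's own argument in all essential respects: the reduction to non-emptiness of $\mathscr{C}(G_n)$, the local swap condition $e(i,C^*)<e(j,C^*\setminus\{i\})$, the second-moment method, the decomposition of $\mathbb{E}[Z^2]$ into the four overlap blocks, and in particular the identification of the mixed blocks ($i_1=i_2$ or $j_1=j_2$) as the source of the $3/4$ threshold via the Laplace-point evaluation of $\int\Phi^2(z+s\sqrt{2\log n})\phi(z)\,dz\asymp n^{-8\gamma\,\mathrm{SNR}/3}$, which after the $\Theta(n^3)$ pair count yields the ratio $n^{4\gamma\,\mathrm{SNR}/3-1}$ exactly as in Lemma~\ref{lem:Second1}. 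The arithmetic checks out ($s^2=4\gamma\,\mathrm{SNR}$, saddle at $z_*=-\tfrac23 t$ giving exponent $-\tfrac13 t^2$), and the proposed truncation at level $n^{1/4}$ for the fully-disjoint block mirrors the paper's Lemma~\ref{lem:jointGauss}.
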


\begin{remark} \label{rem:MLEPDSM}
To explore the information-theoretic limits, we examine the Maximum A Posteriori (MAP) estimation, designed to maximize the probability of accurately reconstructing communities. In the context of uniform community assignment with $\zeta^*$ being uniform on all subsets of size $\gamma n$, MAP estimation is equivalent to Maximum Likelihood estimation (MLE).  Consequently, since the prior on $\zeta^*$ is uniform, if MLE  fails in exactly recovering the planted community with probability bounded away from zero as the network size (n) grows, there is no algorithm, whether efficient or not, capable of reliably achieving this task. Therefore, to prove that exact recovery is not solvable, it is enough to show that MLE is unable to recover the planted community $\zeta^*$ with probability bounded away from zero. 
\end{remark}
 \par
 
Without loss of generality, we assume that the planted community is  $\zeta^* =(\mathbf{1}_{\gamma n}, \mathbf{0}_{(1-\gamma)n}) \in \Theta_n^{\gamma}$, and $G_n \sim $ PDSM$(n, \mu_1, \mu_2, \tau^2)$ conditioned on $\zeta^*$. We denote the planted community  as
    \[
    C^* := \left\{ i \in [n]: \zeta_i^*=1\right\}=\left\{1, \cdots, \gamma n\right\},
    \]
    \[
    [n] \setminus C^*:=\left\{\gamma n +1, \cdots, n\right\}.
    \]

\begin{definition} 

We also define the set of bad pairs of vertices in $G_n$ by the following.
    \begin{equation} \label{def:Bad1}
    \mathcal{C}(G_n) := \{(i,j): i\in C^*, j\in [n] \setminus C^*, g_A(\zeta^*)< g_A(\zeta^*[i\leftrightarrow j])\},
    \end{equation}
    
    where  $\zeta^*[i\leftrightarrow j]$ denotes the vector obtained by swapping the values of coordinates $i$ and $j$ in $\zeta^*$.
\end{definition}

From \ref{eq:MLE3} we observe that the event $g_A(\zeta^*)< g_A(\zeta^*[i\leftrightarrow j]) $ is the same as the event that the likelihood of the partition $\zeta^*[i\leftrightarrow j]$ is larger than the planted one $\zeta^*$. Hence, we have the following result.

\begin{lemma} \label{lem:MLEa'}
   MLE fails to solve the exact recovery with high probability if $\mathcal{C}(G_n)$ is non-empty with high probability. More precisely, we have the following lower bound on the error.
   \[\underset{n \to \infty}{\liminf}\hspace{.1cm}\p(M(\hat{\zeta}_{MLE}, \zeta^*)\neq1) \geqslant  \underset{n \to \infty}{\liminf}\hspace{.1cm} \p (\mathcal{C}(G_n)\neq \emptyset)\]
\end{lemma}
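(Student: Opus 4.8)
The plan is to mirror the argument used for Lemma~\ref{lem:MLEa} in the GWSBM setting, using the fact that swapping a vertex $i\in C^*$ with a vertex $j\in[n]\setminus C^*$ leaves the candidate vector inside the feasible set $\Theta_n^{\gamma}$. First I would observe that swapping the $i$-th and $j$-th coordinates of $\zeta^*$ exchanges a $1$ with a $0$, so $\zeta^*[i\leftrightarrow j]\in\{0,1\}^n$ still satisfies $\langle\zeta^*[i\leftrightarrow j],\mathbf{1}_n\rangle=\gamma n$; that is, it is an admissible competitor in the maximization~\eqref{eq:MLE3} defining $\hat{\zeta}_{MLE}$.

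Next, by the definition~\eqref{def:Bad1} of $\mathcal{C}(G_n)$, on the event $\{(i,j)\in\mathcal{C}(G_n)\}$ one has $g_A(\zeta^*[i\leftrightarrow j])>g_A(\zeta^*)$. Since $\hat{\zeta}_{MLE}$ maximizes $g_A$ over $\Theta_n^{\gamma}$, this forces $g_A(\hat{\zeta}_{MLE})\geqslant g_A(\zeta^*[i\leftrightarrow j])>g_A(\zeta^*)$, hence $\hat{\zeta}_{MLE}\neq\zeta^*$. For $\{0,1\}$-valued vectors the condition $M(\hat{\zeta}_{MLE},\zeta^*)=1$ is equivalent to $\hat{\zeta}_{MLE}=\zeta^*$ (there is no global sign ambiguity here, in contrast with the GWSBM), so we conclude $M(\hat{\zeta}_{MLE},\zeta^*)\neq 1$, and therefore obtain the event inclusion $\{\mathcal{C}(G_n)\neq\emptyset\}\subseteq\{M(\hat{\zeta}_{MLE},\zeta^*)\neq 1\}$.

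Finally I would take probabilities on both sides of this inclusion to get $\p(M(\hat{\zeta}_{MLE},\zeta^*)\neq 1)\geqslant\p(\mathcal{C}(G_n)\neq\emptyset)$ for every $n$, and then pass to the $\liminf$ as $n\to\infty$. There is essentially no technical obstacle: the statement is a purely deterministic, combinatorial observation about the feasibility of swaps and the maximizing property of the MLE, and the genuine work is deferred to the subsequent first- and second-moment estimates that show $\p(\mathcal{C}(G_n)\neq\emptyset)\to 1$ whenever $\gamma\,\text{SNR}<\tfrac{3}{4}$. The only point meriting a line of care is checking that $\zeta^*[i\leftrightarrow j]$ remains in $\Theta_n^{\gamma}$, which holds precisely because $i$ and $j$ lie on opposite sides of the planted partition.
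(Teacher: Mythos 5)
Your argument is correct and is essentially the same as the paper's: both show that a bad pair $(i,j)\in\mathcal{C}(G_n)$ produces an admissible competitor $\zeta^*[i\leftrightarrow j]\in\Theta_n^{\gamma}$ with strictly larger $g_A$, forcing $\hat{\zeta}_{MLE}\neq\zeta^*$, and then take probabilities and $\liminf$. Your extra remarks — checking that the swap preserves feasibility and noting the absence of sign ambiguity in $\{0,1\}^n$ so that $M(\hat{\zeta}_{MLE},\zeta^*)=1$ iff $\hat{\zeta}_{MLE}=\zeta^*$ — are sound clarifications of the same route, not a different proof.
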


\begin{proof}
    If $\exists (i,j) \in \mathcal{C}(G_n)$, then we can swap the coordinates $i$ and $j$ in $\zeta^*$ and increase the likelihood of the partition (or equivalently $g_A(\cdot)$), thus obtaining a different partition than the planted one ($\zeta^*$) that is more likely than the planted one. Therefore, $\mathcal{C}(G_n) \neq \emptyset \implies M(\hat{\zeta}_{MLE}, \zeta^*)\neq1 $ More precisely, we have 
    \begin{align*}
        \p(M(\hat{\zeta}_{MLE}, \zeta^*)\neq 1) \geqslant \p (\mathcal{C}(G_n)\neq \emptyset)
    \end{align*} 
\end{proof}
With the lemma above, the impossibility of exact recovery reduces to proving that if $\gamma$ SNR$<\frac{3}{4}$ then 
\[\underset{n \to \infty}{\liminf}\hspace{.1cm}\p (\mathcal{C}(G_n)\neq \emptyset) >0.\]

We will show this by proving a stronger result that the following holds.
\[
\underset{n \to \infty}{\liminf}\hspace{.1cm}\p (\mathcal{C}(G_n)\neq \emptyset) =1.
\]
 \subsubsection{Key step --The second moment method}
 
We now analyze the condition $g_A(\zeta^*)< g_A(\zeta^*[i\leftrightarrow j])$ for $i \in C^*$, and $j \in [n]\setminus C^*$.

\par
\begin{definition}
    For a vertex $i \in C^*$, and $j \in [n]\setminus C^*$ we define the following.
    \[
    e(i, C^*) = \sum_{ k \in C^*} A_{ik} =e(i, C^* \setminus \{i\})
    \]
    \[
    e(j, C^* \setminus \{i\}) = \sum_{ k \in C^*  \setminus \{i\}} A_{jk}
    \]
\end{definition}
\begin{lemma} \label{lem:MLEb'}
With the notations above and $i \in C^*$, and $j \in [n]\setminus C^*$ we have the following.
 
    \[g_A(\zeta^*)< g_A(\zeta^*[i\leftrightarrow j]) \quad \iff \quad  e(i, C^*) < e(j, C^* \setminus \{i\}).\]
\end{lemma}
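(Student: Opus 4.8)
The plan is to mimic the computation already carried out for the two-community model in Lemma~\ref{lem:MLEb}, expanding $g_A(\zeta^*)$ and $g_A(\zeta^*[i\leftrightarrow j])$ coordinate-by-coordinate and cancelling all terms that are common to both. First I would recall that $g_A(\zeta) = \sum_{u,v} A_{uv}\zeta_u\zeta_v = 2\sum_{u<v}A_{uv}\zeta_u\zeta_v$ (using $A_{uu}=0$), so the inequality $g_A(\zeta^*) < g_A(\zeta^*[i\leftrightarrow j])$ is equivalent to $\sum_{u<v}A_{uv}\zeta^*_u\zeta^*_v < \sum_{u<v}A_{uv}\zeta^*[i\leftrightarrow j]_u\,\zeta^*[i\leftrightarrow j]_v$.

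Next I would identify precisely which products $\zeta_u\zeta_v$ change when we swap coordinate $i\in C^*$ (value $1$) with coordinate $j\in[n]\setminus C^*$ (value $0$). Writing $\zeta' := \zeta^*[i\leftrightarrow j]$, we have $\zeta'_i = 0$, $\zeta'_j = 1$, and $\zeta'_k = \zeta^*_k$ for all other $k$. The only affected edges are those incident to $i$ or to $j$. For an edge $\{i,k\}$ with $k\in C^*\setminus\{i\}$: the product changes from $1$ to $0$; for $\{i,k\}$ with $k\notin C^*$: it stays $0$. For an edge $\{j,k\}$ with $k\in C^*\setminus\{i\}$: the product changes from $0$ to $1$; for $\{j,k\}$ with $k\notin C^*\cup\{i\}$: it stays $0$. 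Finally the edge $\{i,j\}$ itself has product $0$ before and after. Hence the difference $g_A(\zeta')-g_A(\zeta^*)$, up to the factor $2$, equals $-\sum_{k\in C^*\setminus\{i\}}A_{ik} + \sum_{k\in C^*\setminus\{i\}}A_{jk} = -e(i,C^*\setminus\{i\}) + e(j,C^*\setminus\{i\})$. Using $e(i,C^*) = e(i,C^*\setminus\{i\})$ (since $A_{ii}=0$), the inequality $g_A(\zeta^*) < g_A(\zeta')$ becomes exactly $e(i,C^*) < e(j,C^*\setminus\{i\})$, which is the claim.

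I do not expect any real obstacle here: the argument is a bookkeeping exercise on which bilinear terms survive the swap, completely parallel to Lemma~\ref{lem:MLEb} but simpler because the planted vector is $\{0,1\}$-valued rather than $\{\pm1\}$-valued, so each changed product simply toggles between $0$ and $1$ rather than between $+1$ and $-1$. The only point requiring a line of care is the edge $\{i,j\}$: one must check it contributes $0$ on both sides (since one endpoint lies outside $C^*$ in each configuration), so it drops out cleanly and no $\pm A_{ij}$ term appears — in contrast to the SBM computation where $-A_{ij}$ showed up on both sides and cancelled. After that, collecting terms and invoking the convention $A_{ii}=0$ to rewrite $e(i,C^*\setminus\{i\})$ as $e(i,C^*)$ finishes the proof.
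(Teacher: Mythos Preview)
Your proposal is correct and follows essentially the same approach as the paper: both expand $g_A(\zeta^*)$ and $g_A(\zeta^*[i\leftrightarrow j])$ and cancel the common block indexed by $C^*\setminus\{i\}$, leaving $2e(i,C^*)$ versus $2e(j,C^*\setminus\{i\})$. Your edge-by-edge tracking of which products toggle is just a slightly more explicit version of the paper's direct decomposition, and your remark that the $\{i,j\}$ edge contributes $0$ on both sides is exactly the right observation.
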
  
\begin{proof}
We recall the definition of MLE estimator \ref{eq:MLE3} given by (recall $A_{uu}:=0$ for all $1\leqslant u\leqslant n$)
\[g_A(\zeta^*) :=  \sum_{1\leqslant u,v \leqslant n} A_{uv} \zeta^*_u\zeta^*_v.\]

Therefore, one has the following chain of equivalences for $i \in C^*$, and $j \in [n]\setminus C^*$.
    \[
    g_A(\zeta^*)< g_A(\zeta^*[i\leftrightarrow j]
    \]
    \[ 
    \leftrightarrow  \hspace{.1 cm} \sum_{1\leqslant u,v \leqslant n} A_{uv} \zeta^*_u\zeta^*_v < \sum_{1\leqslant u,v \leqslant n} A_{uv} \zeta^*_u[i\leftrightarrow j]\zeta^*_v[i\leftrightarrow j])
    \]
    \[ \leftrightarrow \hspace{.1 cm} \sum_{(u,v) \in C^* \setminus \{i\} \times  C^* \setminus \{i\}} A_{uv} + 2e(i, C^*) < \sum_{(u,v) \in C^* \setminus \{i\} \times  C^* \setminus \{i\}} A_{uv} + 2e(j, C^* \setminus \{i\})
    \]
    \[
    \leftrightarrow  e(i, C^*) < e(j, C^* \setminus \{i\}) 
    \]
\end{proof}

With the lemma above we apply the second moment method to  $Z = \sum_{ i \in C^*, j \in [n]\setminus C^*} Z_{ij}$
\[
Z_{ij}=\mathbf{1}_{\mathcal{C}_{ij}}
\quad \text{with} \quad 
\mathcal{C}_{ij} : =\{e(i, C^*) < e(j, C^*\setminus \{i\})\}
\]
\[
\mathcal{C}(G_n) =\{(i,j): i \in C^*, j \in [n]\setminus C^*, e(i, C^*)< e(j,C^*\setminus \{i\})\} = \bigcup_{ i \in C^*, j \in [n]\setminus C^* } \mathcal{C}_{ij}.
\]

It is immediate that $\mathbb{P}\left[\mathcal{C}(G_n) \neq \emptyset\right] =\mathbb{P}\left[Z \geqslant 1\right]$ and we are ready to apply the second moment method to $Z$.

\begin{lemma} \label{lem:MLEc'}
With the notations above, we have $\mathbb{P}\left[Z \geqslant1 \right] \to 1$ as soon as $ \gamma \text{SNR} <\frac{3}{4}$.
\end{lemma}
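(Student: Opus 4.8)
The plan is to apply the second moment method \eqref{eq:SMM} to the counting variable $Z=\sum_{i\in C^*,\,j\in[n]\setminus C^*}Z_{ij}$, which gives $\mathbb{P}\left[Z\geqslant1\right]\geqslant\mathbb{E}[Z]^2/\mathbb{E}[Z^2]$, so it suffices to prove $\mathbb{E}[Z^2]=(1+o(1))\mathbb{E}[Z]^2$. Since the $Z_{ij}$ are $\{0,1\}$-valued, $Z_{ij}^2=Z_{ij}$, so $\mathbb{E}[Z^2]=\mathbb{E}[Z]+\sum_{(i_1,j_1)\neq(i_2,j_2)}\mathbb{P}\left[\mathcal{C}_{(i_1,j_1)}\cap\mathcal{C}_{(i_2,j_2)}\right]$, whence
\[
\frac{\mathbb{E}[Z^2]}{\mathbb{E}[Z]^2}=\frac{1}{\mathbb{E}[Z]}+\frac{\sum_{(i_1,j_1)\neq(i_2,j_2)}\mathbb{P}\left[\mathcal{C}_{(i_1,j_1)}\cap\mathcal{C}_{(i_2,j_2)}\right]}{\mathbb{E}[Z]^2}.
\]
By Lemma \ref{lem:First1'}, $\mathbb{E}[Z]=\sum_{(i,j)}\mathbb{P}[\mathcal{C}_{ij}]\to\infty$ whenever $\gamma\text{SNR}<1$, so the first term tends to $0$; the second term is exactly the ratio that Lemma \ref{lem:Second1'} shows converges to $1$ when $\gamma\text{SNR}<\tfrac34$. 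Combining, $\mathbb{E}[Z^2]/\mathbb{E}[Z]^2\to1$, hence $\mathbb{P}[Z\geqslant1]=\mathbb{P}[\mathcal{C}(G_n)\neq\emptyset]\to1$, which is the assertion.

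The real content therefore lives in Lemma \ref{lem:Second1'}, whose proof I would organize by splitting the off-diagonal double sum according to how the two bad pairs share indices: the fully disjoint family $i_1\neq i_2,\ j_1\neq j_2$, the family $i_1=i_2,\ j_1\neq j_2$ (shared vertex in $C^*$), and the family $i_1\neq i_2,\ j_1=j_2$ (shared vertex outside $C^*$). For the disjoint family I would use the joint independence and the distributional identities recorded after Lemma \ref{lem:First1'} (for $e(i_1,C^*\setminus\{i_2\})$, $e(i_2,C^*\setminus\{i_1\})$, $e(j_1,C^*\setminus\{i_1\})$, $e(j_2,C^*\setminus\{i_2\})$ and the single shared edge $e_{i_1,i_2}$) to recast $\mathbb{P}[\mathcal{C}_{(i_1,j_1)}\cap\mathcal{C}_{(i_2,j_2)}]$ as the two-sided Gaussian-tail probability in \eqref{eq:SM1'}; conditioning on the one shared standard Gaussian $Z/\sqrt{2\gamma n-3}$, whose magnitude $O(n^{-1/2})$ is negligible against the shift $s\sqrt{\log n}$, decouples the two events, so this family contributes $(1+o(1))\bigl(\sum_{(i,j)}\mathbb{P}[\mathcal{C}_{ij}]\bigr)^2$. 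This step parallels Lemmas \ref{lem:MLEf}--\ref{lem:jointGauss} of the GWSBM analysis.

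The main obstacle is the two "one shared index" families, where the two events share the Gaussian $e(i,C^*)$ (resp.\ $e(j,C^*\setminus\{\cdot\})$) and are genuinely positively correlated, so one cannot simply bound $\mathbb{P}[\mathcal{C}_{(i,j_1)}\cap\mathcal{C}_{(i,j_2)}]$ by $\mathbb{P}[\mathcal{C}_{ij}]^2$. Conditioning on the shared variable and using the independence of $e(j_1,C^*\setminus\{i\})$ and $e(j_2,C^*\setminus\{i\})$ leaves a Gaussian integral of the shape $\int_{\mathbb{R}}\Phi^2\bigl(z+s\sqrt{2\log n}\bigr)\phi(z)\,dz$; a Laplace estimate (using $\Phi(t)\sim\phi(t)/t$ and completing the square in the exponent $-(z+a)^2-z^2/2=-\tfrac32(z+\tfrac{2a}{3})^2-\tfrac{a^2}{3}$ with $a=s\sqrt{2\log n}$, $s^2=4\gamma\text{SNR}$) shows this integral is $n^{-8\gamma\text{SNR}/3+o(1)}$, and dividing by the correct normalizer $\gamma n\,\mathbb{P}[\mathcal{C}_{ij}]^2\simeq n^{1-4\gamma\text{SNR}+o(1)}$ (read off from Lemma \ref{lem:First1'}) gives a contribution $\simeq n^{\frac{4\gamma\text{SNR}}{3}-1}$, which vanishes exactly when $\gamma\text{SNR}<\tfrac34$. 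The delicate point is tracking the exact polynomial-in-$n$ exponent through this Laplace asymptotic — an off-by-constant slip would move the threshold — and I would make it rigorous by sandwiching $\Phi$ between its standard tail bounds \eqref{eq:Gtail} and truncating the $z$-integration to $|z|\leqslant n^{1/4}$, exactly as in the proof of Lemma \ref{lem:jointGauss}; this polynomial loss is also why the elementary moment method cannot be pushed all the way to $\gamma\text{SNR}<1$.
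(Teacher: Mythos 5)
Your proposal is correct and follows the paper's own argument exactly: you apply the second-moment method to $Z$, decompose $\mathbb{E}[Z^2]/\mathbb{E}[Z]^2$ into the $1/\mathbb{E}[Z]$ term (killed by the first-moment divergence, i.e.\ Lemma \ref{lem:First1'} under $\gamma\,\text{SNR}<1$) plus the off-diagonal ratio (which Lemma \ref{lem:Second1'} sends to $1$ under $\gamma\,\text{SNR}<\tfrac34$). Your further sketch of Lemma \ref{lem:Second1'} — splitting the off-diagonal sum by shared indices, decoupling the fully disjoint family via the small shared Gaussian as in Lemma \ref{lem:jointGauss}, and obtaining the $n^{4\gamma\,\text{SNR}/3-1}$ contribution from the one-shared-index families via the Laplace estimate on $\int\Phi^2(z+s\sqrt{2\log n})\phi(z)\,dz$ — also coincides with the paper's treatment.
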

\begin{proof}
From the second moment method  \ref{eq:SMM} we have the following chain of inequalities.

\[
\begin{aligned}
& \mathbb{P}\left\{Z \geqslant 1\right\}
\geqslant 
\frac{\mathbb{E}\left[Z\right]^2}{ \mathbb{E}\left[Z^2\right]}
\geqslant 
\frac{\left(\mathbb{E} \left[\sum_{(i,j) \in C^* \times [n]\setminus C^*} Z_{ij}\right]\right)^2}{\mathbb{E}\left[\left(\sum_{(i,j) \in C^* \times [n]\setminus C^*} Z_{ij}\right)^2\right]} 
\\
&
=  \frac{\left(\sum_{(i,j) \in C^* \times [n]\setminus C^*}\mathbb{P}\left[\mathcal{C}_{ij}\right] \right)^2}{ \sum_{(i,j) \in C^* \times [n]\setminus C^*} \mathbb{P}\left[\mathcal{C}_{ij}\right] +\sum_{(i_1, j_1) \neq (i_2, j_2)   \in  C^* \times [n]\setminus C^*} \mathbb{P}\left[ \mathcal{C}_{(i_1, j_1)}\cap \mathcal{C}_{(i_2, j_2)}\right]}
\\
&  = \frac{1}{ \frac{1}{\sum_{(i,j) \in C^* \times [n]\setminus C^*} \mathbb{P}\left[\mathcal{C}_{ij}\right]} + \frac{\sum_{(i_1, j_1) \neq (i_2, j_2)   \in  C^* \times [n]\setminus C^*} \mathbb{P}\left[ \mathcal{C}_{(i_1, j_1)}\cap \mathcal{C}_{(i_2, j_2)}\right]}{\left(\sum_{(i,j) \in C^* \times [n]\setminus C^*} \mathbb{P}\left[\mathcal{C}_{ij}\right]\right)^2}}
\end{aligned}
\]

and the last bound tends to $1$ if the following two conditions hold, which we will prove in the following lemma
\begin{equation}
  \sum_{(i,j) \in C^* \times [n]\setminus C^*} \mathbb{P}\left[\mathcal{C}_{ij}\right]\to \infty,
  \end{equation}
  \begin{equation}
   \frac{\sum_{(i_1, j_1) \neq (i_2, j_2)   \in  C^* \times [n]\setminus C^*} \mathbb{P}\left[ \mathcal{C}_{(i_1, j_1)}\cap \mathcal{C}_{(i_2, j_2)}\right]}{\left(\sum_{(i,j) \in C^* \times [n]\setminus C^*} \mathbb{P}\left[\mathcal{C}_{ij}\right]\right)^2} \to 1 .
  \end{equation}

\end{proof}

It remains to show that both the conditions hold as soon as $\gamma \text{SNR} < \frac{3}{4}$ and reveal the bound in the theorem.

\begin{lemma} \label{lem:First1}
If $ \gamma \text{SNR} < 1$ then, we have the following   
\[
\sum_{(i,j) \in C^* \times [n]\setminus C^*} \mathbb{P}\left[\mathcal{C}_{ij}\right]\to \infty.
\]
\end{lemma}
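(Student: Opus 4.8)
The plan is to reduce $\mathbb{P}[\mathcal{C}_{ij}]$ to a single standard Gaussian tail probability that does not depend on the pair $(i,j)$, and then multiply by the number $\gamma(1-\gamma)n^2$ of admissible pairs and apply the lower Gaussian tail bound.

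First I would recall that $\mathcal{C}_{ij}=\{e(i,C^*)<e(j,C^*\setminus\{i\})\}$ and record the distributional identities $e(i,C^*)\overset{d}{=}N\big((\gamma n-1)\mu_1,(\gamma n-1)\tau^2\big)$ and $e(j,C^*\setminus\{i\})\overset{d}{=}N\big((\gamma n-1)\mu_2,(\gamma n-1)\tau^2\big)$, together with their independence: the first is a function of the edge weights $\{A_{ik}:k\in C^*\setminus\{i\}\}$ and the second of $\{A_{jk}:k\in C^*\setminus\{i\}\}$, and these sets are disjoint because $i\neq j$ (and $A_{ii}=0$, which is why $e(i,C^*)=e(i,C^*\setminus\{i\})$). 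Consequently $e(i,C^*)-e(j,C^*\setminus\{i\})\overset{d}{=}N\big((\gamma n-1)(\mu_1-\mu_2),\,2(\gamma n-1)\tau^2\big)$, so for $Z\sim N(0,1)$,
\[
\mathbb{P}[\mathcal{C}_{ij}]=\mathbb{P}\!\left[Z>\sqrt{\tfrac{\gamma n-1}{2}}\cdot\frac{\mu_1-\mu_2}{\tau}\right]=\mathbb{P}\!\left[Z>(1+o(1))\,s\sqrt{\log n}\right],\qquad s:=\frac{(\alpha-\beta)\sqrt{\gamma}}{\tau\sqrt2},
\]
where the last equality uses the critical scaling \eqref{def:scaling1}, $\mu_1-\mu_2=(\alpha-\beta)\sqrt{\log n/n}$, together with $\sqrt{(\gamma n-1)/n}=\sqrt\gamma\,(1+o(1))$. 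Since this tail probability is the same for every $i\in C^*$ and $j\in[n]\setminus C^*$, we get $\sum_{(i,j)}\mathbb{P}[\mathcal{C}_{ij}]=\gamma(1-\gamma)n^2\,\mathbb{P}\!\left[Z>(1+o(1))s\sqrt{\log n}\right]$.

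Next I would apply the lower bound in \eqref{eq:Gtail}, $\Phi(t)\ge e^{-t^2/2}/\big((t+1/t)\sqrt{2\pi}\big)$, with $t=(1+o(1))s\sqrt{\log n}\to\infty$. Since $\tfrac{s^2}{2}=\tfrac{(\alpha-\beta)^2\gamma}{4\tau^2}=2\gamma\,\text{SNR}$ by \eqref{eq:SNR1}, this yields $\mathbb{P}[\mathcal{C}_{ij}]\ge n^{-2\gamma\,\text{SNR}\,(1+o(1))}/\mathrm{poly}(\log n)$, whence
\[
\sum_{(i,j)\in C^*\times([n]\setminus C^*)}\mathbb{P}[\mathcal{C}_{ij}]\;\ge\;\frac{\gamma(1-\gamma)\,n^{\,2(1-\gamma\,\text{SNR})+o(1)}}{\mathrm{poly}(\log n)}\;\longrightarrow\;\infty,
\]
because $\gamma\,\text{SNR}<1$ forces $2(1-\gamma\,\text{SNR})$ to be a strictly positive constant, which dominates the polylogarithmic denominator. (The sharper equivalence $\sim C\,n^{2(1-\gamma\,\text{SNR})}/\sqrt{\log n}$ claimed in the corresponding main-text statement would follow by also using the matching upper bound $\Phi(t)\le e^{-t^2/2}/(t\sqrt{2\pi})$ and tracking the constant $C$, but for the present lemma the lower bound already gives divergence.)

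This is essentially a routine first-moment computation, so I do not expect a genuine obstacle; the only points that need a little care are verifying the independence of $e(i,C^*)$ and $e(j,C^*\setminus\{i\})$ (which hinges on $A_{ii}=0$ and $i\neq j$), and ensuring that the various $1+o(1)$ factors — from the centering, from $\sqrt{(\gamma n-1)/(\gamma n)}$, and from the $t+1/t$ denominator — only perturb the exponent of $n$ by $o(1)$, which is harmless since $\gamma\,\text{SNR}$ is a fixed constant bounded away from $1$.
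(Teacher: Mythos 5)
Your proposal is correct and follows the same route as the paper: identify the distributions and independence of $e(i,C^*)$ and $e(j,C^*\setminus\{i\})$, reduce $\mathbb{P}[\mathcal{C}_{ij}]$ to a single standard Gaussian tail at height $(1+o(1))s\sqrt{\log n}$, multiply by $\gamma(1-\gamma)n^2$, and apply the Gaussian tail bound. The only cosmetic difference is that the paper tracks the full asymptotic equivalence $\sim C\,n^{2(1-\gamma\,\text{SNR})}/\sqrt{\log n}$, whereas you use only the lower tail bound, which already suffices for divergence.
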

\begin{proof}
 We start by observing that $e(i, C^*)$ and $e(j, C^* \setminus \{i\})$ are independent the following distributional identities  hold
 \begin{equation}
 e(i, C^*) \overset{d}{=} N((\gamma n -1)\mu_1,(\gamma n -1)\tau^2)
 \end{equation}
 \begin{equation}
 e(j, C^*\setminus \{i\}) \overset{d}{=} N((\gamma n -1)\mu_2,(\gamma n -1)\tau^2)
 \end{equation}
 Combining the two above, we have the following
 \begin{align}
 \sum_{(i,j) \in C^* \times [n]\setminus C^*} \mathbb{P}\left[\mathcal{C}_{ij}\right] 
 & = 
 \gamma (1- \gamma )n^2 \mathbb{P}\left[ N((\gamma n -1)\mu_1,(\gamma n -1)\tau^2) < N((\gamma n -1)\mu_2,(\gamma n -1)\tau^2)\right] \\
 & =
 \gamma (1- \gamma )n^2  \mathbb{P}\left[\tau \sqrt{2(\gamma n -1)} N(0,1)>  (\gamma n -1) (\mu_1 -\mu_2) \right] \\
 & =
 \gamma (1- \gamma )n^2  \mathbb{P}\left[ N(0,1) > (1+o(1))\frac{\sqrt{\gamma}(\alpha -\beta) \sqrt{\log n}}{\tau \sqrt{2}}\right] \\
 & \sim
 \gamma (1- \gamma )n^2 \frac{e^{-2\gamma \text{SNR}\log n (1+o(1))}}{\sqrt{2\pi }\frac{\sqrt{\gamma}(\alpha -\beta) \sqrt{\log n}}{\tau \sqrt{2}} } \\
 & \sim \frac{\gamma (1- \gamma )\tau \sqrt{2} }{(\alpha -\beta) \sqrt{2\pi \gamma }} \frac{n^{2(1-\gamma \text{SNR})}}{\sqrt{\log n}} \to \infty
 \end{align}
\end{proof}

We remark that the result holds verbatim even in the general model \ref{eq:SNR_n1} as soon as $\lim_{n \to \infty} 
 \gamma \text{SNR}_n = \gamma \text{SNR} <1$.

\begin{lemma} \label{lem:Second1}

If $ \gamma \text{SNR} <  \frac{3}{4}$ then, we have the following   
\[
\frac{\sum_{(i_1, j_1) \neq (i_2, j_2)   \in  C^* \times [n]\setminus C^*} \mathbb{P}\left[ \mathcal{C}_{(i_1, j_1)}\cap \mathcal{C}_{(i_2, j_2)}\right]}{\left(\sum_{(i,j) \in C^* \times [n]\setminus C^*} \mathbb{P}\left[\mathcal{C}_{ij}\right]\right)^2} \to 1 \iff
\]
\[
\sum_{(i_1, j_1) \neq (i_2, j_2)   \in  C^* \times [n]\setminus C^*} \mathbb{P}\left[ \mathcal{C}_{(i_1, j_1)}\cap \mathcal{C}_{(i_2, j_2)}\right] \sim \left(\sum_{(i,j) \in C^* \times [n]\setminus C^*} \mathbb{P}\left[\mathcal{C}_{ij}\right]\right)^2.
\]
\end{lemma}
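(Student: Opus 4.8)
The plan is to split the sum over ordered pairs $(i_1,j_1)\neq(i_2,j_2)$ in $C^*\times([n]\setminus C^*)$ into three disjoint families according to which coordinate (if any) repeats: (i) the generic family with $i_1\neq i_2$ and $j_1\neq j_2$; (ii) the family with $i_1=i_2$, $j_1\neq j_2$; and (iii) the family with $i_1\neq i_2$, $j_1=j_2$. By the symmetry of the model every $\mathbb{P}[\mathcal{C}_{ij}]$ equals a common value $p_n$, so the denominator is exactly $\big(\gamma(1-\gamma)n^2\big)^2 p_n^2$, and it suffices to prove that family (i) contributes $\big(\gamma(1-\gamma)n^2\big)^2 p_n^2(1+o(1))$ while families (ii) and (iii) contribute $o(n^4 p_n^2)$. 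Family (i) is immediate: there are $\gamma n(\gamma n-1)(1-\gamma)n((1-\gamma)n-1)=\big(\gamma(1-\gamma)n^2\big)^2(1+o(1))$ such ordered pairs, and for each of them all four indices are distinct, so the events $\mathcal{C}_{(i_1,j_1)}$ and $\mathcal{C}_{(i_2,j_2)}$ share only the single edge $A_{i_1i_2}$; the asymptotic independence established exactly as in~\eqref{eq:SM1'} gives $\mathbb{P}[\mathcal{C}_{(i_1,j_1)}\cap\mathcal{C}_{(i_2,j_2)}]\sim p_n^2$, and summing matches the denominator up to $(1+o(1))$. Thus the whole ratio equals $1+o(1)$ plus the contribution of families (ii) and (iii) divided by $\big(\sum\mathbb{P}[\mathcal{C}_{ij}]\big)^2$, and the remaining work is to bound that.

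For families (ii) and (iii) I would identify the shared randomness and condition on it. In family (ii) the events $\mathcal{C}_{(i,j_1)}$ and $\mathcal{C}_{(i,j_2)}$ share $e(i,C^*)$ in full while $e(j_1,C^*\setminus\{i\})$ and $e(j_2,C^*\setminus\{i\})$ are independent of each other and of $e(i,C^*)$; using $e(i,C^*)\overset{d}{=}N((\gamma n-1)\mu_1,(\gamma n-1)\tau^2)$ and $e(j_m,C^*\setminus\{i\})\overset{d}{=}N((\gamma n-1)\mu_2,(\gamma n-1)\tau^2)$, centering and normalizing turns each event into $\{Z_m>Z+(1+o(1))s\sqrt{2\log n}\}$, with $Z$ the normalized copy of the shared $e(i,C^*)$, $Z_1,Z_2,Z$ IID standard normals, and $s=\frac{(\alpha-\beta)\sqrt{\gamma}}{\tau\sqrt2}$. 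In family (iii) the events $\mathcal{C}_{(i_1,j)}$ and $\mathcal{C}_{(i_2,j)}$ share the macroscopic cross-degree $\sum_{k\in C^*\setminus\{i_1,i_2\}}A_{jk}$, with $e(i_1,C^*\setminus\{i_2\})$, $e(i_2,C^*\setminus\{i_1\})$, $A_{ji_1}$, $A_{ji_2}$ (and the lower-order edge $A_{i_1i_2}$) independent of it and of one another, and the same computation reduces the pair of events to the same form. Hence in both families $\mathbb{P}[\mathcal{C}_{(i_1,j_1)}\cap\mathcal{C}_{(i_2,j_2)}]\sim I(a):=\int_{\mathbb R}\Phi(a+z)^2\phi(z)\,dz=\mathbb{P}[Z_1>Z+a,\,Z_2>Z+a]$ with $a=(1+o(1))s\sqrt{2\log n}$, and since each of families (ii) and (iii) has order $n^3$ pairs, the target reduces to $n^3 I(a)/(n^4 p_n^2)\to0$.

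The hard part will be the asymptotics of $I(a)$ for $a\asymp\sqrt{\log n}\to\infty$. Using $\Phi(t)\sim\phi(t)/t$, the integrand behaves like $\frac{1}{(2\pi)^{3/2}(a+z)^2}\exp\!\big(-\tfrac32z^2-2az-a^2\big)$, and completing the square rewrites the exponent as $-\tfrac32\big(z+\tfrac{2a}{3}\big)^2-\tfrac{a^2}{3}$; Laplace's method around the saddle $z^\star=-2a/3$, where $a+z^\star=a/3$, gives $I(a)\asymp a^{-2}\exp(-a^2/3)$ — equivalently, $I(a)$ is the bivariate Gaussian orthant probability $\mathbb{P}[Z_1-Z>a,\,Z_2-Z>a]$, whose large-deviation rate is $a^2/3$. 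Since $s^2=4\gamma\,\text{SNR}$ we get $a^2/3=(1+o(1))\tfrac{8\gamma\,\text{SNR}}{3}\log n$, while $p_n\sim\Phi((1+o(1))s\sqrt{\log n})\simeq n^{-2\gamma\,\text{SNR}}$ by Lemma~\ref{lem:First1}, so
\[
\frac{n^3 I(a)}{n^4 p_n^2}\;\simeq\;n^{-1}\cdot n^{-\frac{8\gamma\,\text{SNR}}{3}}\cdot n^{4\gamma\,\text{SNR}}\;=\;n^{\frac{4\gamma\,\text{SNR}}{3}-1-o(1)}\;\to\;0
\]
exactly when $\gamma\,\text{SNR}<\tfrac34$, since then the exponent $\tfrac{4\gamma\,\text{SNR}}{3}-1$ is bounded away from $0$ and the $o(1)$ and poly-$(\log n)$ corrections are harmless. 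Combining the three families then yields the ratio $\to1$.

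Apart from the Laplace evaluation of $I(a)$ — locating the optimal $z^\star=-2a/3$ and controlling the polynomial prefactor, which is the genuinely non-routine step — the only points requiring care are checking that the lower-order edges dropped during normalization (such as $A_{i_1i_2}$ and the single edges $A_{ji_m}$) perturb each shift only by $o(1)$, and that the $(1+o(1))$ factors appearing in the exponents do not spoil the strict threshold $\gamma\,\text{SNR}<\tfrac34$.
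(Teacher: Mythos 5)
Your proposal matches the paper's argument essentially step for step: you split the ordered pairs into the generic family (all four indices distinct) and the two diagonal families ($i_1=i_2$ or $j_1=j_2$), show asymptotic independence for the generic family via the shared edge $A_{i_1i_2}$, and reduce each diagonal family to $\int_{\mathbb{R}}\Phi^2\bigl(z+s\sqrt{2\log n}(1+o(1))\bigr)\phi(z)\,dz$, which you evaluate by Laplace's method around $z^\star=-2a/3$ to get the rate $n^{-8\gamma\,\mathrm{SNR}/3}$ and hence the ratio $n^{4\gamma\,\mathrm{SNR}/3-1}\to0$ precisely when $\gamma\,\mathrm{SNR}<3/4$. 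This is the same decomposition and the same saddle-point computation that the paper carries out (the paper executes it by shifting the variable, bounding $\Phi$ by $\phi$, and completing the square to centre a Gaussian at $s\sqrt{2\log n}/3$), so the approaches coincide; the only stylistic difference is that you package the key integral as $I(a)$ and speak of conditioning on the shared Gaussian, whereas the paper folds the shared and independent directions directly into a joint Gaussian before bounding.
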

\begin{proof}

We start by observing that for $i_1 \neq i_2 \in C^* $ and $j_1 \neq j_2 \in [n] \setminus C^*$ we have that
\[
e(i_1, C^* \setminus \{i_2\}) \overset{d}{=} N( (\gamma n -2)\mu_1, (\gamma n-2) \tau^2)
\]
\[
e(i_2, C^* \setminus \{i_1\}) \overset{d}{=} N( (\gamma n -2)\mu_1, (\gamma n-2) \tau^2)
\]
\[
e_{i_1, i_2} \overset{d}{=} \mu_1+ \tau N(0,1)
\]
\[
e(j_1, C^* \setminus \{i_1\}) \overset{d}{=}  N( (\gamma n -1)\mu_2, (\gamma n-1) \tau^2)
\]
\[
e(j_2, C^* \setminus \{i_2\}) \overset{d}{=} N( (\gamma n -1)\mu_2, (\gamma n-1) \tau^2)
\]

are jointly independent normal random variables and we have that following with  $Z_1, Z_2, Z$ being three IID standard Gaussian random variables.

\begin{align*}
 \mathbb{P}[\mathcal{C}_{(i_1, j_1)}\cap \mathcal{C}_{(i_2, j_2)}] 
&  =
\mathbb{P}\left[ \left\{e(i_1 , C^*) < e(j_1, C^* \setminus \{i_1\})\right\} \bigcap \left\{e(i_2 , C^*) < e(j_2, C^* \setminus \{i_2\})\right\} \right] \\
&  = \mathbb{P}  \left[ \left\{e(i_1 , C^* \setminus \{i_2\}) +e_{i_1, i_2} < e(j_1, C^* \setminus \{i_1\})\right\}  \right. \\
& \quad \bigcap  \left\{e(i_2 , C^*\setminus \{i_1\}) +e_{i_1, i_2} < e(j_2, C^* \setminus \{i_2\})\right\}\Big]  \\
&  = \mathbb{P}  \left[ \left\{N( (\gamma n -2)\mu_1, (\gamma n-2) \tau^2)  + e_{i_1, i_2} <  N( (\gamma n -1)\mu_2, (\gamma n-1) \tau^2)\right\} \right. \\
& \quad \bigcap \left\{N( (\gamma n -2)\mu_1, (\gamma n-2) \tau^2) + e_{i_1, i_2} <  N( (\gamma n -1)\mu_2, (\gamma n-1) \tau^2)\right\}\Big]  \\
&  = \mathbb{P}  \left[ \left\{ (\gamma n -1)(\mu_1 - \mu_2 ) + \tau Z < \tau \sqrt{2\gamma n -3}Z_1\right\}  \right. \\
& \quad \bigcap \left\{(\gamma n -1)(\mu_1-\mu_2)  + \tau Z <  \tau \sqrt{2\gamma n -3}Z_2\right\} \Big]  \\
&  = \mathbb{P}  \left[ \left\{Z_1  > \frac{Z}{ \sqrt{2\gamma n -3}} + (1+o(1))s \sqrt{\log n}\right\} \right. \\
& \quad \bigcap  \left\{Z_2  > \frac{Z}{ \sqrt{2\gamma n -3}} + (1+o(1))s \sqrt{\log n}\right\}\Big]
\end{align*}

where we denote that  
$s=\frac{ (\alpha -\beta) \sqrt{\gamma}}{\tau \sqrt{2}}$.
Further, for $ i\in C^*, j \in [n]\setminus C^*$ we have from Lemma \ref{lem:jointGauss} 
    \begin{equation} \label{eq:FM1}
    \mathbb{P}\left[\mathcal{C}_{ij}\right] = \mathbb{P}\left[ \left\{Z_1  > \frac{Z}{ \sqrt{2\gamma n -3}} + (1+o(1))s \sqrt{\log n}\right\}\right] \sim \frac{1}{ s \sqrt{\log n}}\phi (s \sqrt{\log n})
    \end{equation}
    \begin{equation}\label{eq:SM1}
\mathbb{P}[\mathcal{C}_{(i_1, j_1)}\cap \mathcal{C}_{(i_2, j_2)}] \sim \left(\frac{1}{ s \sqrt{\log n}}\phi (s \sqrt{\log n})\right)^2
\end{equation}

As a consequence, we have the following  asymptotic equality.
\[
\frac{\sum_{i_1 \neq i_2  \in  C^*,j_1\neq  j_2 \in [n]\setminus C^*} \mathbb{P}\left[ \mathcal{C}_{(i_1, j_1)}\cap \mathcal{C}_{(i_2, j_2)}\right]}{\left(\sum_{(i,j) \in C^* \times [n]\setminus C^*} \mathbb{P}\left[\mathcal{C}_{ij}\right]\right)^2} \sim  \frac{ \gamma n (\gamma n -1) ((1-\gamma) n -1) (1- \gamma)n}{\gamma ^2(1-\gamma)^2 n^4} \sim 1
\]

It remains to establish that the following two holds.
\[
\frac{\sum_{i_1 =i_2  \in  C^*,j_1\neq  j_2 \in [n]\setminus C^*} \mathbb{P}\left[ \mathcal{C}_{(i_1, j_1)}\cap \mathcal{C}_{(i_2, j_2)}\right]}{\left(\sum_{(i,j) \in C^* \times [n]\setminus C^*} \mathbb{P}\left[\mathcal{C}_{ij}\right]\right)^2} 
\sim 
\frac{\gamma n (1-\gamma)^2 n^2\mathbb{P}\left[  \mathcal{C}_{(i, j_1)}\cap \mathcal{C}_{(i, j_2)}\right] }{\gamma^2 n^2 (1-\gamma)^2 n^2 \mathbb{P}\left[\mathcal{C}_{ij}\right]^2}
\sim
\frac{\mathbb{P}\left[  \mathcal{C}_{(i, j_1)}\cap \mathcal{C}_{(i, j_2)}\right]}{ \gamma n \mathbb{P}\left[\mathcal{C}_{ij}\right]^2} 
\sim
0
\]
\[
\frac{\sum_{i_1 \neq i_2  \in  C^*,j_1 =  j_2 \in [n]\setminus C^*} \mathbb{P}\left[ \mathcal{C}_{(i_1, j_1)}\cap \mathcal{C}_{(i_2, j_2)}\right]}{\left(\sum_{(i,j) \in C^* \times [n]\setminus C^*} \mathbb{P}\left[\mathcal{C}_{ij}\right]\right)^2}
\sim
\frac{\gamma^2 n^2 (1-\gamma) n\mathbb{P}\left[  \mathcal{C}_{(i_1, j)}\cap \mathcal{C}_{(i_2, j)}\right] }{\gamma^2 n^2 (1-\gamma)^2 n^2 \mathbb{P}\left[\mathcal{C}_{ij}\right]^2}
\sim
\frac{\mathbb{P}\left[  \mathcal{C}_{(i_1, j)}\cap \mathcal{C}_{(i_2, j)}\right]}{(1-\gamma )n \mathbb{P}\left[\mathcal{C}_{ij}\right]^2}  \sim
0
\]

For $i \in C^*$ and $j_1 \neq j_2 \in [n] \setminus C^*$ we have for IID standard normals $Z_1, Z_2,$ and $ Z$
\begin{align*}
 \mathbb{P}[\mathcal{C}_{(i, j_1)}\cap \mathcal{C}_{(i, j_2)}] 
&  =
\mathbb{P}\left[ \left\{e(i , C^*) < e(j_1, C^* \setminus \{i\})\right\} \bigcap \left\{e(i , C^*) < e(j_2, C^* \setminus \{i\})\right\} \right] \\
&  = \mathbb{P}  \left[ \left\{N( (\gamma n -1)\mu_1, (\gamma n-1) \tau^2) <  N( (\gamma n -1)\mu_2, (\gamma n-1) \tau^2)\right\} \right. \\
& \quad \bigcap \left\{N( (\gamma n -1)\mu_1, (\gamma n-1) \tau^2)  <  N( (\gamma n -1)\mu_2, (\gamma n-1) \tau^2)\right\}\Big]  \\
& = \mathbb{P}  \left[ \left\{Z_1 > Z + s\sqrt{2 \log n} (1+o(1)) \right\} \bigcap \left\{Z_2 >  Z + s\sqrt{2 \log n} (1+o(1)) \right\} \right] \\
& = \int_{\mathbb{R}} \Phi^2(z +s\sqrt{2 \log n} (1+o(1))) \phi(z) dz
\end{align*}

For $i_1 \neq i_2 \in C^*$ and $j  \in [n] \setminus C^*$ we have for IID standard normals $Z_1, Z_2,Z_3, Z_4$ and $ Z,Z'$
\begin{align*}
 \mathbb{P}[\mathcal{C}_{(i_1, j)}\cap \mathcal{C}_{(i_2, j)}] 
&  
=
\mathbb{P}\left[ \left\{e(i_1 , C^*) < e(j, C^* \setminus \{i_1\})\right\} \bigcap \left\{e(i_2, C^*) < e(j, C^* \setminus \{i_2\})\right\} \right]
\\
&  
= \mathbb{P}  \left[ \left\{N( (\gamma n -2)\mu_1, (\gamma n-2) \tau^2) +e_{i_1, i_2} <  N( (\gamma n -2)\mu_2, (\gamma n-2) \tau^2) + e_{j, i_2}\right\} \right. 
\\
& 
\quad \bigcap \left\{N( (\gamma n -2)\mu_1, (\gamma n-2) \tau^2) + e_{i_1, i_2}  <  N( (\gamma n -2)\mu_2, (\gamma n-2) \tau^2) + e_{j, i_1}\right\}\Big] 
\\
& 
= \mathbb{P}  \left[ \left\{ \tau \sqrt{\gamma n -2} Z + \tau Z_3 >  (\gamma n -1) (\mu_1 -\mu_2) + \tau \sqrt{\gamma n -2} Z_1 + \tau Z'\right\} \right. 
\\
& 
\quad \bigcap \left\{ \tau \sqrt{\gamma n -2} Z +\tau Z_4 > (\gamma n -1) (\mu_1 -\mu_2) + \tau \sqrt{\gamma n -2} Z_2 +\tau Z' \right\}\Big]
\\
& 
=  \mathbb{P}  \left[ \left\{ \tau \sqrt{\gamma n -1} Z  >  (\gamma n -1) (\mu_1 -\mu_2) + \tau \sqrt{\gamma n -1} Z_1  \right\} \right. 
\\
& 
\quad \bigcap \left\{ \tau \sqrt{\gamma n -1} Z  > (\gamma n -1) (\mu_1 -\mu_2) + \tau \sqrt{\gamma n -1} Z_2  \right\}\Big] 
\\
& 
= \mathbb{P}  \left[ \left\{Z > Z_1 + s\sqrt{2 \log n} (1+o(1)) \right\} \bigcap \left\{Z >  Z_2 + s\sqrt{2 \log n} (1+o(1)) \right\} \right] \\
& = \int_{\mathbb{R}} \Phi^2(z +s\sqrt{2 \log n} (1+o(1))) \phi(z) dz
\end{align*}

So, whenever $\gamma \text{SNR} < \frac{3}{4}$ and with the notations above we have the following.
 \begin{align}
 \frac{\mathbb{P}\left[  \mathcal{C}_{(i_1, j)}\cap \mathcal{C}_{(i_2, j)}\right]}{  n \mathbb{P}\left[\mathcal{C}_{ij}\right]^2} 
 & \sim 
 \frac{\mathbb{P}\left[  \mathcal{C}_{(i, j_1)}\cap \mathcal{C}_{(i, j_2)}\right]}{  n \mathbb{P}\left[\mathcal{C}_{ij}\right]^2}
 \\
 & 
 \sim 
 \frac{\int_{\mathbb{R}} \Phi^2(z +s\sqrt{2 \log n} (1+o(1))) \phi(z) dz}{ n \left(\frac{1}{ s \sqrt{\log n}}\phi (s \sqrt{\log n})\right)^2 }
 \\
 &
 \sim
 \frac{\int_{-s \sqrt{2 \log n} (1+o(1))}^{s \sqrt{2 \log n} (1+o(1))} \Phi^2(z +s\sqrt{2 \log n} (1+o(1))) \phi(z) dz}{  n \left(\frac{1}{ s \sqrt{\log n}}\phi (s \sqrt{\log n})\right)^2}
 \\
 & 
 \sim  \frac{ 2 \pi s^2 \log n}{ n}\int_{0}^{2s \sqrt{2 \log n} (1+o(1))} \Phi^2(z) \phi(z) e^{z s \sqrt{2 \log n}} dz
 \\
 & 
 \leqslant 
 \frac{  \sqrt{2 \pi} e^2 s^2 \log n}{ n} \int_{0}^{2s \sqrt{2 \log n} (1+o(1))}   e^{ - \frac{3z^2}{2} +z s \sqrt{2 \log n}} dz 
 \\
 & 
 \leqslant 
 \frac{C \log n}{ n} e^{\frac{4 \gamma \text{SNR} \log n}{3}} \int_{0}^{2s \sqrt{2 \log n}  (1+o(1))}   e^{ - \frac{3 (z - \frac{s\sqrt{ 2\log n}}{3})^2}{2}} dz 
 \\
 & 
 \to 0
 \end{align}

It is also immediate that up to $\text{poly}(\log n)$ factors\footnote{$a_n \simeq b_n$ if $a_n \leqslant \text{poly}(\log n) b_n$ and $b_n \leqslant \text{poly}(\log n)$} we have 
\[
\frac{ 2 \pi s^2 \log n}{ n}\int_{0}^{2s \sqrt{2 \log n} (1+o(1))} \Phi^2(z) \phi(z) e^{z s \sqrt{2 \log n}} dz \simeq n^{\frac{4 \gamma \text{SNR}}{3}-1}
\]
\end{proof}

    \subsection{Statistical possibility}

In this section, we prove that as soon as the $ \gamma \text{SNR} >1$, we have that MLE can recover the planted  community $\zeta^*$ exactly with probability approaching one. We prove this result in two steps. On the impossibility side,  we already showed that below a weaker threshold, there exists a bad pair of vertices, one in the planted community and another one from the rest,  that can be swapped  while increasing the likelihood. Here we show that the  \ref{def:MLEPD} gives the planted community, by showing that there is no possibility to swap any set of $r$
 vertices from the planted community and increase the likelihood for any $r \in \{1, \cdots, K =\gamma n\}$ above the threshold $ \gamma \text{SNR} >1$.
\subsubsection{The statement}
\begin{theorem}[\textbf{Statistical possibility of exact recovery of the planted community}]  \label{thm: Positive1}
    Under the  Gaussian weighted planted dense subgraph model  PDSM$(n, \mu_1, \mu_2, \tau^2)$ [\ref{def:GWPDSM}], MLE can exactly recover the community labelling $\zeta^*$ as soon as $ \gamma \text{SNR} >1$. More precisely, $ \mathbb{P}\left[M(\hat{\zeta}_{\text{MLE}}, \zeta^*)= 1\right] \to 1$.
\end{theorem}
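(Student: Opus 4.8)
The plan is to argue, exactly as in the two--community case (Theorem~\ref{thm: Positive}), that $\hat{\zeta}_{\mathrm{MLE}}$ coincides with $\zeta^*$ with high probability by ruling out any balanced perturbation of the planted set that strictly increases the objective $g_A$ in \eqref{eq:MLE3}. Note first that $\hat{\zeta}_{\mathrm{MLE}}\neq\zeta^*$ occurs precisely when there is some $\hat{\zeta}\in\Theta_n^{\gamma}$ with $g_A(\hat{\zeta})>g_A(\zeta^*)$. Since $g_A(\zeta)=\sum_{i,j}A_{ij}\zeta_i\zeta_j$ is exactly the total weight inside the set $\{i:\zeta_i=1\}$, writing the candidate community as $(C^*\setminus S_+)\cup S_-$ with $S_+\subseteq C^*$, $S_-\subseteq[n]\setminus C^*$, and bookkeeping the edges gained and lost shows that such a $\hat{\zeta}$ forces the existence of $S_+,S_-$ with $1\le|S_+|=|S_-|=:l\le\min(\gamma n,(1-\gamma)n)$ satisfying the inequality \eqref{eq:Pos1'}; the equality $|S_+|=|S_-|$ is forced by the size constraint $|\hat{\zeta}^{-1}(1)|=\gamma n$. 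This is the analogue of Lemma~\ref{lem:MLEg}, and it reduces the theorem to proving $\mathbb{P}[E]\to 0$ as in Lemma~\ref{lem:Second2'}.

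For the probability estimate I would use a union bound over $l$ and over the $\binom{\gamma n}{l}\binom{(1-\gamma)n}{l}$ choices of $(S_+,S_-)$. The four quantities $e(S_+),\,e(C^*\setminus S_+,S_+),\,e(S_-),\,e(C^*\setminus S_+,S_-)$ are jointly independent Gaussians with the stated parameters, so for a fixed pair of size $l$ the event in \eqref{eq:Pos1'} is $\{N(0,1)>\sqrt{m_l}\,(\mu_1-\mu_2)/(\tau\sqrt{2})\}$ with $m_l=\binom{l}{2}+l(\gamma n-l)=\tfrac12 l(2\gamma n-l-1)$. Using $(\mu_1-\mu_2)^2/(2\tau^2)=4\,\mathrm{SNR}\,\log n/n$ together with the Gaussian tail bound \eqref{eq:Gtail}, this probability is at most $n^{-l(2\gamma-(l+1)/n)\,\mathrm{SNR}(1+o(1))}$. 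For $l=1$ the union bound contributes $\gamma(1-\gamma)n^{2}\cdot n^{-2\gamma\,\mathrm{SNR}(1+o(1))}=\gamma(1-\gamma)n^{\,2-2\gamma\,\mathrm{SNR}+o(1)}$, which tends to $0$ precisely because $\gamma\,\mathrm{SNR}>1$; this is the term that pins down the threshold.

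To control the remaining $l$ I would split exactly as in Lemma~\ref{lem:MLEh}. Write $\gamma\,\mathrm{SNR}=1+2\delta$ and fix $\epsilon_\delta>0$ small enough that $(2\gamma-2\epsilon_\delta)\,\mathrm{SNR}\ge 2+\delta$. For $1\le l\le\epsilon_\delta n$ use $\binom{\gamma n}{l}\binom{(1-\gamma)n}{l}\le(e^2 n^2/l^2)^l$, so each term is at most $\exp\!\big(l\log n\,[\,2(1+o(1))-(2\gamma-(l+1)/n)\,\mathrm{SNR}\,]\big)\le n^{-l\delta(1+o(1))}$, and the sum over this range is $o(1)$. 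For $\epsilon_\delta n\le l\le\min(\gamma n,(1-\gamma)n)$ use the cruder bound $\binom{\gamma n}{l}\binom{(1-\gamma)n}{l}\le 2^{n}$, while the Gaussian exponent $l(2\gamma-(l+1)/n)\,\mathrm{SNR}\log n$ is $\Omega(n\log n)$ (since $l\ge\epsilon_\delta n$ and $2\gamma-(l+1)/n$ is bounded below by a positive constant), which dominates $n\log 2$; each such term is $e^{-\Omega(n\log n)}$ and there are at most $n$ of them. Adding the three contributions gives $\mathbb{P}[E]\to 0$, hence $\mathbb{P}[M(\hat{\zeta}_{\mathrm{MLE}},\zeta^*)=1]\to1$.

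The routine parts are the Gaussian computations and the combinatorial edge count; the only place that needs genuine care is the union bound over all $l$ up to a constant fraction of $n$, since the $l=1$ term alone dictates the threshold $\gamma\,\mathrm{SNR}>1$ and one must check that the exponentially many choices at larger $l$ cannot overwhelm the (stronger) Gaussian decay, which is exactly what the two--regime split above is for. A minor subtlety is getting $m_l$ right --- in particular the $\binom{l}{2}$ contribution from the internal edges of $S_-$ that enter the community and of $S_+$ that leave it --- since this makes the effective signal at scale $l$ grow like $l(\gamma n-l/2)$ rather than $l\gamma n$.
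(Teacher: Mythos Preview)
Your proposal is correct and follows essentially the same route as the paper's proof (Lemmas~\ref{lem:MLEpos1} and~\ref{lem:MLEpos2}): the same combinatorial reduction to the event \eqref{eq:Pos1}, the same Gaussian computation giving exponent $l(2\gamma n-1-l)\,\mathrm{SNR}/n$, and the same two--regime split of the union bound with $\epsilon_\delta$ chosen so that $\epsilon_\delta\,\mathrm{SNR}\le\delta$. Your handling of the large--$l$ regime (observing that $2\gamma-(l+1)/n$ stays bounded away from zero for $l\le\min(\gamma n,(1-\gamma)n)$) is the same mechanism the paper uses, just phrased slightly differently.
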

\begin{Notations} With  $\zeta^*=\{\mathbf{1}_{\gamma n}, \mathbf{0}_{(1-\gamma)n}\} \in \Theta_n^{\gamma}$  we have the true 
 planted community $C^*:=\{1, \cdots, \gamma n\}$, and the complement  $[n] \setminus C^*:=\{\gamma n +1, \cdots, n\}$. Further,  let  $\hat{\zeta}  \in  \Theta_n^{\gamma}$ and  denote the predicted community labels  with $\hat{C}=\{i \in [n]: \hat{\zeta}(i)=1\},$  and the complement $ [n] \setminus \hat{C}:=\{i \in [n]: \hat{\zeta}(i)=0\}$. 
We observe that  MLE \ref{def:MLEPD} fails to recover the true planted community labeling $\zeta^*$  if and only if there exists $ \hat{\zeta} \in \Theta_n^{\gamma}$ such that $g_A(\hat{\zeta}) > g_A(\zeta^*)$.
For  subsets of vertices $A,B \subseteq [n]$ with $A\cap B = \emptyset$ we define the following
\[
e(A, B) =\sum_{p \in A,q \in B} A_{pq}
\]
\[
e(A) = \sum_{p \in A, q \in A} A_{pq}
\]
\end{Notations}

\subsubsection{The proof -- union bound}
 \begin{lemma} \label{lem:MLEpos1}
 $\exists$ $\hat{\zeta} \in \Theta_n^{\gamma}$ with 
     $g_A(\hat{\zeta}) > g_A(\zeta^*) \smallimplies$ $\exists  S_+ \subseteq C^*, S_- \subseteq [n] \setminus C^*$ such that $1\leqslant |S_+| =|S_-| \leqslant \gamma n$ and  we have
     \begin{equation} \label{eq:Pos1}
     e(S_-) +
     e(C^* \setminus S_+ , S_-) > 
     e(S_+)+
     e(C^* \setminus S_+, S_+)
     \end{equation} 
 \end{lemma}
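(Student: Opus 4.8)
The plan is to reproduce, in the $0/1$-valued setting, the decomposition used for the GWSBM in Lemma \ref{lem:MLEg}. First I would recall from Lemma \ref{lem: MLE1} that $g_A(\zeta)=\sum_{i,j}A_{ij}\zeta_i\zeta_j$ equals twice the total weight of the edges with both endpoints in the support $C=\{i:\zeta_i=1\}$; hence, writing $\hat C=\{i:\hat\zeta(i)=1\}$, the hypothesis $g_A(\hat\zeta)>g_A(\zeta^*)$ is equivalent to $e(\hat C)>e(C^*)$, where $e(C)$ denotes the total weight of edges inside $C$.

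Next I would introduce the two ``swap sets'' $S_+:=C^*\setminus\hat C$ and $S_-:=\hat C\setminus C^*$. By construction $S_+\subseteq C^*$, $S_-\subseteq[n]\setminus C^*$, and both $\hat C$ and $C^*$ split as disjoint unions sharing the common part $C^*\setminus S_+$, namely $\hat C=(C^*\setminus S_+)\cup S_-$ and $C^*=(C^*\setminus S_+)\cup S_+$. Since $|\hat C|=|C^*|=\gamma n$, a cardinality count gives $|S_+|=|S_-|=\gamma n-|C^*\cap\hat C|$; moreover $g_A(\hat\zeta)>g_A(\zeta^*)$ forces $\hat C\neq C^*$, hence $S_+\neq\emptyset$, so that $1\leqslant|S_+|=|S_-|\leqslant|C^*|=\gamma n$, which is the claimed cardinality bound.

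Then I would expand $e(\hat C)$ and $e(C^*)$ over these disjoint decompositions, using additivity of the edge-weight functionals (here $e(A)$ sums over unordered pairs inside $A$ and $e(A,B)$ over the edges between disjoint sets $A$ and $B$):
\begin{align*}
e(\hat C)&=e(C^*\setminus S_+)+e(S_-)+e(C^*\setminus S_+,\,S_-),\\
e(C^*)&=e(C^*\setminus S_+)+e(S_+)+e(C^*\setminus S_+,\,S_+).
\end{align*}
Subtracting and cancelling the common term $e(C^*\setminus S_+)$, the inequality $e(\hat C)>e(C^*)$ becomes precisely \eqref{eq:Pos1}, which closes the implication.

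I do not expect a genuine obstacle in this lemma: it is essentially bookkeeping, the one point to keep straight being that — unlike the $\pm1$ case of Lemma \ref{lem:MLEg} — the objective $g_A$ records only the edges inside the support of $\zeta$, so there is no ``out-cluster'' contribution and no need to exploit a $\hat\zeta\mapsto-\hat\zeta$ symmetry to bound $|S_+|$. The substantive work is deferred to the companion union-bound step (the analogue of Lemma \ref{lem:MLEh}, stated in the sketch as Lemma \ref{lem:Second2'}): showing that $\sum_{l}\mathbb{P}[E_l]\to0$ once $\gamma\,\text{SNR}>1$, by feeding the joint-Gaussian identities for $e(S_+),e(S_-),e(C^*\setminus S_+,S_+),e(C^*\setminus S_+,S_-)$ into the Gaussian tail bound \eqref{eq:Gtail} together with the binomial estimates \eqref{eq:Binom}, split at a critical density $\epsilon_\delta$ exactly as in the proof of Lemma \ref{lem:MLEh}.
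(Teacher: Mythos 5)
Your argument is correct and follows the paper's proof essentially verbatim: same choice of swap sets $S_+ = C^*\setminus\hat C$, $S_- = \hat C\setminus C^*$, same observation that $g_A(\hat\zeta)>g_A(\zeta^*)$ reduces to $e(\hat C)>e(C^*)$, same decomposition of $e(\hat C)$ and $e(C^*)$ over the common part $C^*\cap\hat C = C^*\setminus S_+$, and same cancellation. The only addition is your explicit cardinality bookkeeping $|S_+|=|S_-|=\gamma n - |C^*\cap\hat C|$, which the paper leaves implicit but which does not change the argument.
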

 \begin{proof}  
 For $\zeta \in \Theta_n^{\gamma}$ let $C_{\zeta} =\{ i \in [n]: \zeta_i =1\}$. We observe that $
     g_A(\zeta)= \sum_{i,j} A_{ij}\zeta_i\zeta_j=   \sum_{\zeta_ i=\zeta_j =1} A_{ij}\zeta_i\zeta_j = e(C_{\zeta})$
     \[
         g_A(\zeta) > g_A(\zeta^*)  \iff e(C_{\zeta}) > e(C^*)
     \]
     
     Now given $\hat{\zeta}$ and $\zeta^*$ we define $S_+:=C^*\setminus\hat{C},$  $S_-:=  \hat{C} \cap [n] \setminus C^* = \hat{C} \setminus C^* \implies |S_+| =|S_-|$, and observe that $g_A(\zeta) > g_A(\zeta^*)$ implies $|S_+| \geqslant 1$. Further,
     \[
     e(\hat{C}) =  
     e(\hat{C} \cap C^*) +
     e(\hat{C} \setminus C^*) +
     e(\hat{C} \cap C^*, \hat{C} \setminus C^*)
     \]
     \[
     e(C^*) =  
     e(C^* \cap \hat{C}) +
     e(C^* \setminus \hat{C})+
     e(C^* \cap \hat{C}, C^* \setminus \hat{C})
     \]

     Combining the above, we have the following equivalence.
     \begin{align*}
     g_A(\hat{\zeta}) > g_A(\zeta^*) 
     &  
     \iff  
     e(\hat{C} \setminus C^*) +
     e(\hat{C} \cap C^*, \hat{C} \setminus C^*) > 
     e(C^* \setminus \hat{C})+
     e(C^* \cap \hat{C}, C^* \setminus \hat{C})
     \\
     &
     \iff  e(S_-) +
     e(C^* \setminus S_+ , S_-) > 
     e(S_+)+
     e(C^* \setminus S_+, S_+)
     \end{align*}
 \end{proof}

 Now, to prove that MLE \ref{def:MLEPD} can solve exact recovery \ref{thm: Positive1} as soon as 
  $\gamma$ \text{SNR} $>1$  by proving that such events of the kind \ref{eq:Pos1} occur with probability approaching zero. We define the following events as the following.
 \[
 \begin{split}
E:=\{  \exists S_+ \subseteq C^*,& S_- \subseteq [n] \setminus C^* : 1\leqslant |S_+|=|S_-| \leqslant 
 \min(\gamma n, (1-\gamma)n), \\  &   e(S_-) + e(C^* \setminus S_+ , S_-) > 
 e(S_+) + e(C^* \setminus S_+, S_+) \}
\end{split}
 \]
 \[
 E_l= \{\exists S_+ \subseteq C^*, S_- \subseteq [n] \setminus C^* : |S_+|=|S_-| =l, e(S_-) +
     e(C^* \setminus S_+ , S_-) > 
     e(S_+)+
     e(C^* \setminus S_+, S_+)\}
 \]
 \begin{lemma} \label{lem:MLEpos2} Let $K=\gamma n$. Then
    \[
          \mathbb{P}\left[E\right] =  \mathbb{P}\left[\bigcup_{l=1}^{\min(K, n-K)}E_l \right] \to 0
    \]
 \end{lemma}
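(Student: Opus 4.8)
The plan is to prove this by a union bound over $l$ and over the choices of $S_+\subseteq C^*$ and $S_-\subseteq [n]\setminus C^*$, in the same spirit as the proof of Lemma~\ref{lem:MLEh}. Fix a pair $(S_+,S_-)$ with $|S_+|=|S_-|=l$. As recorded after the lemma statement, the four quantities $e(S_-)$, $e(S_+)$, $e(C^*\setminus S_+,S_-)$, $e(C^*\setminus S_+,S_+)$ are sums over pairwise disjoint edge sets, hence jointly independent, with the displayed Gaussian laws. Writing $m_l:=\binom{l}{2}+l(K-l)$, the difference
\[
\Delta \;:=\; \bigl(e(S_-)+e(C^*\setminus S_+,S_-)\bigr)-\bigl(e(S_+)+e(C^*\setminus S_+,S_+)\bigr)
\]
is therefore Gaussian with mean $-m_l(\mu_1-\mu_2)$ and variance $2m_l\tau^2$. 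Using the critical scaling, $(\mu_1-\mu_2)^2/\tau^2=(\alpha-\beta)^2(\log n)/(n\tau^2)=8\,\text{SNR}\,(\log n)/n$, so by the Gaussian tail bound~\ref{eq:Gtail},
\[
\mathbb{P}[\Delta>0]=\mathbb{P}\Bigl[N(0,1)>\sqrt{\tfrac{m_l}{2}}\,\tfrac{\mu_1-\mu_2}{\tau}\Bigr]\le \exp\Bigl(-\tfrac{m_l}{4}\cdot\tfrac{(\mu_1-\mu_2)^2}{\tau^2}\Bigr)=n^{-2m_l\text{SNR}/n}.
\]
Since there are $\binom{K}{l}$ choices for $S_+$ and $\binom{n-K}{l}$ for $S_-$, a union bound gives $\mathbb{P}[E_l]\le\binom{K}{l}\binom{n-K}{l}\,n^{-2m_l\text{SNR}/n}$, and it remains to sum this over $1\le l\le\min(K,n-K)$.

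For the summation I would split the range of $l$ exactly as in Lemma~\ref{lem:MLEh}. Note $m_l/n=l\bigl(K-\tfrac{l+1}{2}\bigr)/n=\gamma l\bigl(1-\tfrac{l+1}{2\gamma n}\bigr)$, and also $m_l\ge \tfrac{lK}{2}\bigl(1-\tfrac1K\bigr)$ for every $l\le K$. Fix $\delta>0$ with $\gamma\,\text{SNR}=1+2\delta$, and pick a small constant $\epsilon_\delta>0$ with $\gamma\,\text{SNR}\bigl(1-\tfrac{\epsilon_\delta}{\gamma}\bigr)\ge 1+\delta$. \emph{Small $l$:} for $1\le l\le \epsilon_\delta n$ use $\binom{K}{l}\binom{n-K}{l}\le (eK/l)^l(e(n-K)/l)^l\le (en/l)^{2l}$ together with $2m_l\text{SNR}\log n/n\ge 2(1+\delta)l\log n$ (valid for $n$ large on this range), so that
\[
\mathbb{P}[E_l]\le \exp\bigl(2l\log(en/l)-2(1+\delta)l\log n\bigr)=\exp\bigl(2l(1-\log l)-2\delta l\log n\bigr)\le n^{-\delta l}
\]
for $n$ large, whence $\sum_{l\le \epsilon_\delta n}\mathbb{P}[E_l]\le n^{-\delta}/(1-n^{-\delta})\to0$. \emph{Large $l$:} for $\epsilon_\delta n\le l\le\min(K,n-K)\le K$, use the crude bound $\binom{K}{l}\binom{n-K}{l}\le 2^{K}2^{n-K}=2^{n}$ and $m_l\ge \tfrac{\epsilon_\delta n\,K}{2}(1-o(1))=\tfrac{\epsilon_\delta\gamma n^2}{2}(1-o(1))$, so each term is at most $\exp\bigl(n\log2-\epsilon_\delta\gamma\,\text{SNR}\,n\log n(1-o(1))\bigr)=e^{-\Omega(n\log n)}$, and since there are at most $n$ such terms their sum tends to $0$ as well. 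Combining the two regimes gives $\mathbb{P}[E]\le\sum_l\mathbb{P}[E_l]\to0$; in particular the leading $l=1$ term is $K(n-K)\,n^{-2(K-1)\text{SNR}/n}=\gamma(1-\gamma)\,n^{-4\delta+o(1)}$.

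The one genuinely delicate point — and it is identical to the subtlety in Lemma~\ref{lem:MLEh} — lies in the large-$l$ window $l=\Theta(n)$: there the crude estimate $\binom{K}{l}\le (en)^l$ produces an entropy term of order $n\log n$, comparable to the Gaussian exponent $2m_l\text{SNR}\log n/n$, and balancing the two would only yield decay under the stronger hypothesis $\gamma\,\text{SNR}>2$. The fix is to control the number of pairs $(S_+,S_-)$ by a bound whose logarithm is only $O(n)$, namely $\binom{K}{l}\binom{n-K}{l}\le 2^{n}$, so that the $\Theta(n\log n)$ Gaussian exponent dwarfs it; for small $l$ the $-2l\log l$ gain in $(eK/l)^l$ then reduces the sum to a convergent geometric series. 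Everything else is a routine Gaussian tail computation, directly parallel to the GWSBM case.
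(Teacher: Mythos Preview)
Your proof is correct and follows essentially the same approach as the paper: a union bound over $l$ and over pairs $(S_+,S_-)$, the Gaussian tail bound with $m_l=\binom{l}{2}+l(K-l)=l(2K-l-1)/2$, and a split into small $l$ (via $(ep/l)^l$) and large $l$ (via $2^p$) with a threshold $\epsilon_\delta$ chosen from $\gamma\,\mathrm{SNR}=1+2\delta$. The only cosmetic differences are your explicit $m_l$ notation, your (slightly looser but sufficient) bound $\binom{K}{l}\binom{n-K}{l}\le (en/l)^{2l}$, and your definition of $\epsilon_\delta$ via $\gamma\,\mathrm{SNR}(1-\epsilon_\delta/\gamma)\ge 1+\delta$ rather than the paper's $\epsilon_\delta\,\mathrm{SNR}=\delta$; both choices work for the same reason.
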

 \begin{proof}
     We prove this by observing the following for a fixed pair $(S_+, S_-)$ of size $l$.
     \begin{align}
     \mathbb{P}[E] 
     & 
     \leqslant \sum_{ 1\leqslant l \leqslant \min(K, n-K)} \mathbb{P}[E_l]
     \\
     &
     \leqslant  \sum_{ 1\leqslant l \leqslant \min(K, n-K)}  \binom{K}{l} \binom{n-K}{l} \mathbb{P}\left[e(S_-) +
     e(C^* \setminus S_+ , S_-) > e(S_+)+ e(C^* \setminus S_+, S_+)\right]\to 0
     \end{align}
     
     We observe that $e(S_-), e(S_+), e(C^* \setminus S_+, S_-),$  and $e(C^*\setminus S_+, S_+)$ are jointly independent random variables with 
     \[
     e(S_+)\overset{d}{=}  N\left( \binom{l}{2} \mu_1, \binom{l}{2} \tau^2\right) 
     \]
     \[
     e(S_-)\overset{d}{=}  N\left( \binom{l}{2} \mu_2, \binom{l}{2} \tau^2\right)
     \]
     \[
     e(C^*\setminus S_+, S_+) \overset{d}{=} N\left( l(K -l) \mu_1, l(K -l) \tau^2 \right)
     \]
     \[
     e(C^*\setminus S_+, S_-) \overset{d}{=} N\left( l(K -l) \mu_2, l(K -l) \tau^2\right)
     \]
     
     Combining the above distributional identities we have the following simplification.
     \begin{align*}
     \mathbb{P} 
     & \left[ e(S_-) +
     e(C^* \setminus S_+ , S_-) > e(S_+)+ e(C^* \setminus S_+, S_+)\right] 
     \\
     &
     =\mathbb{P}\left[ \tau \sqrt{2} \sqrt{\left(\binom{K}{2} - \binom{K-l}{2}\right)}N(0,1)> \left(\binom{K}{2} - \binom{K-l}{2}\right)(\mu_1- \mu_2)\right] 
     \\
     & 
     = \mathbb{P}\left[ N(0,1) > \sqrt{\left(\binom{K}{2} - \binom{K-l}{2}\right)} \left(\frac{\mu_1- \mu_2}{\tau \sqrt{2}}\right)\right]
     \\
     & 
     \leqslant  e^{ - \frac{2 \text{SNR} \left(\binom{K}{2} - \binom{K-l}{2}\right) \log n}{n}} 
     = \frac{1}{n^{ \frac{ l  (2K-1-l) \text{SNR}}{n}}}
     \end{align*}
     
     From the standard Gaussian tail bounds \ref{eq:Gtail} we have the following for $l=1$ with $ \gamma \text{SNR}= 1+2\delta$.
     \[ \mathbb{P}\left[E_1\right] \leqslant 
     K(n-K) \frac{1}{n^{ \frac{2(K-1) \text{SNR}}{n}}}\leqslant \gamma (1-\gamma) n^2 n^{-2 \gamma  
       \text{SNR} (1+o(1))} =  \frac{\gamma (1-\gamma)}{n^{4\delta +o(1)} }  \to 0
     \]

     More generally, let $ \epsilon = \frac{l}{n}$. Then we apply the bounds on the binomial coefficients for $1 \leqslant l \leqslant p$.
     \begin{equation} \label{eq:Binom}
    \left(\frac{p}{l}\right)^l \leqslant \binom{p}{l} \leqslant \left(\frac{ep}{l}\right)^l
     \end{equation}

     We choose  $\epsilon_{\delta} >0 $ be such that $ \epsilon_{\delta} \text{SNR} =  \delta \implies $
     For $\epsilon \leqslant \epsilon_{\delta}$ we have $-2\delta + \frac{(l+1)\text{SNR}}{2n} \leqslant -\delta$
     \[
       \frac{\binom{K}{l}\binom{n-K}{l}}{n^{ \frac{l(2K-1-l)\text{SNR}}{n}}}   \leqslant \left(\frac{e^2 \gamma (1-\gamma )}{l^2}
     \right)^{l} \frac{n^{2l}}{n^{2\gamma \text{SNR}l}} n^{\frac{l(l+1)\text{SNR}}{n}}\leqslant  n^{ 2l(-2\delta + \frac{(l+1)\text{SNR}}{2n}) } \leqslant \frac{1}{n^{2l\delta}}
     \]
     
     For $\epsilon \geqslant \epsilon_{\delta}$  we use the simpler binomial identity $\binom{p}{l} \leqslant 2^{p}$ and use the fact that $ l \leqslant \min(K, n-K)$
     \[
       \frac{\binom{K}{l}\binom{n-K}{l}}{n^{ \frac{l(2K-1-l)\text{SNR}}{n}}} \leqslant \frac{2^{n}}{n^{\delta (\gamma n -1)}}   \leqslant \frac{1}{n^{\delta \gamma n (1 - o(1))}}
     \]
     \begin{align*}
         \sum_{ 1\leqslant l \leqslant \min(K, n-K)}  \frac{\binom{K}{l}\binom{n-K}{l}}{n^{ \frac{l(2K-1-l)\text{SNR}}{n}}} 
         \leqslant
         \sum_{ \frac{1}{n} \leqslant \frac{l}{n}  \leqslant \epsilon_{\delta}} \frac{1}{n^{2l\delta}} + \sum_{ \epsilon_{\delta} \leqslant \frac{l}{n}  \leqslant  \min(\gamma, 1-\gamma)} \frac{1}{n^{\delta \gamma n (1 - o(1))}}  \to 0.
     \end{align*}
     
 \end{proof}
    \subsection{Semi-definite relaxation}

We begin with the observation that we can write the MLE \ref{eq:MLE3} in the form of an SDP with rank one constraints.

\subsubsection{The statement}
\begin{lemma} \label{lem:SDPeuiv}
The optimization problem corresponding to MLE \ref{eq:MLE3} is equivalent to the following optimization problem.

\begin{equation} \label{eq:MLE4}
\max_{Z, \zeta} \hspace{.1 cm} \langle A, Z \rangle :  Z =\zeta \zeta^{\top},  Z_{ii} \leqslant  1, \hspace{.1 cm} \forall 
 \hspace{.1 cm} i \in [n],  Z_{ij} \geqslant  0, \hspace{.1 cm} \forall \hspace{.1 cm} i, j \in[n], \langle\mathbf{I}, Z\rangle=\gamma n,  \langle\mathbf{J}, Z \rangle =\gamma^2 n^2,
\end{equation}

\begin{proof}
    The objective function is the same for both \ref{eq:MLE3} and \ref{eq:MLE4}. We also observe that if $\zeta$ is a solution to \ref{eq:MLE4}, then $-\zeta$ is a solution too.  It suffices to prove that the constraint sets are equal up to a global sign flip. Let $\zeta \in \Theta^{\gamma}_n$.   
    \[ 
    Z_{ii}= \zeta_i^2 \leqslant 1 \forall \hspace{.1 cm} i \in [n], 
    \]
    \[
    Z_{ij}= \zeta_i \zeta_j \geqslant 0 \hspace{.1 cm} \forall \hspace{.1 cm} i, j \in[n]
    \]
    \[
    \langle \mathbf{I}, Z \rangle  =\sum_{i } \zeta_i^2 = \sum_{i } \zeta_i =\gamma n
    \]
    \[
    \langle  \mathbf{J}, Z \rangle  = \left(\sum_{i } \zeta_i\right)^2= \gamma^2 n^2
    \]
    
    This clearly implies that the constraint set of \ref{eq:MLE3} is a subset of \ref{eq:MLE4}.
    On the other hand 
    \[ 
    \zeta_i^2 \leqslant 1 \forall \hspace{.1 cm} i \in [n] \implies  -1 \leqslant \zeta_i \leqslant 1, 
    \]
    \[
    Z_{ij}= \zeta_i \zeta_j \geqslant 0 \hspace{.1 cm} \forall \hspace{.1 cm} i, j \in[n] \implies  \hspace{.1 cm}\text{ all }  \hspace{.1 cm} \zeta_i \hspace{.1 cm}\text{have the same signs}
    \]
    
    Combining the above, we have either $\zeta \in [0,1]^{n}$ or $\zeta \in [-1, 0]^n$.
    
    \[
    \langle \mathbf{I}, Z \rangle  =\sum_{i } \zeta_i^2 = \gamma n
    \]
    \[
    \langle  \mathbf{J}, Z \rangle  = \left(\sum_{i } \zeta_i\right)^2= \gamma^2 n^2 \implies \sum_i \zeta_i = \pm \gamma n
    \]
    
    Assuming  $\zeta \in [0,1]^{n}$ we have $\sum_{i}\zeta_i^2 =\sum_{i} \zeta_i =\gamma n$. Therefore we have 
    \[
    \sum_{i} \zeta_i(1-\zeta_i)=0 \implies \zeta_i(1-\zeta_i) = 0 \forall i \in [n] \implies \zeta \in \{0,1\}^n \cap \langle \zeta , \mathbf{1}_n \rangle =\gamma n \equiv \zeta \in \Theta^{\gamma}_n.
    \]
    
    Similarly, assuming $\zeta \in [-1,0]^{n}$  we have $\sum_{i}\zeta_i^2 = -\sum_{i} \zeta_i =\gamma n$, and hence 
    \[
      \sum_{i} \zeta_i(1+\zeta_i) =0 \implies \zeta_i(1+\zeta_i) = 0 \forall i \in [n] \implies -\zeta \in \{0,1\}^n \cap \langle -\zeta , \mathbf{1}_n \rangle =\gamma n   \equiv - \zeta \in \Theta^{\gamma}_n
    \]
\end{proof}
\end{lemma}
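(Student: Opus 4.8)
The plan is to observe that the substitution $Z=\zeta\zeta^{\top}$ carries \ref{eq:MLE4} onto \ref{eq:MLE3} once one checks that the two feasible sets in the variable $\zeta$ coincide up to a global sign flip $\zeta\mapsto-\zeta$. This reduction is legitimate because, under $Z=\zeta\zeta^{\top}$, the objective of \ref{eq:MLE4} becomes $\langle A,\zeta\zeta^{\top}\rangle=\sum_{i,j}A_{ij}\zeta_i\zeta_j=g_A(\zeta)$, which is exactly the objective of \ref{eq:MLE3}; moreover both the objective $\langle A,Z\rangle$ and the entire constraint set of \ref{eq:MLE4} are invariant under replacing $\zeta$ by $-\zeta$ (since $(-\zeta)(-\zeta)^{\top}=\zeta\zeta^{\top}$). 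So it remains only to match the constraints, after which the optimal values agree and the maximizers agree up to a sign.

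First I would dispatch the easy inclusion: given $\zeta\in\Theta^{\gamma}_n$, i.e. $\zeta\in\{0,1\}^n$ with $\langle\zeta,\mathbf{1}_n\rangle=\gamma n$, set $Z=\zeta\zeta^{\top}$ and note $Z_{ii}=\zeta_i^2\in\{0,1\}$, so $Z_{ii}\leqslant 1$; $Z_{ij}=\zeta_i\zeta_j\geqslant 0$; $\langle\mathbf{I},Z\rangle=\sum_i\zeta_i^2=\sum_i\zeta_i=\gamma n$ using $\zeta_i^2=\zeta_i$ on $\{0,1\}$; and $\langle\mathbf{J},Z\rangle=(\sum_i\zeta_i)^2=\gamma^2 n^2$. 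Hence $\zeta$ is feasible for \ref{eq:MLE4}.

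For the converse, suppose $(Z,\zeta)$ is feasible for \ref{eq:MLE4}. From $Z_{ii}=\zeta_i^2\leqslant 1$ I get $\zeta_i\in[-1,1]$ for every $i$, and from $Z_{ij}=\zeta_i\zeta_j\geqslant 0$ for every pair $i,j$ the nonzero coordinates of $\zeta$ all carry the same sign, so $\zeta\in[0,1]^n$ or $\zeta\in[-1,0]^n$; by the sign-flip symmetry I may assume $\zeta\in[0,1]^n$. Then $\langle\mathbf{J},Z\rangle=(\sum_i\zeta_i)^2=\gamma^2 n^2$ together with $\zeta_i\geqslant 0$ forces $\sum_i\zeta_i=\gamma n$, while $\langle\mathbf{I},Z\rangle=\sum_i\zeta_i^2=\gamma n$; subtracting gives $\sum_i\zeta_i(1-\zeta_i)=0$, and since each summand is nonnegative on $[0,1]$ each must vanish, so $\zeta_i\in\{0,1\}$ for all $i$, i.e. $\zeta\in\Theta^{\gamma}_n$. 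The case $\zeta\in[-1,0]^n$ is handled identically by passing to $-\zeta$. This closes the equivalence of feasible sets, hence of the two optimization problems.

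I do not expect a genuine obstacle here — the argument is essentially bookkeeping with the two linear side-constraints. The only delicate point is the "common sign" deduction, which should be phrased for the nonzero coordinates of $\zeta$ (a zero coordinate is compatible with either orientation); after that the $\zeta\mapsto-\zeta$ symmetry pins down the orientation and the identity $\sum_i\zeta_i(1-\zeta_i)=0$ then delivers the $0/1$ integrality.
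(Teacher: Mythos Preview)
Your proposal is correct and follows essentially the same approach as the paper: verify the forward inclusion $\Theta^{\gamma}_n\hookrightarrow$ feasible set of \ref{eq:MLE4} by direct computation, and for the converse use $Z_{ii}\leqslant 1$ and $Z_{ij}\geqslant 0$ to force $\zeta\in[0,1]^n$ (up to the sign flip), then combine $\sum_i\zeta_i^2=\sum_i\zeta_i=\gamma n$ to obtain $\sum_i\zeta_i(1-\zeta_i)=0$ and hence $\zeta\in\{0,1\}^n$. Your remark that the same-sign deduction should be phrased for the nonzero coordinates is a small but welcome sharpening of the paper's wording.
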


In the above formulation \ref{eq:MLE4} the matrix $Z=\zeta \zeta^{\top}$ is positive semidefinite and rank-one. Removing the rank-one restriction leads to the following convex relaxation of \ref{eq:MLE4}, which is a semidefinite program.

\begin{equation} \label{def:SDPPDSM}
 \hat{Z}_{\mathrm{SDP}}=\underset{Z}{\arg \max } \langle A, Z\rangle:  Z \succeq 0,  Z_{ii} \leqslant 1, \hspace{.1 cm} \forall \hspace{.1 cm} i \in [n], Z_{ij} \geqslant 0, \hspace{.1 cm} \forall \hspace{.1 cm} i, j \in[n], \langle\mathbf{I}, Z\rangle=\gamma n, \langle\mathbf{J}, Z\rangle=\gamma^2 n^2 .
\end{equation}

Let $Z^*=\zeta^*\zeta^{*^{\top}}$ correspond to the true cluster and define $\mathcal{Z}_n=\left\{\zeta \zeta^{\top}: \zeta \in\{0,1\}^n, \langle \zeta, \mathbf{1} \rangle =\gamma n\right\}$. The threshold for the exact recovery of the densely weighted community for the SDP \ref{def:SDPPDSM} is given as follows.
\[ 
Z^* =
\begin{bmatrix}
\begin{array}{c|c}
\mathbf{1}_{\gamma n  \times \gamma n} & \mathbf{0}_{\gamma n \times n} \\[1ex]
\hline
\\[-2ex]
\mathbf{0}_{n \times \gamma n} & \mathbf{0}_{ (1-\gamma ) n \times (1-\gamma) n}
\end{array}
\end{bmatrix}.
\]
\begin{theorem}[\textbf{Semi-definite relaxation achieves exact recovery of the planted community}]  \label{thm:SDP1}
    Under the Gaussian weighted planted dense subgraph model \ref{def:GWPDSM} with parameter $\gamma \in  (0,1)$, 
 
\[ \text{if} \hspace{.1 cm} \gamma \text{SNR} \hspace{.1 cm}>1, \hspace{.1 cm} \text{then} \hspace{.1 cm}
\min _{Z \in \mathcal{Z}_n} \mathbb{P}\left[\hat{Z}_{\mathrm{SDP}}=Z^*\right]\to 1 \hspace{.1 cm} \text{as} \hspace{.1 cm} n \rightarrow \infty.
\]
\end{theorem}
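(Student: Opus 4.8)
The plan is to follow the dual-certificate method of \cite{hajek2016achieving}. By Lemma~\ref{lem:SDPeuiv}, the MLE \ref{eq:MLE3} is exactly the rank-one restriction of \ref{def:SDPPDSM}, so it suffices to show that the semidefinite relaxation recovers $Z^*=\zeta^*\zeta^{*\top}$ with high probability, uniformly over $Z^*\in\mathcal Z_n$. The first step is the deterministic sufficient condition of Lemma~\ref{lem:DetPDSM}: if there exist $D^*=\diag\{d_i^*\}$ with $d_i^*\ge 0$, a nonnegative matrix $B^*$, and scalars $\lambda^*,\eta^*$ such that $S^*:=D^*-B^*-A+\eta^*\mathbf I+\lambda^*\mathbf J$ obeys $S^*\succeq 0$, $\lambda_2(S^*)>0$, $S^*\zeta^*=0$, and the complementary-slackness identities $d_i^*(Z^*_{ii}-1)=0$, $B^*_{ij}Z^*_{ij}=0$ hold, then $\widehat Z_{\mathrm{SDP}}=Z^*$ is the unique optimizer. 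This lemma is proved by weak duality: the Lagrangian $L(Z,S^*,D^*,B^*,\lambda^*,\eta^*)$ dominates $\langle A,Z\rangle$ on the feasible set and collapses to $\langle A,Z^*\rangle$ at the certificate, so $Z^*$ is optimal; equality at another optimizer $Z$ forces $\langle S^*,Z\rangle=0$, and $\lambda_2(S^*)>0$ together with the diagonal/trace constraints then forces $Z=Z^*$, by an argument analogous to Lemma~\ref{lem:uniq}.

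The second step is to produce explicit dual variables. Take $\lambda^*=\tfrac{\mu_1+\mu_2}{2}$, $\eta^*=2\|A-\mathbb E[A]\|$, $d_i^*=\langle (A-\eta^*\mathbf I-\lambda^*\mathbf J)_i,\zeta^*\rangle\mathbf 1_{i\in C^*}$, and $B^*_{ij}=b_i^*\mathbf 1_{\{i\notin C^*,\,j\in C^*\}}+b_j^*\mathbf 1_{\{i\in C^*,\,j\notin C^*\}}$ with $b_i^*\gamma n=\langle(\lambda^*\mathbf J-A)_i,\zeta^*\rangle\mathbf 1_{i\notin C^*}$. With this choice $S^*\zeta^*=0$ and both slackness identities hold by construction, so the theorem reduces to the three probabilistic statements
\[
\mathbb P\Big[\textstyle\bigcap_{i\notin C^*}\{b_i^*\ge 0\}\Big]\to 1,\quad
\mathbb P\Big[\textstyle\bigcap_{i\in C^*}\{d_i^*\ge 0\}\Big]\to 1,\quad
\mathbb P\big[S^*\succeq 0,\ \lambda_2(S^*)>0\big]\to 1.
\]
For the first two, I would first condition on the Gaussian matrix-norm bound $\|A-\mathbb E[A]\|\le 3\tau\sqrt n$, which holds with probability $1-2e^{-cn}$ (cf.\ \ref{eq:GNorm}, \cite{Vershynin_2018}), so that $\eta^*\le 6\tau\sqrt n$. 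Writing $X_i\sim N(\gamma n\mu_1,\gamma n\tau^2)\mathbf 1_{i\in C^*}+N(\gamma n\mu_2,\gamma n\tau^2)\mathbf 1_{i\notin C^*}$, the events become $X_i\ge 6\tau\sqrt n+\lambda^*\gamma n$ for $i\in C^*$ and $X_i\le\lambda^*\gamma n$ for $i\notin C^*$; in both cases the deviation from the mean is $(\tfrac12+o(1))\gamma n(\mu_1-\mu_2)$ against standard deviation $\tau\sqrt{\gamma n}$, so a union bound with the tail estimate \ref{eq:Gtail} produces error $n\cdot n^{-\gamma\text{SNR}+o(1)}=n^{1-\gamma\text{SNR}+o(1)}\to 0$ precisely when $\gamma\text{SNR}>1$. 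This is the step that pins the threshold, and it explains the factor $1/\gamma$: each $X_i$ aggregates only $\gamma n$ edges, so its fluctuation scales as $\sqrt{\gamma n}$ while the signal gap scales as $\gamma n$.

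The main obstacle is the spectral claim $\mathbb P[S^*\succeq 0,\ \lambda_2(S^*)>0]\to 1$. For unit $x\perp\zeta^*$ I would expand
\[
\langle x,S^*x\rangle=\langle x,D^*x\rangle-\langle x,B^*x\rangle+\eta^*+\langle x,(\lambda^*\mathbf J-\mathbb E[A])x\rangle-\langle x,(A-\mathbb E[A])x\rangle,
\]
use that $\mathbb E[A]$ agrees with $\lambda^*\mathbf J$ up to a multiple of $Z^*$ plus a diagonal correction (so $x\perp\zeta^*$ kills the $Z^*$ term), and absorb $\langle x,(A-\mathbb E[A])x\rangle\le\|A-\mathbb E[A]\|$ into $\eta^*=2\|A-\mathbb E[A]\|$. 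The genuinely new difficulty relative to the GWSBM case (Theorem~\ref{thm:SDP}) is the off-diagonal block $B^*$ supported on the $C^*\times([n]\setminus C^*)$ entries: one must verify that $\langle x,B^*x\rangle$ cannot overwhelm the positive terms on the subspace $x\perp\zeta^*$. After again substituting $\|A-\mathbb E[A]\|\le 3\tau\sqrt n$, this reduces to showing that $\min_i d_i^*$ (and the analogous $B^*$-quantities) dominates an $O(\tau\sqrt n)$ term, which is handled by the same Gaussian tail bounds already used for the $d_i^*\ge 0$ event; hence $\langle x,S^*x\rangle>0$ on $x\perp\zeta^*$ with high probability, yielding $S^*\succeq 0$ and $\lambda_2(S^*)>0$. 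Taking the minimum over $Z^*\in\mathcal Z_n$ is then immediate from the exchangeability of the model under relabeling, and the details of the high-probability event are carried out in Section~\ref{sec:hpe1}.
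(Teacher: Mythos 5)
Your proposal matches the paper's proof: the same deterministic dual certificate (Lemma~\ref{lem:DetPDSM}), the same explicit choices $\lambda^*=\frac{\mu_1+\mu_2}{2}$, $\eta^*=2\|A-\mathbb E[A]\|$, $D^*$, $B^*$, and the same reduction to the three high-probability statements, with the threshold $\gamma\,\mathrm{SNR}>1$ pinned by the union bound over $d_i^*,b_i^*$ exactly as you describe. The one detail you overestimate is the $B^*$ term: on $x\perp\zeta^*$ it does not merely fail to overwhelm the positive contributions, it vanishes identically, since $\langle x,B^*x\rangle = 2\langle x,\zeta^*\rangle\sum_{i\notin C^*}x_i b_i^* = 0$, so no domination estimate for $B^*$ is needed and the remaining lower bound on $\langle x,S^*x\rangle$ follows from $\eta^*-\|A-\mathbb E[A]\|=\|A-\mathbb E[A]\|$ together with $\min_{i\in C^*}d_i^*\ge 0$.
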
 

\subsubsection{A deterministic condition}

Now, we prove this theorem by first establishing a deterministic condition for a  unique solution.

\begin{lemma}\label{lem:DetPDSM}
 Suppose $\exists$  $D^{*}=\operatorname{diag}\left\{d_{i}^{*}\right\}$ with $\{d^*_i\}_{i=1}^{n} \in  \mathbb{R}_{\geqslant 0}^{n}$,  $B^{*} \in \mathbb{R}^{n\times n}_{\geqslant 0},$    and $\lambda^{*}, \eta^{*} \in \mathbb{R}$ such that 
 \begin{equation}\label{eq:First}
 S^{*} \triangleq D^{*}-B^{*}-A+\eta^{*} \mathbf{I}+\lambda^{*} \mathbf{J} \hspace{.1 cm} \text{satisfies} \hspace{.1 cm} S^{*} \succeq 0, \lambda_{2}\left(S^{*}\right)>0, \hspace{.1 cm} \text{and}  \hspace{.1 cm}S^{*}  \zeta^{*}  =0,
 \end{equation}
 \begin{equation}\label{eq:Second}
 \hspace{.1 cm}
d_{i}^{*}\left(Z_{i i}^{*}-1\right)  =0 \quad \forall i, \quad 
B_{i j}^{*} Z_{i j}^{*}  =0 \quad \forall i, j,
 \end{equation}

Then SDP recovers the true solution. More precisely, $\widehat{Z}_{\mathrm{SDP}}=Z^{*}$ is the unique solution to \ref{def:SDPPDSM}.
\end{lemma}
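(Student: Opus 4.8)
The plan is to establish Lemma~\ref{lem:DetPDSM} by the Lagrangian weak-duality argument, following the proof of Lemma~\ref{lem:exist} for the GWSBM but now carrying the three additional dual variables forced by the constraints $Z_{ii}\leqslant 1$, $Z_{ij}\geqslant 0$ and $\langle\mathbf{I},Z\rangle=\gamma n$. First I would introduce, for dual-feasible multipliers $S\succeq 0$, $D=\operatorname{diag}\{d_i\}$ with $d_i\geqslant 0$, $B\in\mathbb{R}^{n\times n}_{\geqslant 0}$ and $\eta,\lambda\in\mathbb{R}$, the Lagrangian
\[
L(Z,S,D,B,\eta,\lambda):=\langle A,Z\rangle+\langle S,Z\rangle+\langle D,\mathbf{I}-Z\rangle+\langle B,Z\rangle+\langle -\eta\mathbf{I}-\lambda\mathbf{J},Z\rangle+\eta\gamma n+\lambda\gamma^{2}n^{2},
\]
and note that for every $Z$ feasible in~\ref{def:SDPPDSM} the three terms $\langle S,Z\rangle$, $\langle D,\mathbf{I}-Z\rangle=\sum_i d_i(1-Z_{ii})$ and $\langle B,Z\rangle=\sum_{ij}B_{ij}Z_{ij}$ are non-negative while the remaining part vanishes (since $\langle\mathbf{I},Z\rangle=\gamma n$ and $\langle\mathbf{J},Z\rangle=\gamma^{2}n^{2}$), so that $\langle A,Z\rangle\leqslant L(Z,S,D,B,\eta,\lambda)$. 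Regrouping the $Z$-linear part gives $L=\langle A+S-D+B-\eta\mathbf{I}-\lambda\mathbf{J},Z\rangle+\langle D,\mathbf{I}\rangle+\eta\gamma n+\lambda\gamma^{2}n^{2}$, and the defining identity $S^{*}=D^{*}-B^{*}-A+\eta^{*}\mathbf{I}+\lambda^{*}\mathbf{J}$ in~\ref{eq:First} makes the bracket vanish identically, so $L(Z,S^{*},D^{*},B^{*},\eta^{*},\lambda^{*})$ is the $Z$-independent constant $c^{*}:=\langle D^{*},\mathbf{I}\rangle+\eta^{*}\gamma n+\lambda^{*}\gamma^{2}n^{2}$, whence $\langle A,Z\rangle\leqslant c^{*}$ over the feasible set.

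Next I would check that $Z^{*}=\zeta^{*}\zeta^{*\top}$ attains $c^{*}$. It is feasible ($Z^{*}\succeq 0$, $Z^{*}_{ii}=(\zeta^{*}_i)^{2}\in\{0,1\}$, $Z^{*}_{ij}\geqslant 0$, $\langle\mathbf{I},Z^{*}\rangle=\sum_i(\zeta^{*}_i)^2=\gamma n$, $\langle\mathbf{J},Z^{*}\rangle=\gamma^{2}n^{2}$), and the three slack terms all vanish at $Z^{*}$: $\langle S^{*},Z^{*}\rangle=\zeta^{*\top}S^{*}\zeta^{*}=0$ by~\ref{eq:First}, while $\langle D^{*},\mathbf{I}-Z^{*}\rangle=\sum_i d^{*}_i(1-Z^{*}_{ii})=0$ and $\langle B^{*},Z^{*}\rangle=\sum_{ij}B^{*}_{ij}Z^{*}_{ij}=0$ by the complementary-slackness relations~\ref{eq:Second}. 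Feeding this into the same chain of (in)equalities used in Lemma~\ref{lem:exist} yields $\langle A,Z^{*}\rangle=c^{*}$, so $Z^{*}$ is an optimal solution of~\ref{def:SDPPDSM}.

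For uniqueness I would take an arbitrary optimizer $Z'$; feasibility kills the $\eta,\lambda$ contribution, so $c^{*}=\langle A,Z'\rangle+\langle S^{*},Z'\rangle+\langle D^{*},\mathbf{I}-Z'\rangle+\langle B^{*},Z'\rangle$ with the last three terms $\geqslant 0$, and optimality $\langle A,Z'\rangle=c^{*}$ forces each of them to be zero, in particular $\langle S^{*},Z'\rangle=0$. Writing the eigendecomposition $Z'=\sum_j\alpha_j w_jw_j^{\top}$ with $\alpha_j\geqslant 0$ and the $w_j$ orthonormal, $\langle S^{*},Z'\rangle=\sum_j\alpha_j\,w_j^{\top}S^{*}w_j=0$ forces $w_j\in\ker S^{*}$ whenever $\alpha_j>0$; but $S^{*}\succeq 0$, $S^{*}\zeta^{*}=0$ and $\lambda_2(S^{*})>0$ make $\ker S^{*}=\operatorname{span}(\zeta^{*})$ one-dimensional, so $Z'=c\,\zeta^{*}\zeta^{*\top}$ for some $c\geqslant 0$, and the constraint $\langle\mathbf{I},Z'\rangle=\gamma n=c\|\zeta^{*}\|_2^{2}=c\gamma n$ pins down $c=1$, i.e.\ $Z'=Z^{*}$ --- exactly the argument of Lemma~\ref{lem:uniq}. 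I expect the only delicate point to be this last kernel-dimension step, together with keeping the signs straight for the two oppositely oriented inequality constraints $Z_{ii}\leqslant 1$ and $Z_{ij}\geqslant 0$; everything else is routine bookkeeping of the extra multipliers. The remaining work --- exhibiting $D^{*},B^{*},\lambda^{*},\eta^{*}$ that satisfy~\ref{eq:First}--\ref{eq:Second} with probability tending to $1$ when $\gamma\,\text{SNR}>1$, via $\lambda^{*}=\tfrac{\mu_1+\mu_2}{2}$, $\eta^{*}=2\|A-\mathbb{E}[A]\|$ and the stated formulas for $d^{*}_i,b^{*}_i$ --- is carried out separately and lies outside this lemma.
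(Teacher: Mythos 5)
Your proposal is correct and follows essentially the same weak-duality path as the paper: identical Lagrangian, the same observation that \ref{eq:First} makes the $Z$-linear part of $L$ vanish identically so that $L$ collapses to the constant $c^{*}=\langle D^{*},\mathbf I\rangle+\eta^{*}\gamma n+\lambda^{*}\gamma^{2}n^{2}$, the same use of \ref{eq:Second} and $S^{*}\zeta^{*}=0$ to show all three slacks vanish at $Z^{*}$, and the same kernel argument (the paper's Lemma~\ref{lem:uniq}) to extract uniqueness from $\langle S^{*},Z'\rangle=0$ plus $\lambda_{2}(S^{*})>0$ and the trace constraint.

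The one place your route differs slightly, and arguably is a touch cleaner, is the uniqueness step: you observe directly that for any other optimizer $Z'$ the identity $c^{*}=\langle A,Z'\rangle+\langle S^{*},Z'\rangle+\langle D^{*},\mathbf I-Z'\rangle+\langle B^{*},Z'\rangle$ together with optimality forces all three non-negative slack terms to vanish at once, giving $\langle S^{*},Z'\rangle=0$ immediately. The paper instead derives $\langle S^{*},Z_{2}\rangle\leqslant 0$ by an explicit chain of substitutions (using $\langle \mathbf I,Z_{2}\rangle=\langle\mathbf I,Z^{*}\rangle$, $\langle\mathbf J,Z_{2}\rangle=\langle\mathbf J,Z^{*}\rangle$, $\langle A,Z_{2}\rangle=\langle A,Z^{*}\rangle$, and the sign bounds $\langle D^{*},Z_{2}\rangle\leqslant\langle D^{*},Z^{*}\rangle$, $\langle -B^{*},Z_{2}\rangle\leqslant 0$) and then combines with $\langle S^{*},Z_{2}\rangle\geqslant 0$ from $S^{*},Z_{2}\succeq 0$. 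The two derivations encode the same complementary-slackness information; your version reaches the conclusion in one shot. Everything else, including the observation that $\ker S^{*}=\operatorname{span}(\zeta^{*})$ is one-dimensional so that $Z'=c\,\zeta^{*}\zeta^{*\top}$ and the normalization $\langle\mathbf I,Z'\rangle=\gamma n\Rightarrow c=1$, matches the paper exactly.
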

\begin{proof}
    For auxiliary Lagrangian multipliers denoted by $S \succeq 0, D = \operatorname{diag}\left\{d_{i}\right\}$ with $\{d_i\}_{i=1}^{n} \in  \mathbb{R}_{\geqslant 0}^{n}$, $B \geqslant 0$, and  $\lambda, \eta \in \mathbb{R}$, we define the Lagrangian dual function of the original semi-definite program \ref{def:SDPPDSM} as
\begin{align}
L(Z, S, D, B, \lambda, \eta) 
& :=\langle A, Z\rangle+\langle S, Z\rangle+\langle D, \mathbf{I}-Z\rangle+\langle B, Z\rangle+\eta( \gamma n-\langle\mathbf{I}, Z\rangle)+\lambda\left(\gamma ^2 n^2- \langle\mathbf{J}, Z\rangle\right)
\\
& = \langle  A+S -D +B -\eta \mathbf{I} -\lambda\mathbf{J}, Z\rangle  +\langle D, \mathbf{I} \rangle +\eta \gamma n + \lambda \gamma^2 n^2
\end{align}

 Then, for any $Z$ satisfying the constraints in \ref{def:SDPPDSM}, we have the following weak duality inequality.
\begin{align}
\langle A, Z\rangle & \stackrel{(a)}{\leqslant } L\left(Z, S^{*}, D^{*}, B^{*}, \lambda^{*}, \eta^{*}\right) \stackrel{(b)}{=}\left\langle D^{*}, I\right\rangle+\eta^{*} \gamma n+\lambda^{*} \gamma ^2 n^2 \stackrel{(c)}{=}\left\langle D^{*}, Z^{*}\right\rangle+\eta^{*} \gamma n+\lambda^{*} \gamma^{2} n^2 \\
& =\left\langle A+B^{*}+S^{*}-\eta^{*} \mathbf{I}-\lambda^{*} \mathbf{J}, Z^{*}\right\rangle+\eta^{*} \gamma n+\lambda^{*} \gamma^{2} n^2 \stackrel{(c)}{=}\left\langle A, Z^{*}\right\rangle
\end{align}

where (a) follows from \ref{eq:First} and \ref{def:SDPPDSM} implying $\left\langle S^{*}, Z\right\rangle \geqslant 0,\left\langle D^{*}, \mathbf{I}- Z\right\rangle \geqslant  0$, and $\left\langle B^{*}, Z\right\rangle \geqslant 0$; (b) holds directly from \ref{eq:First}; (c) holds because  $d_{i}^{*}\left(Z_{i i}^{*}-1\right)=0, \forall i$; (d) holds because $B_{i j}^{*} Z_{i j}^{*}=0, \hspace{.1 cm}\forall \hspace{.1 cm}i, j \in [n]$ from \ref{eq:Second} implying $\langle B^*, Z^* \rangle = 0$ and it follows from \ref{eq:First}  that $\left\langle S^{*},  Z^{*} \right\rangle=\zeta^{*^{\top}} S^{*} \zeta^{*}=0$. Therefore, $Z^{*}$ is an optimal solution to \ref{def:SDPPDSM}. 

We now establish its uniqueness. To this end, suppose $Z_2$ be another optimal solution to \ref{def:SDPPDSM}. Then we have,
\begin{align} \label{eq:ineq}
\left\langle S^{*}, Z_2\right\rangle 
& 
= \left\langle D^{*}-B^{*}-A+\eta^{*} \mathbf{I}+\lambda^{*} \mathbf{J}, Z_2\right\rangle \stackrel{(a)}{=}\left\langle D^{*}-B^{*}, Z_2\right\rangle + \langle  -A +\eta^* \mathbf{I}+ \lambda^* \mathbf{J},Z^*\rangle
\\ 
& 
\stackrel{(b)}{\leqslant}\left\langle D^{*}-B^*, Z^{*}\right\rangle + \langle  -A +\eta^* \mathbf{I}+ \lambda^* \mathbf{J},Z^*\rangle  =\left\langle S^{*}, Z^{*}\right\rangle=0 
\end{align}

where $(a)$ holds because 
$\langle\mathbf{I}, Z_2\rangle=\gamma n = \langle\mathbf{I}, Z^*\rangle$, $\langle\mathbf{J}, Z_2\rangle=\gamma^{2}n^2= \langle\mathbf{J}, Z^*\rangle$ and  $\langle A, Z_2\rangle=\left\langle A, Z^{*}\right\rangle$ ;(b) holds because from \ref{def:SDPPDSM} and \ref{eq:Second} we have  $B^{*}, Z_2 \geqslant  0$ and therefore $\langle -B^*, Z_2 \rangle 
 \leqslant 0 = \langle B^*, Z^* \rangle $. Furthermore, we have   $\left\langle D^{*}, Z_2\right\rangle =\sum_{i:\zeta_i =1} D^*_{ii} Z_{2_{ii}} \leqslant \sum_{i: \zeta_i=1} d_{i}^{*}=\left\langle D^{*}, Z^{*}\right\rangle$ since $d_{i}^{*} \geqslant 0$ and $Z_{2_{ii}} \leqslant 1$ for all $i \in[n]$. Since $Z_2 \succeq 0$ and $S^{*} \succeq 0$ we also have $\langle S^{*}, Z_2\rangle \geqslant 0$. Combining with the above we have that $\langle S^*, Z_2\rangle =0$.

 We are now in a position to apply lemma \ref{lem:uniq} with $S^* \succeq 0$, $\lambda_{2}\left(S^{*}\right)>0,$  and $\langle S^*, Z_2 \rangle = \langle S^*, Z^* \rangle =0$. By lemma \ref{lem:uniq} $Z_2$ needs to be a multiple of $Z^{*}=\zeta^{*}\zeta^{*^{\top}}$. Finally, we have  $Z_2=Z^{*}$ since $\operatorname{Tr}(Z_2)=\operatorname{Tr}\left(Z^{*}\right)=\gamma n$.
\end{proof}

\subsubsection{A high probability event and the proof} \label{sec:hpe1}
We are now in a position to provide a proof of theorem \ref{thm:SDP1}. Let $C^*:= \{ i \in [n] :\zeta_i=1\}$. We shall choose $\lambda^*$ later. For now, we  choose $\eta^{*}=2\|A-\mathbb{E}[A]\|, D^{*}=\operatorname{diag}\left\{d_{i}^{*}\right\}$ with $d^*_i= \langle (A-\eta^*\mathbf{I}- \lambda^*\mathbf{J})_i, \zeta^*\rangle \mathbf{1}_{i \in C^*}$. More precisely,

\begin{equation} \label{eq:Dstar}
d_{i}^{*} = \begin{cases}
    \sum_{j \in C^{*}} A_{i j} - \eta^{*} - \lambda^{*} \gamma n & \text{if } i \in C^{*} \\
    0 & \text{otherwise}
\end{cases}
\end{equation}

Define $b_{i}^{*} \gamma n \triangleq \lambda^{*} \gamma n - \sum_{j \in C^{*}} A_{i j} =  \langle (\lambda^* \mathbf{J} -A)_i, \zeta^* \rangle  $ for $i \notin C^{*}$. We define  $B^{*} \in \mathbb{R}^{n\times n}$ as the following.

\begin{equation}\label{eq:Bstar}
B_{i j}^{*}=b_{i}^{*} \mathbf{1}_{\left\{i \notin C^{*}, j \in C^{*}\right\}}+b_{j}^{*} \mathbf{1}_{\left\{i \in C^{*}, j \notin C^{*}\right\}}
\end{equation}

It remains to show that $\left(S^{*}, D^{*}, B^{*}\right)$ satisfies the conditions in Lemma  \ref{lem:DetPDSM} with probability approaching one.

By definition of $D^*$  and $B^*$, we have $d_{i}^{*}\left(Z_{i i}^{*}-1\right)=0$ $\forall$ $ i\in [n]$ and $B_{i j}^{*} Z_{i j}^{*}=0$ $\forall$ $i, j \in[n]$. Moreover, $\forall$ $i \in C^{*}$,

\begin{equation}
d_{i}^{*} \zeta_{i}^{*}=d_{i}^{*} =\sum_{j} A_{i j} \zeta_{j}^{*}-\eta^{*}-\lambda^{*} \gamma n \stackrel{(a)}{=}\sum_{j} A_{i j} \zeta_{j}^{*}+\sum_{j} B_{i j}^{*} \zeta_{j}^{*}-\eta^{*}-\lambda^{*} \gamma n =\langle (A+B^* -\eta^*\mathbf{I} -\lambda^* \mathbf{J})_i, \zeta^*\rangle 
\end{equation}

where equality $(a)$ holds because $B_{i j}^{*}=0$ if $(i, j) \in C^{*} \times C^{*}$. Furthermore,  $\forall$ $i \notin C^{*}$,

\begin{equation} 
\sum_{j} A_{i j} \zeta_{j}^{*}+\sum_{j} B_{i j}^{*} \zeta_{j}^{*}-\lambda^{*} \gamma n=\sum_{j \in C^{*}} A_{i j}+\gamma n b_{i}^{*}-\lambda^{*} \gamma n = \langle (A+ B^* - \eta^*\mathbf{I} -\lambda^* \mathbf{J})_i, \zeta^*\rangle \stackrel{(a)}{=} 0 =d^*_i\zeta^*_i
\end{equation}

where the final equality (a) follows from our choice of $b_{i}^{*}$. Combining the two above we have,
\[
D^{*} \zeta^{*}=A \zeta^{*}+B^{*} \zeta^{*}-\eta^{*} \zeta^{*}-\lambda^{*} \gamma n \mathbf{1} = (A+B^* -\eta^* \mathbf{I} -\lambda^* \mathbf{J})\zeta^*
\]

As a consequence $S^*$ satisfies the following condition.
\[
S^{*} \zeta^{*}= D^* \zeta^* - (A+B^* -\eta^* \mathbf{I} -\lambda^* \mathbf{J})\zeta^* = 0.
\]

We next prove that $D^{*} \geqslant 0, B^{*} \geqslant 0$ with probability approaching one. It follows again from  \ref{eq:GNorm} standard bounds on norms of Gaussian random matrices that $\eta^{*}= 2\|A-\mathbb{E}[A]\| \leqslant 6 \tau \sqrt{n}$ with probability at least $1-2e^{-cn}$ for some absolute positive constant $c$. Furthermore, let $X_{i} \triangleq \sum_{j \in C^{*}} A_{i j} = \langle A_i , \zeta^* \rangle $.  Then we have the distributional identity.

\[
 X_i \sim
\begin{cases}
    N(\gamma n \mu_1, \gamma n \tau^2)& \text{if } i \in C^* \\
    N(\gamma n \mu_2, \gamma n \tau^2)  &  i \notin C^* 
\end{cases}
\]

 We will show using union bound that  with probability approaching one
 \begin{equation}
 d^*_i =X_i - 6\tau \sqrt{n} - \lambda^{*} \gamma n \geqslant 0  \hspace{.1 cm} \forall \hspace{.1 cm}  i \in C^*
 \end{equation}
 \begin{equation}
 b_i^* \gamma n = \lambda^{*} \gamma n - X_i  \geqslant 0 \hspace{.1 cm} \forall \hspace{.1 cm} i \notin C^*
 \end{equation}
 
We  prove that $\mathbb{P}\left[ \cap _{i \in C^*} \{d^*_i \geqslant 0\}\right] \to 1 \iff \mathbb{P}\left[ \cup_{i \in C^*} \{d^*_i < 0\}\right] \to 0$  by showing the following with $\lambda^* = \frac{\mu_1 +\mu_2}{2}$.

\begin{align*} 
\gamma n \mathbb{P}\left[ d^*_i < 0\right] 
&  = \gamma n \mathbb{P}\left[ \gamma n (\mu_1 -\lambda^*) - \tau \sqrt{\gamma n} N(0,1) < 6\tau \sqrt{n}\right]
\\
& = \gamma n \mathbb{P}\left[ N(0,1) > \sqrt{\gamma n} \frac{(\mu_1 -\mu_2)}{2\tau} -    \frac{6}{\sqrt{\gamma}}\right] 
\\
& \leqslant  \gamma n e^{-\gamma \text{SNR} (1+ o(1)) \log n }  = \frac{\gamma }{n^{\gamma \text{SNR} -1 +o(1)}} \to 0
\end{align*}

Similarly, we  prove that $\mathbb{P}\left[ \cap _{i \notin C^*} \{b^*_i \geqslant 0\}\right] \to 1 \iff \mathbb{P}\left[ \cup_{i \notin C^*} \{b^*_i < 0\}\right] \to 0$  by showing the following.

\begin{align*} 
(1-\gamma) n \mathbb{P}\left[ b^*_i \leqslant 0\right] 
&  = (1-\gamma) n \mathbb{P}\left[ \tau \sqrt{\gamma n} N(0,1) > \gamma n (\lambda^* -\mu_2)\right]
\\
& = (1-\gamma) n \mathbb{P}\left[ N(0,1) > \sqrt{\gamma n} \frac{(\mu_1 -\mu_2)}{2\tau}\right] 
\\
& \leqslant  (1-\gamma) n e^{-\gamma \text{SNR}  \log n }  = \frac{(1-\gamma) }{n^{\gamma \text{SNR} -1}} \to 0
\end{align*}

Finally, it remains to prove that $S^* \succeq 0$ with $\lambda_2\left(S^*\right)>0$ with probability approaching one. More precisely,
\[
\mathbb{P}\left[\inf _{x \perp \zeta^*,\|x\|=1} \langle x, S^* x\rangle >0\right] \to 1
\]

Note that we have the following decomposition of the expected adjacency matrix $\mathbb{E}\left[A\right]$.
$$
\mathbb{E}[A]=(\mu_1-\mu_2) Z^*-\mu_1\left[\begin{array}{rr}
\mathbf{I}_{\gamma n \times \gamma n} & \mathbf{0} \\
\mathbf{0} & \mathbf{0}
\end{array}\right]-\mu_2\left[\begin{array}{cc}
\mathbf{0} & \mathbf{0} \\
\mathbf{0} & \mathbf{I}_{(1-\gamma)n \times(1- \gamma)n}
\end{array}\right]+\mu_2 \mathbf{J} .
$$

It follows that for any $x \perp \zeta^*$ and $\|x\|=1$, we have $\langle x, Z^* x\rangle = (\langle x, \zeta^* \rangle)^2 =0$ and
$$
\begin{aligned}
\langle x, S^* x \rangle  & = \langle x, D^* x \rangle - \langle x, B^* x \rangle +\left(\lambda^*-\mu_2\right) \langle x, \mathbf{J} x \rangle +\mu_1 \sum_{i \in C^*} x_i^2+\mu_2 \sum_{i \notin C^*} x_i^2+\eta^*- \langle x, (A-\mathbb{E}[A]) x \rangle  
\\
& 
\stackrel{(a)}{=} \sum_{i \in C^*}d_i^* x_i^2+\left(\lambda^*-\mu_2\right) \langle x, \mathbf{J} x\rangle +  \mu_1 \sum_{i \in C^*} x_i^2 + \mu_2 \sum_{i \notin C^*} x_i^2+\eta^*- \langle x, (A-\mathbb{E}[A]) x\rangle  
\\
& 
\stackrel{(b)}{\geqslant} \left(\min _{i \in C^*} d_i^* \right) \sum_{i \in C^*} x_i^2+ \frac{\left(\mu_1-\mu_2\right)}{2} (\langle x,  \mathbf{1}  \rangle)^2+ \mu_1 \sum_{i \in C^*} x_i^2+\mu_2 \sum_{i \notin C^*} x_i^2+\eta^*-\|A-\mathbb{E}[A]\|
\\
& 
\stackrel{(c)}{\geqslant}\left(\min _{i \in C^*} d_i^*\right) \sum_{i \in C^*} x_i^2+ \mu_1 \sum_{i \in C^*} x_i^2+ \mu_2 \sum_{i \notin C^*} x_i^2  + \|A-\mathbb{E}[A]\|
\\
& 
\stackrel{(d)}{\geqslant }   \|A-\mathbb{E}[A]\| +   (\mu_1 -\mu_2) \sum_{i \in C^*} x_i^2+ \mu_2  \quad \text{with probability approaching one}
\\
& 
\stackrel{(e)}{\geqslant}   \|A-\mathbb{E}[A]\| +\mu_2 \stackrel{(f)}{>}0 \quad \text{with probability approaching one}
\end{aligned}
$$

where (a) holds because $B_{i j}^*=0$ for all $(i, j) \in C^*\times C^*$ and  $(i, j) \in [n] \setminus C^*\times [n] \setminus C^*$
\begin{align}
\langle x, B^* x \rangle
& =  \sum_{i \notin C^*} \sum_{j \in C^*} x_i x_j B_{i j}^* +  \sum_{i \in C^*} \sum_{j \notin C^*} x_i x_j B_{i j}^* 
\\
& = 2 \sum_{i \notin C^*} \sum_{j \in C^*} x_i x_j B_{i j}^*
\\
&
= 2 \sum_{i \notin C^*} x_i b_i^* \sum_{j \in C^*} x_j
\\
&
= 2  \langle x , \zeta^*  \rangle \sum_{i \notin C^*} x_i b_i^*
\\
&
=0
\end{align}

(b) holds because  $\lambda^* =\frac{\mu_1+\mu_2}{2}$ and $\|A-\mathbb{E}[A]\| \geqslant  \langle x, A-\mathbb{E}[A] x \rangle $. Further, $(c)$ holds because  $\eta^*=2\|A-\mathbb{E}[A]\|$ and $\mu_1 \geqslant \mu_2$. (d) holds because  $\min _{i \in C^*} d_i^* \geqslant 0$ with  probability approaching one. Finally, again from standard bounds on norms on Gaussian random matrices, for some absolute positive constant $c >0$
\begin{equation}
\mathbb{P}\left[\|A-\mathbb{E}[A]\| \geqslant \tau c\sqrt{n}  \right]\to 1
\end{equation}

This along with  $\tau c\sqrt{n} \geqslant  |\mu_2| = |\alpha| \sqrt{\frac{\log n}{n}}$ implies $(f)$ holds with probability approaching one. Hence, the result.

\end{document}